\setlist{leftmargin=8mm}
\numberwithin{equation}{section}
\newcommand{\N}{\mathbb{N}}
\newcommand{\R}{\mathbb{R}}
\newcommand{\pnorm}[2]{\lVert #1\rVert_{#2}}
\newcommand{\bigpnorm}[2]{\big\lVert#1\big\rVert_{#2}}
\newcommand{\abs}[1]{\lvert#1\rvert}
\newcommand{\bigabs}[1]{\big\lvert#1\big\rvert}
\newcommand{\biggabs}[1]{\bigg\lvert#1\bigg\rvert}
\newcommand{\iprod}[2]{\langle#1,#2\rangle}
\newcommand{\bigiprod}[2]{\big\langle#1,#2\big\rangle}
\renewcommand{\epsilon}{\varepsilon}
\renewcommand{\d}[1]{\mathrm{d}#1}
\newcommand{\smallo}{\mathfrak{o}}
\newcommand{\bigo}{\mathcal{O}}
\newcommand{\equald}{\stackrel{d}{=}}
\renewcommand{\hat}{\widehat}
\renewcommand{\tilde}{\widetilde}
\DeclareMathOperator{\E}{\mathbb{E}}
\DeclareMathOperator{\Prob}{\mathbb{P}}
\DeclareMathOperator{\sign}{\texttt{sgn}}
\DeclareMathOperator{\var}{Var}
\DeclareMathOperator{\cov}{Cov}
\DeclareMathOperator{\op}{op}
\DeclareMathOperator{\err}{\texttt{err}}
\DeclareMathOperator{\prox}{\mathsf{prox}}
\let\limsup\relax
\DeclareMathOperator*\limsup{\overline{lim}}
\DeclareMathOperator*{\argmin}{arg\,min\,}
\theoremstyle{definition}\newtheorem{problem}{Problem}[section]
\theoremstyle{definition}\newtheorem{definition}[problem]{Definition}
\theoremstyle{remark}\newtheorem{assumption}{Assumption}
\theoremstyle{remark}\newtheorem{remark}{Remark}
\theoremstyle{definition}\newtheorem{example}[problem]{Example}
\theoremstyle{plain}\newtheorem{theorem}[problem]{Theorem}
\theoremstyle{plain}\newtheorem{question}{Question}
\theoremstyle{plain}\newtheorem{lemma}[problem]{Lemma}
\theoremstyle{plain}\newtheorem{proposition}[problem]{Proposition}
\theoremstyle{plain}
\theoremstyle{plain}
	\def\MR#1{}
\begin{document}

\title[Gradient descent inference in ERM]{Gradient descent inference in empirical risk minimization}
\thanks{The research of Q. Han is partially supported by NSF grant DMS-2143468.}

\author[Q. Han]{Qiyang Han}

\address[Q. Han]{
Department of Statistics, Rutgers University, Piscataway, NJ 08854, USA.
}
\email{qh85@stat.rutgers.edu}

\author[X. Xu]{Xiaocong Xu}

\address[X. Xu]{
	Data Sciences and Operations Department, Marshall School of Business, University of Southern California, Los Angeles, CA 90089, USA.
}
\email{xuxiaoco@marshall.usc.edu}

\date{\today}

\keywords{debiased statistical inference, empirical risk minimization, gradient descent, linear regression, logistic regression, state evolution, universality}
%\subjclass[2000]{60E15, 60G15}

\begin{abstract}
Gradient descent is one of the most widely used iterative algorithms in modern statistical learning. However, its precise algorithmic dynamics in high-dimensional settings remain only partially understood, which has limited its broader potential for statistical inference applications.

This paper provides a precise, non-asymptotic joint distributional characterization of gradient descent iterates and their debiased statistics in a broad class of empirical risk minimization problems, in the so-called mean-field regime where the sample size is proportional to the signal dimension. Our non-asymptotic state evolution theory holds for both general non-convex loss functions and non-Gaussian data, and reveals the central role of two Onsager correction matrices that precisely characterize the non-trivial dependence among all gradient descent iterates in the mean-field regime. 

Leveraging the joint state evolution characterization, we show that the gradient descent iterate retrieves approximate normality after a debiasing correction via a linear combination of observable loss derivative directions from all past iterates. Crucially, the debiasing coefficients are directly linked to the Onsager correction matrices which can be estimated in a fully data-driven manner via the proposed \emph{gradient descent inference algorithm}. This leads to a new algorithmic statistical inference framework based on debiased gradient descent, which (i) applies to a broad class of models with both convex and non-convex losses, (ii) remains valid at each iteration without requiring algorithmic convergence, and (iii) exhibits a certain robustness to possible model misspecification. As a by-product, our framework also provides algorithmic estimates of the generalization error at each iteration.

We demonstrate our theory and inference methods in the canonical single-index regression model and a generalized logistic regression model, where the natural loss functions may exhibit arbitrarily non-convex landscapes. Our analysis further shows that, in linear regression with squared loss, the proposed debiased gradient descent iterate eventually coincides with the debiased convex regularized estimator in a mean-field distributional sense, and the quality of statistical inference for the unknown signal aligns exactly with the generalization error achieved along the algorithmic trajectory.
\end{abstract}

\maketitle

\vspace{-2em}
\setcounter{tocdepth}{1}
\tableofcontents

\sloppy

\section{Introduction}

Suppose we observe i.i.d. data $\{(A_i,Y_i)\}_{i \in [m]}\subset \R^n\times \R$, where $A_i$'s are features/covariates and $Y_i$'s are labels related via the model
\begin{align}\label{def:model_general}
Y_i=\mathcal{F}(\iprod{A_i}{\mu_\ast},\xi_i),\quad i \in [m].
\end{align}
Here $\mathcal{F}:\R^2\to \R$ is a model-specific function, $\mu_\ast \in \R^n$ is the unknown signal, and $\xi_i$'s are statistical noises independent of $A_i$'s. For notational convenience, we write $A \in \R^{m\times n}$ whose rows collect $A_i^\top$'s, and $Y=(Y_i)_{i \in [m]}, \xi \equiv (\xi_i)_{i \in [m]}\in \R^m$.

While (\ref{def:model_general}) covers a broad range of concrete models, here we have in mind the following two prominent examples:
\begin{example}[Single-index regression model]\label{model:single_index}
We observe $(Y_i,A_i) \in \R\times \R^n$ according to $
Y_i = \varphi_\ast\big(\iprod{A_i}{\mu_\ast}\big)+\xi_i$, $i \in [m]$, where $\varphi_\ast:\R\to \R$ is a link function possibly without any apriori convexity. This single-index regression model can be identified as (\ref{def:model_general}) by setting $\mathcal{F}(z,\xi)\equiv \varphi_\ast(z)+\xi$. The special case of linear regression amounts to taking $\varphi_\ast =\mathrm{id}$. 
\end{example}

\begin{example}[Generalized logistic regression]\label{model:1bit}
We observe $(Y_i,A_i)\in \{\pm 1\}\times \R^n$ according to the model $
Y_i = \sign\big(\iprod{A_i}{\mu_\ast}+\xi_i\big)$, $i \in [m]$. Here for definiteness, we interpret $\sign(x)=2\cdot\bm{1}_{x\geq 0}-1$ for $x\in \R$. This model can be recast into (\ref{def:model_general}) by setting $\mathcal{F}(z,\xi) = \sign(z+\xi)$, and is also known under the name of the noisy one-bit compressed sensing model. The special case of logistic regression can be recovered upon suitable specification of the error distribution for $\{\xi_i\}$'s. 
\end{example}

A major goal of statistician is to make inference about the unknown signal $\mu_\ast \in \R^n$ based on the observed data $\{(A_i,Y_i)\}_{i \in [m]}$ from (\ref{def:model_general}). In this paper, we will be mainly interested in the so-called `proportional regime' (or `mean-field regime', cf. \cite{montanari2018mean}), where
\begin{align}\label{def:mean_field}
\hbox{the sample size $m$ and the signal dimension $n$ are of the same order}.
\end{align}
The mean-field regime (\ref{def:mean_field}) is particularly challenging for statistical inference of $\mu_\ast$, as consistent estimation of $\mu_\ast$ is in general impossible in this regime.

\subsection{Review: Mean-field debiasing methods for convex regularized estimators}

In convex models, a popular statistical paradigm for the inference purpose is to use a suitable debiased version of the empirical risk minimizer $\hat{\mu}$ obtained from solving the empirical risk minimization problem
\begin{align}\label{def:ERM_general}
\hat{\mu}\in \argmin_{\mu \in \R^n} \sum_{i \in [m]} \mathsf{L}\big(\iprod{A_i}{\mu}, Y_i\big)+ \sum_{j \in [n]} \mathsf{f}(\mu_j).
\end{align}
Here $\mathsf{L}:\R^2 \to \R_{\geq 0}$ is a loss function, and $\mathsf{f}:\R\to \R_{\geq 0}$ is a (convex) regularizer designed to promote the structure of $\mu_\ast$. As a canonical example, in the linear model $\mathcal{F}(x,y)=x+y$ under Gaussian i.i.d. design with entrywise variance $1/n$, the regularized estimator $\hat{\mu}^{\mathrm{ls}}$ under the squared loss $\mathsf{L}(x,y)=(x-y)^2/2$ admits the following distributional characterization in the mean-field regime (\ref{def:mean_field}): for some $\alpha_\ast \in \mathbb{R}_{>0}$ and Gaussian noise $\mathsf{W}^{\mathrm{ls}}\in \R^n$, it holds in an averaged sense\footnote{In the introduction, for two random vectors $X, Y \in \mathbb{R}^n$ defined possibly on different probability spaces, we say that $X \stackrel{d}{\approx} Y$ in an averaged sense if, for any sufficiently regular test function $\psi$, the relation $n^{-1} \sum_{i \in [n]} \psi(X_i) \approx n^{-1} \sum_{i \in [n]} \E \psi(Y_i)$ holds with high probability.} that
\begin{align}\label{eqn:reg_est}
\hat{\mu}^{\mathrm{ls}} \stackrel{d}{\approx} \prox_{\alpha_\ast\mathsf{f}}\big(\mu_\ast+\mathsf{W}^{\mathrm{ls}}\big).
\end{align}
We refer the reader to, e.g.,  \cite{elkaroui2013asymptotic,stojnic2013framework,donoho2016high,elkaroui2018impact,thrampoulidis2015regularized,thrampoulidis2018precise,sur2019likelihood,sur2019modern,miolane2021distribution,celentano2023lasso,han2023noisy,han2023universality,han2023distribution} for precise statements of (\ref{eqn:reg_est}) for various concrete regularizers, and a formal definition of the proximal operator $\prox_\cdot$ can be found in (\ref{def:prox_op}).

The core to debiasing $\hat{\mu}^{\mathrm{ls}} $ is to expose the Gaussian noise $\mathsf{W}^{\mathrm{ls}}$ in (\ref{eqn:reg_est}) through a correction to $\hat{\mu}^{\mathrm{ls}}$ along its own loss derivative direction (cf.~\cite{javanmard2014confidence,javanmard2014hypothesis,miolane2021distribution,celentano2023lasso,bellec2023debias,bellec2025observable}): for some scalar $\omega^{\mathrm{ls}} \in \mathbb{R}$, the oracle debiased regularized estimator 
	\begin{align}\label{eqn:debiased_reg_est}
	\mu^{\mathrm{ls}}_{\mathrm{db}} \equiv \hat{\mu}^{\mathrm{ls}} + \omega^{\mathrm{ls}} \cdot A^\top (A\hat{\mu}^{\mathrm{ls}} - Y) \stackrel{d}{\approx} \mathcal{N}\big(\mu_\ast, \sigma_{\mathrm{db}}^2 I_n\big)
	\end{align}
	is approximately normal (typically in an averaged sense). Statistical inference for $\mu_\ast$ can then be performed using a data-driven version of the debiased estimator $\hat{\mu}^{\mathrm{ls}}_{\mathrm{db}}$, in which the scalar $\omega^{\mathrm{ls}}$ is replaced by a consistent estimator $\hat{\omega}^{\mathrm{ls}} = \hat{\omega}^{\mathrm{ls}}\big(\{(A_i, Y_i)\}_{i \in [m]}\big) \in \mathbb{R}$. We refer readers to \cite{bellec2023debias,bellec2025observable} for a general `degrees-of-freedom adjustment' methodology to construct such a consistent estimator $\hat{\omega}^{\mathrm{ls}}$ in the mean-field regime (\ref{def:mean_field}).
	
	An important theoretical property of the debiased regularized estimator $\mu^{\mathrm{ls}}_{\mathrm{db}} $ lies in the direct connection between its variance $\sigma_{\mathrm{db}}^2$ and the generalization error $\mathscr{E}_{\mathrm{sq}}$ achieved by $\hat{\mu}^{\mathrm{ls}}$: 
	\begin{align}\label{eqn:var_gen_intro}
	\sigma_{\mathrm{db}}^2 \approx \kappa \cdot \mathscr{E}_{\mathrm{sq}},\quad \kappa \equiv \phi^{-1}\equiv (m/n)^{-1}.
	\end{align}
	Consequently, the quality of statistical inference for $\mu_\ast$, as measured by the length of the confidence intervals produced by the debiased estimator $\hat{\mu}^{\mathrm{ls}}_{\mathrm{db}}$, is precisely aligned with the generalization error achieved by the regularized estimator $\hat{\mu}^{\mathrm{ls}}$.

\subsection{Motivation and the goal of this paper}
In practice, since the optimization problem (\ref{def:ERM_general}) generally lacks a closed-form solution, the empirical risk minimizer $\hat{\mu}$ in (\ref{def:ERM_general}) and its debiased version $\hat{\mu}^{\mathrm{ls}}_{\mathrm{db}}$ in (\ref{eqn:debiased_reg_est}) are typically computed via iterative algorithms. One of the simplest yet broadly applicable methods for this purpose is the proximal gradient descent algorithm: starting with an initialization $\mu^{(0)}\in \R^n$ and a pre-specified step size $\eta>0$, the algorithm iteratively updates $\mu^{(t)}$ via:
\begin{align}\label{def:gd_intro}
\mu^{(t)}=\prox_{\eta \mathsf{f}}\big( \mu^{(t-1)}-\eta\cdot A^\top \partial_1 \mathsf{L} (A\mu^{(t-1)},Y)\big),\quad t=1,2,\ldots.
\end{align}
Here $\partial_1 \mathsf{L}(x,y)\equiv (\partial \mathsf{L}/\partial x)(x,y)$ is understood as applied row-wise.

When the loss function $x \mapsto \mathsf{L}(x,y)$ is convex and the gradient descent iterates $\mu^{(t)}$ are provably close to the empirical risk minimizer $\hat{\mu}$, one may directly use the algorithmic output $\mu^{(t)}$ for large $t$ to construct an accurate approximation of the debiased estimator $\hat{\mu}^{\mathrm{ls}}_{\mathrm{db}}$ in (\ref{eqn:debiased_reg_est}) for statistical inference of $\mu_\ast$. However, even in this favorable setting, it is often preferable in practice to early-stop the gradient descent due to its implicit regularization effect, which typically leads to smaller generalization error. This benefit is already evident in the simplest linear model with squared loss and Ridge regularization; cf.~\cite{ali2020implicit}.
	
A much more challenging scenario relevant to our examples arises when the loss function is non-convex, in which case the resulting proximal gradient descent (\ref{def:gd_intro}) may fail to converge to the empirical risk minimizer (\ref{def:ERM_general}). For instance, in the single-index regression model from Example~\ref{model:single_index}, the canonical squared loss $x \mapsto \mathsf{L}(x,y) = (\varphi_\ast(x) - y)^2$ can already exhibit \emph{arbitrary} non-convex landscape due to the presence of a general link function $\varphi_\ast$. Furthermore, even if the algorithm converges, the existing debiasing methodology (\ref{eqn:debiased_reg_est}) is not directly applicable without convexity assumptions. As a result, statistical inference for $\mu_\ast$ based on existing debiasing paradigms for empirical risk minimizers becomes infeasible.
	
The foregoing discussion naturally leads to the following question:

\begin{question}\label{question}
Can the gradient descent iterate $\mu^{(t)}$ itself, rather than the empirical risk minimizer $\hat{\mu}$, be directly used for the purpose of statistical inference of $\mu_\ast$ in the mean-field regime (\ref{def:mean_field}), both in convex and non-convex settings?
\end{question}

A recent series of works \cite{bellec2024uncertainty,tan2024estimating} demonstrates the feasibility of statistical inference using $\mu^{(t)}$ in the convex case of the linear model with squared loss and Gaussian data. These works introduce a data-driven iterative debiasing methodology that leverages the derivatives of all past gradient descent mappings, and thus providing a promising approach.

The goal of this paper is to provide a systematic solution to Question~\ref{question} by developing a statistical inference framework that can be applied during gradient descent training for the general class of models in \eqref{def:model_general}, including both convex and non-convex loss cases.
	
Our approach builds on the state evolution formalism from the Dynamical Mean Field Theory (DMFT) \cite{mignacco2020dynamical,altieri2020dynamical,celentano2021high,mignacco2022effective,gerbelot2024rigorous}, particularly in the recently developed form presented in \cite{han2025entrywise}. We characterize the joint distribution of $\mu^{(t)}$, $A\mu^{(t)}$ and their debiased statistics, from which we show that the gradient iterate $\mu^{(t)}$ retrieves approximate normality after a debiasing correction via a linear combination of observable loss derivative directions $\{\partial_1 \mathsf{L}(A\mu^{(s)},Y)\}_{s \in [0:t-1]}$ from all past iterates. Furthermore, we show that the debiasing coefficients in this linear combination can be estimated in a data-driven manner using the proposed \emph{gradient descent inference algorithm} (cf.~Algorithm~\ref{def:alg_tau_rho}).

Taken together, this leads to a new algorithmic statistical inference framework based on debiased gradient descent. As will be clear from below, our algorithmic inference framework (i) applies to a broad class of models in (\ref{def:model_general}) with both convex and non-convex losses, (ii) remains valid at each iteration without requiring algorithmic convergence, and (iii) exhibits a certain robustness to possible model misspecification. As a by-product of our inference framework, we also obtain algorithmic estimates of the generalization error of $\mu^{(t)}$ at each iteration.

While we only treat the gradient descent algorithm (\ref{def:gd_intro}) and its immediate variants (cf., Eqn. (\ref{def:grad_descent})) in this paper, our theory and inference methods can be readily extended to other variations such as accelerated or noisy gradient descent. For clarity and to emphasize the key ideas of our theory and inference methods, we do not detail these extensions here.

\subsection{Mean-field dynamics of (debiased) gradient descent}

As mentioned earlier, the main theoretical tool underlying our inference method is a joint distributional characterization for $\mu^{(t)}, A\mu^{(t)}$ and their debiased statistics at each iteration $t$. Suppose $A$ has independent, mean-zero and sub-gaussian entries with variance $1/n$, a simplified version of Theorem \ref{thm:gd_se} shows that in the mean-field regime (\ref{def:mean_field}), the following holds both in an entrywise and an averaged sense for $t=1,2,\ldots$:
\begin{align}\label{ineq:intro_gd_dist}
\begin{cases}
A\mu^{(t-1)} \stackrel{d}{\approx} -\eta \sum_{s \in [1:t-1]}  \rho_{t-1,s} \cdot \partial_1\mathsf{L}\big(A\mu^{(s-1)},Y\big)+ \mathsf{Z}^{(t)},\\
\mu^{(t)} \stackrel{d}{\approx} \prox_{\eta \mathsf{f}}\big[(1+\tau_{t,t})\cdot  \mu^{(t-1)}+\sum_{s \in [1:t-1]} \tau_{t,s}\cdot  \mu^{(s-1)}+ \delta_t\cdot \mu_\ast+\mathsf{W}^{(t)}\big].
\end{cases}
\end{align}
Here $\mathsf{Z}^{(t)}$ and $\mathsf{W}^{(t)}$ are centered Gaussian vectors, $\bm{\tau}^{[t]}=(\tau_{r,s})_{r,s \in [t]}\in \R^{t\times t},\bm{\rho}^{[t-1]}=(\rho_{r,s})_{r,s \in [t-1]}\in \R^{(t-1)\times (t-1)}$ are called \emph{Onsager correction matrices}\footnote{In the statistical physics literature \cite{cugliandolo1993analytical,agoritsas2018out}, the elements of these matrices are referred to as \emph{linear response functions}. To avoid confusion with statistical terminology, we do not adopt this terminology from statistical physics in this paper.} that quantify how the current gradient descent iterates $\mu^{(t)}$ and $A\mu^{(t-1)}$ depend on past iterates, and $\delta_t\in \R$ is called \emph{information parameter} that measures the amount of information about $\mu_\ast$ contributed by the gradient descent iterate $\mu^{(t)}$ at iteration $t$. These parameters can be determined recursively for $t=1,2,\ldots$, via a specific state evolution detailed in Definition \ref{def:gd_se}.

The distributional characterization of $\mu^{(t)}$ in (\ref{ineq:intro_gd_dist}) takes a form similar to (\ref{eqn:reg_est}) with a single additional Gaussian noise term $\mathsf{W}^{(t)}$, thereby suggesting a general debiasing framework via $\mu^{(t)}$ in the same spirit as (\ref{eqn:debiased_reg_est}). We show in Theorem \ref{thm:db_gd_oracle} that this is indeed possible: with $(\omega_{r,s})_{r,s \in [t]}\equiv \bm{\omega}^{[t]}\equiv (\bm{\tau}^{[t]})^{-1}\in \R^{t\times t}$ and suitable bias-variance parameters $\big(b^{(t)}_{\mathrm{db}}, (\sigma^{(t)}_{\mathrm{db}})^2\big)\in \R\times \R_{\geq 0}$, the debiased gradient descent iterate $\mu^{(t)}_{\mathrm{db}}$ below is approximately normal in the mean-field regime (\ref{def:mean_field}), both in an entrywise and an averaged sense for $t=1,2,\ldots$:
\begin{align}\label{def:db_gd_intro}
\mu^{(t)}_{\mathrm{db}}\equiv \mu^{(t-1)}+\eta  \sum_{s \in [1:t]} \omega_{t,s} A^\top \partial_1 \mathsf{L}(A\mu^{(s-1)},Y)\stackrel{d}{\approx} \mathcal{N}\Big( b^{(t)}_{\mathrm{db}}\cdot \mu_\ast, (\sigma^{(t)}_{\mathrm{db}})^2 I_n \Big).
\end{align}
In fact, the above display follows by an `inversion' of (a suitable form of) the second display in (\ref{ineq:intro_gd_dist}); some heuristics can be found in (\ref{def:debias_gd_oracle}).

Compared to the debiased regularized estimator $\mu^{\mathrm{ls}}_{\mathrm{db}}$ in (\ref{eqn:debiased_reg_est}), the debiased gradient descent iterate $\mu^{(t)}_{\mathrm{db}}$ in (\ref{def:db_gd_intro}) exhibits a clear structural similarity: the iterate $\mu^{(t-1)}$ can be debiased to recover normality through a linear combination of observable loss derivative directions from all past iterates. A key advantage of the debiased gradient descent iterate (\ref{def:db_gd_intro}), however, is that it entirely avoids any convexity requirement on the loss landscape.

In addition, the debiased gradient descent iterate (\ref{def:db_gd_intro}) enjoys a theoretical property analogous to~(\ref{eqn:var_gen_intro}): In linear regression under the squared loss, with the same constant $\kappa$ as in (\ref{eqn:var_gen_intro}), 
\begin{align}\label{eqn:CI_len_gen_intro}
 (\sigma^{(t)}_{\mathrm{db}})^2 \approx \kappa \cdot \mathscr{E}_{\mathrm{sq}}^{(t)},\quad t=1,2,\ldots.
\end{align}
The consequences of (\ref{eqn:CI_len_gen_intro}) are two-fold. First, as $b^{(t)}_{\mathrm{db}}=1$ in linear regression, it reveals a stronger alignment between the quality of statistical inference via the debiased gradient descent iterate $\mu^{(t)}_{\mathrm{db}}$ and the generalization error achieved along the entire gradient descent trajectory. Second, (\ref{eqn:var_gen_intro}) and (\ref{eqn:CI_len_gen_intro}) imply that if the proximal gradient descent iterate $\mu^{(t)}$ converges to the regularized least squares estimator $\hat{\mu}^{\mathrm{ls}}$ as $t \to \infty$, then the debiased gradient descent iterate $\mu^{(t)}_{\mathrm{db}}$ must also converge to the debiased regularized estimator $\mu^{\mathrm{ls}}_{\mathrm{db}}$ in a mean-field distributional sense, cf. Eqn. (\ref{eqn:debias_gd_cvx}).

From these perspectives, our proposed debiased gradient descent iterate $\mu^{(t)}_{\mathrm{db}}$ in~(\ref{def:db_gd_intro}) can be viewed as a canonical extension of the debiased regularized least squares estimator $\mu^{\mathrm{ls}}_{\mathrm{db}}$ in~(\ref{eqn:debiased_reg_est}), now situated within an iterative algorithmic framework that does not require convexity of the loss landscape.

We mention that the property (\ref{eqn:CI_len_gen_intro}) extends beyond linear regression. In fact, (\ref{eqn:CI_len_gen_intro}) also holds for logistic regression with the (mis-specified) squared loss for a different iteration-independent constant $\kappa>0$; see, e.g., Proposition \ref{prop:1bit_cs_bias_var}.

\subsection{Debiased statistical inference via gradient descent inference algorithm}

In order to use~(\ref{def:db_gd_intro}) for statistical inference of $\mu_\ast$, an essential challenge lies in obtaining data-driven estimates of the Onsager correction matrices $\bm{\tau}^{[t]},\bm{\rho}^{[t]}$ (and therefore $\bm{\omega}^{[t]}$). The difficulty stems from the fact that these matrices are defined recursively via state evolution, and their exact analytical forms are typically mathematically intractable.

We propose the \emph{gradient descent inference algorithm} (cf. Algorithm \ref{def:alg_tau_rho}) to construct consistent estimators $\hat{\bm{\tau}}^{[t]},\hat{\bm{\rho}}^{[t]}$ for $\bm{\tau}^{[t]},\bm{\rho}^{[t]}$. This algorithm can be naturally embedded in the gradient descent iterate (\ref{def:gd_intro}), which, at iteration $t$, simultaneously outputs $\hat{\bm{\tau}}^{[t]},\hat{\bm{\rho}}^{[t]}$ and $\mu^{(t)}$. At a high level, the construction of this algorithm relies on the recursive structure of the state evolution mechanism, which propagates  $\bm{\tau}^{[t]},\bm{\rho}^{[t]}$ through the chain $\bm{\tau}^{[1]} \to \bm{\rho}^{[1]}\to \cdots \to \bm{\tau}^{[t]} \to \bm{\rho}^{[t]}$. The special coupling structure in the state evolution mean-field functions allows us to efficiently construct estimators $\hat{\bm{\tau}}^{[t]},\hat{\bm{\rho}}^{[t]}$ along this chain using only $\hat{\bm{\tau}}^{[t-1]},\hat{\bm{\rho}}^{[t-1]}$ from the previous iterate.

From a conceptual standpoint, the role of our proposed gradient descent inference algorithm is analogous to that of constructing a data-driven estimate of $\omega^{\mathrm{ls}}$ for the oracle debiased regularized estimator in~(\ref{eqn:debiased_reg_est}). A well-understood methodology for the latter is the `degrees-of-freedom (DoF) adjustment' systematically studied in \cite{bellec2023debias,bellec2025observable}. We summarize this conceptual correspondence below:

\begin{table}[H]
	\centering
	\begin{tabular}{c|cc}
		& \textbf{Debiased form} & \textbf{Est. method of debias. coef.} \\
		\hline
		\textbf{Cvx. reg. est.}
		& Eqn. (\ref{eqn:debiased_reg_est})
		& DoF adjustment \cite{bellec2023debias,bellec2025observable} \\
		\textbf{Grad. descent  }
		& Eqn. (\ref{def:db_gd_intro})
		& \emph{Grad. descent inf. alg.} in Sec. \ref{section:iterative_inf} \\
	\end{tabular}
\end{table}

With $\hat{\bm{\tau}}^{[t]}$ and $\hat{\bm{\rho}}^{[t]}$ computed from the gradient descent inference algorithm, confidence intervals for the unknown signal $\mu_\ast$ may be constructed using the debiased gradient descent iterate $\hat{\mu}^{(t)}_{\mathrm{db}}$ in (\ref{def:db_gd_intro}), provided that the bias and variance parameters $b^{(t)}_{\mathrm{db}}$ and $(\sigma^{(t)}_{\mathrm{db}})^2$ can be estimated. As we will see, the variance parameter $(\sigma^{(t)}_{\mathrm{db}})^2$ can be readily estimated from the observed data, whereas the bias parameter $b^{(t)}_{\mathrm{db}}$ may depend on oracle information about $\mu_\ast$ through the information parameters $\{\delta_t\}$, and must therefore leverage model-specific structure.
	
As a by-product of our general debiasing methodology, the 	`generalization error' of $\mu^{(t)}$ can be estimated at no additional cost at each iteration $t$. Specifically, we show in Theorem \ref{thm:gen_error} that the generalization error $\mathscr{E}_{\mathsf{H}}^{(t)}$ of $\mu^{(t)}$ under a given loss function $\mathsf{H}:\R^2 \to \R$ can be estimated via:
\begin{align}\label{def:gen_err_est_intro}
\hat{\mathscr{E}}_{\mathsf{H}}^{(t)}\equiv \frac{1}{m}\sum_{k \in [m]}\mathsf{H}\,\bigg[\bigg(A\mu^{(t)}+\eta \sum_{s \in [1:t]}\hat{\rho}_{t,s}  \partial_1\mathsf{L}\big(A\mu^{(s-1)},Y\big), Y \bigg)_k\bigg]\approx \mathscr{E}_{\mathsf{H}}^{(t)}.
\end{align}
The specific form of the generalization error estimate $\hat{\mathscr{E}}_{\mathsf{H}}^{(t)}$ follows from an `inversion' of the first display in~(\ref{ineq:intro_gd_dist}), together with an identity that relates the variance of $\mathsf{Z}^{(t)}$ to the generalization error $\mathscr{E}_{\mathsf{H}}^{(t)}$; some heuristics can be found in (\ref{ineq:gen_err_est_reason}). From a practical perspective, the sequence of estimates $\{\hat{\mathscr{E}}_{\mathsf{H}}^{(t)}\}$ remains valid under a non-convex loss landscape and provides a direct criterion for determining whether the gradient descent algorithm should be early stopped. This is particularly relevant when the goal is to minimize generalization error.

It is worth noting that our debiased gradient descent iterate (\ref{def:db_gd_intro}) and the generalization error estimate (\ref{def:gen_err_est_intro}) are robust to a certain degree of model misspecification. For example, even when the data are generated from a single-index regression model (cf. Example~\ref{model:single_index}) with an unknown nonlinear link function~$\varphi_\ast$, valid inference can still be obtained using (\ref{def:db_gd_intro}) and (\ref{def:gen_err_est_intro}) designed for linear regression; the readers are referred to Appendix~\ref{subsection:loo_gen_est} for numerical validation. Fundamentally, this robustness is possible because the loss function~$\mathsf{L}$ used in the debiased gradient descent (\ref{def:db_gd_intro}) need not correctly reflect the model structure~$\mathcal{F}$ in (\ref{def:model_general}).

\subsection{Applications to the two leading examples}
We further illustrate our general theory and inference methods in the two leading examples mentioned in the beginning of Introduction:
\begin{enumerate}
	\item[(i)] In the single-index regression model in Example~\ref{model:single_index}, although the loss landscape may exhibit arbitrary non-convexity, our proposed debiased gradient descent inference remains valid upon numerical estimation of the bias parameter $b^{(t)}_{\mathrm{db}}$. In the special case of the linear model, the bias parameter $b^{(t)}_{\mathrm{db}}=1$ regardless of the loss-regularization pair $(\mathsf{L},\mathsf{f})$; this means statistical inference for $\mu_\ast$ via debiased gradient descent in the linear model is almost as easy as running the gradient descent algorithm (\ref{def:gd_intro}) itself. 
	\item[(ii)] In the generalized logistic regression model in Example~\ref{model:1bit}, valid debiased gradient descent inference can again be performed by numerically estimating the bias parameter $b^{(t)}_{\mathrm{db}}$. Interestingly, using the mis-specified squared loss for the standard logistic regression leads to major computational gains, while producing qualitatively similar confidence intervals for $\mu_\ast$ to those computed from the standard logistic/cross-entropy loss.
\end{enumerate} 
It should be mentioned that in both examples above, the quality of inference need not align exactly with the generalization error achieved along the gradient descent trajectory for general/non-convex loss functions; see, e.g., Section~\ref{sec:numerics} for several numerical experiments in this direction. While a complete theoretical understanding remains open, our framework provides a practical path forward: since both the confidence intervals based on $\hat{\mu}^{(t)}_{\mathrm{db}}$ and the generalization error estimate $\hat{\mathscr{E}}_{\mathsf{H}}^{(t)}$ remain valid at each iteration, practitioners can select learning procedures and potentially early stopping times, based on task-specific goals, such as minimizing generalization error or confidence interval length, under the design conditions considered in this paper.

\subsection{Further related literature}

\subsubsection{Comparison to existing DMFT characterizations}

	We compare (\ref{ineq:intro_gd_dist})-(\ref{def:db_gd_intro}) with existing DMFT characterizations in \cite{celentano2021high,gerbelot2024rigorous}, which typically show that, for some deterministic functions $\{\Theta_s:\mathbb{R}^{m\times [0:s]}\to \mathbb{R}^m\}_{s \in [1:t]}$ and $\{\Omega_s: \mathbb{R}^{n\times [1:s]} \to \mathbb{R}^n\}_{s \in [1:t]}$, it holds in an averaged sense that
	\begin{align}\label{ineq:intro_gd_dist_2}
	\big(A\mu^{(s-1)}\big)_{s \in [1:t]} \stackrel{d}{\approx} \big(\Theta_s(\mathsf{Z}^{([0:s])})\big)_{s \in [1:t]}, \quad 
	\big(\mu^{(s)}\big)_{s \in [1:t]} \stackrel{d}{\approx} \big(\Omega_s(\mathsf{W}^{([1:s])})\big)_{s \in [1:t]}.
	\end{align}
	For example, \cite[Lemma 6.2]{celentano2021high} provides asymptotic Gaussian characterizations of the form (\ref{ineq:intro_gd_dist_2}) for a class of gradient descent with Ridge regularization, via a reduction to the so-called Approximate Message Passing (AMP) algorithms, cf. \cite{bayati2011dynamics,bayati2015universality,berthier2020state}. Likewise, \cite[Theorem 3.2]{gerbelot2024rigorous} derives asymptotic characterizations of the form (\ref{ineq:intro_gd_dist_2}) for a class of (stochastic) gradient descent algorithms, by directly applying the Gaussian conditioning technique developed in \cite{bayati2011dynamics} for the AMP.  
	
	However, DMFT characterizations of the type~(\ref{ineq:intro_gd_dist_2}) are generally not readily applicable for statistical inference of $\mu_\ast$, since the mean-field functions $\{\Theta_s, \Omega_s\}_{s \in [1:t]}$ and the Gaussian laws $\mathsf{Z}^{([0:t])},\mathsf{W}^{([1:t])}$ typically depend on the unknown parameter $\mu_\ast$ in a highly nonlinear manner and are therefore not amenable to numerical approximation based on the observable data $\{(A_i, Y_i)\}_{i \in [m]}$ and the gradient descent trajectory $\{\mu^{(t)}\}$. 

Differently from the existing DMFT characterizations in (\ref{ineq:intro_gd_dist_2}), the debiased characterization~(\ref{ineq:intro_gd_dist}) reveals the Gaussian vectors $\mathsf{Z}^{(t)}$ and $\mathsf{W}^{(t)}$ without involving unknown nonlinear transformations. This structure enables the construction of the debiased gradient descent iterate~(\ref{def:db_gd_intro}), and reduces the challenge of statistical inference from estimating the entire state evolution to consistently estimating the debiasing coefficients $\{\omega_{t,s}\}$, which can be achieved using the proposed \emph{gradient descent inference algorithm} without knowledge of $\mu_\ast$.

From a technical perspective, since our debiased characterization \eqref{ineq:intro_gd_dist} can be viewed as a certain inversion of the DMFT theory \eqref{ineq:intro_gd_dist_2}, its formal validation therefore relies crucially on certain `stability' properties for the mean-field functions $\{\Theta_s, \Omega_s\}_{s \in [1:t]}$, the Gaussian laws $\mathsf{Z}^{([0:t])}, \mathsf{W}^{([1:t])}$, and other state evolution parameters, in addition to the theory already developed in \cite{han2025entrywise}; the readers are referred to the technical Lemma~\ref{lem:rho_tau_bound} and the subsequent proofs in Section \ref{section:proof_gd_dynamics} for mathematical details.

\subsubsection{Other mean-field theory of iterative algorithms}

We review some other literature directly related to our theory in (\ref{ineq:intro_gd_dist}). Under the squared loss without regularization, the algorithmic evolution of gradient descent (\ref{def:gd_intro}) has been analyzed directly using random matrix methods thanks to a direct reduction to the spectrum of $A^\top A$, cf. \cite{ali2019continuous,ali2020implicit}. 

In a related direction, a significant body of recent work has characterized the algorithmic dynamics of stochastic gradient descent (SGD) under the squared loss \cite{paquette2021sgd,paquette2021dynamics,balasubramanian2025high} and for more general non-convex losses \cite{benarous2021online,benarous2024high,collins2024hitting}. In particular, \cite{benarous2021online} characterizes the precise sample complexity of SGD for strong signal recovery in the regime $m \gg n$, in terms of the so-called `information exponent' associated with the single-index model function in (\ref{def:model_general}). In contrast, our work operates under the mean-field regime (\ref{def:mean_field}) where strong recovery is generally impossible, and, more importantly, studies the full gradient-descent trajectory whose sample-complexity behavior may differ qualitatively from that of SGD due to its dependence on all past iterates, cf. \cite{dandi2024benefits,lee2024neural}. We also mention that while our theory can cover some mini-batch SGD settings (where batch sizes are proportional to $m$ or $n$), the dynamics of the fully online SGD are of a different nature and fall out of the scope of our approach. 

%It remains open to examine if SGD has distributional characterizations similar to (\ref{ineq:intro_gd_dist}).

\subsubsection{Statistical inference via gradient descent}

Statistical inference via gradient descent algorithms in the mean-field regime (\ref{def:mean_field}) was initiated in \cite{bellec2024uncertainty} in the specific linear model under the squared loss, and has been further extended to general losses in \cite{tan2024estimating}. In contrast, our generic inference methods in (\ref{def:db_gd_intro}) and (\ref{def:gen_err_est_intro}) are broadly applicable to the general class of models in (\ref{def:model_general}) with possibly non-convex losses.

In a different direction, statistical inference is studied for stochastic gradient descent (SGD) in convex problems under (effectively) low-dimensional settings. A key approach involves using averaged SGD iterates which are known to obey a normal limiting law \cite{ruppert1988efficient,polyak1992acceleration}. Inference is then feasible once the limiting covariance is accurately estimated. We refer the readers to \cite{fang2018online,chen2020statistical,zhu2023online} for several recent proposals along this line; much more references can be found therein. It remains open to extend our inference methods (\ref{def:db_gd_intro}) and (\ref{def:gen_err_est_intro}) to the fully online SGD setting in the mean-field regime (\ref{def:mean_field}).

\subsection{Organization}

The rest of the paper is organized as follows. In Section~\ref{section:gd_dynamics}, we formalize a more comprehensive version of theory~(\ref{ineq:intro_gd_dist}) in the mean-field regime~(\ref{def:mean_field}), and show how it leads to the construction of the oracle debiased gradient descent iterate~(\ref{def:db_gd_intro}) and an oracle version of~(\ref{def:gen_err_est_intro}). Section~\ref{section:iterative_inf} presents our gradient descent inference algorithm for computing the estimates $\hat{\bm{\tau}}^{[t]}$ and $\hat{\bm{\rho}}^{[t]}$, and describes the resulting fully data-driven inference procedures. Applications of our theory and inference method to the single-index regression and generalized logistic regression models are provided in Sections~\ref{section:example_linear} and~\ref{section:example_1bit}, respectively. Numerical experiments for our debiased gradient descent inference proposal in both convex and non-convex settings are presented in Section~\ref{sec:numerics}, with additional simulation results provided in Appendix~\ref{section:additional_simulation}. All technical proofs are deferred to Sections~\ref{section:proof_gd_dynamics}-\ref{section:proof_logistic} and Appendices~\ref{section:GFOM_se}-\ref{section:aux_result}.

\subsection{Notation}
For any two integers $m,n$, let $[m:n]\equiv \{m,m+1,\ldots,n\}$. We sometimes write for notational convenience $[n]\equiv [1:n]$. When $m>n$, it is understood that $[m:n]=\emptyset$.  

For $a,b \in \R$, $a\vee b\equiv \max\{a,b\}$ and $a\wedge b\equiv\min\{a,b\}$. For $a \in \R$, let $a_\pm \equiv (\pm a)\vee 0$. For a multi-index $a \in \mathbb{Z}_{\geq 0}^n$, let $\abs{a}\equiv \sum_{i \in [n]}a_i$. For $x \in \R^n$, let $\pnorm{x}{p}$ denote its $p$-norm $(0\leq p\leq \infty)$, and $B_{n;p}(R)\equiv \{x \in \R^n: \pnorm{x}{p}\leq R\}$. We simply write $\pnorm{x}{}\equiv\pnorm{x}{2}$ and $B_n(R)\equiv B_{n;2}(R)$. For $x \in \R^n$, let $\mathrm{diag}(x)\equiv (x_i\bm{1}_{i=j})_{i,j \in [n]} \in \R^{n\times n}$. For $x, y \in \R^n$, let $x\odot y \equiv (x_i y_i)_{i \in [n]}$.

For a matrix $M \in \R^{m\times n}$, let $\pnorm{M}{\op},\pnorm{M}{F}$ denote the spectral and Frobenius norm of $M$, respectively. $I_n$ is reserved for an $n\times n$ identity matrix, written simply as $I$ (in the proofs) if no confusion arises. For a general $n\times n$ matrix $M$, let
\begin{align}\label{def:mat_O}
\mathfrak{O}_{n+1}(M)\equiv 
\begin{pmatrix}
0_{1\times n} & 0\\
M & 0_{n\times 1}
\end{pmatrix} \in \R^{(n+1)\times (n+1)}.
\end{align}
For notational consistency, we write $\mathfrak{O}_1(\emptyset)=0$.

We use $C_{x}$ to denote a generic constant that depends only on $x$, whose numeric value may change from line to line unless otherwise specified. $a\lesssim_{x} b$ and $a\gtrsim_x b$ mean $a\leq C_x b$ and $a\geq C_x b$, abbreviated as $a=\bigo_x(b), a=\Omega_x(b)$ respectively;  $a\asymp_x b$ means $a\lesssim_{x} b$ and $a\gtrsim_x b$. $\bigo$ and $\smallo$ (resp. $\mathcal{O}_{\mathbf{P}}$ and $\mathfrak{o}_{\mathbf{P}}$) denote the usual big and small O notation (resp. in probability). By convention, sum and product over an empty set are understood as $\Sigma_{\emptyset}(\cdots)=0$ and $\Pi_{\emptyset}(\cdots)=1$. 

For a random variable $X$, we use $\Prob_X,\E_X$ (resp. $\Prob^X,\E^X$) to indicate that the probability and expectation are taken with respect to $X$ (resp. conditional on $X$).

For $\Lambda>0$ and $\mathfrak{p}\in \N$, a measurable map $f:\R^n \to \R$ is called \emph{$\Lambda$-pseudo-Lipschitz of order $\mathfrak{p}$} iff 
\begin{align}\label{cond:pseudo_lip}
\abs{f(x)-f(y)}\leq \Lambda\cdot  (1+\pnorm{x}{}+\pnorm{y}{})^{\mathfrak{p}-1}\cdot\pnorm{x-y}{},\quad \forall x,y \in \R^{n}.
\end{align}
Moreover, $f$ is called \emph{$\Lambda$-Lipschitz} iff $f$ is $\Lambda$-pseudo-Lipschitz of order $1$, and in this case we often write $\pnorm{f}{\mathrm{Lip}}\leq L$, where $\pnorm{f}{\mathrm{Lip}}\equiv \sup_{x\neq y} \abs{f(x)-f(y)}/\pnorm{x-y}{}$. For a proper, closed convex function $f$ defined on $\R^n$, its \emph{proximal operator} $\prox_f(\cdot)$  is defined by 
\begin{align}\label{def:prox_op}
\prox_f(x)\equiv \argmin_{z \in \R^n} \big\{\pnorm{x-z}{}^2/2+f(z)\big\}.
\end{align}

\section{Mean-field dynamics of (debiased) gradient descent}\label{section:gd_dynamics}

\subsection{Basic setups and assumptions}

We consider a class of generalized proximal gradient descent algorithms: starting from an initialization $\mu^{(0)} \in \R^n$, for $t=1,2,\ldots$ and using step sizes $\{\eta_t\}\subset \R_{>0}$, the iterates are computed as follows:
\begin{align}\label{def:grad_descent}
\mu^{(t)} = \mathsf{P}_{t}\big( \mu^{(t-1)}-\eta_{t-1}\cdot A^\top \partial_1 \mathsf{L}_{t-1} (A\mu^{(t-1)},Y)\big).
\end{align}
Here $\mathsf{P}_{t}:\R^n\to \R^n$ and $\mathsf{L}_{t-1}:\R^m\to \R^m$ are row-separable functions, and  $\partial_1 \mathsf{L}_{t-1}(x,y)\equiv (\partial \mathsf{L}_{t-1}/\partial x)(x,y)$ is understood as applied row-wise. 

For the canonical proximal gradient descent method, we may take $\mathsf{P}_{t}\equiv \prox_{\eta_{t-1} \mathsf{f}}$. The  generalization to iteration-dependent and vector-valued loss functions $\mathsf{L}_{t-1}$ can naturally accommodate other variants such as stochastic gradient descent (SGD). For example, in SGD, only a subsample $S_t \subset [m]$ is used at iteration $t$, so we may take $\mathsf{L}_{t-1,\cdot}(u)\equiv \mathsf{L}(u)\circ \bm{1}_{\cdot \in S_t}$. Importantly, at the level of our abstract theory, we do not assume that $\mathsf{P}_t$ is the proximal operator of a convex function, nor do we assume the convexity of $\mathsf{L}_{t-1}$. 

We list a set of common assumptions that will be used throughout the paper:
\begin{assumption}\label{assump:setup}
	Suppose the following hold for some $K,\Lambda\geq 2$:
	\begin{enumerate}
		\item[(A1)] The aspect ratio $\phi \equiv m/n \in [1/K,K]$.
		\item[(A2)] The matrix $A\equiv A_0/\sqrt{n}$, where the entries of $A_0\in \R^{m\times n}$ are independent mean $0$, unit variance variables such that\footnote{Here $\pnorm{\cdot}{\psi_2}$ is the standard Orlicz-2/subgaussian norm; see, e.g., \cite[Section 2.1]{van1996weak} for a precise definition.} $\max_{i,j \in [n]}\pnorm{A_{0,ij}}{\psi_2}\leq K$.
		\item [(A3)] The step sizes satisfy $\max_{s \in [0:t-1]} \eta_{s}\leq \Lambda$ at iteration $t$.
	\end{enumerate}
\end{assumption}

Assumption (A1) formalizes the proportional/mean-field regime (\ref{def:mean_field}) of our main interest here. Assumption (A2) requires that the design matrix $A$ is normalized with entries having variance $1/n$. If the variance is instead normalized as $1/m$, the state evolution below need be adjusted accordingly. Notably, (A2) does not require $A$ to be Gaussian. This means that our results hold universally for all random matrix models satisfying (A2). Assumption (A3) imposes a mild constraint on the magnitude of the step sizes. Here we use the constant $\Lambda$ (rather than $K$) for conditions on the gradient descent algorithm (\ref{def:grad_descent}).

\subsection{State evolution}

The state evolution for describing the mean-field behavior of the gradient descent iterate $\{\mu^{(t)}\}$ consists of three major components:

\begin{enumerate}
	\item A sequence of functions $\{\Upsilon_t: \mathbb{R}^{m\times [0:t]}\to \R^m\}$, a Gaussian law $\mathfrak{Z}^{([0:\infty))}\in \R^{[0:\infty)}$ that describes the distributions of (a transform of) $\{A \mu^{(t)}\}$, and a matrix $\bm{\rho}\in \R^{\infty\times \infty}$ that characterizes inter-correlation between $\{A\mu^{(t)}\}$.	
	\item A sequence of functions $\{\Omega_t: \mathbb{R}^{n\times [1:t]}\to \R^n\}$, a Gaussian law $\mathfrak{W}^{([1:\infty))}\in \R^{[1:\infty)}$ that describe the distributions of (a transform of) $\{\mu^{(t)}\}$, and a matrix $\bm{\tau}\in \R^{\infty\times \infty}$ that characterizes inter-correlation between $\{\mu^{(t)}\}$.	
	\item An information vector $\bm{\delta}\in \R^{[0:\infty)}$ that characterizes the amount of information about the true signal $\mu_\ast$ contained in the gradient descent iterates $\{\mu^{(t)}\}$.
\end{enumerate}

To formally describe the recursive relation for these components, we need some additional notation:
\begin{itemize}
	\item Let $
	\sigma_{\mu_\ast}^2 \equiv \pnorm{\mu_\ast}{}^2/n$
	be the signal strength.
	\item Let $\pi_m$ (resp. $\pi_n$) denote the uniform distribution on $[1:m]$ (resp. $[1:n]$), independent of all other variables. 
	\item In our probabilistic statements, we usually treat the true signal $\mu_\ast$, the initialization $\mu^{(0)}$ and the noise $\xi$ as fixed, and use $
	\E^{(0)}[\cdot]\equiv \E[\cdot |\mu_\ast,\mu^{(0)},\xi]$
	to denote the expectation over all other sources. 
\end{itemize}

\begin{definition}\label{def:gd_se}
	Initialize with (i) two formal variables $\Omega_{-1}\equiv \mu_\ast \in \R^n$ and $\Omega_0 \equiv \mu^{(0)}\in \R^n$, and (ii) a Gaussian random variable $\mathfrak{Z}^{(0)}\sim \mathcal{N}(0,\sigma_{\mu_\ast}^2)$. For $t=1,2,\ldots$, we execute the following steps:
	\begin{enumerate}
		\item[(S1)] Let $\Upsilon_t: \mathbb{R}^{m\times [0:t]}\to \R^m$ be defined as follows: 
		\begin{align*}
		\Upsilon_t(\mathfrak{z}^{([0:t])})\equiv - \eta_{t-1}  \partial_1 \mathsf{L}_{t-1}\bigg(\mathfrak{z}^{(t)} + \sum_{s \in [1:t-1]}\rho_{t-1,s}\Upsilon_s(\mathfrak{z}^{([0:s])}) ,\mathcal{F}(\mathfrak{z}^{(0)},\xi)\bigg)\in \R^m.
		\end{align*}
		Here the coefficients are defined via
		\begin{align*}
		\rho_{t-1,s}\equiv \E^{(0)} \partial_{ \mathfrak{W}^{(s)}} \Omega_{t-1;\pi_n}(\mathfrak{W}^{([1:t-1])})\in \R,\quad s \in [1:t-1].
		\end{align*}
		\item[(S2)] Let $\mathfrak{Z}^{([0:t])}\in \R^{[0:t]}$ and $\mathfrak{W}^{([1:t])}\in \R^{[1:t]}$ be centered Gaussian random vectors whose laws at iteration $t$ are determined via the correlation specification: 
		\begin{align*}
		\cov(\mathfrak{Z}^{(t)},\mathfrak{Z}^{(s)})
		& \equiv \E^{(0)} \prod\limits_{\ast \in \{s-1,t-1\}} \Omega_{*;\pi_n} (\mathfrak{W}^{([1:*])}),\quad s \in [0:t];\\
		\cov(\mathfrak{W}^{(t)},\mathfrak{W}^{(s)})
		& \equiv \phi\cdot  \E^{(0)} \prod_{\ast \in \{s,t\}} \Upsilon_{*;\pi_m}(\mathfrak{Z}^{([0:*])}),\quad s \in [1:t].
		\end{align*}
		\item[(S3)] Let $\Omega_t: \R^{n\times [1:t]}\to \R^n$ be defined as follows:
		\begin{align*}
		\Omega_t\big(\mathfrak{w}^{([1:t])}\big)\equiv \mathsf{P}_t\bigg(\mathfrak{w}^{(t)}+\sum_{s \in [1:t]} (\tau_{t,s}+\bm{1}_{t=s})\cdot  \Omega_{s-1}(\mathfrak{w}^{([1:s-1])}) + \delta_t\cdot \mu_\ast\bigg).
		\end{align*}
		Here the coefficients are defined via
		\begin{align*}
		\tau_{t,s} &\equiv \phi\cdot \E^{(0)}\partial_{\mathfrak{Z}^{(s)}} \Upsilon_{t;\pi_m}(\mathfrak{Z}^{([0:t])})\in \R,\quad s \in [1:t];\\
		\delta_t &\equiv \phi \cdot \E^{(0)}\partial_{\mathfrak{Z}^{(0)}} \Upsilon_{t;\pi_m}(\mathfrak{Z}^{([0:t])}) \in \R.
		\end{align*}
	\end{enumerate}
\end{definition}

For notational convenience, we shall sometimes write $\Sigma_{\mathfrak{Z}}^{[t]} \in \R^{[0:t]\times [0:t]}$ for the covariance of  $\mathfrak{Z}^{([0:t])}$ and $\Sigma_{\mathfrak{W}}^{[t]} \in \R^{[1:t]\times [1:t]}$ for the covariance of  $\mathfrak{W}^{([1:t])}$.

\begin{remark}
Some technical and notational remarks:
\begin{enumerate}
	\item We use lowercase $\mathfrak{z}$ and $\mathfrak{w}$ in the definitions of the mean-field functions $\Upsilon_\cdot$ in (S1) and $\Omega_\cdot$ in (S3), and uppercase $\mathfrak{Z}$ and $\mathfrak{W}$ for the corresponding Gaussian random vectors.
	\item Since $\Upsilon_{t;k}(\mathfrak{z}^{([0:t])})$ depends on $\mathfrak{z}^{([0:t])}$ only through its $k$-th row $\mathfrak{z}_{k\cdot}^{([0:t])}$, we identify $\Upsilon_{t;k}$ as a mapping from $\mathbb{R}^{[0:t]}$ to $\mathbb{R}$. A similar convention applies to $\Omega_{t;\ell}$.
	\item In Definition \ref{def:gd_se} above, we have not specified the precise conditions on the regularity of the loss functions $\{\mathsf{L}_\cdot\}$, the model function $\mathcal{F}$ and the (proximal) operators $\{\mathsf{P}_\cdot\}$. The precise conditions will be specified in the theorems ahead. 
\end{enumerate}
\end{remark}

\subsubsection{Onsager correction matrices}
For any $t\geq 1$, let
\begin{align}\label{def:tau_rho_mat}
\bm{\tau}^{[t]}\equiv (\tau_{r,s})_{r,s \in [t]}\in \R^{t\times t},\quad \bm{\rho}^{[t]}\equiv (\rho_{r,s})_{r,s \in [t]}\in \R^{t\times t}.
\end{align}
Both $\bm{\tau}^{[t]}$ and $\bm{\rho}^{[t]}$ are lower triangular matrices. As will be clear below, these matrices play a crucial role in describing the interactions across the iterates for $\{A\mu^{(t)}\}$ and $\{\mu^{(t)}\}$. Following terminology from the AMP literature \cite{bayati2011dynamics,javanmard2013state,bayati2015universality,berthier2020state,fan2022approximate,bao2025leave}, we refer to $\bm{\tau}^{[t]}$ and $\bm{\rho}^{[t]}$  as \emph{Onsager correction matrices}, inspired by the `Onsager correction coefficients' used to describe how the current AMP iterate restores approximate normality using previous iterates. The main difference is that under the random matrix model (A2), the current AMP iterate may restore normality using only the two most recent iterations, whereas in gradient descent, the dependency is global spanning all past iterations.

\subsubsection{Alternative formulations for $\{\Upsilon_\cdot\}$ and $\{\Omega_\cdot\}$}

As we will prove below, the state evolution in Definition \ref{def:gd_se} accurately captures the behavior of $\{\partial_1 \mathsf{L}_{t-1}(A\mu^{(t-1)},Y)\}$ and $\{\mu^{(t)}\}$ in the sense that
\begin{align}
\big(-\eta_{t-1}\partial_1 \mathsf{L}_{t-1}(A\mu^{(t-1)},Y)\big)\stackrel{d}{\approx} \big(\Upsilon_t(\mathfrak{Z}^{([0:t])})\big),\quad \big(\mu^{(t)}\big) \stackrel{d}{\approx} \big(\Omega_t(\mathfrak{W}^{([1:t])})\big).
\end{align}
In order to describe the behavior of $\{A\mu^{(t)}\}$ and $\{A^\top \partial_1 \mathsf{L}_t (A\mu^{(t)},Y)\}$, it is also convenient to work with some equivalent transformations of $\{\Upsilon_\cdot\}$ and $\{\Omega_\cdot\}$.
\begin{itemize}
	\item Let $\Theta_t: \mathbb{R}^{m\times [0:t]}\to \R^m$ be defined recursively via
	\begin{align}\label{eqn:Theta_recur}
	\Theta_t(\mathfrak{z}^{([0:t])})\equiv \mathfrak{z}^{(t)}- \sum_{s \in [1:t-1]}\eta_{s-1} \rho_{t-1,s} \cdot   \partial_1 \mathsf{L}_{s-1}\big(\Theta_{s}(\mathfrak{z}^{([0:s])}),\mathcal{F}(\mathfrak{z}^{(0)},\xi)\big).
	\end{align}
	The functions $\{\Theta_t\}$ and $\{\Upsilon_t\}$ are equivalent via the relation
	\begin{align}\label{def:Theta_fcn}
	\begin{cases}
	\Theta_t(\mathfrak{z}^{([0:t])})= \mathfrak{z}^{(t)} + \sum_{s \in [1:t-1]}\rho_{t-1,s} \Upsilon_s(\mathfrak{z}^{([0:s])}),\\
	\Upsilon_t(\mathfrak{z}^{([0:t])})= - \eta_{t-1}  \partial_1 \mathsf{L}_{t-1}\big(\Theta_t(\mathfrak{z}^{([0:t])}) ,\mathcal{F}(\mathfrak{z}^{(0)},\xi)\big).
	\end{cases}
	\end{align}
	With $\{\Theta_t\}$, we may describe the behavior of $\{A\mu^{(t-1)}\}$ via
	\begin{align}\label{eqn:Theta_dist_heuristic}
	\big(A\mu^{(t-1)}\big) \stackrel{d}{\approx} \big(\Theta_t(\mathfrak{Z}^{([0:t])})\big).
	\end{align}
	\item Let $\Delta_t: \R^{n\times [1:t]}\to \R^n$ be defined recursively as follows:
	\begin{align}\label{def:Delta_fcn}
	\Delta_t\big(\mathfrak{w}^{([1:t])}\big)\equiv \mathfrak{w}^{(t)}+\sum_{s \in [1:t]} (\tau_{t,s}+\bm{1}_{t=s})\cdot  \mathsf{P}_{s-1}\big(\Delta_{s-1}(\mathfrak{w}^{([1:s-1])})\big) + \delta_t\cdot \mu_\ast.
	\end{align}
	The functions $\{\Delta_t\}$ and $\{\Omega_t\}$ are equivalent via the relation
	\begin{align}
	\begin{cases}
	\Delta_t\big(\mathfrak{w}^{([1:t])}\big)= \mathfrak{w}^{(t)}+\sum_{s \in [1:t]} (\tau_{t,s}+\bm{1}_{t=s})\cdot  \Omega_{s-1}(\mathfrak{w}^{([1:s-1])}) + \delta_t\cdot \mu_\ast, \\
	\Omega_t\big(\mathfrak{w}^{([1:t])}\big)= \mathsf{P}_t\big(\Delta_t\big(\mathfrak{w}^{([1:t])}\big)\big).
	\end{cases}
	\end{align}
	With $\{\Delta_t\}$, we may describe 
	\begin{align}\label{eqn:Delta_dist_heuristic}
	\big(\mu^{(t-1)}-\eta_{t-1}\cdot A^\top \partial_1 \mathsf{L}_{t-1} (A\mu^{(t-1)},Y) \big) \stackrel{d}{\approx} \big(\Delta_t(\mathfrak{W}^{([1:t])})\big).
	\end{align}
\end{itemize}

\subsubsection{Alternative definition of the information parameter $\delta_t$}
In some examples, the quantity $\E^{(0)}\partial_{\mathfrak{Z}^{(0)}} \Upsilon_{t;\pi_m}(\mathfrak{Z}^{([0:t])})$ may not be well-defined due to the strict non-differentiability of $\mathcal{F}$. In such cases, we shall interpret the definition of $\delta_t$ via the Gaussian integration-by-parts formula:
\begin{align}\label{def:delta_t_alternative}
\delta_t \equiv \frac{\phi}{ \sigma_{\mu_\ast}^2}\bigg(\E^{(0)} \mathfrak{Z}^{(0)} \Upsilon_{t;\pi_m}(\mathfrak{Z}^{([0:t])})-\phi^{-1}\sum_{s \in [1:t]} \tau_{t,s}\cov(\mathfrak{Z}^{(0)},\mathfrak{Z}^{(s)}) \bigg).
\end{align}
Here for $\mu_\ast=0$, the right hand side is interpreted as the limit as $\pnorm{\mu_\ast}{}\to 0$ whenever well-defined. It is easy to check for regular enough $\{(u_1,u_2)\mapsto \E_{\pi_m}\mathsf{L}_{s-1}(u_1,\mathcal{F}(u_2,\xi_{\pi_m}))\}_{s \in [1:t-1]}$, the above definition of $\delta_t$ coincides with Definition \ref{def:gd_se}.

\subsection{Distributional characterizations for (debiased) gradient descent}

With the state evolution in Definition \ref{def:gd_se}, we shall now formally describe the joint distributional behavior of $\{A\mu^{(t)}\}$ and $\{\mu^{(t)}\}$ with their debiased versions, defined as
\begin{align}\label{def:Z_W}
{Z}^{(t)}&\equiv A\mu^{(t)}+\sum_{s \in [1:t]} \eta_{s-1} \rho_{t,s} \cdot \partial_1\mathsf{L}_{s-1}\big(A\mu^{(s-1)},Y\big) \in \R^m, \nonumber\\
{W}^{(t)}&\equiv -\delta_t\cdot \mu_{\ast}-\sum_{s \in [1:t]} \tau_{t,s}\cdot  \mu^{(s-1)} -\eta_{t-1}\cdot  A^\top\partial_1 \mathsf{L}_{t-1}(A \mu^{(t-1)},Y)\in \R^n.
\end{align}
The form of $Z^{(t)}$ is motivated by plugging the heuristic (\ref{eqn:Theta_dist_heuristic}) into (\ref{eqn:Theta_recur}), while the form of $W^{(t)}$ is informed by similarly plugging the heuristic (\ref{eqn:Delta_dist_heuristic}) into (\ref{def:Delta_fcn}).

For notational convenience, we define the constant
\begin{align}\label{def:L_mu}
L_\mu\equiv 1+\pnorm{\mu^{(0)}}{\infty}+\pnorm{\mu_\ast}{\infty}.
\end{align}

\begin{theorem}\label{thm:gd_se}
Suppose Assumption \ref{assump:setup} holds for some $K,\Lambda \geq 2$. 
\begin{enumerate}
	\item (\textbf{Entrywise characterization}). Further suppose that for $s \in [0:t-1]$:
	\begin{enumerate}
		\item[(A4)] For all $\ell \in [n]$, 
		$\mathsf{P}_{s+1;\ell}\in C^3(\R)$ and $\abs{\mathsf{P}_{s+1;\ell}(0)}\vee \max_{q\in [1:3]}\pnorm{\mathsf{P}_{s+1;\ell}^{(q)}}{\infty}\leq \Lambda$.
		\item[(A5)] Both $\pnorm{\partial_1 \mathsf{L}_s\big(0, \mathcal{F}(0,\xi)\big)}{\infty}$ and 
		\begin{align*}
		& \max_{k \in [m]}\sup_{u_1,u_2 \in \R} \max_{0\neq \alpha \in \mathbb{Z}_{\geq 0}^2: \abs{\alpha}\leq 3} \biggabs{\frac{\partial^\alpha}{\partial u_1^{\alpha_1}\partial u_2^{\alpha_2}} \partial_1 \mathsf{L}_{s;k}\big(u_1, \mathcal{F}(u_2,\xi_k)\big) }
		\end{align*}
		are bounded by $\Lambda$.
	\end{enumerate} 
	Fix any test function $\Psi \in C^3(\R^{2t+1})$ such that for some $\Lambda_{\Psi}\geq 2$ and $\mathfrak{p}\in \N$,
	\begin{align}\label{cond:test_fcn}
	\max_{\alpha\in \mathbb{Z}_{\geq 0}^{2t+1}: \abs{\alpha}\leq 3}\sup_{x\in \R^{2t+1}} \big(1+\pnorm{x}{}\big)^{-\mathfrak{p}}\cdot \abs{\partial_{\alpha} \Psi(x)}\leq \Lambda_{\Psi}.
	\end{align}
	Then there exists some $c_t\equiv c_t(t,\mathfrak{p})>1$ such that
	\begin{align*}
	&\max_{k \in [m]} \bigabs{\E^{(0)} \Psi\big(\big\{(A \mu^{(s-1)})_k, {Z}_k^{(s-1)}\big\}, (A \mu_\ast)_k\big)-\E^{(0)} \Psi\big(\big\{\Theta_{s;k}(\mathfrak{Z}^{([0:s])}), \mathfrak{Z}^{(s)}\big\}, \mathfrak{Z}^{(0)}\big)}\\
	&\quad \vee \max_{\ell \in [n]} \bigabs{ \E^{(0)} \Psi\big(\big\{\mu^{(s)}_\ell, {W}_\ell^{(s)}\big\},\mu_{\ast,\ell}\big)-\E^{(0)} \Psi\big(\big\{\Omega_{s;\ell}(\mathfrak{W}^{([1:s])}),\mathfrak{W}^{(s)}\big\},\mu_{\ast,\ell}\big)}\\
	& \leq \big(K\Lambda \Lambda_\Psi  L_\mu \big)^{c_t} \cdot n^{-1/c_t}.
	\end{align*}
	\item  (\textbf{Averaged characterization}). Further suppose that for $s \in [0:t-1]$:
	\begin{enumerate}
		\item[(A4')] $\max_{\ell \in [n]} \big\{\abs{\mathsf{P}_{s+1;\ell}(0)} \vee \pnorm{\mathsf{P}_{s+1;\ell}}{\mathrm{Lip}}\big\}\leq \Lambda$.
		\item[(A5')] $\max_{k \in [m]}\big\{ \abs{\partial_1 \mathsf{L}_{s;k}\big(0, \mathcal{F}(0,\xi_k)\big)} \vee  \pnorm{\partial_1 \mathsf{L}_{s;k}\big(\cdot, \mathcal{F}(\cdot,\xi_k)}{\mathrm{Lip}} \big\} \leq \Lambda$.
	\end{enumerate}
	Fix a sequence of $\Lambda_\psi$-pseudo-Lipschitz functions $\{\psi_k:\R^{2t+1} \to \R\}_{k \in [m\vee n]}$ of order $\mathfrak{p}$, where $\Lambda_\psi\geq 2$. Then for any $q \in \N$, there exists some constant $c_t'=c_t'(t,\mathfrak{p},q)>1$ such that
	\begin{align*}
	&\E^{(0)} \biggabs{\frac{1}{m}\sum_{k \in [m]}\Big( \psi_{k}\big(\big\{(A \mu^{(s-1)})_{k}, {Z}_{k}^{(s-1)}\big\}, (A \mu_\ast)_{k}\big)- \E^{(0)}\psi_{k}\big(\big\{\Theta_{s;k}(\mathfrak{Z}^{([0:s])}), \mathfrak{Z}^{(s)}\big\}, \mathfrak{Z}^{(0)}\big) \Big) }^{q}\\
	&\quad \vee \E^{(0)}\biggabs{ \frac{1}{n}\sum_{\ell \in [n]} \Big(\psi_{\ell}\big(\big\{\mu^{(s)}_{\ell}, {W}_{\ell}^{(s)}\big\},\mu_{\ast,\ell}\big)- \E^{(0)} \psi_{\ell}\big(\big\{\Omega_{s;\ell}(\mathfrak{W}^{([1:s])}),\mathfrak{W}^{(s)}\big\},\mu_{\ast,\ell}\big)\Big) }^{q}\\
	& \leq \big(K\Lambda \Lambda_\psi L_\mu\big)^{c_t'} \cdot n^{-1/c_t'}.
	\end{align*}
\end{enumerate}
In both error estimates, the index $s$ in the brackets all run over $s \in [1:t]$.
\end{theorem}

As mentioned in the Introduction, averaged distributional characterizations for $\{A\mu^{(t)}\}$ and $\{\mu^{(t)}\}$ are known for other variants of gradient descent algorithms and fall within the standard DMFT framework in a technically weaker asymptotic form or under Gaussian data assumptions or both, cf. \cite{celentano2021high,gerbelot2024rigorous}. Our Theorem~\ref{thm:gd_se} here provides a stronger, non-asymptotic, entrywise distributional characterization for $\{A\mu^{(t)}\}$ and $\{\mu^{(t)}\}$ that holds under non-Gaussian data.

\subsection{Oracle debiased gradient descent iterates}
The main advantage of Theorem~\ref{thm:gd_se} over the standard DMFT formalism lies in its utility for statistical inference applications via a joint distributional characterization involving the debiased statistics $Z^{(t)}$ and $W^{(t)}$.

To this end, we first consider $W^{(t)}$. Let
\begin{itemize}
	\item $\bm{\delta}^{[t]}\equiv (\delta_s)_{s \in [t]} \in \R^t$,
	\item  $\bm{W}^{[t]} \equiv  (W^{(1)},\ldots,W^{(t)}) \in \R^{n\times t}$,
	\item $\bm{\mu}^{[t]} \equiv  (\mu^{(0)},\ldots,\mu^{(t-1)}) \in \R^{n\times t}$, 
	\item $\bm{g}^{[t]}\equiv \big(\eta_{s-1}A^\top \partial_1 \mathsf{L}_{s-1}(A\mu^{(s-1)},Y)\big)_{s \in [1:t]}\in \R^{n\times t}$. 
\end{itemize} 
Using these notation, the second line of (\ref{def:Z_W}) can be rewritten as
\begin{align*}
\bm{W}^{[t]} = -  \mu_{\ast}\bm{\delta}^{[t],\top} - \bm{\mu}^{[t]}\bm{\tau}^{[t],\top}  - \bm{g}^{[t]}.
\end{align*}
Suppose the lower triangular matrix $\bm{\tau}^{[t]}  $ is invertible (i.e., when $\tau_{rr}\neq 0$ for $r \in [1:t]$). Then from the above display, with $\bm{\omega}^{[t]} \equiv (\bm{\tau}^{[t]} )^{-1}$,
\begin{align*}
\bm{\mu}^{[t]} + \bm{g}^{[t]}\bm{\omega}^{[t],\top}  &= - \mu_{\ast}  \big(\bm{\omega}^{[t]} \bm{\delta}^{[t]}\big)^\top - \bm{W}^{[t]} \bm{\omega}^{[t],\top}.  
\end{align*}
On the right hand side of the display above, the first term $- \mu_{\ast}  \big(\bm{\omega}^{[t]} \bm{\delta}^{[t]}\big)^\top$ contains the information for the true signal $\mu_\ast$ up to a multiplicative factor, whereas the second term is approximately Gaussian as $\bm{W}^{[t]}$ is.

This motivates us to define the (oracle) debiased gradient descent iterate
\begin{align}\label{def:debias_gd_oracle}
\mu^{(t)}_{\mathrm{db}}&\equiv \big(\bm{\mu}^{[t]} + \bm{g}^{[t]}\bm{\omega}^{[t],\top} \big)_t=\mu^{(t-1)}+\sum_{s \in [1:t]} \omega_{t,s}\cdot \eta_{s-1}A^\top \partial_1 \mathsf{L}_{s-1}(A\mu^{(s-1)},Y).
\end{align}
By setting 
\begin{align}\label{def:db_gd_bias_var}
b^{(t)}_{\mathrm{db}}\equiv - \iprod{\bm{\omega}^{[t]} \bm{\delta}^{[t]}}{e_t},\quad \sigma^{(t)}_{\mathrm{db}}\equiv \pnorm{\Sigma_{\mathfrak{W}}^{[t],1/2} \bm{\omega}_{t\cdot}^{[t],\top} }{},
\end{align} 
where recall $\Sigma_{\mathfrak{W}}^{[t]}= \big(\cov(\mathfrak{W}^{(r)},\mathfrak{W}^{(s)})\big)_{r,s\in [t]}$, we now expect that
\begin{align}\label{eqn:gd_db_normality}
\mu^{(t)}_{\mathrm{db}}\stackrel{d}{\approx} \mathcal{N}\big(b^{(t)}_{\mathrm{db}}\cdot \mu_\ast,(\sigma^{(t)}_{\mathrm{db}})^2\cdot I_n\big).
\end{align}
To formalize this heuristic, we need the quantity
\begin{align}\label{def:tau_ast}
\tau_\ast^{(t)} \equiv \min_{s \in [1:t]} \abs{\tau_{s,s}}=\min_{s \in [1:t]}\bigabs{\eta_{s-1}  \E^{(0)} \partial_{11} \mathsf{L}_{s-1;\pi_m}\big(\Theta_{s;\pi_m}(\mathfrak{Z}^{([0:s])}),\mathcal{F}(\mathfrak{Z}^{(0)},\xi_{\pi_m})\big)},
\end{align}
where the second identity is a consequence of Lemma \ref{lem:tau_diag}.

\begin{theorem}\label{thm:db_gd_oracle}
	The following hold:
	\begin{enumerate}
		\item Under the assumptions in Theorem \ref{thm:gd_se}-(1), fix any test function $\psi \in C^3(\R)$ such that $
		\max_{q\in [0:3]}\sup_{x \in \R }(1+\abs{x})^{-\mathfrak{p}} \abs{\psi^{(q)}(x)}\leq \Lambda_\psi$
		holds for some $\Lambda_\psi\geq 2$ and $\mathfrak{p}\in \N$. Then there exists some $c_t=c_t(t,\mathfrak{p})>1$ such that
		\begin{align*}
		&\max_{\ell \in [n]}\bigabs{\E^{(0)} \psi \big(\mu^{(t)}_{\mathrm{db};\ell}\big)- \E^{(0)} \psi\big(b^{(t)}_{\mathrm{db}}\cdot \mu_{\ast,\ell}+ \sigma^{(t)}_{\mathrm{db}}\mathsf{Z}\big)}\\
		&\leq   \big((1\wedge \tau_\ast^{(t)})^{-1} K\Lambda \Lambda_\psi L_\mu \big)^{c_t} \cdot n^{-1/c_t}.
		\end{align*}
		\item Under the assumptions in Theorem \ref{thm:gd_se}-(2), fix a sequence of $\Lambda_\psi$-pseudo-Lipschitz functions $\{\psi_\ell:\R \to \R\}_{\ell \in [n]}$ of order $\mathfrak{p}$ where $\Lambda_\psi\geq 2$. Then	for any $q \in \N$, there exists some $c_t'=c_t'(t,\mathfrak{p},q)>1$ such that
		\begin{align*}
		&\E^{(0)} \bigabs{\E_{\pi_n} \psi_{\pi_n} \big(\mu^{(t)}_{\mathrm{db};\pi_n}\big)- \E^{(0)} \psi_{\pi_n}\big(b^{(t)}_{\mathrm{db}}\cdot \mu_{\ast,\pi_n}+ \sigma^{(t)}_{\mathrm{db}}\mathsf{Z}\big)}^q\\
		&\leq \big((1\wedge \tau_\ast^{(t)})^{-1} K\Lambda \Lambda_\psi L_\mu\big)^{c_t'} \cdot n^{-1/c_t'}.
		\end{align*}
	\end{enumerate}
	Here $\mathsf{Z}\sim \mathcal{N}(0,1)$ is independent of all other variables.
\end{theorem}

Next, the normality of $Z^{(t)}$ can be used for constructing general consistent estimators for the `generalization error' of $\mu^{(t)}$, formally defined as follows.
\begin{definition}
	The \emph{generalization error} $\mathscr{E}_{\mathsf{H}}^{(t)}(A,Y)$ for the gradient descent iterate $\mu^{(t)}$ under a given loss function $\mathsf{H}:\R^2\to \R$ is defined as 
	\begin{align}\label{def:gen_err}
	\mathscr{E}_{\mathsf{H}}^{(t)}\equiv \mathscr{E}_{\mathsf{H}}^{(t)}(A,Y)\equiv \E\big[\mathsf{H}\big(\iprod{A_{\mathrm{new}}}{\mu^{(t)}},\mathcal{F}(\iprod{A_{\mathrm{new}}}{\mu_\ast},\xi_{\pi_m})\big)|(A,Y)\big].
	\end{align}
	Here the expectation $\E$ is taken jointly over $A_{\mathrm{new}}\equald A_1$ and $\pi_m$.
\end{definition}
Let the oracle generalization error estimate $\overline{\mathscr{E}}_{\mathsf{H}}^{(t)}(A,Y)$ be defined by
\begin{align}\label{def:gen_err_est_oracle}
\overline{\mathscr{E}}_{\mathsf{H}}^{(t)}(A,Y)\equiv m^{-1} \iprod{\mathsf{H} (Z^{(t)},Y)}{\bm{1}_m}= \frac{1}{m}\sum_{k \in [m]} \mathsf{H}(Z^{(t)}_k,Y_k),
\end{align}
To provide some intuition for the above proposal, as conditional on data $(A,Y)$,
\begin{align*}
\binom{\iprod{A_{\mathrm{new}}}{\mu^{(t)}}}{\iprod{A_{\mathrm{new}}}{\mu_\ast}}\stackrel{d}{\approx} \mathcal{N}\left(0_2, \frac{1}{n}
\begin{bmatrix}
\pnorm{\mu^{(t)}}{}^2 & \iprod{ \mu^{(t)}}{\mu_\ast}\\
\iprod{ \mu^{(t)}}{\mu_\ast} & \pnorm{\mu_\ast}{}^2
\end{bmatrix}
\right) \stackrel{d}{\approx} \binom{ \mathfrak{Z}^{(t+1)} }{ \mathfrak{Z}^{(0)} },
\end{align*}
we then expect
\begin{align}\label{ineq:gen_err_est_reason}
\mathscr{E}_{\mathsf{H}}^{(t)}(A,Y)&\equiv \E\big[\mathsf{H}\big(\iprod{A_{\mathrm{new}}}{\mu^{(t)}},\mathcal{F}(\iprod{A_{\mathrm{new}}}{\mu_\ast},\xi_{\pi_m})\big)|(A,Y)\big]\nonumber\\
& \approx \E\big[\mathsf{H}\big(\mathfrak{Z}^{(t+1)},\mathcal{F}(\mathfrak{Z}^{(0)},\xi_{\pi_m})\big)\big]\nonumber\\
& \approx m^{-1} \iprod{\mathsf{H} ({Z}^{(t)},Y)}{\bm{1}_m}=\overline{\mathscr{E}}_{\mathsf{H}}^{(t)}(A,Y).
\end{align}
The following theorem makes this heuristic precise. For notational simplicity, for $k \in [m]$, let $\mathsf{H}_{\mathcal{F};k}(u_1,u_2)\equiv \mathsf{H}(u_1,\mathcal{F}(u_2,\xi_k))$, and $\mathsf{H}_{\mathcal{F}}(u_1,u_2)\equiv \E_{\pi_m} \mathsf{H}_{\mathcal{F};\pi_m}(u_1,u_2)$.

\begin{theorem}\label{thm:gen_error_oracle}
	Suppose the assumptions in Theorem \ref{thm:gd_se}-(2) hold, and additionally $\{\mathsf{H}_{\mathcal{F};k}\}_{k \in [m]}\subset C^3(\R^2)$ admits mixed derivatives of order 3 all bounded by $\Lambda$. Then for any $q\in \N$, there exists some $c_t=c_t(t,q)>1$ such that
	\begin{align*}
	\E^{(0)}\abs{\overline{\mathscr{E}}_{\mathsf{H}}^{(t)}(A,Y)- \mathscr{E}_{\mathsf{H}}^{(t)}(A,Y) }^q\leq (K\Lambda L_\mu )^{c_t}\cdot n^{-1/c_t}.
	\end{align*}
\end{theorem}
We have not pursued the weakest possible conditions on, e.g., $\{\mathsf{H}_{\mathcal{F};k}\}_{k \in [m]}$, as this can usually be weakened via technical modifications in specific models.

From Theorems~\ref{thm:db_gd_oracle} and~\ref{thm:gen_error_oracle}, to use (i) the oracle debiased gradient descent iterate $\mu^{(t)}_{\mathrm{db}}$ for statistical inference of $\mu_\ast$, and (ii) the oracle generalization estimate $\overline{\mathscr{E}}_{\mathsf{H}}^{(t)}(A,Y)$ for consistent estimation of $\mathscr{E}_{\mathsf{H}}^{(t)}(A,Y)$, it remains to provide data-driven estimates of the Onsager correction matrices $\bm{\tau}^{[t]}$ and $\bm{\rho}^{[t]}$. In Section~\ref{section:iterative_inf}, we propose a \emph{gradient descent inference algorithm} that can be naturally embedded within vanilla gradient descent, and produces consistent estimators $\hat{\bm{\tau}}^{[t]}$ and $\hat{\bm{\rho}}^{[t]}$ for $\bm{\tau}^{[t]}$ and $\bm{\rho}^{[t]}$ at each iteration $t$.

A particularly important feature of Theorems \ref{thm:db_gd_oracle} and \ref{thm:gen_error_oracle} is that it does not require any convexity assumption. In fact, all Theorems~\ref{thm:gd_se}, \ref{thm:db_gd_oracle} and \ref{thm:gen_error_oracle} allow for \emph{arbitrary} non-convexity, subject to the smoothness conditions specified therein. In particular, the debiased gradient descent iterate $\mu^{(t)}_{\mathrm{db}}$ and the generalization error estimate $\overline{\mathscr{E}}_{\mathsf{H}}^{(t)}(A,Y)$, when coupled with the \emph{gradient descent inference algorithm} to be detailed in Section \ref{section:iterative_inf}, can be used for inference of $\mu_\ast$ and estimation of the generalization error in a broad class of non-convex problems. Several such examples will be presented in Section~\ref{section:example_linear}.

\begin{remark}\label{rmk:gd_dyn_main_thm}
Some technical remarks on Theorems \ref{thm:gd_se}, \ref{thm:db_gd_oracle} and \ref{thm:gen_error_oracle}:
\begin{enumerate}
	\item The dependence of $c_t$ on $t$ can be tracked explicitly in the proof (for instance, $c_t \leq t^{c_0 t}$ for some universal constant $c_0 > 0$), but we have omitted this explicit dependence as these estimates are likely suboptimal. Note that at the level of the general theory, any further improvement must encounter a barrier at $t \asymp \log n$ (cf. \cite{rush2018finite}). Such a barrier can be improved only in concrete models; see, e.g. \cite{li2022non,jones2024diagram}, for results in this direction for specialized AMP algorithms under Gaussian/Rademacher designs, and \cite{han2025long} for the vanilla gradient descent algorithm in the regime $\phi\gg 1$.
	\item The error bounds require all quantities $K,\Lambda$ and $L_\mu$ to grow slowly, for instance at a rate of at most $n^\epsilon$ for some small $\epsilon > 0$. In particular, this implies that the initialization $\mu^{(0)}$ must satisfy $\pnorm{\mu^{(0)}}{\infty} \leq n^\epsilon$.
	\item In applications, condition (A4) (resp. (A4')) is satisfied when $\mathsf{P}_t: \R^n \to \R^n$ has coordinate mappings given by the same function $x \mapsto \prox_{\eta_{t-1} \mathsf{f}}(x)$, where $\mathsf{f}$ is a sufficiently smooth convex regularizer such that $\max_{q \in [2:4]} \pnorm{\mathsf{f}^{(q)}}{\infty} < \infty$ (resp. $\pnorm{\mathsf{f}'}{\mathrm{Lip}} < \infty$).
	\item Conditions (A5) and (A5') are designed to reflect the interplay between the loss function and the noise distribution. For example, in the linear model $\mathcal{F}(x, y) = x + y$ with a symmetric loss function $\mathsf{L}(x, y) = \mathsf{L}_\ast(x - y)$, both (A5) and (A5') primarily require a slowly growing bound on $\pnorm{\mathsf{L}_\ast'(\xi)}{\infty}$, which directly captures the well-known interaction between the tail behavior of the noise and the choice of loss function.
	\item In specific examples, regularity conditions on these theorems may not hold. Nonetheless, our theory can often be adapted with technical modifications. For example, in Section \ref{section:example_1bit} ahead, we demonstrate how our framework applies to generalized logistic regression, even though $\mathcal{F}$ is not globally continuous. 		
\end{enumerate}
\end{remark}

\section{Debiased inference via gradient descent inference algorithm}\label{section:iterative_inf}

\subsection{Gradient descent inference algorithm}

As highlighted in the previous section, the key to using the debiased gradient descent iterate $\mu^{(t)}_{\mathrm{db}}$ in (\ref{def:debias_gd_oracle}) and the generalization error estimate $\overline{\mathscr{E}}_{\mathsf{H}}^{(t)}(A,Y)$ in (\ref{def:gen_err_est_oracle}) for statistical inference lies in providing data-driven estimates of the Onsager correction matrices $\bm{\tau}^{[t]}$ and $\bm{\rho}^{[t]}$ defined in (\ref{def:tau_rho_mat}).

While the exact forms of these matrices are generally complex and not analytically tractable, we present in Algorithm \ref{def:alg_tau_rho} below a general iterative procedure for computing their data-driven estimates, which we refer to as the \emph{gradient descent inference algorithm}. Recall the notation $\mathfrak{O}_t(M)\in \R^{t\times t}$ defined in (\ref{def:mat_O}) for a general matrix $M\in \R^{(t-1)\times (t-1)}$.

\begin{algorithm}
	\caption{Gradient descent inference algorithm}\label{def:alg_tau_rho}
	\begin{algorithmic}[1]
		\STATE Input data $A \in \R^{m\times n}$, $Y \in \R^n$, step sizes $\{\eta_t\}\subset \R_{>0}$.
		\STATE Initialize with $\mu^{(0)}\in \R^n$,  $\hat{\bm{\rho}}^{[0]}\equiv \emptyset$, $\{\hat{\bm{\rho}}_\ell^{[0]}\}_{\ell \in [n]}\equiv \emptyset$.
		\FOR{$ t = 1,2,\ldots$}
		\STATE Compute the coefficient matrices $\{\hat{\bm{\tau}}^{[t]}_k\}_{k \in [m]} \subset \R^{t\times t}$ by
		\begin{align*}
		\hat{\bm{L}}^{[t]}_k &\equiv \mathrm{diag}\Big(\Big\{-\eta_{s-1} \iprod{e_k}{\partial_{11}\mathsf{L}_{s-1}(A\mu^{(s-1)},Y)}\Big\}_{s \in [1:t]}\Big),\\
		\hat{\bm{\tau}}^{[t]}_k &\equiv \phi\cdot  \big[I_t - \hat{\bm{L}}^{[t]}_k \mathfrak{O}_t(\hat{\bm{\rho}}^{[t-1]})\big]^{-1} \hat{\bm{L}}^{[t]}_k,\,\hbox{ for all } k \in [m].
		\end{align*}
		The average $\hat{\bm{\tau}}^{[t]}$ is computed as $\hat{\bm{\tau}}^{[t]}\equiv m^{-1}\sum_{k \in [m]} \hat{\bm{\tau}}^{[t]}_{k}\in \R^{t\times t}$. 
		\STATE Compute the coefficient matrices $\{\hat{\bm{\rho}}^{[t]}_\ell\}_{\ell \in [n]} \subset \R^{t\times t}$ by
		\begin{align*}
		\hat{\bm{P}}^{[t]}_\ell&\equiv \mathrm{diag}\Big(\Big\{\mathsf{P}_{s;\ell}'\big(\bigiprod{e_\ell}{\mu^{(s-1)}-\eta_{s-1} A^\top \partial_1 \mathsf{L}_{s-1}(A \mu^{(s-1)},Y) }\big)\Big\}_{s \in [1:t]}\Big),\\
		\hat{\bm{\rho}}_\ell^{[t]}&\equiv \hat{\bm{P}}_\ell^{[t]}\big[I_t+(\hat{\bm{\tau}}^{[t]}+I_t)\mathfrak{O}_t(\hat{\bm{\rho}}_\ell^{[t-1]})\big],\,\hbox{ for all } \ell \in [n].
		\end{align*}
		The average $\hat{\bm{\rho}}^{[t]}$ is computed as $\hat{\bm{\rho}}^{[t]}\equiv n^{-1}\sum_{\ell \in [n]} \hat{\bm{\rho}}^{[t]}_\ell \in \R^{t\times t}$.
		\STATE Compute the gradient descent iterate $\mu^{(t)}$ by (\ref{def:grad_descent}).
		\ENDFOR
	\end{algorithmic}
\end{algorithm}

The algorithmic form for computing $\hat{\bm{\tau}}^{[\cdot]}$ and $\hat{\bm{\rho}}^{[\cdot]}$ is directly inspired by the state evolution in Definition \ref{def:gd_se}:
\begin{itemize}
	\item By taking derivatives on both sides of (S1), with
	\begin{align*}
	\bm{\Upsilon}_{k}^{';[t]}(z^{([0:t])}) &\equiv \big(\partial_{z^{(s)}}\Upsilon_{r;k}(z^{([0:r])})\big)_{r,s \in [1:t]} \in \R^{t\times t},\\
	\bm{L}^{[t]}_{k}(z^{([0:t])}) &\equiv \mathrm{diag}\big(\big\{-\eta_{s-1}  \partial_{11} \mathsf{L}_{s-1}\big(\Theta_{s;k}(z^{([0:s])}) ,\mathcal{F}(z^{(0)},\xi_k)\big)\big\}_{s \in [1:t]}\big) \in \R^t,
	\end{align*}
	we have the derivative formula
	\begin{align*}
	&\bm{\Upsilon}_{k}^{';[t]}(z^{([0:t])}) =  \bm{L}^{[t]}_{k}(z^{([0:t])}) + \bm{L}^{[t]}_{k}(z^{([0:t])}) \mathfrak{O}_{t}(\bm{\rho}^{[t-1]}) \bm{\Upsilon}_{k}^{';[t]}(z^{([0:t])}),\\
	\implies \,\, & \bm{\Upsilon}_{k}^{';[t]}(z^{([0:t])}) = \big[I_t- \bm{L}^{[t]}_{k}(z^{([0:t])}) \mathfrak{O}_{t}(\bm{\rho}^{[t-1]}) \big]^{-1}\bm{L}^{[t]}_{k}(z^{([0:t])}).
	\end{align*}
	Thus, $\hat{\bm{\tau}}^{[t]}$ can be viewed as a data-driven version of the averaged solutions $\{\bm{\Upsilon}_{k}^{';[t]}(z^{([0:t])})\}_{k \in [m]}$ to the above equation.
	\item By taking derivatives on both sides of (S3), with 
	\begin{align*}
	\bm{\Omega}_{\ell}^{';[t]}(w^{([1:t])})&\equiv \big(\partial_{w^{(s)}}\Omega_{r;\ell}(w^{([1:r])})\big)_{r,s \in [1:t]} \in \R^{t\times t},\\
	\bm{P}^{[t]}_{\ell}(w^{([1:t])}) &\equiv \mathrm{diag}\big(\big\{\mathsf{P}_{s;\ell}'(\Delta_{s;\ell} (w^{([1:t])}) )\big\}_{s \in [1:t]}\big) \in \R^t,
	\end{align*} 
	we have the derivative formula
	\begin{align*}
	\bm{\Omega}_{\ell}^{';[t]}(w^{([1:t])}) = \bm{P}^{[t]}_{\ell}(w^{([1:t])})\big[I_t+(\bm{\tau}^{[t]}+I_t)\mathfrak{O}_t\big(\bm{\Omega}_{\ell}^{';[t-1]}(w^{([1:t-1])}) \big)\big].
	\end{align*}
	Thus, $\hat{\bm{\rho}}^{[t]}$ can be viewed as a data-driven version of the averaged solutions $\{\bm{\Omega}_{\ell}^{';[t]}(w^{([1:t])}) \}_{\ell \in [n]}$ to the above equation.
\end{itemize}
Here we follow the convention that boldface fonts are used to denote matrices that collect elements from the corresponding non-bold quantities.

\begin{remark}
	Some remarks on Algorithm \ref{def:alg_tau_rho}:
\begin{enumerate}
	\item A stopping time is not explicitly included, and it should be understood that if stopped at iteration $t$, the algorithm outputs (i) estimates $\hat{\bm{\tau}}^{[t]}, \hat{\bm{\rho}}^{[t]}$ for the Onsager correction matrices $\bm{\tau}^{[t]},\bm{\rho}^{[t]}$, and (ii) the gradient descent iterate $\mu^{(t)}$.
	\item As the inversion of a lower triangular matrix can be computed in $\mathcal{O}(t^2)$ operations, updating the $t$-th rows for $\hat{\bm{\tau}}^{[t]}, \hat{\bm{\rho}}^{[t]}$ in Algorithm \ref{def:alg_tau_rho} at iteration $t$ requires additionally $\mathcal{O}(n t^2)$ operations on top of $\mathcal{O}(n^2)$ many operations that the gradient descent iterate  (\ref{def:grad_descent}) requires in general. The computational complexity can be further reduced when stochastic gradient methods are employed. 
\end{enumerate}
\end{remark}

The next theorem shows that the outputs $\hat{\bm{\tau}}^{[t]}, \hat{\bm{\rho}}^{[t]}$ of Algorithm \ref{def:alg_tau_rho} are close to their `population' versions $\bm{\tau}^{[t]},\bm{\rho}^{[t]}$ (defined in (\ref{def:tau_rho_mat})) in the state evolution.

\begin{theorem}\label{thm:consist_tau_rho}
	Suppose Assumption \ref{assump:setup} holds for some $K,\Lambda \geq 2$. Moreover, suppose that for $s \in [0:t-1]$:
	\begin{enumerate}
		\item[(A4*)] $\max_{\ell \in [n]} \big\{\abs{\mathsf{P}_{s+1;\ell}(0)} \vee \pnorm{\mathsf{P}_{s+1;\ell}}{\mathrm{Lip}}\vee \pnorm{\mathsf{P}_{s+1;\ell}'}{\mathrm{Lip}}\big\}\leq \Lambda$.
		\item[(A5*)] $\max_{k \in [m]}\big\{ \abs{\partial_1 \mathsf{L}_{s;k}\big(0, \mathcal{F}(0,\xi_k)\big)} \vee \max_{*=1,11} \pnorm{\partial_{*} \mathsf{L}_{s;k}\big(\cdot, \mathcal{F}(\cdot,\xi_k)}{\mathrm{Lip}} \big\} \leq \Lambda$.
	\end{enumerate}
    Then for any $q>1$, there exists some constant $c_t=c_t(t,q)>1$ such that 
    \begin{align*}
    & \E^{(0)}  \pnorm{\hat{\bm{\tau}}^{[t]}-\bm{\tau}^{[t]} }{\op}^q\vee \E^{(0)}  \pnorm{\hat{\bm{\rho}}^{[t]}-\bm{\rho}^{[t]} }{\op}^q  \leq   (K\Lambda L_\mu )^{ c_t} \cdot n^{-1/c_t}.
    \end{align*} 
\end{theorem}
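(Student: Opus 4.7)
The plan is to prove the bound by strong induction on $t$, alternately estimating $\hat{\bm{\tau}}^{[t]}$ and $\hat{\bm{\rho}}^{[t]}$. The base case $t=1$ is direct: $\hat{\bm{\tau}}^{[1]} = -\phi\eta_0 \cdot m^{-1}\sum_k \partial_{11}\mathsf{L}_0(A\mu^{(0)},Y)_k$ is a simple empirical average of a bounded Lipschitz function of the iterate, so Theorem \ref{thm:gd_se_avg} applied with $\psi_k(x)=-\phi\eta_0\,\partial_{11}\mathsf{L}_{0;k}(x,\mathcal{F}(\cdot,\xi_k))$ gives the desired $L^q$ bound; the argument for $\hat{\bm{\rho}}^{[1]}$ is identical using (A4*). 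For the inductive step, the key algorithmic observation is that the population quantities admit the same fixed-point characterization as the algorithm, viz. $\bm{\tau}^{[t]}=\phi\,\E^{(0)}[I_t-\bm{L}^{[t]}_{\pi_m}\mathfrak{O}_t(\bm{\rho}^{[t-1]})]^{-1}\bm{L}^{[t]}_{\pi_m}$ where $\bm{L}^{[t]}_{\pi_m}$ is evaluated at the state-evolution Gaussian $\mathfrak{Z}^{([0:t])}$, and analogously for $\bm{\rho}^{[t]}$. Hence $\hat{\bm{\tau}}^{[t]}_k$ differs from its population analogue in exactly two ways: the iterates $(A\mu^{(s-1)})_k$ replace $\Theta_{s;k}(\mathfrak{Z}^{([0:s])})$, and $\hat{\bm{\rho}}^{[t-1]}$ replaces $\bm{\rho}^{[t-1]}$.

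The decisive analytic observation is that $\mathfrak{O}_t(\cdot)$ is strictly lower triangular and therefore nilpotent with $\mathfrak{O}_t(M)^t=0$. Consequently the matrix inverse admits the finite Neumann expansion
\begin{align*}
\big[I_t-\bm{L}\,\mathfrak{O}_t(\bm{\rho})\big]^{-1}=\sum_{j=0}^{t-1}\big(\bm{L}\,\mathfrak{O}_t(\bm{\rho})\big)^j,
\end{align*}
which is a polynomial of degree at most $t-1$ in the entries of $\bm{L}$ and $\bm{\rho}$. Because $\partial_{11}\mathsf{L}_{s;k}(\cdot,\mathcal{F}(\cdot,\xi_k))$ is Lipschitz and bounded at zero by (A5*), the map $z^{([0:t])}\mapsto [I_t-\bm{L}_k^{[t]}(z^{([0:t])})\mathfrak{O}_t(\bm{\rho}^{[t-1]})]^{-1}\bm{L}_k^{[t]}(z^{([0:t])})$ is pseudo-Lipschitz of a fixed order when $\bm{\rho}^{[t-1]}$ is frozen as a parameter, with pseudo-Lipschitz constant polynomial in $\pnorm{\bm{\rho}^{[t-1]}}{\op}$ and $\Lambda$. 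The analogous statement for the $\hat{\bm{\rho}}^{[t]}_\ell$ recursion follows from (A4*).

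With these pseudo-Lipschitz test functions available, I decompose
\begin{align*}
\pnorm{\hat{\bm{\tau}}^{[t]}-\bm{\tau}^{[t]}}{\op}\leq \pnorm{\hat{\bm{\tau}}^{[t]}-\tilde{\bm{\tau}}^{[t]}}{\op}+\pnorm{\tilde{\bm{\tau}}^{[t]}-\bm{\tau}^{[t]}}{\op},
\end{align*}
where $\tilde{\bm{\tau}}^{[t]}$ is the oracle quantity obtained by substituting the true $\bm{\rho}^{[t-1]}$ for $\hat{\bm{\rho}}^{[t-1]}$ in Algorithm \ref{def:alg_tau_rho}. The second term is precisely the deviation of an empirical average of the above pseudo-Lipschitz function from its expectation, which is controlled by $(K\Lambda L_\mu)^{c_t}n^{-1/c_t}$ via Theorem \ref{thm:gd_se_avg} applied to the $(r,s)$-entry of the matrix (for each $r,s\in[t]$; the operator norm is absorbed by a factor $t$). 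The first term is controlled by expanding both sides in the Neumann series above and using that the polynomial expansion is locally Lipschitz in its $\bm{\rho}$-entries, giving a bound of the form $\pnorm{\hat{\bm{\rho}}^{[t-1]}-\bm{\rho}^{[t-1]}}{\op}$ times a random prefactor depending on the $\hat{\bm{L}}_k^{[t]}$; the inductive hypothesis dispatches the first factor, and uniform moment bounds on $\hat{\bm{L}}_k^{[t]}$ coming from (A5*) dispatch the second via Cauchy--Schwarz.

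The main obstacle is closing the $L^q$ estimate through the recursion in a way that preserves the clean form on the right-hand side of the theorem. The risk is that each inductive step multiplies the pseudo-Lipschitz constant and the $L^q$ tail of the prefactor by a power of $(K\Lambda L_\mu)$ and of the iterate norms; since Theorem \ref{thm:gd_se_avg} only provides the bound $(K\Lambda\Lambda_\psi L_\mu)^{c_t} n^{-1/c_t}$ for an absolute (but $t$-dependent) $c_t$, I must verify that the propagated pseudo-Lipschitz parameters, the $L^{2q}$ iterate tails supplied by the state evolution, and the Neumann truncation depth all remain controlled by some new constant $c_t'=c_{t'}(t,q)$. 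Finally, a symmetric argument with (A4*) gives the bound for $\pnorm{\hat{\bm{\rho}}^{[t]}-\bm{\rho}^{[t]}}{\op}^q$, closing the induction.
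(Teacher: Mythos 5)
Your proposal is correct and mirrors the paper's proof almost exactly: the paper also proceeds by an alternating induction on $t$ (Lemmas \ref{lem:epi_tau} and \ref{lem:epi_rho}), introduces an oracle intermediate quantity $\overline{\bm{\tau}}^{[t]}_k$ that replaces $\hat{\bm{\rho}}^{[t-1]}$ by the population $\bm{\rho}^{[t-1]}$, controls the deviation of the empirical average of the resolvent map from its population value by exhibiting the map as Lipschitz (using nilpotence of $\mathfrak{O}_t$ and the resulting finite Neumann expansion to bound the resolvent norm), and then controls the remaining term by the inductive hypothesis; the apriori estimates (Lemmas \ref{lem:rho_tau_bound} and \ref{lem:hat_rho_tau_bound}) close the recursion in $L^q$ exactly as you anticipate. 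One small difference worth noting: in your Step-2 estimate you invoke Cauchy--Schwarz to dispose of a "random prefactor depending on $\hat{\bm{L}}_k^{[t]}$," but under (A5*) the diagonal entries of $\hat{\bm{L}}_k^{[t]}$ are uniformly bounded by $\Lambda^2$, so the paper obtains a deterministic bound $\pnorm{\E_{\pi_m}\overline{\bm{\tau}}^{[t]}_{\pi_m}-\hat{\bm{\tau}}^{[t]}}{\op}\leq (K\Lambda)^{c_t}\epsilon_{\rho;t-1}$ without any moment bookkeeping — this also resolves your stated worry about the $L^q$ estimate closing cleanly. Also, for the $\hat{\bm{\rho}}^{[t]}$ half, Theorem \ref{thm:gd_se_avg} as stated characterizes $\mu^{(t)}$ and $W^{(t)}$, whereas you actually need a state-evolution description of $\hat{\Delta}_t=\mu^{(t-1)}-\eta_{t-1}A^\top\partial_1\mathsf{L}_{t-1}(A\mu^{(t-1)},Y)$; the paper invokes the general GFOM theorem (Theorem \ref{thm:GFOM_se_asym_avg}) for this, so the "symmetric argument" is not literally an invocation of Theorem \ref{thm:gd_se_avg}, though it is morally the same tool.
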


Similar to Remark \ref{rmk:gd_dyn_main_thm} for the results in Section \ref{section:gd_dynamics}, the technical conditions in the above theorem are not the weakest possible and are chosen for clean presentation. In concrete applications, these regularity conditions can possibly be further relaxed in a case-by-case manner.

\subsection{Application I: Inference via debiased gradient descent iterate $\mu^{(t)}$}

With the output $\hat{\bm{\tau}}^{[t]}$ from Algorithm \ref{def:alg_tau_rho}, let 
\begin{align*}
\hat{\bm{\omega}}^{[t]}\equiv (\hat{\bm{\tau}}^{[t]})^{-1}
\end{align*}
(whenever invertible) be an estimator for $\bm{\omega}^{[t]}$. The oracle debiased gradient descent iterate $\mu^{(t)}_{\mathrm{db}}$ in (\ref{def:debias_gd_oracle}) then admits a data-driven version:
\begin{align}\label{def:debias_gd}
\hat{\mu}^{(t)}_{\mathrm{db}}&\equiv  \mu^{(t-1)}+\sum_{s \in [1:t]} \hat{\omega}_{t,s}\cdot \eta_{s-1}A^\top \partial_1 \mathsf{L}_{s-1}(A\mu^{(s-1)},Y).
\end{align}
Note that $\hat{\mu}^{(t)}_{\mathrm{db}}$ can be computed using iterates $\{\mu^{(0)},\ldots,\mu^{(t-1)}\}$, and we retain the index $t$ to indicate that this may be naturally incorporated in the $t$-th iteration of Algorithm \ref{def:alg_tau_rho}.

The following is an immediate consequence of Theorems \ref{thm:db_gd_oracle} and \ref{thm:consist_tau_rho}.
\begin{theorem}
Suppose the assumptions in Theorem \ref{thm:consist_tau_rho} hold. Fix a sequence of $\Lambda_\psi$-pseudo-Lipschitz functions $\{\psi_\ell:\R \to \R\}_{\ell \in [n]}$ of order $\mathfrak{p}$ where $\Lambda_\psi\geq 2$. Then	for any $q \in \N$, there exists some $c_t=c_t(t,\mathfrak{p},q)>1$ such that
\begin{align*}
&\E^{(0)} \bigabs{\E_{\pi_n} \psi_{\pi_n} \big(\hat{\mu}^{(t)}_{\mathrm{db};\pi_n}\big)- \E^{(0)} \psi_{\pi_n}\big(b^{(t)}_{\mathrm{db}}\cdot \mu_{\ast,\pi_n}+ \sigma^{(t)}_{\mathrm{db}}\mathsf{Z}\big)}^q\\
&\leq \big((1\wedge \tau_\ast^{(t)})^{-1} K\Lambda \Lambda_\psi L_\mu\big)^{c_t} \cdot n^{-1/c_t}.
\end{align*} 
\end{theorem}
The proof is omitted for simplicity. In order to use $\hat{\mu}^{(t)}_{\mathrm{db}}$ for statistical inference of the unknown $\mu_\ast$, it remains to estimate the bias $b^{(t)}_{\mathrm{db}}$ and the variance $\big({\sigma}^{(t)}_{\mathrm{db}}\big)^2$:

\begin{itemize}
	\item Estimating the variance $\big({\sigma}^{(t)}_{\mathrm{db}}\big)^2$ is fairly easy. From the state evolution (S2), the covariance $\Sigma_{\mathfrak{W}}^{[t]}$ in the general empirical risk minimization problem can be naturally estimated by 
	\begin{align*}
	\hat{\Sigma}_{\mathfrak{W}}^{[t]}\equiv \bigg(\phi\cdot \eta_{r-1}\eta_{s-1} \cdot \frac{1}{m}\bigiprod{\partial_1 \mathsf{L}_{r-1}(A\mu^{(r-1)},Y) }{ \partial_1 \mathsf{L}_{s-1}(A\mu^{(s-1)},Y) }\bigg)_{r,s \in [t]}.
	\end{align*}
	Therefore, a natural variance estimator is
	\begin{align}\label{def:db_gd_var_est}
	\big(\hat{\sigma}^{(t)}_{\mathrm{db}}\big)^2\equiv \hat{\bm{\omega}}_{t\cdot}^{[t]}\hat{\Sigma}_{\mathfrak{W}}^{[t]}\hat{\bm{\omega}}_{t\cdot}^{[t],\top}.
	\end{align}
	In specific models, simpler methods may exist for estimating $\big({\sigma}^{(t)}_{\mathrm{db}}\big)^2$.
	\item Estimating the bias parameter $b^{(t)}_{\mathrm{db}}$ is more challenging and requires leveraging model-specific features. This is expected, as $b^{(t)}_{\mathrm{db}}$ involves $(\delta_s)_{s \in [t]}$ which are derivatives of $\Upsilon_{t}$ with respect to $\mathfrak{Z}^{(0)}$ that contains purely oracle information $\mu_\ast$. 
	If the signal strength $\sigma_{\mu_\ast}$ can be estimated by $\hat{\sigma}_{\mu_\ast}$, then we may invert the approximate normality (\ref{eqn:gd_db_normality}) to construct a generic bias estimator
	\begin{align}\label{def:db_gd_b_est}
	\abs{\hat{b}^{(t)}_{\mathrm{db}}}&\equiv \big[\pnorm{\hat{\mu}^{(t)}_{\mathrm{db}}}{}^2/n-\big(\hat{\sigma}^{(t)}_{\mathrm{db}}\big)^2\big]_+^{1/2}/\hat{\sigma}_{\mu_\ast}.
	\end{align}
\end{itemize}
With a bias estimator $\hat{b}^{(t)}_{\mathrm{db}}$ and a variance estimator $\big(\hat{\sigma}^{(t)}_{\mathrm{db}}\big)^2$, we may construct  $(1-\alpha)$ confidence intervals (CIs) for $\{\mu_{\ast,\ell}\}_{\ell \in [n]}$ as follows:
\begin{align}\label{def:db_gd_CI_general}
\mathsf{CI}_\ell^{(t)}(\alpha)\equiv \bigg[  \frac{\hat{\mu}^{(t)}_{\mathrm{db};\ell}}{ \hat{b}^{(t)}_{\mathrm{db}}} \pm \frac{ \hat{\sigma}^{(t)}_{\mathrm{db}} \cdot z_{\alpha/2} }{\abs{\hat{b}^{(t)}_{\mathrm{db}}} } \bigg].
\end{align}
Here for CI's, we write $z_\alpha$ as the solution to $\alpha\equiv \Prob(\mathcal{N}(0,1)>z_\alpha)$. The coverage validity of these CI's can be easily justified for each $\ell \in [n]$ under the conditions in Theorem \ref{thm:db_gd_oracle}-(1), and in an averaged sense under the conditions in Theorem \ref{thm:db_gd_oracle}-(2). We omit these routine technical details.

\subsection{Application II: Estimation of the generalization error}

With the output $\hat{\bm{\rho}}^{[t]}$ from Algorithm \ref{def:alg_tau_rho}, the oracle generalization error estimate $\overline{\mathscr{E}}_{\mathsf{H}}^{(t)}(A,Y)$ in (\ref{def:gen_err_est_oracle}) has the following data-driven version:
\begin{align}\label{def:gen_err_est}
\hat{\mathscr{E}}_{\mathsf{H}}^{(t)}(A,Y)\equiv m^{-1} \iprod{\mathsf{H} (\hat{Z}^{(t)},Y)}{\bm{1}_m}= \frac{1}{m}\sum_{k \in [m]} \mathsf{H}(\hat{Z}^{(t)}_k,Y_k).
\end{align}
Here $\hat{Z}^{(t)}$ uses the output $ \hat{\bm{\rho}}^{[t]}$ of Algorithm \ref{def:alg_tau_rho} to estimate $Z^{(t)}$ defined in (\ref{def:Z_W}):
\begin{align}\label{def:Z_hat}
\hat{Z}^{(t)}&\equiv A\mu^{(t)}+\sum_{s \in [1:t]} \eta_{s-1} \hat{\rho}_{t,s} \cdot \partial_1\mathsf{L}_{s-1}\big(A\mu^{(s-1)},Y\big) \in \R^m.
\end{align}
In fact, computation of $\hat{Z}^{(t)}$ above and therefore $\hat{\mathscr{E}}_{\mathsf{H}}^{(t)}(A,Y)$ can be immediately embedded into Algorithm \ref{def:alg_tau_rho}, so that at $t$-th iteration, the algorithm outputs the estimate $\hat{\mathscr{E}}_{\mathsf{H}}^{(t)}(A,Y)$ for the unknown generalization error $\mathscr{E}_{\mathsf{H}}^{(t)}(A,Y)$.

The following theorem formally validates (\ref{def:Z_hat}).

\begin{theorem}\label{thm:gen_error}
Suppose the assumptions in Theorem \ref{thm:consist_tau_rho} hold, and additionally $\{\mathsf{H}_{\mathcal{F};k}\}_{k \in [m]}\subset C^3(\R^2)$ admits mixed derivatives of order 3 all bounded by $\Lambda$. Then for any $q\in \N$, there exists some $c_t=c_t(t,q)>1$ such that
\begin{align*}
\E^{(0)}\abs{\hat{\mathscr{E}}_{\mathsf{H}}^{(t)}(A,Y)- \mathscr{E}_{\mathsf{H}}^{(t)}(A,Y) }^q\leq (K\Lambda L_\mu )^{c_t}\cdot n^{-1/c_t}.
\end{align*}
\end{theorem}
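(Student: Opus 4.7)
The plan is to decompose $\hat{\mathscr{E}}_{\mathsf{H}}^{(t)} - \mathscr{E}_{\mathsf{H}}^{(t)}(A,Y)=E_1+E_2+E_3$ where
\begin{align*}
E_1 &\equiv \frac{1}{m}\sum_{k\in[m]}\big[\mathsf{H}(\hat Z_k^{(t)},Y_k)-\mathsf{H}(Z_k^{(t)},Y_k)\big],\\
E_2 &\equiv \frac{1}{m}\sum_{k\in[m]}\mathsf{H}(Z_k^{(t)},Y_k)-\E^{(0)}\mathsf{H}\big(\mathfrak Z^{(t+1)},\mathcal F(\mathfrak Z^{(0)},\xi_{\pi_m})\big),\\
E_3 &\equiv \E^{(0)}\mathsf{H}\big(\mathfrak Z^{(t+1)},\mathcal F(\mathfrak Z^{(0)},\xi_{\pi_m})\big) - \mathscr{E}_{\mathsf{H}}^{(t)}(A,Y),
\end{align*}
mirroring the heuristic chain in \eqref{ineq:gen_err_est_reason}. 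Each piece is bounded in $L^q$ separately, corresponding respectively to (i) the Onsager-correction estimation error, (ii) the sample-to-population error governed by state evolution, and (iii) a conditional CLT for a fresh draw $A_{\mathrm{new}}$; the triangle inequality then delivers the theorem.

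For $E_1$: by \eqref{def:Z_hat}--\eqref{def:Z_W}, $\hat Z^{(t)}-Z^{(t)}=\sum_{s\in[1:t]}\eta_{s-1}(\hat\rho_{t,s}-\rho_{t,s})\partial_1\mathsf L_{s-1}(A\mu^{(s-1)},Y)$. The $C^3$ smoothness of $\mathsf H_{\mathcal F;k}$ with bounded mixed derivatives yields pseudo-Lipschitz control on $\mathsf H$, so Cauchy--Schwarz bounds $\E^{(0)}|E_1|^q$ by a product of $L^{2q}$-moments of $|\hat\rho_{t,s}-\rho_{t,s}|$ and of $m^{-1/2}\pnorm{\partial_1\mathsf L_{s-1}(A\mu^{(s-1)},Y)}{2}$. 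The former are handled directly by Theorem \ref{thm:consist_tau_rho}; the latter follow from Lipschitzness of $\partial_1\mathsf L_{s-1}$ together with standard sub-gaussian operator-norm bounds on $A$ and $L^q$ control of $\pnorm{\mu^{(s-1)}}{2}/\sqrt n$ available en route to Theorem \ref{thm:gd_se_avg}.

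For $E_2$: apply Theorem \ref{thm:gd_se_avg} under a natural one-step extension of the state evolution to time $t+1$ (no new regularity on $\mathsf L_t,\mathsf P_{t+1}$ is required, since the defining covariance $\cov(\mathfrak Z^{(t+1)},\mathfrak Z^{(s)})$ in (S2) depends only on $\Omega_*$ for $*\in[0:t]$) with test function $\psi_k(\{(u_1^{(s)},u_2^{(s)})\}_{s\in[1:t+1]},u_0)\equiv \mathsf H_{\mathcal F;k}(u_2^{(t+1)},u_0)$. This $\psi_k$ is pseudo-Lipschitz of bounded order under the $C^3$ hypothesis, so Theorem \ref{thm:gd_se_avg} delivers the $L^q$ bound directly.

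Step $E_3$ is the main obstacle, as it is the only step touching the fresh sample $A_{\mathrm{new}}$. Conditionally on $(A,Y)$, the bivariate vector $V\equiv(\iprod{A_{\mathrm{new}}}{\mu^{(t)}},\iprod{A_{\mathrm{new}}}{\mu_\ast})$ is a sum of $n$ independent mean-zero sub-gaussian vectors with covariance
\[
\hat\Sigma \equiv \frac{1}{n}\begin{pmatrix}\pnorm{\mu^{(t)}}{}^2 & \iprod{\mu^{(t)}}{\mu_\ast}\\ \iprod{\mu^{(t)}}{\mu_\ast} & \pnorm{\mu_\ast}{}^2\end{pmatrix}.
\]
A two-dimensional Lindeberg swap exploiting the bounded third partials of $\mathsf H_{\mathcal F;k}$ together with (A2) replaces $V$ by $\tilde V\sim\mathcal N(0,\hat\Sigma)$ at cost $\lesssim n^{-1/2}$ up to polynomial factors in $(K,\Lambda,L_\mu)$, after averaging over $\pi_m$. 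Next, by (S2) in Definition \ref{def:gd_se} together with Theorem \ref{thm:gd_se_avg} applied to the pseudo-Lipschitz-of-order-2 maps $x\mapsto x^2$ and $x\mapsto x\mu_{\ast,\ell}$, the entries of $\hat\Sigma$ concentrate in $L^q$ around the $2\times 2$ covariance block of $(\mathfrak Z^{(t+1)},\mathfrak Z^{(0)})$. A standard Gaussian interpolation then bounds the Gaussian-to-Gaussian discrepancy by a constant times $\pnorm{\hat\Sigma-\Sigma_{\mathfrak Z^{(t+1)},\mathfrak Z^{(0)}}}{\op}$ in $L^q$. Combining the Lindeberg swap with the covariance-matching step yields the $L^q$ bound on $E_3$, completing the proof.
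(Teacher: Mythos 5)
Your decomposition $\hat{\mathscr{E}}_{\mathsf{H}}^{(t)} - \mathscr{E}_{\mathsf{H}}^{(t)} = E_1 + E_2 + E_3$ is exactly the paper's, with $\overline{\mathscr{E}}_{\mathsf{H}}^{(t)} = m^{-1}\langle \mathsf{H}(Z^{(t)},Y),\bm{1}_m\rangle$ as the intermediate quantity, and each piece is handled by the same tool: $E_1$ via the pseudo-Lipschitz bound on $\mathsf{H}$ combined with $\|\hat{\bm{\rho}}^{[t]}-\bm{\rho}^{[t]}\|_{\op}$ control from Theorem \ref{thm:consist_tau_rho} (this is Lemma \ref{lem:gen_err_replace_Z}), $E_2$ by applying Theorem \ref{thm:gd_se_avg} at horizon $t+1$ to the test function built from $\mathsf{H}_{\mathcal{F};k}$, and $E_3$ by a Lindeberg swap to Gaussianize $A_{\mathrm{new}}$ followed by matching the empirical $2\times2$ covariance $\hat\Sigma$ to $\Sigma_0^{(t+1)}$ via Gaussian interpolation (this is Lemma \ref{lem:gen_error_replace_pop}). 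Your observation that the $t\to t+1$ extension of the state evolution requires no fresh regularity on $\mathsf{L}_t,\mathsf{P}_{t+1}$ is a useful clarification the paper leaves implicit, but it does not change the argument's structure.
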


\begin{remark}[\emph{Comparison to LOOCV}]
A different heuristic approach, leave-one-out cross-validation (LOOCV), can be applied to gradient descent to produce a generalization error estimate at each iteration. While this procedure is generic, its theoretical validity must be verified on a case-by-case basis and has so far only been established for vanilla gradient descent under the simplest linear regression setting with squared loss, as shown in \cite{patil2024failures}.

A well-known practical issue with the LOOCV method is that it is computationally demanding and does not scale well to large-scale problems. In the context of gradient descent, at iteration $t$, LOOCV typically requires $\bigo(n^3)$ operations, whereas our proposed method requires  at most $\bigo(nt^2+n^2)$. In particular, in the common regime where $t \ll n$, our method offers substantial computational advantages over LOOCV. In Appendix~\ref{subsection:loo_gen_est}, we compare the performance of our method and LOOCV using vanilla gradient descent with squared loss, and find that while both methods remain valid even under possible model mis-specification, our proposal is hundreds of times faster than LOOCV for a moderate number of iterations.

\end{remark}

\begin{remark}[\emph{On the initialization scheme}]
Our inference proposals in the preceding subsections do not \emph{apriori} specify whether an uninformative or informative initialization $\mu^{(0)}$ should be used, as long as it is independent of the data matrix $X$ and $\pnorm{\mu^{(0)}}{\infty}$ grows mildly (cf., Remark \ref{rmk:gd_dyn_main_thm}-(2)). However, additional care in the choice of initialization scheme may be needed depending on the inference task:
\begin{itemize}
	\item For the task of consistent estimation of the generalization error, our theory in Theorem~\ref{thm:gen_error_oracle} and the inference proposal~\eqref{def:gen_err_est} hold without further conditions on the initialization scheme.
	\item The task of constructing confidence intervals for $\mu_\ast$ is more subtle. In particular, while our abstract theory in Theorem~\ref{thm:db_gd_oracle} holds without additional conditions on the initialization scheme, the proposed confidence intervals~\eqref{def:db_gd_CI_general} remain valid only if the `bias parameter' $b_{\mathrm{db}}^{(t)}$ is of order~1.
\end{itemize}
From a practical point of view, although the exact expression of $b_{\mathrm{db}}^{(t)}$ can be computed in closed form only in special cases (for instance, linear regression in Section~\ref{section:example_linear} and logistic regression under squared loss in Section~\ref{section:example_1bit}), whether it is of order~1 can be checked numerically via the data-driven estimate~\eqref{def:db_gd_b_est}. Therefore, for the purpose of inference on $\mu_\ast$, one should proceed with an initialization scheme that renders the bias estimator in~\eqref{def:db_gd_b_est} not exceedingly small.
\end{remark}

\section{Example I: Single-index regression}\label{section:example_linear}

In this section we consider single-index regression in Example \ref{model:single_index}. Suppose the model is correctly specified, and consider the loss function $\mathsf{L}_{\cdot}(x,y)\equiv \mathsf{L}_\ast(\varphi_\ast(x)-y)$ for some symmetric function $\mathsf{L}_\ast:\R \to \R_{\geq 0}$. We are interested in solving the non-convex ERM problem (\ref{def:ERM_general}) in the above single-index regression model with gradient descent. As $\varphi_\ast$ does not have any convexity structure, the ERM problem (\ref{def:ERM_general}) may exhibit arbitrary non-convexity, and therefore the gradient descent algorithm is not guaranteed to converge. For simplicity of discussion, we take a fixed step size $\eta_t\equiv \eta>0$, and the resulting gradient descent algorithm reads
\begin{align}\label{def:grad_descent_linear}
\mu^{(t)}=\prox_{\eta \mathsf{f}}\Big(\mu^{(t-1)}-\eta\cdot A^\top  \big[\mathsf{L}_{\ast}' (\varphi_\ast(A\mu^{(t-1)})-Y)\odot \varphi_\ast'(A\mu^{(t-1)}) \big]\Big).
\end{align}

\subsection{Distributional characterizations}

Distributional theory for the gradient descent iterate (\ref{def:grad_descent_linear}) and the validity of Algorithm \ref{def:alg_tau_rho} follow immediately from our theory in the previous sections. We state these results without a formal proof.

\begin{theorem}\label{thm:linear}
	Suppose Assumption \ref{assump:setup} holds for some $K,\Lambda \geq 2$, $
	\pnorm{\mathsf{L}_\ast'(\xi)}{\infty}\leq \Lambda$ and the maps $\{(x,y)\mapsto \mathsf{L}_\ast'\big(\varphi_\ast(x)-\varphi_\ast(y)+\xi_i\big)\cdot \varphi_\ast'(x)\}_{i \in [m]}$ are $\Lambda$-Lipschitz.
	\begin{enumerate}
		\item Further suppose (A4') holds. The averaged distributional characterizations in Theorem \ref{thm:gd_se}-(2) hold for the gradient descent iterates in (\ref{def:grad_descent_linear}).
		\item Further suppose (A4*) holds and the maps $\{(x,y)\mapsto \mathsf{L}_\ast''\big(\varphi_\ast(x)-\varphi_\ast(y)+\xi_i\big) (\varphi_\ast'(x))^2+ \mathsf{L}_\ast'\big(\varphi_\ast(x)-\varphi_\ast(y)+\xi_i\big) \varphi_\ast''(x)\}_{i \in [m]}$ are $\Lambda$-Lipschitz. Then Theorem \ref{thm:consist_tau_rho} holds for the output $(\hat{\bm{\tau}}^{[t]},\hat{\bm{\rho}}^{[t]})$ in Algorithm \ref{def:alg_tau_rho} using gradient descent iterates in (\ref{def:grad_descent_linear}).
	\end{enumerate}
\end{theorem}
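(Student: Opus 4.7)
The plan is to reduce both parts of the theorem to direct applications of the general results in the previous sections—Theorem~\ref{thm:gd_se_avg} for part~(1) and Theorem~\ref{thm:consist_tau_rho} for part~(2)—by verifying that the linear-model specification of $(\mathcal{F},\mathsf{L}_\cdot)$ fulfills their hypotheses. Since Assumption~\ref{assump:setup} and (A4') (resp.\ (A4*)) are already imposed in the theorem statement, the only non-trivial task is to establish the smoothness conditions (A5') and (A5*) on the composed maps $\partial_1 \mathsf{L}_{s;k}\bigl(\cdot, \mathcal{F}(\cdot, \xi_k)\bigr)$, and additionally $\partial_{11} \mathsf{L}_{s;k}\bigl(\cdot, \mathcal{F}(\cdot, \xi_k)\bigr)$ for part~(2).

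First I would exploit the special product structure $\mathsf{L}_s(x,y) = \mathsf{L}_\ast(x-y)$ together with $\mathcal{F}(u,\xi) = u + \xi$ to obtain the identities
\[
\partial_1 \mathsf{L}_{s;k}\bigl(u_1, \mathcal{F}(u_2, \xi_k)\bigr) = \mathsf{L}_\ast'(u_1 - u_2 - \xi_k),
\]
and analogously $\partial_{11}\mathsf{L}_{s;k}\bigl(u_1, \mathcal{F}(u_2,\xi_k)\bigr) = \mathsf{L}_\ast''(u_1-u_2-\xi_k)$. Evaluated at $(u_1,u_2)=(0,0)$ the first identity gives $\mathsf{L}_\ast'(-\xi_k)$, whose maximum over $k$ is bounded by $\pnorm{\partial_1 \mathsf{L}_\ast(\xi)}{\infty}\leq \Lambda$ (using symmetry of $\mathsf{L}_\ast$, which makes $\mathsf{L}_\ast'$ odd). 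For the joint $(u_1,u_2)$-Lipschitz constant, the chain rule gives a bound of $\sqrt{2}\,\pnorm{\partial_1 \mathsf{L}_\ast}{\mathrm{Lip}}\leq \sqrt{2}\,\Lambda$, and under the additional hypothesis in part~(2), $\sqrt{2}\,\pnorm{\partial_{11}\mathsf{L}_\ast}{\mathrm{Lip}}\leq \sqrt{2}\,\Lambda$ for the second-order term. Thus (A5') holds under the hypotheses of part~(1), and (A5*) holds under the hypotheses of part~(2), once $\Lambda$ is replaced by a universal multiple of itself.

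The hard part, if any, is pure bookkeeping: the enlarged Lipschitz constants need to be absorbed into the prefactor $(K\Lambda\Lambda_\psi L_\mu)^{c_t}$ on the right-hand sides of Theorems~\ref{thm:gd_se_avg} and~\ref{thm:consist_tau_rho}, which is immediate since those bounds scale polynomially in $\Lambda$ and a universal factor can be absorbed into the exponent $c_t$. With these verifications in place, part~(1) follows by directly invoking Theorem~\ref{thm:gd_se_avg} and part~(2) by directly invoking Theorem~\ref{thm:consist_tau_rho}. There is no model-specific content beyond this reduction—the substance lies entirely in the underlying state-evolution theory.
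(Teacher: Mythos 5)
Your proposal is correct and follows exactly the route the paper intends: the paper explicitly omits the proof, remarking that the result ``follows immediately'' from the general theory, and your reduction to verifying (A5')/(A5*) is precisely that immediate argument. The key details you identify---the composition identity $\partial_1\mathsf{L}_{s;k}(u_1,\mathcal{F}(u_2,\xi_k))=\mathsf{L}_\ast'(u_1-u_2-\xi_k)$, the use of symmetry of $\mathsf{L}_\ast$ (hence oddness of $\mathsf{L}_\ast'$) to pass from $\lvert\mathsf{L}_\ast'(-\xi_k)\rvert$ to the hypothesized bound on $\pnorm{\partial_1\mathsf{L}_\ast(\xi)}{\infty}$, and the $\sqrt{2}$ factor from the chain rule being absorbed into the $(K\Lambda\cdots)^{c_t}$ prefactor---are exactly what is needed, and nothing more.

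Two very small points of presentation, neither of which affects correctness. First, the step-size condition (A3) requires $\max_{s\in[0:t-1]}\eta_s\leq\Lambda$; you implicitly rely on this being part of the hypothesized Assumption~\ref{assump:setup}, which is fine, but it is worth naming since (\ref{def:grad_descent_linear}) uses a fixed $\eta$ whereas the general theorems allow iteration-dependent $\eta_{t-1}$. Second, in the published theorem the notation $\pnorm{\partial_1\mathsf{L}_\ast(\xi)}{\infty}$ is somewhat idiosyncratic (since $\mathsf{L}_\ast$ is univariate, $\partial_1\mathsf{L}_\ast=\mathsf{L}_\ast'$); your reading of it as $\max_k\lvert\mathsf{L}_\ast'(\xi_k)\rvert$ is the intended one.
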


The information parameter $\delta_t$ takes a simple form in the single-index model:
\begin{lemma}\label{lem:linear_delta}
Suppose $\varphi_\ast \in C^1(\R)$. Then the information parameter is $
\delta_t = -  \sum_{s \in [1:t]} \phi\cdot  \E^{(0)}\partial_{\mathfrak{Z}^{(s)}} \Upsilon_{t;\pi_m}(\mathfrak{Z}^{([0:t])}) \varphi_\ast'(\mathfrak{Z}^{(0)})$. 
For the linear model, further simplification is possible:  $\delta_t=-\sum_{s \in [1:t]} \tau_{t,s}$.
\end{lemma}

The simplicity of $\delta_t$ in the linear model arises from a key structural property: the function $\Upsilon_t(\mathfrak{z}^{([0:t])})$ depends on $\mathfrak{z}^{([0:t])}$ only through $\{\mathfrak{z}^{(s)}-\mathfrak{z}^{(0)}\}_{s \in [1:t]}$. This structure directly leads to the simplified formula for $\delta_t$ in the linear model.

\subsection{Mean-field statistical inference}

\subsubsection{Estimation of generalization error}
Consider the generalization error (\ref{def:gen_err}) with $\mathsf{H}(x,y)=\mathsf{H}_\ast(\varphi_\ast(x)-y)$ for a symmetric loss function $\mathsf{H}_\ast:\R\to \R_{\geq 0}$. The proposed estimator (\ref{def:gen_err_est}) simplifies to
	\begin{align}\label{def:linear_gen_err_est}
	\hat{\mathscr{E}}_{\mathsf{H}}^{(t)}&\equiv \frac{1}{m}\sum_{k \in [m]} \mathsf{H}_\ast\,\bigg[\varphi_\ast\bigg(\big(A\mu^{(t)}\big)_k\nonumber\\
	&\qquad+\eta\sum_{s \in [1:t]}  \hat{\rho}_{t,s} \cdot \mathsf{L}_\ast'\big( (\varphi_\ast(A\mu^{(s-1)})-Y)_k\big)\cdot \varphi_\ast'\big((A\mu^{(s-1)})_k\big)\bigg)-Y_k \bigg],
	\end{align}
	where $\hat{\bm{\rho}}^{[t]}$ is the output of Algorithm \ref{def:alg_tau_rho}. The conditions in Theorem \ref{thm:gen_error} can be easily adapted to this setting. Under these conditions, $\hat{\mathscr{E}}_{\mathsf{H}}^{(t)}\approx \mathscr{E}_{\mathsf{H}}^{(t)}(A,Y)$ in the sense described therein.

\subsubsection{Inference via debiased gradient descent}

For the single-index model, the data-driven counterpart of the oracle debiased gradient descent iterate $\mu^{(t)}_{\mathrm{db}}$ takes the following form:
	\begin{align}\label{def:linear_debias_gd}
	\hat{\mu}^{(t)}_{\mathrm{db}}&\equiv  \mu^{(t-1)}+\eta \sum_{s \in [1:t]} \hat{\omega}_{t,s}\cdot A^\top  \Big[\mathsf{L}_{\ast}'(\varphi_\ast(A\mu^{(s-1)})-Y)\odot \varphi_\ast'(A\mu^{(s-1)})\Big].
	\end{align}
A special case is the linear model, where significant simplifications take place. 

\begin{proposition}\label{prop:linear_bias_var}
	Consider the linear model with $\varphi_\ast(x)=x$. Suppose that $\bm{\tau}^{[t]}$ is invertible. 
	\begin{enumerate}
		\item The bias $b^{(t)}_{\mathrm{db}}=1$. Moreover, for the squared loss $\mathsf{L}_\ast(x)=x^2/2$, the variance $
		(\sigma^{(t)}_{\mathrm{db}})^2=\phi^{-1}\cdot \big\{\E^{(0)}\big(\mathfrak{Z}^{(t)}-\mathfrak{Z}^{(0)}\big)^2+\sigma_m^2\big\}$, where $\sigma_m^2\equiv \E_{\pi_m} \xi_{\pi_m}^2$.
		\item Under the conditions in Theorem \ref{thm:linear}-(1), the averaged distributional characterizations for $\mu^{(t)}_{\mathrm{db}}$ in Theorem \ref{thm:db_gd_oracle}-(2) hold with $b^{(t)}_{\mathrm{db}},
		\sigma^{(t)}_{\mathrm{db}}$ specified as above.
	\end{enumerate}
	
\end{proposition}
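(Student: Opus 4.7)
My plan hinges on three observations.

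\textbf{Bias.} The identity $b_{\mathrm{db}}^{(t)} = 1$ is an immediate consequence of Lemma~\ref{lem:linear_delta}. Rewriting the lemma in matrix form as $\bm{\delta}^{[t]} = -\bm{\tau}^{[t]}\bm{1}_t$, where $\bm{1}_t \in \R^t$ is the all-ones vector, the assumed invertibility of $\bm{\tau}^{[t]}$ yields $\bm{\omega}^{[t]}\bm{\delta}^{[t]} = -\bm{1}_t$. Plugging into (\ref{def:db_gd_bias_var}) gives $b_{\mathrm{db}}^{(t)} = -\iprod{-\bm{1}_t}{e_t} = 1$, with the key point being that no model-specific or oracle information is needed.

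\textbf{Variance under squared loss.} For $\mathsf{L}_\ast(x) = x^2/2$, every $\Upsilon_{s;k}$ becomes affine, namely $\Upsilon_{s;k}(\mathfrak{z}^{([0:s])}) = -\eta\bigl(\Theta_s(\mathfrak{z}^{([0:s])}) - \mathfrak{z}^{(0)} - \xi_k\bigr)$. Stacking (\ref{eqn:Theta_recur}) over $s \in [1:t]$ shows that $\Theta^{[t]} \equiv (\Theta_1, \ldots, \Theta_t)^\top$ satisfies the linear system
\begin{align*}
(I_t + \eta M)\bigl(\Theta^{[t]} - (\mathfrak{z}^{(0)} + \xi_k)\bm{1}_t\bigr) = \mathfrak{z}^{[1:t]} - (\mathfrak{z}^{(0)} + \xi_k)\bm{1}_t,
\end{align*}
where $M \in \R^{t\times t}$ is the strictly lower-triangular matrix with $M_{r,s} = \rho_{r-1,s}$ for $s < r$ (and zero otherwise). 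Differentiating in $\mathfrak{z}^{(s)}$ gives the deterministic derivatives $\partial_{\mathfrak{z}^{(s)}}\Upsilon_{r;k} = -\eta[(I_t + \eta M)^{-1}]_{r,s}$, which upon multiplying by $\phi$ yields the clean identity $\bm{\tau}^{[t]} = -\phi\eta (I_t + \eta M)^{-1}$. Consequently $\bm{\omega}^{[t]} = -(\phi\eta)^{-1}(I_t + \eta M)$, so $\bm{\omega}^{[t]}(I_t + \eta M)^{-1} = -(\phi\eta)^{-1} I_t$. Combining with the parallel identity $\Sigma_{\mathfrak{W}}^{[t]} = \phi\eta^2 (I_t + \eta M)^{-1}\E^{(0)}[R R^\top](I_t + \eta M^\top)^{-1}$, where $R \equiv \mathfrak{Z}^{[1:t]} - (\mathfrak{Z}^{(0)} + \xi_{\pi_m})\bm{1}_t$, the definition (\ref{def:db_gd_bias_var}) telescopes to
\begin{align*}
(\sigma_{\mathrm{db}}^{(t)})^2 = \phi^{-1}\,\E^{(0)}(\mathfrak{Z}^{(t)} - \mathfrak{Z}^{(0)} - \xi_{\pi_m})^2 = \phi^{-1}\bigl\{\E^{(0)}(\mathfrak{Z}^{(t)} - \mathfrak{Z}^{(0)})^2 + \sigma_m^2\bigr\},
\end{align*}
the cross term vanishing because the centered Gaussian $\mathfrak{Z}$ is independent of $\xi_{\pi_m}$.

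\textbf{Averaged distributional characterization.} For part~(2), I invoke Theorem~\ref{thm:db_gd_oracle}-(2) directly: the conditions in Theorem~\ref{thm:linear}-(1) supply the required averaged state-evolution input, and invertibility of the lower-triangular $\bm{\tau}^{[t]}$ forces $\tau_\ast^{(t)} > 0$, so the bound in Theorem~\ref{thm:db_gd_oracle}-(2) is non-vacuous. Substituting $b_{\mathrm{db}}^{(t)} = 1$ from part~(1) (and the simplified variance for the squared loss) then delivers the claim.

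The only step requiring genuine care is the matrix telescoping in the second paragraph. This cancellation is special to the squared loss: for general $\mathsf{L}_\ast$ the functions $\Upsilon_s$ are no longer affine and $\bm{\tau}^{[t]}$ ceases to be a scalar multiple of $(I_t + \eta M)^{-1}$, which is precisely why the proposition records a closed-form variance only in the squared-loss case.
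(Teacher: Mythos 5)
Your proposal is correct and follows essentially the same approach as the paper: the bias claim is the matrix-vector form of the paper's double-sum index-swap using Lemma \ref{lem:linear_delta}, and the variance computation replicates the paper's affine solve for $\bm{\Upsilon}$ in terms of $(I_t + \eta\,\mathfrak{O}_t(\bm{\rho}^{[t-1]}))^{-1}$, the resulting identities $\bm{\tau}^{[t]} = -\phi\eta(I_t + \eta M)^{-1}$ and $\Sigma_{\mathfrak{W}}^{[t]} = \phi\eta^2(I_t+\eta M)^{-1}\Sigma_{\mathscr{E}}^{[t]}(I_t+\eta M)^{-\top}$, and the telescoping cancellation in $\bm{\omega}^{[t]}\Sigma_{\mathfrak{W}}^{[t]}\bm{\omega}^{[t],\top}$.
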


From Proposition \ref{prop:linear_bias_var}-(1) and Theorem \ref{thm:db_gd_oracle}, we expect $
\hat{\mu}^{(t)}_{\mathrm{db}}\approx {\mu}^{(t)}_{\mathrm{db}}\stackrel{d}{\approx} \mu_\ast+ \sigma^{(t)}_{\mathrm{db}}\cdot  \mathsf{Z}$. Thus, with $\hat{\sigma}^{(t)}_{\mathrm{db}}$ defined in (\ref{def:db_gd_var_est}), in linear model the CI's in (\ref{def:db_gd_CI_general}) simplify to
\begin{align}\label{def:linear_CI}
\mathsf{CI}_\ell^{(t)}(\alpha)\equiv \big[  \hat{\mu}^{(t)}_{\mathrm{db};\ell} \pm  \hat{\sigma}^{(t)}_{\mathrm{db}} \cdot z_{\alpha/2}  \big].
\end{align}
Under the squared loss, the variance formula in Proposition \ref{prop:linear_bias_var}-(1) indicates a further simplification in variance estimation. Using the heuristics in (\ref{ineq:gen_err_est_reason}), 
\begin{align}\label{eqn:linear_var_gen}
(\sigma^{(t)}_{\mathrm{db}})^2\approx \phi^{-1} \mathscr{E}_{\abs{\cdot}^2}^{(t-1)}(A,Y),\quad t=1,2,\ldots.
\end{align}
By (\ref{eqn:linear_var_gen}), we may use the scaled generalization error estimate $\phi^{-1} \hat{\mathscr{E}}_{\abs{\cdot}^2}^{(t-1)}$ in (\ref{def:linear_gen_err_est}) as an estimator for $(\sigma^{(t)}_{\mathrm{db}})^2$. Consequently, under the squared loss, a further simplified CI can be devised in the linear model:
\begin{align}\label{def:linear_CI_sq}
\mathsf{CI}_{\mathrm{sq};\ell}^{(t)}(\alpha)\equiv \Big[  \hat{\mu}^{(t)}_{\mathrm{db};\ell} \pm  \big(\phi^{-1} \hat{\mathscr{E}}_{\abs{\cdot}^2}^{(t-1)}\big)^{1/2} \cdot z_{\alpha/2}  \Big].
\end{align}
The above CI's are valid in an averaged sense $\abs{\E_{\pi_n} \mathsf{CI}_{\mathrm{sq};\pi_n}^{(t)}(\alpha)-\alpha}\stackrel{\mathbb{P}}{\approx}0$, by an application of Proposition \ref{prop:linear_bias_var} coupled with a routine smoothing argument to lift the Lipschitz condition required for the test function therein. 

Note that if $\mu^{(t)} \approx \hat{\mu}$ for large $t$, then combining (\ref{eqn:var_gen_intro}) and (\ref{eqn:linear_var_gen}) yields $(\sigma^{(t)}_{\mathrm{db}})^2 \approx \sigma_{\mathrm{db}}^2$. This implies that the debiased gradient descent iterate $\hat{\mu}^{(t)}_{\mathrm{db}}$ in linear regression with squared loss has approximately the same Gaussian law as the debiased convex regularized estimator $\mu^{\mathrm{ls}}_{\mathrm{db}}$ defined in (\ref{eqn:debiased_reg_est}), in the sense that for regular enough test functions $\{\psi_j\}$,
\begin{align}\label{eqn:debias_gd_cvx}
\lim_{t \to \infty}\limsup_{n \to \infty}\E^{(0)}\biggabs{\frac{1}{n}\sum_{j \in [n]} \psi_j\big(\hat{\mu}^{(t)}_{\mathrm{db};j}\big)-\frac{1}{n}\sum_{j \in [n]} \psi_j\big(\mu^{\mathrm{ls}}_{\mathrm{db};j}\big)}^q = 0.
\end{align}
The above statement can be formally established under suitable conditions on the regularizer $\mathsf{f}$, with strong convexity being the simplest sufficient condition. We omit these technical details for brevity.

\section{Example II: Generalized logistic regression}\label{section:example_1bit}
In this section we consider generalized logistic regression in Example \ref{model:1bit}. Consider a loss function $\mathsf{L}(x,y)$ that does not depend on the iteration, and the associated proximal gradient descent algorithm with a fixed step size $\eta>0$:
\begin{align}\label{def:grad_descent_1bit_cs}
\mu^{(t)}= \prox_{\eta \mathsf{f}}\big(\mu^{(t-1)}-\eta \cdot A^\top \partial_1\mathsf{L}(A\mu^{(t-1)},Y)\big).
\end{align}
Note that the loss $\mathsf{L}$ is not required to align correctly with the model.

\subsection{Relation to logistic regression}
Let $\rho(t)=\log (1+e^t)$ and therefore $\rho'(t)=1/(1+e^{-t})$, $\rho'(-t)=1/(1+e^{t})$. In logistic regression, we observe i.i.d. data $(\tilde{Y}_i,A_i)\in \{0,1\}\times \R^n$ generated according to the model 
\begin{align*}
\Prob\big(\tilde{Y}_i=1|A_i\big)=1/(1+e^{- \iprod{A_i}{\mu_\ast} }) =\rho'(\iprod{A_i}{\mu_\ast}),\quad i \in [m].
\end{align*}
The maximum likelihood estimator $\hat{\mu}^{\mathrm{MLE}}$ solves
\begin{align*}
\hat{\mu}^{\mathrm{MLE}}\in \argmin_{\mu \in \R^n}-\sum_{i \in [m]}\Big\{\tilde{Y}_i\cdot  \log\rho'(\iprod{A_i}{\mu_\ast}) +(1-\tilde{Y}_i)\cdot \log\rho'(-\iprod{A_i}{\mu_\ast})\Big\}.
\end{align*}
It is natural to run gradient descent for the loss function above.

To setup its equivalence to Example \ref{model:1bit}, consider the reparametrization $Y_i\equiv 2 \tilde{Y}_i-1$. Then it is easy to verify that $\{(Y_i,A_i)\}_{i \in [m]}$ are i.i.d. observations from Example \ref{model:1bit} with the errors $\{\xi_i\}$ being i.i.d. random variables with c.d.f. $\Prob(\xi_1\leq t)=1/(1+e^{-t})=\rho'(t)$. Moreover, 
\begin{align*}
\hat{\mu}^{\mathrm{MLE}}\in \argmin_{\mu \in \R^n} \sum_{i \in [m]} \log\big(1+e^{-Y_i\cdot \iprod{A_i}{\mu}}\big) = \argmin_{\mu \in \R^n} \sum_{i \in [m]}  \rho\big(-Y_i\cdot \iprod{A_i}{\mu}\big).
\end{align*}
In other words, by setting the loss function as $\mathsf{L}(x,y)\equiv \rho(-xy)$, $\hat{\mu}^{\mathrm{MLE}}$ is stationary point to the gradient descent algorithm
\begin{align*}
\mu^{(t)}&\equiv \mu^{(t-1)}-\eta \cdot A^\top \partial_1 \mathsf{L}(A\mu^{(t-1)},Y).
\end{align*}
With this identification, it suffices to work with Example \ref{model:1bit} with a general loss function $\mathsf{L}(x,y)$.

\subsection{Distributional characterizations}

Since $\Upsilon_t(\mathfrak{z}^{[0:t]})$ is non-differentiable with respect to $\mathfrak{z}^{(0)}$, the regularity conditions (A5') in Theorem \ref{thm:gd_se}-(2) and (A5*) in Theorem \ref{thm:consist_tau_rho} are not satisfied. However, with a non-trivial smoothing technique, these distributional results continue to hold.

\begin{theorem}\label{thm:1bit_cs}
	Suppose Assumption \ref{assump:setup} and (A4*) hold for some $K,\Lambda \geq 2$, and the loss function $\mathsf{L}$ satisfies
	\begin{align}\label{ineq:111112bLound}
	 \abs{\partial_1 \mathsf{L}(0,0)}+\sup_{x \in \R, y \in [-1,1]}\max_{{\alpha}=2, 3}\abs{\partial_{\alpha}\mathsf{L}(x,y)} \leq \Lambda.
	\end{align}
	\begin{enumerate}
		\item 
		For a sequence of $\Lambda_\psi$-pseudo-Lipschitz functions $\{\psi_k:\R^{2t+1} \to \R\}_{k \in [m\vee n]}$ of order 2 where $\Lambda_\psi\geq 2$, the averaged distributional characterizations in Theorem \ref{thm:gd_se}-(2) hold for the gradient descent iterates in (\ref{def:grad_descent_1bit_cs}), with an error bound $
		\big( K\Lambda \Lambda_\psi L_\mu(1\wedge \sigma_{\mu_\ast})^{-1}\big)^{c_t}\cdot n^{-1/c_t}$. 
		\item Theorem \ref{thm:consist_tau_rho} holds for the output $(\hat{\bm{\tau}}^{[t]},\hat{\bm{\rho}}^{[t]})$ in Algorithm \ref{def:alg_tau_rho} using gradient descent iterates in (\ref{def:grad_descent_1bit_cs}), with an error bound $
		\big( K\Lambda  L_\mu(1\wedge \sigma_{\mu_\ast})^{-1}\big)^{c_t}\cdot n^{-1/c_t}$. 
	\end{enumerate}
\end{theorem}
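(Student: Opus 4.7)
The plan is to reduce Theorem \ref{thm:1bit_cs} to Theorems \ref{thm:gd_se_avg} and \ref{thm:consist_tau_rho} via a smoothing argument that bypasses the discontinuity of $\mathcal{F}(z,\xi)=\sign(z+\xi)$. For a small scale $\epsilon>0$, fix an odd $C^3$ mollification $h_\epsilon:\R\to[-1,1]$ of the sign function that agrees with $\sign$ outside $[-\epsilon,\epsilon]$ and satisfies $\pnorm{h_\epsilon^{(k)}}{\infty}\lesssim \epsilon^{-k}$ for $k\leq 3$; define $\mathcal{F}_\epsilon(z,\xi)\equiv h_\epsilon(z+\xi)$ and the smoothed labels $Y_i^\epsilon \equiv h_\epsilon(\iprod{A_i}{\mu_\ast}+\xi_i)\in[-1,1]$. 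Because (\ref{ineq:111112bLound}) demands bounds on $\mathsf{L}$ only for $y\in[-1,1]$, the smoothed loss $\mathsf{L}(\cdot,\mathcal{F}_\epsilon(\cdot,\xi_k))$ satisfies the regularity hypotheses (A5') and (A5*) with a constant $\Lambda_\epsilon \asymp \Lambda\cdot \epsilon^{-2}$, so Theorems \ref{thm:gd_se_avg} and \ref{thm:consist_tau_rho} apply verbatim to the smoothed gradient descent iterates $\mu^{(t),\epsilon}$ and their associated smoothed state-evolution parameters $(\bm{\tau}^{[t],\epsilon}, \bm{\rho}^{[t],\epsilon})$, with error bounds polynomial in $\Lambda_\epsilon$.

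Next I would control the smoothing gap both algorithmically and at the population level. The two problems differ only on $\mathcal{E}_i^\epsilon\equiv\{|\iprod{A_i}{\mu_\ast}+\xi_i|\leq \epsilon\}$; conditioning on $\xi$ and exploiting that $\iprod{A_i}{\mu_\ast}$ has an approximately Gaussian density of order $\sigma_{\mu_\ast}^{-1}$ (via universality under (A2), with negligible correction of order $n^{-c}$), one obtains the anti-concentration bound $\Prob(\mathcal{E}_i^\epsilon)\lesssim \epsilon\cdot (1\wedge \sigma_{\mu_\ast})^{-1}$. The identical estimate applies to the Gaussian $\Theta_s(\mathfrak{Z}^{([0:s])})+\xi_k$ since $\mathfrak{Z}^{(0)}\sim\mathcal{N}(0,\sigma_{\mu_\ast}^2)$ is an additive component. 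Inducting on $t$ and using the Lipschitz continuity of $\mathsf{L},\partial_1\mathsf{L}$ on $[-1,1]$ then yields
\begin{align*}
\E^{(0)}\pnorm{\mu^{(t)}-\mu^{(t),\epsilon}}{}^2/n \,\vee\, \pnorm{\bm{\tau}^{[t]}-\bm{\tau}^{[t],\epsilon}}{\op}^2 \,\vee\, \pnorm{\bm{\rho}^{[t]}-\bm{\rho}^{[t],\epsilon}}{\op}^2 \lesssim_t \epsilon\cdot (1\wedge \sigma_{\mu_\ast})^{-c_t}.
\end{align*}
Combining with the smoothed versions of Theorems \ref{thm:gd_se_avg} and \ref{thm:consist_tau_rho} via the triangle inequality and optimizing $\epsilon\asymp n^{-\gamma}$ for a small $\gamma>0$ balances the $\epsilon^{-O(t)}$ blowup against the $\epsilon$-smallness of the smoothing error, yielding the claimed $n^{-1/c_t}$ rate with factor $(1\wedge\sigma_{\mu_\ast})^{-c_t}$.

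The main obstacle is the propagation of smoothing errors through the recursive state evolution in Definition \ref{def:gd_se}: a perturbation of order $\epsilon$ in $\mathsf{L}\circ \mathcal{F}$ enters through derivatives of $\Upsilon_s^\epsilon$ and $\Omega_s^\epsilon$ whose magnitudes blow up as $\epsilon^{-O(t)}$, so one must carefully combine quantitative stability of the map $(\mathsf{L},\mathsf{f})\mapsto (\bm{\tau}^{[t]},\bm{\rho}^{[t]},\Upsilon_t,\Omega_t)$ with the anti-concentration estimates, in such a way that the final $\sigma_{\mu_\ast}$ dependence collapses to a clean $(1\wedge\sigma_{\mu_\ast})^{-c_t}$ factor. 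This stability in turn rests on uniform nondegeneracy of the Gaussian vectors $\mathfrak{Z}^{([0:s])}$, which is precisely where the $\sigma_{\mu_\ast}$ dependence enters. All remaining smoothness, moment, and universality inputs transfer from the proofs of Theorems \ref{thm:gd_se_avg} and \ref{thm:consist_tau_rho} without essential modification.
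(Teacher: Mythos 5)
Your proposal follows essentially the same route as the paper's proof: mollify $\sign$ to a smooth $\varphi_\epsilon$, apply Theorems \ref{thm:gd_se_avg} and \ref{thm:consist_tau_rho} to the smoothed iterates at the price of a Lipschitz blow-up of order $\epsilon^{-O(1)}$, bound the smoothing gap at both the data level and the state-evolution level via the anti-concentration estimate $\Prob(|\mathfrak{Z}^{(0)}+\xi|\leq\epsilon)\lesssim \epsilon/\sigma_{\mu_\ast}$, propagate the errors through the recursion (the paper's Lemmas \ref{lem:1bit_cs_se_smooth_diff} and \ref{lem:1bit_cs_data_smooth_diff}), and optimize $\epsilon$ to balance the two error sources. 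You also correctly identify the recursive stability propagation as the technically delicate step; the argument is complete once those stability lemmas are filled in, exactly as the paper does.
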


It is easy to verify that the loss function $\mathsf{L}(x,y)\equiv \rho(-xy)$ used in logistic regression satisfies (\ref{ineq:111112bLound}) (details may be found in Section \ref{subsubsection:verification_logistic}). Moreover, 
for random noises $\{\xi_i\}$'s, the above theorem holds for every realization of these noises. 

At a high level, the smoothing technique used in the proof of Theorem \ref{thm:1bit_cs} proceeds as follows. We construct a sequence of smooth approximation functions $\{\varphi_\epsilon\}_{\epsilon>0}$ such that $\varphi_\epsilon\to \sign$ as $\epsilon \to 0$ in a suitable sense. Then for each $\epsilon>0$, we may compute the state evolution parameters $\texttt{SE}_\epsilon^{(t)}$ associated with the smoothed model according to Definition \ref{def:gd_se}. We also construct `smoothed data' $\{Y_{\epsilon;i}\equiv \varphi_\epsilon(\iprod{A_i}{\mu_\ast}+\xi_i)\}_{i \in [m]}$, and compute `smoothed gradient descent' $\mu^{(t)}_\epsilon$ according to (\ref{def:grad_descent}). Since our theory applies to $\mu^{(t)}_\epsilon$ via $\texttt{SE}_\epsilon^{(t)}$ for any $\epsilon>0$, the key task is to prove that these quantities remain stable as $\epsilon \to 0$. We will prove such stability estimates for $\texttt{SE}_\epsilon^{(t)}$ in Lemma \ref{lem:1bit_cs_se_smooth_diff}, and for $\mu^{(t)}_\epsilon$ in Lemma \ref{lem:1bit_cs_data_smooth_diff}.

Interestingly, the smoothing technique also provides a method to compute the information parameter $\delta_t$ via the limit of `smoothed information parameter' $\delta_{\epsilon;t}$ in Lemma \ref{lem:1bit_cs_delta_general_loss}, as well as the bias parameter $b^{(t)}_{\mathrm{db}}$ in Proposition \ref{prop:1bit_cs_bias_var} below.

\subsection{The information parameter}
Due to the non-differentiability issue of $\Upsilon_t$, it is understood that $\delta_t$ is defined in the sense of Gaussian integration-by-parts formula in (\ref{def:delta_t_alternative}), whenever well-defined. 

To state our formula for $\delta_t$, we need the some further notation. For a general loss function $\mathsf{L}$, let $\Theta_t^\circ: \R^{[0:t]}\to \R$ be defined recursively via the relation
\begin{align}\label{def:Theta_circ}
\Theta_t^\circ(z_{[0:t]})\equiv z_{t}- \sum_{s \in [1:t-1]}\eta_{s-1} \rho_{t-1,s} \cdot   \partial_1 \mathsf{L} \big(\Theta^\circ_s(z_{[0:s]}),z_0\big).
\end{align}
For $v>0$, let $\mathfrak{g}_v$ be the Lebesgue density of $\mathcal{N}(0,v^2)$.

\begin{lemma}\label{lem:1bit_cs_delta_general_loss}
	Suppose (A1), (A3) and (A4*), and (\ref{ineq:111112bLound}) hold. 
	Then with $(\mathsf{Z}_0,\mathsf{Z}_{[1:t]})\sim \mathcal{N}\big(0, \var\big(\E^{(0)}\big[\mathfrak{Z}^{([0:t])}|\mathfrak{Z}^{([1:t])}\big]\big)\big)$, if $\mathfrak{v}_t\equiv \E^{(0)}\var(\mathfrak{Z}^{(0)}|\mathfrak{Z}^{([1:t])})>0$, 
	\begin{align*}
	\delta_t&= -2\phi\eta\cdot \E^{(0)} \Big\{ \mathfrak{g}_{\mathfrak{v}_t}(\xi_{\pi_m}+\mathsf{Z}_0) \cdot  e_t^\top\big(I_t + \eta \cdot  \mathfrak{L}_1(U,\mathsf{Z}_{[1:t]}) \mathfrak{O}_t(\bm{\rho}^{[t-1]})\big)^{-1}\mathfrak{l}_2(U,\mathsf{Z}_{[1:t]})\Big\}. 
	\end{align*}
	Here $U\sim \mathrm{Unif}[-1,1]$ is independent of all other variables, and
	\begin{itemize}
		\item $\mathfrak{L}_1(U,\mathsf{Z}_{[1:t]})\equiv \mathrm{diag}\big(\big\{\partial_{11} \mathsf{L}\big( \Theta_t^\circ(U,\mathsf{Z}_{[1:s]}),U   \big)\}_{s \in [t]}\big) \in \R^{t\times t}$, 
		\item $\mathfrak{l}_2(U,\mathsf{Z}_{[1:t]})\equiv \big(\partial_{12} \mathsf{L}( \Theta_t^\circ(U,\mathsf{Z}_{[1:s]}),U)\big)_{s \in [t]} \in \R^t$.
	\end{itemize}
	For squared loss $\mathsf{L}(x,y)=(x-y)^2/2$, we have $\delta_t=-2 \E^{(0)} \mathfrak{g}_{\sigma_{\mu_\ast}}(\xi_{\pi_m})\cdot \sum_{s \in [1:t]} \tau_{t,s}$, provided that $\mu_\ast\neq 0$.
\end{lemma}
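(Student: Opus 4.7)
The plan is to derive the formula via a smoothing argument that reduces the non-differentiable problem to a differentiable one, compute $\delta_{\epsilon;t}$ by the chain rule in the smoothed model, and pass to the limit $\epsilon\to 0$. Concretely, fix a smooth odd monotone approximation $\varphi_\epsilon:\R\to[-1,1]$ with $\varphi_\epsilon\to\sign$ pointwise and $\varphi_\epsilon\equiv \sign$ outside $[-\epsilon,\epsilon]$, set $\mathcal{F}_\epsilon(z,\xi)\equiv \varphi_\epsilon(z+\xi)$, and let $\bm{\rho}_\epsilon^{[t-1]},\delta_{\epsilon;t}$ denote the corresponding state-evolution parameters from Definition \ref{def:gd_se}. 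Since $\mathcal{F}_\epsilon$ is now smooth, the standard derivative formulation $\delta_{\epsilon;t}=\phi\E^{(0)}\partial_{\mathfrak{Z}^{(0)}}\Upsilon_{\epsilon;t;\pi_m}$ applies. The stability result in Lemma \ref{lem:1bit_cs_se_smooth_diff} (referenced in the preceding discussion) ensures $\bm{\rho}_\epsilon^{[t-1]}\to\bm{\rho}^{[t-1]}$ and $\delta_{\epsilon;t}\to\delta_t$ as $\epsilon\to 0$, so it suffices to compute the limit of the smoothed formula.

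The core step sets up the chain rule as a triangular linear system. First one verifies by induction that $\Theta_{\epsilon;s}(\mathfrak{z}^{([0:s])})=\Theta_s^\circ(\varphi_\epsilon(\mathfrak{z}^{(0)}+\xi),\mathfrak{z}^{([1:s])})$, reconciling the recursions in (\ref{def:Theta_circ}) and (\ref{eqn:Theta_recur}). Setting $\mathbf{u}_s\equiv\partial_0\Upsilon_{\epsilon;s}$ and applying the chain rule to $\Upsilon_{\epsilon;s}=-\eta\partial_1\mathsf{L}(\Theta_{\epsilon;s},\varphi_\epsilon(\mathfrak{z}^{(0)}+\xi))$, one obtains $(I_t+\eta\mathbf{L}_\epsilon\mathfrak{O}_t(\bm{\rho}_\epsilon^{[t-1]}))\mathbf{u}=-\eta\varphi_\epsilon'(\mathfrak{z}^{(0)}+\xi)\,\mathbf{v}_\epsilon$, where $\mathbf{L}_\epsilon\equiv\mathfrak{L}_1(\varphi_\epsilon(\mathfrak{z}^{(0)}+\xi),\mathfrak{z}^{([1:t])})$ and $\mathbf{v}_\epsilon\equiv\mathfrak{l}_2(\varphi_\epsilon(\mathfrak{z}^{(0)}+\xi),\mathfrak{z}^{([1:t])})$. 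Inverting the lower-triangular system gives
\begin{align*}
\mathbf{u}_t=-\eta\,\varphi_\epsilon'(\mathfrak{z}^{(0)}+\xi)\cdot e_t^\top\big(I_t+\eta\mathbf{L}_\epsilon\mathfrak{O}_t(\bm{\rho}_\epsilon^{[t-1]})\big)^{-1}\mathbf{v}_\epsilon.
\end{align*}

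For the limit passage, condition on $\mathfrak{Z}^{([1:t])}$ and decompose $\mathfrak{Z}^{(0)}=\mathsf{Z}_0+\mathsf{N}$ with $\mathsf{N}$ independent Gaussian of (conditional) variance $\mathfrak{v}_t$. The conditional inner integral over $\mathsf{N}$ has $\varphi_\epsilon'(\mathsf{Z}_0+\mathsf{N}+\xi_{\pi_m})$ concentrating near $\mathsf{N}=-\mathsf{Z}_0-\xi_{\pi_m}$. Changing variable $u=\varphi_\epsilon(\mathsf{Z}_0+\mathsf{N}+\xi_{\pi_m})$ (so that $du=\varphi_\epsilon'\,d\mathsf{N}$ and $\mathsf{N}(u)=\varphi_\epsilon^{-1}(u)-\mathsf{Z}_0-\xi_{\pi_m}\to -\mathsf{Z}_0-\xi_{\pi_m}$ for $u\in(-1,1)$ as $\epsilon\to 0$) converts the integral into one over $u\in[-1,1]$. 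By dominated convergence, the limit is $\mathfrak{g}_{\mathfrak{v}_t}(\mathsf{Z}_0+\xi_{\pi_m})\int_{-1}^{1}G(u;\mathfrak{Z}^{([1:t])})\,du=2\mathfrak{g}_{\mathfrak{v}_t}(\mathsf{Z}_0+\xi_{\pi_m})\,\E_U G(U;\mathfrak{Z}^{([1:t])})$ with $U\sim\mathrm{Unif}[-1,1]$ and $G(u;\cdot)\equiv e_t^\top(I_t+\eta\mathfrak{L}_1(u,\cdot)\mathfrak{O}_t(\bm{\rho}^{[t-1]}))^{-1}\mathfrak{l}_2(u,\cdot)$. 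Multiplying by $-\phi\eta$ and taking the outer expectation over $\mathfrak{Z}^{([1:t])}$ recovers the general formula.

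For the squared-loss special case, $\partial_{11}\mathsf{L}\equiv 1$ and $\partial_{12}\mathsf{L}\equiv -1$ force $\mathfrak{L}_1=I_t$ and $\mathfrak{l}_2=-\bm{1}_t$, both independent of $U$, so the $U$-averaging drops out. The dual recursion $\partial_s\Theta_t=D_{t,s}$ satisfies $(I_t+\eta\mathfrak{O}_t(\bm{\rho}^{[t-1]}))\mathbf{D}=I_t$, hence $e_t^\top(I_t+\eta\mathfrak{O}_t(\bm{\rho}^{[t-1]}))^{-1}\bm{1}_t=\sum_s D_{t,s}=-(\phi\eta)^{-1}\sum_s\tau_{t,s}$. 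Marginalizing $\mathsf{Z}_0$ by the tower law and using $\mathsf{Z}_0+\mathsf{N}\equald \mathfrak{Z}^{(0)}\sim\mathcal{N}(0,\sigma_{\mu_\ast}^2)$ gives $\E[\mathfrak{g}_{\mathfrak{v}_t}(\mathsf{Z}_0+\xi_{\pi_m})\mid\xi_{\pi_m}]=\mathfrak{g}_{\sigma_{\mu_\ast}}(\xi_{\pi_m})$ (well-defined precisely because $\mu_\ast\neq 0$), yielding the claimed simpler formula. The main obstacle is the rigorous $\epsilon\to 0$ limit passage: invertibility of $I_t+\eta\mathbf{L}_\epsilon\mathfrak{O}_t(\bm{\rho}_\epsilon^{[t-1]})$ is easy since $\mathfrak{O}_t$ is nilpotent of order $t$ (so the inverse is a finite Neumann series), but one must combine this with the uniform bound on $|\varphi_\epsilon'|$ as an approximate identity, the Lipschitz control (\ref{ineq:111112bLound}) on the second derivatives of $\mathsf{L}$, and the shrinking transition zone to justify exchanging the limit with the $\mathsf{N}$-integration via dominated convergence.
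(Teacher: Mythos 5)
Your proposal is correct and follows essentially the same route as the paper's proof: smooth $\sign$ by $\varphi_\epsilon$, rewrite $\Theta_{\epsilon;s}=\Theta_s^\circ(\varphi_\epsilon(\cdot),\cdot)$, solve the triangular linear system for $\partial_{(0)}\Upsilon_{\epsilon;\cdot}$, and pass to the limit via the conditional Gaussian decomposition $\mathfrak{Z}^{(0)}=\mathsf{Z}_0+\mathsf{N}$ with an approximate-identity argument that converts the $\mathsf{N}$-integral into a $\mathrm{Unif}[-1,1]$ average. Two small points worth noting. First, you correctly compute $\partial_{12}\mathsf{L}\equiv -1$ for the squared loss (so $\mathfrak{l}_2=-\bm{1}_t$), whereas the paper's proof text contains a sign typo ($\partial_{11}\mathsf{L}\equiv\partial_{12}\mathsf{L}\equiv 1$); the paper's subsequent intermediate display has a compensating sign error, so the final formula $\delta_t=-2\E^{(0)}\mathfrak{g}_{\sigma_{\mu_\ast}}(\xi_{\pi_m})\sum_s\tau_{t,s}$ comes out the same in both versions — your sign-tracking is the correct one. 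Second, you derive the identity $e_t^\top(I_t+\eta\mathfrak{O}_t(\bm{\rho}^{[t-1]}))^{-1}\bm{1}_t=-(\phi\eta)^{-1}\sum_s\tau_{t,s}$ directly from the dual recursion $(I_t+\eta\mathfrak{O}_t)\bm{\Theta}'=I_t$ for $\partial_{(s)}\Theta_t$ under squared loss, whereas the paper defers this to a separate computation (Lemma~\ref{lem:1bit_cs_delta_tau_smooth}); your self-contained derivation is equivalent and arguably cleaner within this proof. Your change of variable $u=\varphi_\epsilon(\mathsf{Z}_0+\mathsf{N}+\xi_{\pi_m})$ is a cosmetic variant of the paper's rescaling $v=\epsilon^{-1}(z+\xi_{\pi_m})$ and leads to the same $\int_{-1}^1$ form.
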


Under the squared loss, the complicated formula of $\delta_t$ simplifies significantly, revealing a structure reminiscent of the linear regression case (cf. Lemma \ref{lem:linear_delta}). Note that using the squared loss intrinsically mis-specifies the model by treating logistic regression as linear regression. Interestingly, \cite{huang2018robust} showed that the global least squares estimator (the convergent point of $\mu^{(t)})$ achieves a near rate-optimal convergence rate for a scaled $\mu_\ast$ in the low dimensional case $m\ll n$ under Gaussian noises.

\subsection{Mean-field statistical inference}

\subsubsection{Estimation of generalization error}
Consider the generalization error (\ref{def:gen_err}) under the loss function $\mathsf{H}(x,y)$. The generalization error estimator $\hat{\mathscr{E}}_{\mathsf{H}}^{(t)}$ takes the form as in (\ref{def:gen_err_est}). Its validity in the following proposition is formally justified by a smoothing argument similar to that of Theorem \ref{thm:1bit_cs}.

\begin{proposition}\label{prop:gen_error_linear}
	Assume the same conditions as in Theorem \ref{thm:1bit_cs}-(1). Suppose that $\mathsf{H}\in C^3(\R^2)$ has mixed derivatives of order 3 all bounded by $\Lambda$. Then  for any $q\in \N$, there exists some $c_t=c_t(t,q)>1$ such that
	\begin{align*}
	\E^{(0)}\abs{\hat{\mathscr{E}}_{\mathsf{H}}^{(t)}- \mathscr{E}_{\mathsf{H}}^{(t)}(A,Y) }^q\leq \big(K\Lambda L_\mu (1\wedge \sigma_{\mu_\ast})^{-1}\big)^{c_t}\cdot n^{-1/c_1}.
	\end{align*}
\end{proposition}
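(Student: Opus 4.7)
The direct application of Theorem \ref{thm:gen_error} is blocked because $\mathsf{H}_{\mathcal{F};k}(u_1,u_2)=\mathsf{H}(u_1,\sign(u_2+\xi_k))$ is not $C^3$ in $u_2$; the plan is to recover the conclusion via the smoothing argument already used to establish Theorem \ref{thm:1bit_cs}, combined with a stability estimate for the generalization error under that smoothing.

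The first step is to introduce a family of $C^3$ approximators $\{\varphi_\epsilon\}_{\epsilon>0}$ to $\sign$, whose derivatives up to order $3$ are bounded by a negative polynomial of $\epsilon$, form the surrogate responses $Y_{\epsilon;i}\equiv \varphi_\epsilon(\iprod{A_i}{\mu_\ast}+\xi_i)$, and run (\ref{def:grad_descent_1bit_cs}) together with Algorithm \ref{def:alg_tau_rho} on the pair $(A,Y_\epsilon)$ to obtain $\mu^{(t)}_\epsilon$, $\hat{\bm{\rho}}^{[t]}_\epsilon$, residuals $\hat{Z}^{(t)}_\epsilon$, and the estimator $\hat{\mathscr{E}}_{\mathsf{H},\epsilon}^{(t)}$. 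For each fixed $\epsilon>0$, the smoothed model satisfies the hypotheses of Theorem \ref{thm:gen_error} with the constant $\Lambda$ inflated to $\Lambda\cdot \epsilon^{-c_0}$ for some absolute $c_0>0$, yielding
\begin{align*}
\E^{(0)}\bigabs{\hat{\mathscr{E}}_{\mathsf{H},\epsilon}^{(t)}-\mathscr{E}_{\mathsf{H},\epsilon}^{(t)}(A,Y_\epsilon)}^q\leq \big(K\Lambda L_\mu \epsilon^{-c_0}\big)^{c_t}\cdot n^{-1/c_t}.
\end{align*}

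The next step is to establish two stability bounds separating the smoothed and exact quantities. For $\hat{\mathscr{E}}_{\mathsf{H},\epsilon}^{(t)}-\hat{\mathscr{E}}_{\mathsf{H}}^{(t)}$, I would combine Lemma \ref{lem:1bit_cs_data_smooth_diff} (controlling $\pnorm{\mu^{(t)}_\epsilon-\mu^{(t)}}{}$) with a parallel propagation argument through the recursive steps of Algorithm \ref{def:alg_tau_rho} to bound $\pnorm{\hat{\bm{\rho}}^{[t]}_\epsilon-\hat{\bm{\rho}}^{[t]}}{\op}$ and hence $\hat{Z}^{(t)}_\epsilon-\hat{Z}^{(t)}$; the smoothness of $\mathsf{H}$ in its first argument then converts this into a polynomial-in-$\epsilon$ bound on $\hat{\mathscr{E}}_{\mathsf{H},\epsilon}^{(t)}-\hat{\mathscr{E}}_{\mathsf{H}}^{(t)}$. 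For $\mathscr{E}_{\mathsf{H},\epsilon}^{(t)}(A,Y_\epsilon)-\mathscr{E}_{\mathsf{H}}^{(t)}(A,Y)$, the discrepancy is split according to whether the fresh linear functional $\iprod{A_{\mathrm{new}}}{\mu_\ast}+\xi_{\pi_m}$ lies in an $\mathcal{O}(\epsilon)$-band around zero: outside the band $\varphi_\epsilon=\sign$ so the two integrands agree, whereas inside the band the contribution is absorbed using the boundedness of $\mathsf{H}$ on its second argument together with a Gaussian anti-concentration bound for $\iprod{A_{\mathrm{new}}}{\mu_\ast}$, contributing the factor $(1\wedge \sigma_{\mu_\ast})^{-1}$.

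The main obstacle is the balance between these two contributions: the Theorem \ref{thm:gen_error} bound blows up polynomially in $\epsilon^{-1}$ while the stability errors decay polynomially in $\epsilon$. Choosing $\epsilon=n^{-\kappa}$ for a sufficiently small $\kappa=\kappa(t,q)>0$ and tracking the $\sigma_{\mu_\ast}$-dependence through the anti-concentration factor (as in the proof of Theorem \ref{thm:1bit_cs}) produces the stated bound $(K\Lambda L_\mu (1\wedge \sigma_{\mu_\ast})^{-1})^{c_t}\cdot n^{-1/c_t}$ after collecting all constants.
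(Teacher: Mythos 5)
Your approach is essentially the same as the paper's: smooth $\sign$ by $\varphi_\epsilon$, apply Theorem \ref{thm:gen_error} to the smoothed model (with constants inflated polynomially in $\epsilon^{-1}$), control the two smoothing discrepancies, and balance by choosing $\epsilon$ as a negative power of $n$. The paper's proof is shorter because Lemma \ref{lem:1bit_cs_data_smooth_diff} already bounds $\abs{\d_\epsilon \mathscr{E}_{\mathsf{H}}^{(s)}}$ and $\abs{\d_\epsilon \hat{\mathscr{E}}_{\mathsf{H}}^{(s)}}$ directly (they appear explicitly in the definition of $\hat{\mathcal{P}}_{\texttt{DT}}^{(t)}(\epsilon)$), so the propagation through Algorithm \ref{def:alg_tau_rho} and the band-splitting/anti-concentration step you describe have in fact already been packaged into that lemma; the paper's proof just invokes it along with Lemma \ref{lem:1bit_cs_se_smooth_diff}. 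One small imprecision in your second stability bound: outside the $\mathcal{O}(\epsilon)$-band the second arguments of $\mathsf{H}$ agree, but the first arguments do \emph{not}, since $\mu^{(t)}_\epsilon\neq\mu^{(t)}$; the residual term $\iprod{A_{\mathrm{new}}}{\mu^{(t)}_\epsilon-\mu^{(t)}}$ must also be controlled there using $\pnorm{\d_\epsilon\mu^{(t)}}{}/\sqrt{n}$ from Lemma \ref{lem:1bit_cs_data_smooth_diff}, exactly as that lemma's proof does in step (5).
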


As the logistic loss $\mathsf{L}(x,y)=\log(1+e^{-xy})$ satisfies (\ref{ineq:111112bLound}), the above proposition holds for $\mathsf{H}=\mathsf{L}$ in logistic regression.

\subsubsection{Debiased gradient descent inference}

Consider the oracle debiased gradient descent iterate $\mu^{(t)}_{\mathrm{db}}$ or its data-driven version $\hat{\mu}^{(t)}_{\mathrm{db}}$ taking the form (\ref{def:debias_gd}). 

To state our result, recall $\bm{\delta}^{[t]}\equiv (\delta_s)_{s \in [1:t]}$ given by Lemma \ref{lem:1bit_cs_delta_general_loss}. Let $\bm{\omega}^{[t]}\equiv (\bm{\tau}^{[t]})^{-1}$, where with $\bm{L}_{11;k}^{[t]}(z_{[0:t]})\equiv \mathrm{diag}\big(\big\{ \partial_{11} \mathsf{L}(\Theta_{s;k}(z_{[0:t]}),\sign(z_0+\xi_k)) \big\}_{s\in [1:t]}\big)$,
\begin{align}\label{def:1bit_tau}
\bm{\tau}^{[t]}\equiv -\phi\eta \cdot \E^{(0)} \big(I_t + \eta\cdot  \bm{L}_{11;\pi_m}^{[t]}(\mathfrak{Z}^{([0:t])}) \mathfrak{O}_t(\bm{\rho}^{[t-1]})\big)^{-1}\bm{L}_{11;\pi_m}^{[t]}(\mathfrak{Z}^{([0:t])}).
\end{align}
Let the bias $b^{(t)}_{\mathrm{db}}\equiv -  \iprod{\bm{\omega}^{[t]}\bm{\delta}^{[t]} }{e_t}$, and the variance $(\sigma^{(t)}_{\mathrm{db}})^2\equiv \bm{\omega}_{t\cdot}^{[t]}\Sigma_{\mathfrak{W}}^{[t]}\bm{\omega}_{t\cdot}^{[t],\top}$.

\begin{proposition}\label{prop:1bit_cs_bias_var}
	Suppose Assumption \ref{assump:setup} and (A4*), and (\ref{ineq:111112bLound}) hold. 
	\begin{enumerate}
		\item For a sequence of $\Lambda_\psi$-pseudo-Lipschitz functions $\{\psi_\ell:\R \to \R\}_{\ell \in [n]}$ of order 2 where $\Lambda_\psi\geq 2$, the averaged distributional characterizations for $\mu^{(t)}_{\mathrm{db}}$ in Theorem \ref{thm:db_gd_oracle}-(2) hold with $b^{(t)}_{\mathrm{db}},
		\sigma^{(t)}_{\mathrm{db}}$ specified as above, and with an error bound $\big( K\Lambda \Lambda_\psi L_\mu(1\wedge \sigma_{\mu_\ast})^{-1}(1\wedge \tau_\ast^{(t)})^{-1}\big)^{c_t}\cdot n^{-1/c_t}$. 
		\item For the squared loss $\mathsf{L}(x,y)=(x-y)^2/2$, if $\sigma_{\mu_\ast}>0$, 
		\begin{align*}
		b^{(t)}_{\mathrm{db}}= 2 \E^{(0)} \mathfrak{g}_{\sigma_{\mu_\ast}}(\xi_{\pi_m}),\, (\sigma^{(t)}_{\mathrm{db}})^2= \phi^{-1} \E^{(0)}\big(\mathfrak{Z}^{(t)}-\sign(\mathfrak{Z}^{(0)}+\xi_{\pi_m})\big)^2.
		\end{align*}
	\end{enumerate}
	
\end{proposition}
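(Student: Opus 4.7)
The plan is to deduce Part~(1) from Theorem~\ref{thm:db_gd_oracle}-(2) combined with the smoothing scheme used for Theorem~\ref{thm:1bit_cs}, and then to obtain Part~(2) by algebraic simplification under the squared loss. First I would derive the explicit form (\ref{def:1bit_tau}) of $\bm{\tau}^{[t]}$: differentiating the identity $\Upsilon_{t;k}(\mathfrak{z}^{([0:t])}) = -\eta\,\partial_1\mathsf{L}(\Theta_{t;k},\mathcal{F}(\mathfrak{z}^{(0)},\xi_k))$ together with $\Theta_{t;k} = \mathfrak{z}^{(t)} + \sum_{r<t}\rho_{t-1,r}\Upsilon_{r;k}$ in $\mathfrak{z}^{(s)}$ produces the matrix equation $\bm{\Upsilon}_k^{';[t]} = -\eta\,\bm{L}_{11;k}^{[t]}[I_t + \mathfrak{O}_t(\bm{\rho}^{[t-1]})\,\bm{\Upsilon}_k^{';[t]}]$, which solves in closed form and, after averaging over $\pi_m$ as in step~(S3) of Definition~\ref{def:gd_se}, gives exactly (\ref{def:1bit_tau}). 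The formulas for $b^{(t)}_{\mathrm{db}}$ and $(\sigma^{(t)}_{\mathrm{db}})^2$ in Part~(1) are then their definitions in (\ref{def:db_gd_bias_var}), with $\bm{\delta}^{[t]}$ supplied by Lemma~\ref{lem:1bit_cs_delta_general_loss}.

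For Part~(1)'s distributional assertion, I would apply Theorem~\ref{thm:db_gd_oracle}-(2) to the smoothed one-bit model $Y_{\epsilon,i} = \varphi_\epsilon(\langle A_i,\mu_\ast\rangle + \xi_i)$ with $\varphi_\epsilon\to\sign$, where that theorem applies verbatim, and then pass to the limit $\epsilon\to 0$ using the same two-step stability mechanism (for the state evolution parameters and for the iterate) that drives the proof of Theorem~\ref{thm:1bit_cs}-(1), augmented with stability of $\bm{\omega}^{[t]}_\epsilon = (\bm{\tau}^{[t]}_\epsilon)^{-1}$ and of $(b^{(t)}_{\mathrm{db},\epsilon},\sigma^{(t)}_{\mathrm{db},\epsilon})$, both of which follow from the lower-triangular structure of $\bm{\tau}^{[t]}_\epsilon$ together with a uniform-in-$\epsilon$ lower bound on $|\tau_{s,s}^{(\epsilon)}|$ by a quantity comparable to $\tau_\ast^{(t)}$. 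This latter uniformity is the main obstacle: one must keep the $(1\wedge\tau_\ast^{(t)})^{-1}$ factor in the error budget stable under smoothing and reconcile the $\epsilon\to 0$ limit with the $n^{-1/c_t}$ rate of Theorem~\ref{thm:db_gd_oracle}-(2) in the same way Theorem~\ref{thm:1bit_cs} handles the $(1\wedge\sigma_{\mu_\ast})^{-1}$ factor.

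Part~(2) is then a direct computation. Under $\mathsf{L}(x,y)=(x-y)^2/2$ one has $\bm{L}_{11;\pi_m}^{[t]}\equiv I_t$, so (\ref{def:1bit_tau}) collapses to $\bm{\tau}^{[t]} = -\phi\eta\,[I_t + \eta\,\mathfrak{O}_t(\bm{\rho}^{[t-1]})]^{-1}$, whence $\bm{\omega}^{[t]} = -(\phi\eta)^{-1}[I_t + \eta\,\mathfrak{O}_t(\bm{\rho}^{[t-1]})]$; explicitly, $\omega_{t,s} = -\phi^{-1}\rho_{t-1,s}$ for $s<t$ and $\omega_{t,t} = -(\phi\eta)^{-1}$. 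For the bias, the squared-loss clause of Lemma~\ref{lem:1bit_cs_delta_general_loss} reads $\bm{\delta}^{[t]} = -2\,\E^{(0)}\mathfrak{g}_{\sigma_{\mu_\ast}}(\xi_{\pi_m})\cdot\bm{\tau}^{[t]}\mathbf{1}_t$, so $\bm{\omega}^{[t]}\bm{\delta}^{[t]} = -2\,\E^{(0)}\mathfrak{g}_{\sigma_{\mu_\ast}}(\xi_{\pi_m})\cdot\mathbf{1}_t$ and $b^{(t)}_{\mathrm{db}} = -\langle\bm{\omega}^{[t]}\bm{\delta}^{[t]},e_t\rangle = 2\,\E^{(0)}\mathfrak{g}_{\sigma_{\mu_\ast}}(\xi_{\pi_m})$. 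For the variance, writing $e_s \equiv \Theta_{s;\pi_m}-\mathcal{F}(\mathfrak{Z}^{(0)},\xi_{\pi_m})$ gives $\Upsilon_{s;\pi_m}=-\eta e_s$, while rearranging (\ref{eqn:Theta_recur}) yields $\sum_{r<t}\rho_{t-1,r}e_r = \eta^{-1}(\mathfrak{Z}^{(t)}-\mathcal{F}(\mathfrak{Z}^{(0)},\xi_{\pi_m})-e_t)$; substituting the explicit $\omega_{t,s}$ telescopes the sum to $\sum_{s\in[1:t]}\omega_{t,s}\Upsilon_{s;\pi_m} = \phi^{-1}(\mathfrak{Z}^{(t)}-\sign(\mathfrak{Z}^{(0)}+\xi_{\pi_m}))$, and then $(\sigma^{(t)}_{\mathrm{db}})^2 = \phi\,\E^{(0)}\big(\sum_s\omega_{t,s}\Upsilon_{s;\pi_m}\big)^2 = \phi^{-1}\,\E^{(0)}(\mathfrak{Z}^{(t)}-\sign(\mathfrak{Z}^{(0)}+\xi_{\pi_m}))^2$.
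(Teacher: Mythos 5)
Your proposal is substantially the same as the paper's proof. For Part~(1), both you and the paper deduce the distributional statement from Theorem~\ref{thm:db_gd_oracle}-(2) applied to the $\epsilon$-smoothed model, then control the $\epsilon\to 0$ limit via stability of the state evolution and of the iterates; you correctly identify that the new issue beyond Theorem~\ref{thm:1bit_cs} is stability of $\bm{\omega}^{[t]}_\epsilon=(\bm{\tau}^{[t]}_\epsilon)^{-1}$ and hence the appearance of $(1\wedge\tau_\ast^{(t)})^{-1}$ in the error budget, exactly as the paper handles it via Lemma~\ref{lem:lower_tri_mat_inv} together with the $\bm{\tau}$-stability bound in Lemma~\ref{lem:1bit_cs_tau_general_loss}.

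The only genuine difference is in Part~(2). You carry out the matrix algebra directly in the unsmoothed ($\sign$) model: under squared loss $\bm{L}_{11;\pi_m}^{[t]}\equiv I_t$ kills the $\epsilon$-dependence of the loss-Hessian, so $\bm{\tau}^{[t]}=-\phi\eta[I_t+\eta\mathfrak{O}_t(\bm{\rho}^{[t-1]})]^{-1}$, which inverts cleanly to give the telescoping identity $\sum_s\omega_{t,s}\Upsilon_{s;\pi_m}=\phi^{-1}(\mathfrak{Z}^{(t)}-\sign(\mathfrak{Z}^{(0)}+\xi_{\pi_m}))$, and $b^{(t)}_{\mathrm{db}}$, $(\sigma^{(t)}_{\mathrm{db}})^2$ fall out at once from Lemma~\ref{lem:1bit_cs_delta_general_loss}. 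The paper instead records the analogous formulas for the smoothed quantities $b^{(t)}_{\epsilon;\mathrm{db}}$, $(\sigma^{(t)}_{\epsilon;\mathrm{db}})^2$ in Lemma~\ref{lem:1bit_cs_delta_tau_smooth} and then proves, via the stability lemmas, that these converge quantitatively as $\epsilon\to 0$ to the stated formulas. Your computation is algebraically equivalent and arguably tighter, but it silently treats the unsmoothed state evolution objects (the $\Theta_s$, $\Upsilon_s$, $\Sigma_{\mathfrak{W}}^{[t]}$ built with $\mathcal{F}=\sign$) as directly defined, whereas the paper's framework defines those objects as limits of their smoothed versions; the paper's detour through $\epsilon$-dependence is precisely what licenses that substitution. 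So the approaches buy roughly the same thing, but the paper's is the one that literally makes sense within the smoothing-based definition of the state evolution, and yours is a valid shortcut once one grants (as the surrounding lemmas indeed establish) that the unsmoothed quantities are the well-defined $\epsilon\to 0$ limits.
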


In logistic regression, the generic proposal (\ref{def:db_gd_b_est}) can be used to estimate the bias parameter $b^{(t)}_{\mathrm{db}}$. Estimating the signal strength $\sigma_{\mu_\ast}$ in mean-field logistic regression is a non-trivial and separate problem; several notable methods include the \texttt{ProbeFrontier} method developed in \cite[Section 3.1]{sur2019modern} and the \texttt{SLOE} method developed in \cite{yadlowsky2021sloe}. However, estimation of $\sigma_{\mu_\ast}$ is unnecessary when the parameter of interest is $\mu_\ast/\pnorm{\mu_\ast}{}$ as in the single-index model \cite{bellec2025observable}. 

On the other hand, the variance parameter $(\sigma^{(t)}_{\mathrm{db}})^2$ can be estimated using the generic proposal (\ref{def:db_gd_var_est}). Under the squared loss, the variance parameter can also be estimated simply by the rescaled generalization error estimate $\phi^{-1} \hat{\mathscr{E}}_{\abs{\cdot}^2}^{(t-1)}$ as in linear regression. Since the bias parameter $b^{(t)}_{\mathrm{db}}$ remains constant across all $t$ under the squared loss (cf.~Proposition~\ref{prop:1bit_cs_bias_var}-(2)), the associated CI again has a length proportional to the generalization error.

	\section{Numerical experiments}\label{sec:numerics}
	In this section, we conduct numerical experiments to evaluate the performance of our proposed debiased gradient descent inference methods in Section \ref{section:iterative_inf} across a variety of  statistical models in Sections \ref{section:example_linear} and \ref{section:example_1bit} with both convex and non-convex losses.

	\subsection{Common numerical settings}
	
	We set the sample size as $m = 1200$ and the signal dimension as $n = 1000$. The ground truth signal $\mu_\ast \in \mathbb{R}^n$ is generated with i.i.d. entries sampled from $\abs{\mathcal{N}(0,5)}$. The scaled random design matrix $\sqrt{n}A$ has i.i.d. entries following $\mathcal{N}(0,1)$ (orange), $t$ distribution with $10$ degrees of freedom  (blue), Bernoulli$(1/2)$ (purple). The colors in parentheses correspond to those used in the figures.  Proper normalization is applied to the latter two cases so that the variance is $1$. The gradient descent inference algorithm is run for $50$ iterations with random Gaussian initialization $\mu^{(0)}\sim \mathcal{N}(0,I_n)$, and Monte Carlo repetition $B = 1000$.
	
	\subsection{Linear regression model}  We examine the performance of the gradient descent inference algorithm in linear regression with squared loss and the following robust loss, known as the pseudo-Huber function:
	\begin{align*}
	\mathsf{L}_\delta(x,y)\equiv \delta^2\big(\{1+(x-y)^2/\delta^2\}^{1/2}-1\big),\quad \forall x, y \in \R.
	\end{align*}
	Clearly $\mathsf{L}_\delta$ satisfies the regularity conditions in Theorem \ref{thm:linear} for any $\delta>0$. For definiteness, here we use $\delta=1$ as in the numerical experiment in \cite[Section 4]{tan2024estimating}. For squared loss, the noise vector $\xi \in \mathbb{R}^m$ has i.i.d. entries from $\mathcal{N}(0,1)$. As the pseudo-Huber loss function is designed to  accommodate heavy-tailed errors, here we choose the noises $\{\xi_i\}$ as i.i.d. $t$-distributed random variables with only 2 degrees of freedom. The step size is fixed at $\eta = 0.3$. 
	
		\begin{figure}[t]
		\begin{minipage}[t]{0.3\textwidth}
			\includegraphics[width=\textwidth]{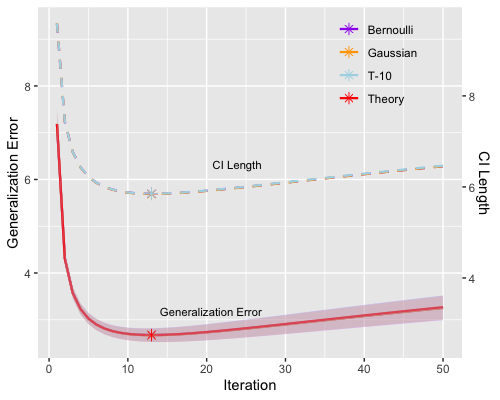}
		\end{minipage}
		\begin{minipage}[t]{0.3\textwidth}
			\includegraphics[width=\textwidth]{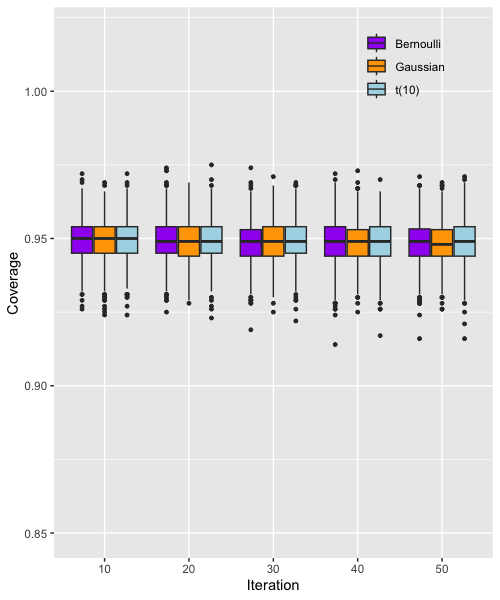}
		\end{minipage}
		\begin{minipage}[t]{0.3\textwidth}
			\includegraphics[width=\textwidth]{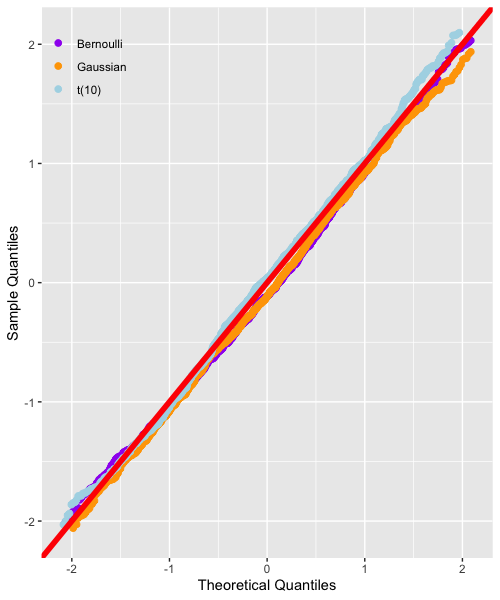}
		\end{minipage}
		\begin{minipage}[t]{0.3\textwidth}
			\includegraphics[width=\textwidth]{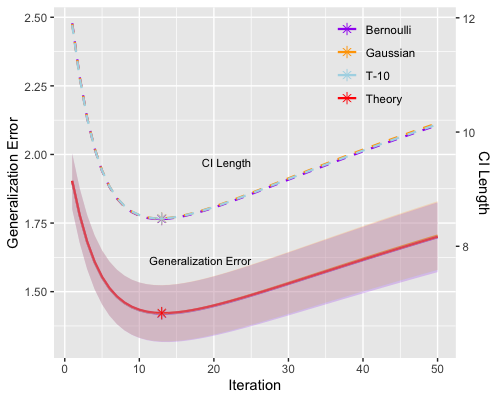}
		\end{minipage}
		\begin{minipage}[t]{0.3\textwidth}
			\includegraphics[width=\textwidth]{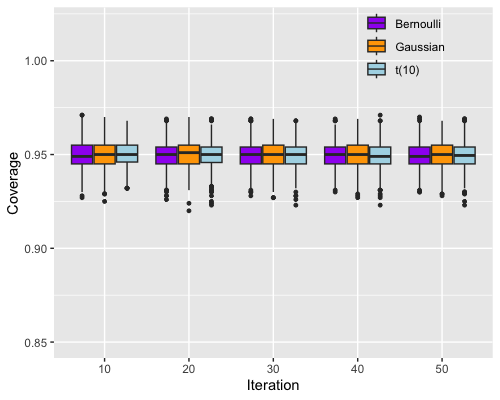}
		\end{minipage}
		\begin{minipage}[t]{0.3\textwidth}
			\includegraphics[width=\textwidth]{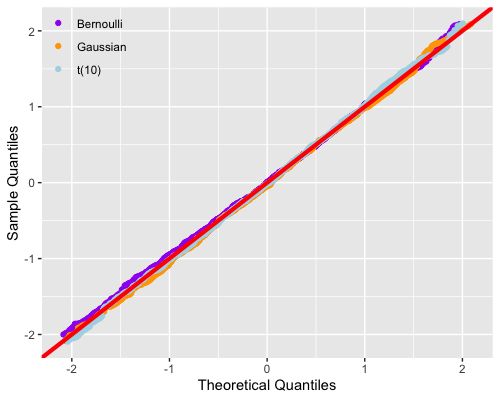}
		\end{minipage}
		\caption{Linear regression. \emph{Top row}: Squared loss. \emph{Bottom row}: Pseudo-Huber loss. }
		\label{fig:1}
	\end{figure}
	
	We report in Figure \ref{fig:1} the simulation results under both squared loss and pseudo-Huber loss for linear regression without regularization:
	\begin{itemize}
		\item 	The left panel of Figure~\ref{fig:1} compares the estimated generalization error $\hat{\mathscr{E}}_{\mathsf{L}}^{(t)}$ with the theoretical generalization error $\mathscr{E}_{\mathsf{L}}^{(t)}$, together with the CI length at each iteration.
		For squared loss (top left), the iteration that minimizes the generalization error coincides with the iteration yielding the shortest confidence interval; this is consistent with our theory in Proposition \ref{prop:linear_bias_var}-(1).

		\item  Using $\hat{\sigma}^{(t)}_{\mathrm{db}}$ computed from (\ref{def:db_gd_var_est}), the middle panel displays the coverage of $95\%$ CIs for all coordinates. For all design distributions, the proposed CIs in (\ref{def:linear_CI}) achieve approximate nominal coverage. 
		\item The right panel validates the approximate normality of $\hat{\mu}^{(t)}_{\mathrm{db}}$. Since the bias and the variance are identical across coordinates, we report only the QQ-plot of $(\hat{\mu}^{(t)}_{\mathrm{db};1} - \mu_{\ast;1}) / \hat{\sigma}^{(t)}_{\mathrm{db}}$ at the last iteration of our simulation. The empirical quantiles align closely with the theoretical standard Gaussian quantiles, supporting the conclusion that  $
		\hat{\mu}^{(t)}_{\mathrm{db}} \stackrel{d}{\approx} \mu_\ast+ \sigma^{(t)}_{\mathrm{db}}\cdot  \mathsf{Z}$.
	\end{itemize}
	We note that the proposal in \cite{tan2024estimating} does not cover the approximate normality of the debiased gradient descent and its associated CI's for the pseudo-Huber loss, as shown in the middle and right panels of Figure \ref{fig:1}. Additionally, while our theory does not strictly apply to the $t(10)$ distribution due to its heavy tails, the simulation results shown in these plots suggest that the moment condition in (A2) could potentially be further relaxed for our theory to hold.

	\subsection{Single-index regression model}  We next evaluate the performance of the gradient descent inference algorithm under the single-index regression model with squared loss. We consider two choices for the link function: the sigmoid function $\varphi_\ast(x) = 1/(1 + e^{-x})$, and a smooth nonlinear function $\varphi_\ast(x) = x + \sin(x)$. For both link functions, it can be verified that $\mathsf{L}_\ast$ satisfies the regularity conditions in Theorem \ref{thm:linear}. In this simulation, we set the step size to $\eta = 1.5$, and the noise vector $\xi \in \mathbb{R}^m$ has i.i.d. entries drawn from $\mathcal{N}(0, 0.1)$. Simulations use known signal strength $\sigma_{\mu_\ast}$ for illustration.
	
		\begin{figure}[t]
		\begin{minipage}[t]{0.3\textwidth}
			\includegraphics[width=\textwidth]{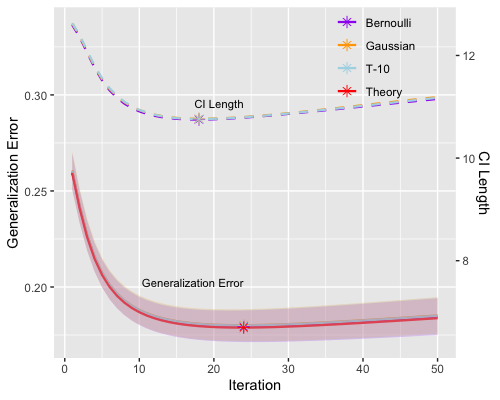}
		\end{minipage}
		\begin{minipage}[t]{0.3\textwidth}
			\includegraphics[width=\textwidth]{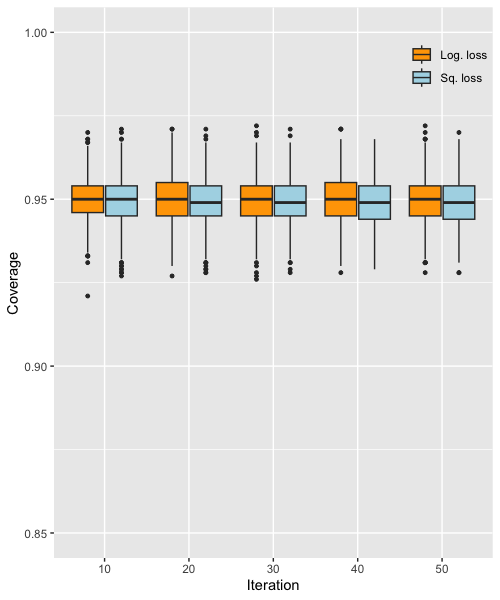}
		\end{minipage}
		\begin{minipage}[t]{0.3\textwidth}
			\includegraphics[width=\textwidth]{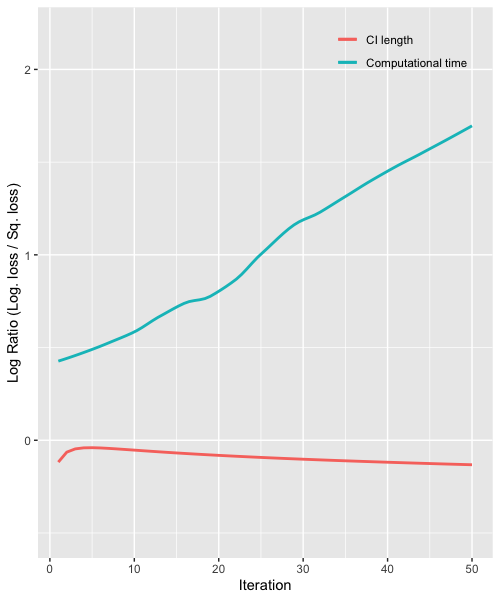}
		\end{minipage}
		\begin{minipage}[t]{0.3\textwidth}
			\includegraphics[width=\textwidth]{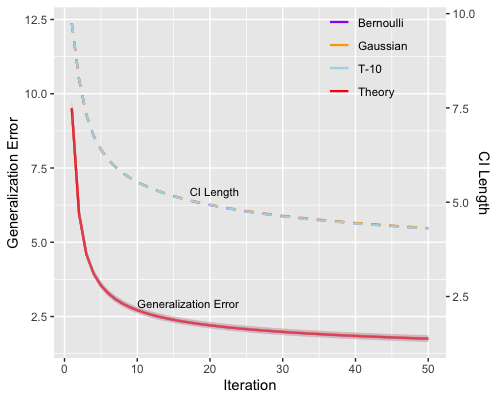}
		\end{minipage}
		\begin{minipage}[t]{0.3\textwidth}
			\includegraphics[width=\textwidth]{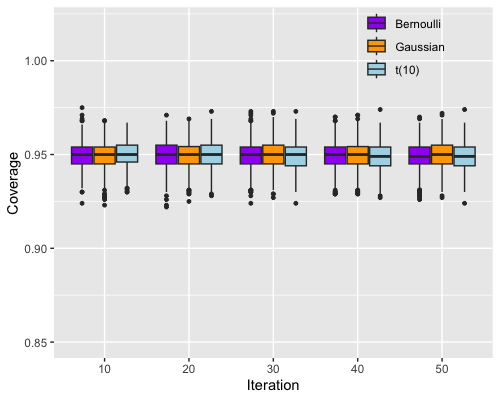}
		\end{minipage}
		\begin{minipage}[t]{0.3\textwidth}
			\includegraphics[width=\textwidth]{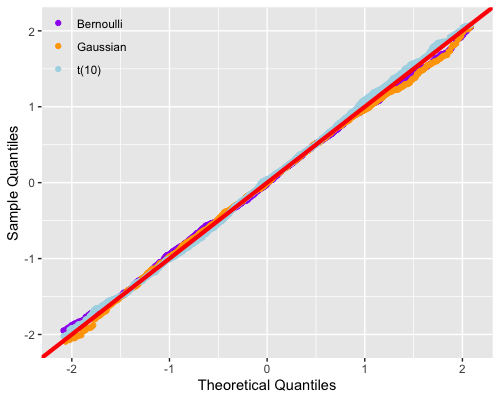}
		\end{minipage}
		\caption{Single-index regression model with squared loss. \emph{Top row}: sigmoid link $\varphi_\ast(x) = 1/(1 + e^{-x})$. \emph{Bottom row}: nonlinear link $\varphi_\ast(x) = x + \sin(x)$.}
		\label{fig:2}
	\end{figure}
	
	Figure~\ref{fig:2} presents the results for both link functions:
	\begin{itemize}
		\item The left panel plots the generalization error and CI length across iterations. For the sigmoid link, the curves reach their minima at different iterations, indicating a mismatch between the generalization-optimal and inference-optimal stopping points.
		\item The middle and right panels show that the proposed CIs achieve close-to-nominal coverage and that the standardized debiased estimator exhibits approximate normality similar to the results observed in the linear regression plots.
	\end{itemize}

	\subsection{Logistic regression model} 	
		\begin{figure}[t]
		\begin{minipage}[t]{0.3\textwidth}
			\includegraphics[width=\textwidth]{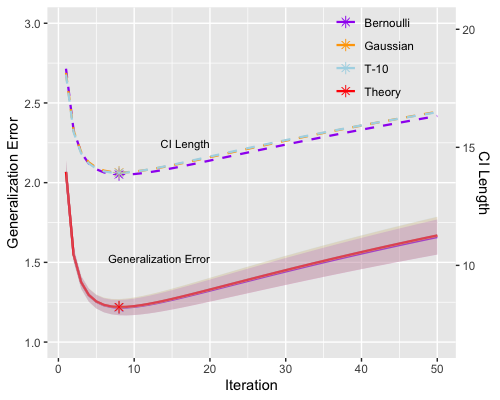}
		\end{minipage}
		\begin{minipage}[t]{0.3\textwidth}
			\includegraphics[width=\textwidth]{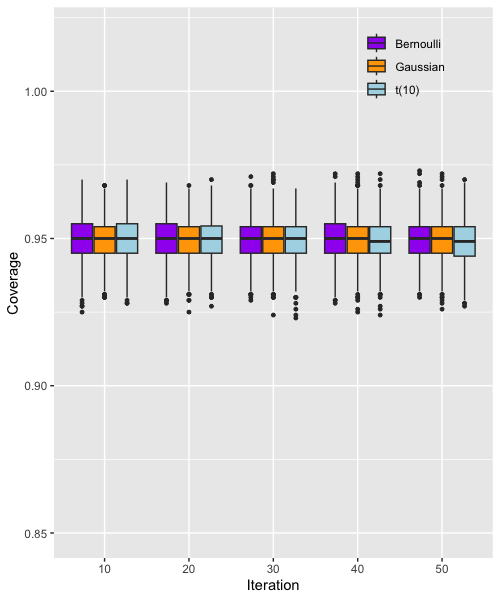}
		\end{minipage}
		\begin{minipage}[t]{0.3\textwidth}
			\includegraphics[width=\textwidth]{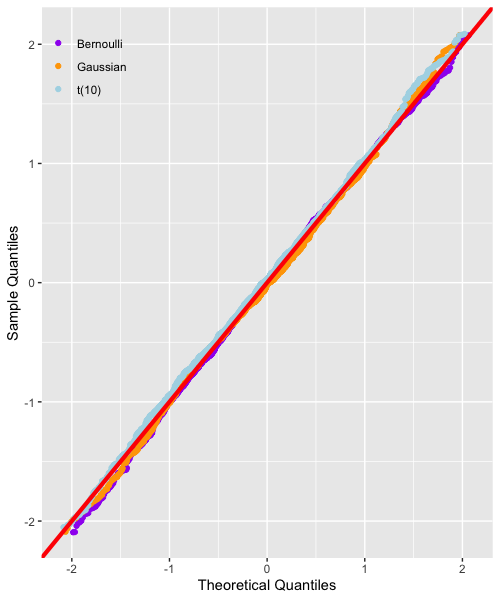}
		\end{minipage}
		\begin{minipage}[t]{0.3\textwidth}
			\includegraphics[width=\textwidth]{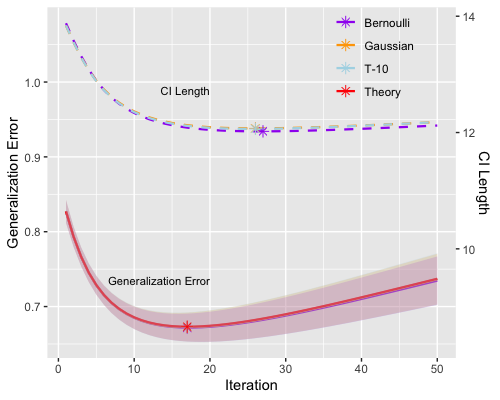}
		\end{minipage}
		\begin{minipage}[t]{0.3\textwidth}
			\includegraphics[width=\textwidth]{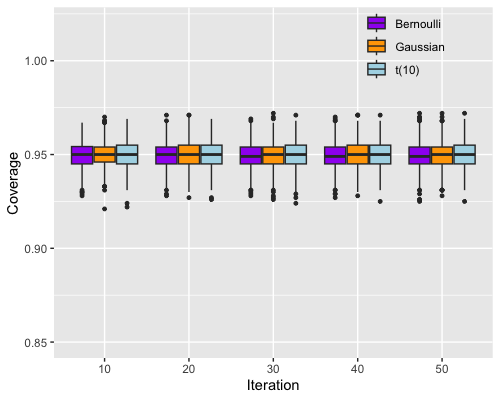}
		\end{minipage}
		\begin{minipage}[t]{0.3\textwidth}
			\includegraphics[width=\textwidth]{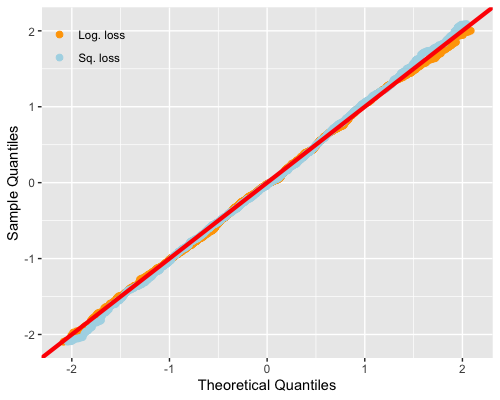}
		\end{minipage}
		\caption{Logistic regression. \emph{Top row}: Squared loss. \emph{Bottom row}: Logistic loss.}
		\label{fig:3}
	\end{figure}
	Finally, we evaluate the performance of the gradient descent inference algorithm in logistic regression model with both squared loss and logistic loss. The step size is set to $\eta = 0.2$. We report in Figure \ref{fig:3} some simulation results under both loss functions without regularization:
	\begin{itemize}
		\item In the left panel, we plot the estimated generalization error and CI length over iterations. For squared loss, the minimizing points for generalization error and CI length are closely aligned, as predicted by Proposition~\ref{prop:1bit_cs_bias_var}-(2). In contrast, for logistic loss, the CI length reaches its minimum after the generalization error does. Note that this is opposite to the sigmoid link case in Figure \ref{fig:2}, and suggests that the relative position of the two minimizing points can vary across loss functions and model structures.
		\item The middle and right panels show that the proposed CIs maintain nominal coverage and that the standardized estimator is approximately normal. Additionally, the right panel in the first row shows
		\begin{align*}
		\log_{10} \frac{@ \text{ under logistic loss}}{@ \text{ under squared loss}},\quad @ \in \{\text{CI length},  \text{Computational time}\}.
		\end{align*}
		This plot highlights two interesting observations: (i) the CI lengths under both losses are comparable, and (ii) the squared loss offers significant computational advantages over the logistic loss. Specifically, in Algorithm \ref{def:alg_tau_rho}, $\hat{\bm{\tau}}^{[t]}$ can be computed in a single step under the squared loss, whereas all $\{\hat{\bm{\tau}}^{[t]}_k\}_{k \in [m]}$ need be computed under the logistic loss. This advantage becomes increasingly pronounced with more iterations; for instance, by iteration 50, computation under the logistic loss is approximately 50 times slower in our simulation.
	\end{itemize}
	
	Our simulations are conducted with known signal strength $\sigma_{\mu_\ast}$ to illustrate the numerical features inherent to our inference methods. As mentioned earlier, estimating $\sigma_{\mu_\ast}$ is a separate problem, and can be tackled by existing methods such as \texttt{ProbeFrontier} \cite{sur2019modern} and \texttt{SLOE} \cite{yadlowsky2021sloe}. Furthermore, while the right panel in Figure \ref{fig:3} display results under Gaussian designs $A$, the findings remain nearly identical for non-Gaussian designs, such as Bernoulli or $t(10)$ variables. For brevity, we omit these additional plots. 
	
	In Appendix~\ref{section:additional_simulation}, we present additional simulation results for the above settings with $\ell_1$ regularization (i.e., $\mathsf{f} \equiv \lambda \abs{\cdot}_1$ in (\ref{def:grad_descent_1bit_cs})), and further evaluate the numerical performance of our proposed inference method in the one-bit compressed sensing model in Example~\ref{model:1bit}, when the errors $\{\xi_i\}$ are i.i.d. standard Gaussian as in \cite{huang2018robust}. In the latter setting, a closed-form estimator for $\sigma_{\mu_\ast}$ can be easily constructed (see Eqn.~(\ref{def:1bit_signal_strength_est_gaussan}) for details). These simulation results, reported in Figures~\ref{fig:4}-\ref{fig:7}, exhibit qualitatively similar patterns to those observed in Figures~\ref{fig:1}-\ref{fig:3}.

\section{Proofs for Section \ref{section:gd_dynamics}}\label{section:proof_gd_dynamics}

\subsection{An apriori estimate}
The following apriori estimates are important for the proofs of many results in the sequel. Moreover, the calculations in the proof of these estimates provide the precise formulae that lead to Algorithm \ref{def:alg_tau_rho}.

\begin{lemma}\label{lem:rho_tau_bound}
	Suppose (A1), (A3), (A4') and (A5') hold. The following hold for some $c_t=c_t(t)>1$:
\begin{enumerate}
	\item $ \pnorm{\bm{\tau}^{[t]}}{\op}+ \pnorm{\bm{\rho}^{[t]}}{\op}\leq (K\Lambda)^{c_t}$.
	\item $\pnorm{\Sigma_{\mathfrak{Z}}^{[t]}}{\op}+ \pnorm{\Sigma_{\mathfrak{W}}^{[t]}}{\op}+ \pnorm{\bm{\delta}^{[t]}}{}\leq (K\Lambda L_\mu )^{c_t}$.
	\item $\max\limits_{k \in [m], r \in [1:t]} \big(\abs{\Upsilon_{r;k}(z^{([0:r])})}+\abs{\Theta_{r;k}(z^{([0:r])})}\big)\leq (K\Lambda)^{c_t}\cdot \big(1+\pnorm{z^{([0:t])} }{}\big)$.
	\item $\max\limits_{\ell \in [n], r \in [1:t]} \abs{\Omega_{r;\ell}(w^{([1:r])})}\leq (K\Lambda L_\mu)^{c_t}\cdot \big(1+\pnorm{w^{([1:t])}}{}\big)$.
	\item $\max\limits_{\substack{k \in [m], \ell \in [n], r,s \in [1:t]}} \big(\abs{\partial_{(s)}\Upsilon_{r;k}(z^{([0:r])})}+\abs{\partial_{(s)}\Omega_{r;\ell}(w^{([1:r])})}\big)\leq (K\Lambda)^{c_t}.$
\end{enumerate}

\end{lemma}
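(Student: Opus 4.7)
The plan is to prove all five bounds simultaneously by strong induction on $t$, processing the quantities within each iteration in the natural order dictated by the state evolution of Definition \ref{def:gd_se}: derivatives of $\Upsilon_t$, then $\bm{\tau}^{[t]}$, then pointwise bounds on $\Theta_t$ and $\Upsilon_t$, then $\Sigma_{\mathfrak{Z}}^{[t]}$ and $\delta_t$, then derivatives of $\Omega_t$, then $\bm{\rho}^{[t]}$, then the pointwise bound on $\Omega_t$, and finally $\Sigma_{\mathfrak{W}}^{[t]}$. At every step the inductive hypothesis gives all five bounds for indices $<t$ (with a larger but still admissible $c_{t-1}$).

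The main technical input is the matrix identity already displayed just below Algorithm \ref{def:alg_tau_rho}: differentiating the recursive definition of $\Upsilon_t$ yields
\begin{equation*}
\bm{\Upsilon}_{k}^{';[t]}(z^{([0:t])}) = \bigl[I_t - \bm{L}^{[t]}_{k}(z^{([0:t])})\,\mathfrak{O}_{t}(\bm{\rho}^{[t-1]})\bigr]^{-1}\bm{L}^{[t]}_{k}(z^{([0:t])}),
\end{equation*}
with the analogous identity for $\bm{\Omega}_{\ell}^{';[t]}$ in terms of $\bm{\tau}^{[t]}$. Since $\mathfrak{O}_t$ shifts a matrix strictly below the diagonal, the product $\bm{L}^{[t]}_{k}\mathfrak{O}_t(\bm{\rho}^{[t-1]})$ is strictly lower triangular and hence nilpotent with nilpotency index at most $t$. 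This lets me expand the inverse as the finite Neumann series $\sum_{j=0}^{t-1}(\bm{L}^{[t]}_{k}\mathfrak{O}_t(\bm{\rho}^{[t-1]}))^j$ and bound its operator norm by $t\,(\Lambda\,\|\bm{\rho}^{[t-1]}\|_{\op})^{t-1}$, where the diagonal entries of $\bm{L}^{[t]}_{k}$ are bounded by $\eta_{s-1}\cdot\|\partial_{11}\mathsf{L}_{s-1,k}\|_\infty\leq\Lambda^2$ by (A3) and (A5'). Item (5) for $\Upsilon$ then follows, and $\|\bm{\tau}^{[t]}\|_{\op}$ follows from item (5) because the $t$-th row of $\bm{\tau}^{[t]}$ is a $\phi$-scaled average of these entries. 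The pointwise bounds in (3) are then read off from the equivalent formula (\ref{def:Theta_fcn}): $\Theta_{t;k}$ is a linear combination of $z^{(t)}$ and earlier $\Upsilon_{s;k}$'s with coefficients $\rho_{t-1,s}$ controlled by induction, and $\Upsilon_{t;k}$ is the $\eta_{t-1}$-Lipschitz image of $\Theta_{t;k}$ shifted by $\partial_1\mathsf{L}_{t-1;k}(0,\mathcal{F}(z^{(0)},\xi_k))$, all under (A5').

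With (3) in hand, item (2) for $\Sigma_{\mathfrak{Z}}^{[t]}$ and the new bounds $\cov(\mathfrak{Z}^{(t)},\mathfrak{Z}^{(s)})$ follow by Cauchy-Schwarz, using the moment bound on $\Omega_{\ast;\pi_n}(\mathfrak{W}^{[\cdot]})$ from the inductive step; the factor $L_\mu$ enters through $\Omega_{-1}=\mu_\ast$ and $\Omega_0=\mu^{(0)}$. For $\delta_t$, on the regular regime I use the definition directly (bounded through item (5)), and in the non-smooth case I invoke the Gaussian integration-by-parts expression (\ref{def:delta_t_alternative}) together with the just-proved bounds on $\E^{(0)}|\mathfrak{Z}^{(0)}\Upsilon_{t;\pi_m}|$ and $\|\bm{\tau}^{[t]}\|_{\op}\,\|\Sigma_{\mathfrak{Z}}^{[t]}\|_{\op}/\sigma_{\mu_\ast}$; the apparent $\sigma_{\mu_\ast}^{-1}$ cancels against the pulled-out $\sigma_{\mu_\ast}$ in the numerator once one writes $\mathfrak{Z}^{(0)}=\sigma_{\mu_\ast}\widetilde{\mathfrak{Z}}^{(0)}$ and observes that the bracketed quantity is $O(\sigma_{\mu_\ast})$. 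The same pattern then handles the $\Omega$-side: derivatives of $\Omega_t$ are controlled by the analogous matrix identity using (A4'), giving (5) for $\Omega$ and then $\|\bm{\rho}^{[t]}\|_{\op}$; the pointwise bound (4) follows by unrolling (\ref{def:Delta_fcn}) with the newly obtained coefficients $\tau_{t,s}$ and $\delta_t$, picking up a factor $L_\mu$ from $\delta_t\mu_\ast$; and $\Sigma_{\mathfrak{W}}^{[t]}$ is bounded by Cauchy-Schwarz from item (3).

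The only subtlety I anticipate is bookkeeping the growth of the constant $c_t$ so that it depends on $t$ only (and not on $K,\Lambda,L_\mu$), which requires being careful that the nilpotency-based bound on $\|[I-L\mathfrak{O}(\bm{\rho}^{[t-1]})]^{-1}\|_{\op}$ produces a polynomial of fixed (in $t$) degree in the previously-controlled quantities, so that the doubly-exponential estimate $c_t\leq t^{c_0 t}$ announced after Theorem \ref{thm:gd_se} emerges in a clean recursion $c_t\leq C(c_{t-1}+1)$ for an absolute $C>0$; no sharp estimate is sought here.
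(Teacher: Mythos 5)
Your proposal uses essentially the same machinery as the paper: the finite-Neumann-series inversion of $I - \bm{L}^{[t]}_k\,\mathfrak{O}_t(\bm{\rho}^{[t-1]})$ (exploiting nilpotency of the strictly lower-triangular shift), the resulting recursion chaining $\bm{\tau}^{[t]}\to\bm{\rho}^{[t]}\to\bm{\tau}^{[t+1]}$, the pointwise bounds on $\Theta_t,\Upsilon_t,\Omega_t$ by unrolling the defining recursions, and the covariance bounds via Cauchy–Schwarz. Up to a permutation of the sub-steps, this is the paper's argument.

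The one place where your plan is imprecise is the treatment of $\delta_t$. Item (5) of the lemma only concerns $\partial_{(s)}\Upsilon_{r;k}$ for $s\in[1:t]$; it says nothing about the $s=0$ derivative, which is what defines $\delta_t$. So "bounded through item (5)" is not correct as written: you need a separate recursion for $\partial_{(0)}\Upsilon_{r;k}$. This recursion is available because (A5') guarantees that $(u_1,u_2)\mapsto\partial_1\mathsf{L}_{s;k}\big(u_1,\mathcal{F}(u_2,\xi_k)\big)$ is $\Lambda$-Lipschitz, so the mixed derivatives $\partial_{11}$ and $\partial_{12}$ are bounded by $\Lambda$ a.e., and differentiating (S1) in $z^{(0)}$ gives
\begin{equation*}
\partial_{(0)}\Upsilon_{t;k} = -\eta_{t-1}\,\partial_{11}\mathsf{L}^{\mathcal{F}}_{t-1;k}\big(\Theta_{t;k},z^{(0)}\big)\sum_{r\in[1:t-1]}\rho_{t-1,r}\,\partial_{(0)}\Upsilon_{r;k}-\eta_{t-1}\,\partial_{12}\mathsf{L}^{\mathcal{F}}_{t-1;k}\big(\Theta_{t;k},z^{(0)}\big),
\end{equation*}
which iterates to $\big|\partial_{(0)}\Upsilon_{t;k}\big|\le(K\Lambda)^{c_t}$, hence $\|\bm{\delta}^{[t]}\|\le(K\Lambda)^{c_t}$ without any $L_\mu$ factor. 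The IBP fallback you sketch is not needed under (A5') (it is the device reserved for the genuinely non-differentiable $\mathcal{F}$ in the one-bit model, and even there the $\sigma_{\mu_\ast}^{-1}$ does not cancel but survives into the bound), and it would also drag in an $L_\mu$ factor through $\|\Sigma_{\mathfrak{Z}}^{[t]}\|$. The lemma's stated $(K\Lambda L_\mu)^{c_t}$ tolerates this, but the $L_\mu$-free bound on $\bm{\delta}^{[t]}$ is what the paper feeds into the $\Sigma$-recursion, so it is worth getting cleanly via the direct derivative computation.
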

\begin{proof}
The proof is divided into several steps. For notational simplicity, we write $\partial_{(s)}\Upsilon_{t;k}(z^{([0:t])})\equiv \partial_{z^{(s)}}\Upsilon_{t;k}(z^{([0:t])})$, and similarly $\partial_{(s)}\Omega_{t;\ell}(w^{([1:t])})\equiv \partial_{w^{(s)}}\Omega_{t;\ell}(w^{([1:t])})$.
	
\noindent (\textbf{Step 1}). We prove the estimate in (1) in this step. First, for any $k\in [m]$ and $1\leq s\leq t$, using (S1),
\begin{align*}
\partial_{(s)}\Upsilon_{t;k}(z^{([0:t])})&\equiv - \eta_{t-1}  \partial_{11} \mathsf{L}_{t-1}\big(\Theta_{t;k}(z^{([0:t])}) ,\mathcal{F}(z^{(0)},\xi_k)\big)\\
&\qquad \times \bigg(\bm{1}_{t=s}+  \sum_{r \in [1:t-1]}\rho_{t-1,r} \partial_{(s)} \Upsilon_{r;k}(z^{([0:r])})\bigg).
\end{align*}
In the matrix form, with $\bm{\Upsilon}_{k}^{';[t]}(z^{([0:t])}) \equiv \big(\partial_{(s)}\Upsilon_{r;k}(z^{([0:r])})\big)_{r,s \in [1:t]}$ and $\bm{L}^{[t]}_{k}(z^{([0:t])})\equiv \mathrm{diag}\big(\big\{-\eta_{s-1}  \partial_{11} \mathsf{L}_{s-1}\big(\Theta_{s;k}(z^{([0:s])}) ,\mathcal{F}(z^{(0)},\xi_k)\big)\big\}_{s \in [1:t]}\big)$, 
\begin{align*}
\bm{\Upsilon}_{k}^{';[t]}(z^{([0:t])}) =  \bm{L}^{[t]}_{k}(z^{([0:t])}) + \bm{L}^{[t]}_{k}(z^{([0:t])}) \mathfrak{O}_{t}(\bm{\rho}^{[t-1]}) \bm{\Upsilon}_{k}^{';[t]}(z^{([0:t])}).
\end{align*}
Solving for $\bm{\Upsilon}_{k}^{';[t]}$ yields that
\begin{align}\label{ineq:rho_tau_bound_Upsilon}
\bm{\Upsilon}_{k}^{';[t]} (z^{([0:t])}) = \big[I_t- \bm{L}^{[t]}_{k}(z^{([0:t])}) \mathfrak{O}_{t}(\bm{\rho}^{[t-1]})\big]^{-1} \bm{L}^{[t]}_{k}(z^{([0:t])}).
\end{align}
As $\bm{L}^{[t]}_{k}(z^{([0:t])}) \mathfrak{O}_{t}(\bm{\rho}^{[t-1]})$ is a lower triangular matrix with $0$ diagonal elements, $\big(\bm{L}^{[t]}_{k}(z^{([0:t])}) \mathfrak{O}_{t}(\bm{\rho}^{[t-1]})\big)^t=0_{t\times t}$, and therefore using $\pnorm{\bm{L}^{[t]}_{k}(z^{([0:t])})}{\op}\leq \Lambda^2$,
\begin{align}\label{ineq:rho_tau_bound_Upsilon_1}
\pnorm{\bm{\Upsilon}_{k}^{';[t]} (z^{([0:t])}) }{\op}&\leq \bigg(1+\sum_{r \in [1:t]} \Lambda^{2r} \pnorm{ \bm{\rho}^{[t-1]}}{\op}^r\bigg)\cdot \Lambda^2\leq \Lambda^{c_0 t}\cdot   \pnorm{ \bm{\rho}^{[t-1]}}{\op}^t.
\end{align}
Using definition of $\{\tau_{r,s}\}$, we then arrive at
\begin{align}\label{ineq:rho_tau_bound_1}
\pnorm{\bm{\tau}^{[t]}}{\op}\leq (K\Lambda)^{c_0 t}\cdot   \pnorm{ \bm{\rho}^{[t-1]}}{\op}^t.
\end{align}
Next, for any $\ell \in [n]$ and $1\leq s\leq t$, using (S3), 
\begin{align*}
\partial_{(s)}\Omega_{t;\ell}(w^{([1:t])})&=\mathsf{P}_{t;\ell}'\big(\Delta_{t;\ell} (w^{([1:t])}) \big)\\
&\quad \times \bigg(\bm{1}_{t=s}+ \sum_{r \in [1:t]} (\tau_{t,r}+\bm{1}_{t=r})\cdot  \partial_{(s)}\Omega_{r-1;\ell}(w^{([1:r-1])}) \bigg).
\end{align*}
In the matrix form, with $\bm{\Omega}_{\ell}^{';[t]}(w^{([1:t])})\equiv \big(\partial_{(s)}\Omega_{r;\ell}(w^{([1:r])})\big)_{r,s \in [1:t]}$ and $\bm{P}^{[t]}_{\ell}(w^{([1:t])})\equiv \mathrm{diag}\big(\big\{\mathsf{P}_{s;\ell}'(\Delta_{s;\ell} (w^{([1:t])}) )\big\}_{s \in [1:t]}\big)$, 
\begin{align}\label{ineq:rho_tau_bound_Omega}
\bm{\Omega}_{\ell}^{';[t]}(w^{([1:t])}) = \bm{P}^{[t]}_{\ell}(w^{([1:t])})\big[I_t+(\bm{\tau}^{[t]}+I_t)\mathfrak{O}_t\big(\bm{\Omega}_{\ell}^{';[t-1]}(w^{([1:t-1])}) \big)\big].
\end{align}
Consequently, 
\begin{align*}
\pnorm{\bm{\Omega}_{\ell}^{';[t]}(w^{([1:t])})}{\op}\leq \Lambda\cdot \Big(1+(1+ \pnorm{\bm{\tau}^{[t]}}{\op})\cdot  \pnorm{\bm{\Omega}_{\ell}^{';[t-1]}(w^{([1:t-1])})}{\op}\Big).
\end{align*}
Iterating the bound and using the trivial initial condition $\pnorm{\bm{\Omega}_{\ell}^{';(1)}(w^{(1)})}{\op}\leq \Lambda$, 
\begin{align}\label{ineq:rho_tau_bound_Omega_1}
\pnorm{\bm{\Omega}_{\ell}^{';[t]}(w^{([1:t])})}{\op}\leq \big(\Lambda (1+ \pnorm{\bm{\tau}^{[t]}}{\op})\big)^{c_t}.
\end{align}
Using the definition of $\{\rho_{r,s}\}$, we then have
\begin{align}\label{ineq:rho_tau_bound_2}
\pnorm{ \bm{\rho}^{[t]}}{\op}\leq \big(\Lambda (1+ \pnorm{\bm{\tau}^{[t]}}{\op})\big)^{c_t}.
\end{align}
Combining (\ref{ineq:rho_tau_bound_1}) and (\ref{ineq:rho_tau_bound_2}), it follows that 
\begin{align*}
\pnorm{\bm{\tau}^{[t]}}{\op}\leq (K\Lambda)^{c_t }\cdot   (1+ \pnorm{\bm{\tau}^{[t-1]}}{\op})^{c_t}.
\end{align*}
Iterating the bound and using the initial condition $\pnorm{\bm{\tau}^{[1]}}{\op}\leq \Lambda^2$ to conclude the bound for $\pnorm{\bm{\tau}^{[t]}}{\op}$. The bound for $\pnorm{\bm{\rho}^{[t]}}{\op}$ then follows from (\ref{ineq:rho_tau_bound_2}).

 \noindent (\textbf{Step 2}). In this step we note the following recursive estimates:
	\begin{enumerate}
		\item[(a)] A direct induction argument for (S1) shows that 
		\begin{align*}
		\max_{k \in [m]} \max_{r \in [1:t]} \abs{\Upsilon_{r;k}(z^{([0:r])})}\leq (K\Lambda)^{c_t}\cdot \big(1+\pnorm{z^{([0:t])} }{}\big).
		\end{align*}
		\item[(b)]  A direct induction argument for (S3) shows that 
		\begin{align*}
		\max_{\ell \in [n]} \max_{r \in [1:t]} \abs{\Omega_{r;\ell}(w^{([1:r])})}&\leq (K\Lambda L_\mu)^{c_t}\cdot \big(1+\pnorm{w^{([1:t])} }{}+ \pnorm{\bm{\delta}^{[t]}}{} \big).
		\end{align*}
		\item[(c)]  Using (S2), we have
		\begin{align*}
		\pnorm{\Sigma_{\mathfrak{Z}}^{[t]}}{\op}&\leq (K\Lambda L_\mu)^{c_t}\cdot \big(1+\pnorm{\Sigma_{\mathfrak{W}}^{[t-1]}}{\op}+ \pnorm{\bm{\delta}^{[t-1]}}{} \big),\\
		\pnorm{\Sigma_{\mathfrak{W}}^{[t]}}{\op} &\leq (K\Lambda)^{c_t}\cdot \big(1+\pnorm{\Sigma_{\mathfrak{Z}}^{[t]}}{\op} \big).
		\end{align*}
	\end{enumerate}

\noindent (\textbf{Step 3}). In order to use the recursive estimates in Step 2, in this step we prove the estimate for $\pnorm{\bm{\delta}^{[t]}}{}$. For $k \in [m]$, let $\mathsf{L}_{t-1;k}^{\mathcal{F}}(u_1,u_2)\equiv \mathsf{L}_{t-1;k}(u_1,\mathcal{F}(u_2,\xi_k))$. Then by assumption, the mapping $(u_1,u_2)\mapsto \partial_1 \mathsf{L}_{t-1;k}^{\mathcal{F}}(u_1,u_2)$ is $\Lambda$-Lipschitz on $\R^2$. By using (S1), we then have
\begin{align*}
\abs{\partial_{(0)}\Upsilon_{t;k}(z^{([0:t])})}&= \biggabs{ - \eta_{t-1}  \partial_{11} \mathsf{L}_{t-1}^{\mathcal{F}}\big(\Theta_{t;k}(z^{([0:t])}) ,z^{(0)})\big) \cdot \bigg( \sum_{r \in [1:t-1]}\rho_{t-1,r} \partial_{(0)} \Upsilon_{r;k}(z^{([0:r])})\bigg)\\
	&\qquad -\eta_{t-1}\partial_{12} \mathsf{L}_{t-1}^{\mathcal{F}}\big(\Theta_{t;k}(z^{([0:t])}) ,z^{(0)})\big)}\\
&\leq t \Lambda^2\cdot \pnorm{\bm{\rho}^{[t-1]} }{\op}\cdot \max_{r \in [1:t-1]} \abs{\partial_{(0)} \Upsilon_{r;k}(z^{([0:r])}) }.
\end{align*}
Invoking the proven estimate in (1) and iterating the above bound with the trivial initial condition $\abs{\partial_{(0)}\Upsilon_{1;k}(z^{([0:1])})}\leq \Lambda^2$ to conclude that $
\abs{\partial_{(0)}\Upsilon_{t;k}(z^{([0:t])})}\leq (K\Lambda)^{c_t}$, and therefore by definition of $\delta_t$, we conclude that
\begin{align}\label{ineq:rho_tau_bound_3}
\pnorm{\bm{\delta}^{[t]}}{}\leq (K\Lambda)^{c_t}.
\end{align}

\noindent (\textbf{Step 4}). Now we shall use the estimate for $\pnorm{\bm{\delta}^{[t]}}{}$ in  Step 3 to run the recursive estimates in Step 2. Combining the first line of (c) and (\ref{ineq:rho_tau_bound_3}), we have
	\begin{align*}
	\pnorm{\Sigma_{\mathfrak{Z}}^{[t]}}{\op}&\leq (K\Lambda L_\mu)^{c_t}\cdot \big(1+\pnorm{\Sigma_{\mathfrak{W}}^{[t-1]}}{\op} \big).
	\end{align*}
	Combined with the second line of (c), we obtain
	\begin{align*}
	\pnorm{\Sigma_{\mathfrak{Z}}^{[t]}}{\op}&\leq (K\Lambda L_\mu)^{c_t}\cdot \Big(1+\max_{r\in [1:t-1]}\pnorm{\Sigma_{\mathfrak{Z}}^{[r]}}{\op}\Big).
	\end{align*}
	Coupled with the initial condition $\pnorm{\Sigma_{\mathfrak{Z}}^{[1]}}{\op}\leq L_\mu^2$, we arrive at the estimate
	\begin{align*}
	\pnorm{\Sigma_{\mathfrak{Z}}^{[t]}}{\op}+ \pnorm{\Sigma_{\mathfrak{W}}^{[t]}}{\op}+ \pnorm{\bm{\delta}^{[t]}}{}\leq (K\Lambda L_\mu)^{c_t}.
	\end{align*}
	The proof of (1)-(4) is complete by collecting the estimates. The estimate in (5) follows by combining (\ref{ineq:rho_tau_bound_Upsilon_1}) and (\ref{ineq:rho_tau_bound_Omega_1}) along with the estimate in (1).
\end{proof}

\subsection{Proof of Theorem \ref{thm:gd_se}}

The proof of Theorem \ref{thm:gd_se} relies on the general state evolution theory developed in \cite{han2025entrywise}. For the convenience of the reader, we review some of its basics in Appendix \ref{section:GFOM_se}.

\begin{proof}[Proof of Theorem \ref{thm:gd_se}]
The proof is divided into several steps.

\noindent (\textbf{Step 1}). Let us now rewrite the proximal gradient descent algorithm (\ref{def:grad_descent}) into the canonical form in which the state evolution theory in \cite{han2025entrywise} can be applied. Consider initialization $u^{(-1)}=0_m$, $v^{(-1)}=\mu_\ast$ at iteration $t=-1$. For $t=0$, let  $u^{(0)}\equiv A v^{(-1)}=A\mu_\ast$, $v^{(0)}\equiv \mu^{(0)}$. For $t\geq 1$, 
\begin{align*}
u^{(t)}&\equiv A \mathsf{R}_{t-1}(v^{(t-1)})\in \R^m,\\ v^{(t)}&\equiv \mathsf{R}_{t-1}(v^{(t-1)})-\eta_{t-1}\cdot  A^\top \partial_1 \mathsf{L}_{t-1}\big(u^{(t)},\mathcal{F}(u^{(0)},\xi)\big)\in \R^n.
\end{align*}
Here $\mathsf{R}_{t-1}\equiv \mathsf{P}_{t-1}\cdot \bm{1}_{t\geq 2}+\mathrm{id}\cdot \bm{1}_{t=1}$. The proximal gradient descent is identified as $\mu^{(t)}=\mathsf{R}_{t}(v^{(t)})$ for $t\geq 0$.  Consequently, for $t\geq 0$,
\begin{align*}
\mathsf{F}_t^{\langle 1 \rangle}(v^{([-1:t-1])}) &= \mathsf{R}_{t-1}(v^{(t-1)})\bm{1}_{t\geq 1}+ \mu_\ast\bm{1}_{t=0},\\
\mathsf{F}_t^{\langle 2 \rangle}(v^{([-1:t-1])}) & = \mathsf{R}_{t-1}(v^{(t-1)})\bm{1}_{t\geq 1}+\mu^{(0)}\bm{1}_{t=0},\\
\mathsf{G}_{t}^{\langle 1\rangle}(u^{([-1:t-1])}) &= 0_{m},\\
\mathsf{G}_{t}^{\langle 2\rangle}(u^{([-1:t])}) &= -\eta_{t-1}\cdot \partial_1 \mathsf{L}_{t-1}\big(u^{(t)},\mathcal{F}(u^{(0)},\xi)\big)\bm{1}_{t\geq 1}.
\end{align*}
Using Definition \ref{def:GFOM_se_asym}, we have the following state evolution. We initialize with $\Phi_{-1}=\mathrm{id}(\R^m),\Xi_{-1}\equiv \mathrm{id}(\R^n)$, $\mathfrak{U}^{(-1)}=0_m$ and $\mathfrak{V}^{(-1)}=\mu_\ast$. For $t=0$:
\begin{itemize}
	\item $\Phi_0: \R^{m\times [-1:0]}\to \R^{m\times [-1:0]}$ is defined as $\Phi_{0}(\mathfrak{u}^{([-1:0])})\equiv [\mathfrak{u}^{(-1)}\,|\, \mathfrak{u}^{(0)}]$.
	\item The Gaussian law of $\mathfrak{U}^{(0)}\in \R$ is determined via $\var(\mathfrak{U}^{(0)})=\pnorm{\mu_\ast}{}^2/n$.
	\item $\Xi_0: \R^{n\times [-1:0]}\to \R^{n\times [-1:0]}$ is defined as $\Xi_{0}(\mathfrak{v}^{([-1:0])})\equiv [\mathfrak{v}^{(-1)}\,|\, \mathfrak{v}^{(0)}+\mu^{(0)}] $.
	\item $\mathfrak{V}^{(0)}\in \R$ is degenerate (identically 0).
\end{itemize}
For $t\geq 1$, we have the following state evolution:
\begin{enumerate}
	\item[(O1)] Let $\Phi_{t}:\R^{m\times [-1:t]}\to \R^{m\times [-1:t]}$ be defined as follows: for $w \in [-1:t-1]$, $\big[\Phi_{t}(\mathfrak{u}^{([-1:t])})\big]_{\cdot,w}\equiv \big[\Phi_{w}(\mathfrak{u}^{([-1:w])})\big]_{\cdot,w}$, and for $w=t$,
	\begin{align*}
	\big[\Phi_{t}(\mathfrak{u}^{([-1:t])})\big]_{\cdot,t} \equiv \mathfrak{u}^{(t)}-\sum_{s \in [1:t-1]}  \eta_{s-1}\cdot  \mathfrak{f}_{s}^{(t-1)}\cdot  \partial_1\mathsf{L}_{s-1}\Big(\big[\Phi_{s}(\mathfrak{u}^{([-1:t])})\big]_{\cdot,s}, \mathcal{F}\big(\mathfrak{u}^{(0)},\xi\big) \Big).
	\end{align*}
	Here the correction coefficients $\{\mathfrak{f}_{s}^{(t-1) } \}_{s \in [1:t-1]}\subset \R$ (defined for $t\geq 2$) are determined by
	\begin{align*}
	\mathfrak{f}_{s}^{(t-1) } \equiv   \E^{(0)} \partial_{\mathfrak{V}^{(s)}}  \mathsf{P}_{t-1;\pi_n}\Big([\Xi_{t-1;\pi_n} (\mathfrak{V}^{(-1)}_{\pi_n},\mathfrak{V}^{([0:t-1])})]_{\cdot,t-1}\Big).
	\end{align*}
	\item[(O2)] Let the Gaussian law of $\mathfrak{U}^{(t)}$ be determined via the following correlation specification: for $s \in [0:t]$,
	\begin{align*}
	\mathrm{Cov}\big(\mathfrak{U}^{(t)}, \mathfrak{U}^{(s)} \big)\equiv   \E^{(0)} \prod_{\ast \in \{s,t\}}  \mathsf{F}_{\ast;\pi_n}^{\langle 1\rangle}\Big( \Xi_{\ast-1;\pi_n} (\mathfrak{V}^{(-1)}_{\pi_n},\mathfrak{V}^{([0:\ast-1])})\Big).
	\end{align*}
	\item[(O3)] Let $\Xi_{t}:\mathbb{R}^{n\times [-1:t]}\to \mathbb{R}^{n\times [-1:t]}$ be defined as follows: for $w \in [-1:t-1]$, $\big[\Xi_{t}(\mathfrak{v}^{([-1:t])})\big]_{\cdot,w}\equiv \big[\Xi_{w}(\mathfrak{v}^{([-1:w])})\big]_{\cdot,w}$, and for $w=t$,
	\begin{align*}
	\big[\Xi_{t}(\mathfrak{v}^{([-1:t])})\big]_{\cdot,t} &\equiv \mathfrak{v}^{(t)}+\sum_{s \in [1:t]} \big(\mathfrak{g}_{s}^{(t)}+\bm{1}_{s=t}\big)\cdot \mathsf{R}_{s-1}\big([\Xi_{s-1}(\mathfrak{v}^{([-1:s-1])})]_{\cdot,s-1} \big) + \mathfrak{g}_{0}^{(t)}\cdot \mu_\ast.
	\end{align*}
	Here the coefficients $\{\mathfrak{g}_{s}^{(t)}\}_{s \in [0:t]}\subset \R$ are determined via
	\begin{align*}
	\mathfrak{g}_{s}^{(t)}
	& \equiv -\phi \eta_{t-1}\cdot \E^{(0)} \partial_{\mathfrak{U}^{(s)}} \partial_1 \mathsf{L}_{t-1;\pi_m}\Big([\Phi_{t;\pi_m}  (\mathfrak{U}^{(-1)}_{\pi_m},\mathfrak{U}^{([0:t])})]_{\cdot,t},\mathcal{F}\big(\mathfrak{U}^{(0)},\xi_{\pi_m}\big)\Big).
	\end{align*}
	\item[(O4)] Let the Gaussian law of $\mathfrak{V}^{(t)}$ be determined via the following correlation specification: for $s \in [1:t]$,
	\begin{align*}
	\mathrm{Cov}(\mathfrak{V}^{(t)},\mathfrak{V}^{(s)})\equiv  \phi\cdot  \E^{(0)} \prod_{\ast \in \{s,t\}} \eta_{\ast-1} \partial_1 \mathsf{L}_{\ast-1;\pi_m}\Big([\Phi_{\ast;\pi_m}  (\mathfrak{U}^{(-1)}_{\pi_m},\mathfrak{U}^{([0:\ast])})]_{\cdot,\ast},\mathcal{F}\big(\mathfrak{U}^{(0)},\xi_{\pi_m}\big)\Big).
	\end{align*}
\end{enumerate}

\noindent (\textbf{Step 2}). We now make a few identifications to convert (O1)-(O4) to the state evolution in (S1)-(S3). 

First, we identify $\mathfrak{U}^{([0:t])}$ as $\mathfrak{Z}^{([0:t])}$ and $\mathfrak{V}^{([1:t])}$ as $\mathfrak{W}^{([1:t])}$. Variable $\mathfrak{U}^{(-1)}$ can be dropped for free, and variables $\mathfrak{V}^{([-1:0])}$ are contained in the recursively defined mappings as detailed below. With a formal variable $\mathsf{R}_0(\Delta_0)\equiv \mu^{(0)}\in \R^n$, for $t\geq 1$, let $\Delta_t: \R^{n\times [1:t]}\to \R^n$ be defined recursively via the following relation:
\begin{align*}
\Delta_t\big(\mathfrak{w}^{([1:t])}\big)\equiv \mathfrak{w}^{(t)}+\sum_{s \in [1:t]} \big(\mathfrak{g}_s^{(t)}+\bm{1}_{s=t}\big)\cdot  \mathsf{R}_{s-1}\Big(\Delta_{s-1}\big(\mathfrak{w}^{([1:s-1])}\big)\Big)+ \mathfrak{g}_0^{(t)}\cdot \mu_\ast.
\end{align*}
Moreover, let $\Theta_t, \Upsilon_t: \mathbb{R}^{m\times [0:t]}\to \R^m$ be defined recursively: for $t\geq 1$, 
\begin{itemize}
	\item $\Theta_t(\mathfrak{z}^{([0:t])})\equiv \mathfrak{z}^{(t)}- \sum_{s \in [1:t-1]}\eta_{s-1} \cdot \mathfrak{f}_{s}^{(t-1) }\cdot  \partial_1 \mathsf{L}_{s-1}\big(\Theta_{s}(\mathfrak{z}^{([0:s])}),\mathcal{F}(\mathfrak{z}^{(0)},\xi)\big)$,
	\item $\Upsilon_t(\mathfrak{z}^{([0:t])}) = - \eta_{t-1}  \partial_1 \mathsf{L}_{t-1}\big(\Theta_t(\mathfrak{z}^{([0:t])}),\mathcal{F}(\mathfrak{z}^{(0)},\xi)\big)$.
\end{itemize}
	We may translate the recursive definition for the Gaussian laws of $\mathfrak{U}^{([0:t])}$ and $\mathfrak{V}^{([1:t])}$ to those of $\mathfrak{Z}^{([0:t])}\in \R^{[0:t]}$ and $\mathfrak{W}^{([1:t])}\in \R^{[1:t]}$ as follows: initialized with $\cov(\mathfrak{Z}^{(0)},\mathfrak{Z}^{(0)})=\pnorm{\mu_\ast}{}^2/n$, with formal variables $\mathsf{P}_{-1}(\Delta_{-1})=\mu_\ast$ and $\mathsf{P}_{0}(\Delta_{0})=\mu^{(0)}$, for $0\leq s\leq t$, 
	\begin{align*}
	\cov(\mathfrak{Z}^{(t)},\mathfrak{Z}^{(s)})&=
	\E^{(0)} \prod\limits_{\ast \in \{s-1,t-1\}} \mathsf{P}_{*;\pi_n}\big([\Xi_{*;\pi_n}(\mathfrak{V}_{\pi_n}^{(-1)},\mathfrak{V}^{([0:*])})]_{\cdot,*}\big)\\
	& = \E^{(0)} \prod\limits_{\ast \in \{s-1,t-1\}} \mathsf{P}_{*;\pi_n} \big(\Delta_{*;\pi_n} (\mathfrak{W}^{([1:*])})\big),
	\end{align*}
	and for $1\leq s\leq t$,
	\begin{align*}
	\cov(\mathfrak{W}^{(t)},\mathfrak{W}^{(s)})&=\phi\cdot  \E^{(0)} \prod_{\ast \in \{s,t\}} \eta_{\ast-1} \partial_1 \mathsf{L}_{\ast-1;\pi_m}\Big([\Phi_{\ast;\pi_m}  (\mathfrak{U}^{(-1)}_{\pi_m},\mathfrak{U}^{([0:\ast])})]_{\cdot,\ast},\mathcal{F}\big(\mathfrak{U}^{(0)},\xi_{\pi_m}\big)\Big)\\
	& = \phi\cdot \E^{(0)} \prod_{\ast \in \{s,t\}} \Upsilon_{*;\pi_m}(\mathfrak{Z}^{([0:*])}).
	\end{align*}
	Then we have
	\begin{align}\label{ineq:gd_se_chg_not}
	\big(\Upsilon_{t;k}(\mathfrak{Z}^{([0:t])}),\Theta_{t;k}(\mathfrak{Z}^{([0:t])})\big)_{k \in [m]}&\equald \big( (\mathsf{G}_{t}^{\langle 2\rangle}\circ\Phi_{t})_k(\mathfrak{U}^{([-1:t])}),
	\big[\Phi_{t;k}(\mathfrak{U}^{([-1:t])})\big]_{\cdot,t}\big)_{k \in [m]},\nonumber\\
	\big(\Delta_{t;\ell}(\mathfrak{W}^{([1:t])}) \big)_{\ell \in [n]}&\equald \big([\Xi_{t;\ell} (\mathfrak{V}_{\ell}^{(-1)},\mathfrak{V}^{([0:t])})]_{\cdot,t} \big)_{\ell \in [n]}.
	\end{align}
	Furthermore, for $1\leq s\leq t$, with $\Omega_t\equiv \mathsf{P}_t\circ \Delta_t$,
	\begin{align*}
	\mathfrak{f}_{s}^{(t) }& = \E^{(0)} \partial_{ \mathfrak{V}^{(s)}} \mathsf{P}_{t;\pi_n}\big(\big[\Xi_{t;\pi_n}(\mathfrak{V}_{\pi_n}^{(-1)},\mathfrak{V}^{([0:t])})\big]_{\cdot,t}\big)\\
	& = \E^{(0)} \partial_{ \mathfrak{W}^{(s)}} \mathsf{P}_{t;\pi_n}\big(\Delta_{t;\pi_n}(\mathfrak{W}^{([1:t])})\big) =\rho_{t,s},\\
	\mathfrak{g}_s^{(t)}& = \phi\cdot \E^{(0)} \partial_{\mathfrak{U}^{(s)}} (\mathsf{G}_{t}^{\langle 2\rangle}\circ\Phi_{t})_{\pi_m}(\mathfrak{U}^{([-1:t])})\\
	&= \phi\cdot \E^{(0)}\partial_{\mathfrak{Z}^{(s)}} \Upsilon_{t;\pi_m}(\mathfrak{Z}^{([0:t])}) = \tau_{t,s},\\
	\mathfrak{g}_0^{(t)}& =\phi\cdot \E^{(0)} \partial_{\mathfrak{U}^{(0)}} (\mathsf{G}_{t}^{\langle 2\rangle}\circ\Phi_{t})_{\pi_m}(\mathfrak{U}^{([-1:t])})\\
	&= \phi\cdot \E^{(0)}\partial_{\mathfrak{Z}^{(0)}} \Upsilon_{t;\pi_m}(\mathfrak{Z}^{([0:t])}) = \delta_t.
	\end{align*}
	This concludes the desired state evolution in (S1)-(S3), by identifying the formal variables $\Omega_* = \mathsf{P}_*(\Delta_*)$ for $*=-1,0$.
	
	\noindent (\textbf{Step 3}). Next we prove the claim for $\E^{(0)} \Psi\big((A \mu^{(t-1)})_k, {Z}_k^{(t-1)}, (A \mu_\ast)_k\big)$. Note that for $k \in [m]$, $(A\mu^{(t-1)})_k=u_{k}^{(t)}$, and
	\begin{align*}
	Z_k^{(t-1)} & = \iprod{A_k}{\mu^{(t-1)}}+\sum_{s \in [1:t-1]} \eta_{s-1} \rho_{t-1,s}\cdot  \partial_1\mathsf{L}_{s-1;k}\big(\iprod{A_k}{\mu^{(s-1)}},Y_k\big)\\
	& =u_{k}^{(t)}+ \sum_{s \in [1:t-1]} \eta_{s-1}\rho_{t-1,s}\cdot \partial_1\mathsf{L}_{s-1;k}\big(u_{k}^{(s)},\mathcal{F}(u_{k}^{(0)},\xi_k)\big)\\
	& \equiv F_Z\big(u_k^{([-1:t])}\big).
	\end{align*}
	By (O1), $F_Z\big(\Phi_{t;k}(\mathfrak{U}_k^{(-1)},\mathfrak{U}^{([0:t])})\big)=\mathfrak{U}^{(t)}\equald \mathfrak{Z}^{(t)}$. Consequently, with 
	\begin{align}\label{ineq:gd_se_0}
	\Psi_Z\big(u_k^{([-1:t])}\big)\equiv \Psi\big(u_{k}^{(t)},F_Z(u_k^{([-1:t])}),u_{k}^{(0)}\big),
	\end{align}
	by Theorem \ref{thm:GFOM_se_asym}, modulo verification of the condition (\ref{cond:Psi_asym}) for $\Psi_Z$, we have 
	\begin{align}\label{ineq:gd_se_1}
	\Psi\big((A \mu^{(t-1)})_k, {Z}_k^{(t-1)}, (A \mu_\ast)_k\big)& =\Psi\big(u_{k}^{(t)},F_Z(u_k^{([-1:t])}),u_{k}^{(0)}\big) \nonumber \\
	&= \Psi_Z\big(u_k^{([-1:t])}\big)\stackrel{d}{\approx}  \Psi_Z\big(\Phi_{t;k}(\mathfrak{U}_k^{(-1)},\mathfrak{U}^{([0:t])})\big)\nonumber\\
	&\equald \Psi\big(\Theta_{t;k}(\mathfrak{Z}^{([0:t])}), \mathfrak{Z}^{(t)}, \mathfrak{Z}^{(0)}\big).
	\end{align}
	To verify the condition (\ref{cond:Psi_asym}) for $\Psi_Z$ and quantify the error in the above display, it suffices to invoke Lemma \ref{lem:rho_tau_bound} for a bound on $\{\rho_{t,s}\}_{s \in [1:t]}$: with $\Psi_Z$ defined in (\ref{ineq:gd_se_0}) and the estimate in Lemma \ref{lem:rho_tau_bound}, some simple algebra shows that condition (\ref{cond:Psi_asym}) is satisfied for $\Psi_Z$ with $\Lambda_{\Psi_Z}\equiv \Lambda_{\Psi} (K\Lambda)^{c_t}$. The claim for $\E^{(0)} \Psi\big((A \mu^{(t-1)})_k, {Z}_k^{(t-1)}, (A \mu_\ast)_k\big)$ now follows from (\ref{ineq:gd_se_1}).
	
	\noindent (\textbf{Step 4}). Finally we prove the claim for $ \E^{(0)} \Psi\big(\mu^{(t)}_\ell, {W}_\ell^{(t)},\mu_{\ast,\ell}\big)$. Recall for $\ell \in [n]$, $\mu^{(t)}_\ell=\mathsf{R}_{t}(v_{\ell}^{(t)})$, and moreover by (O3),
	\begin{align*}
	W_\ell^{(t)}&=-\eta_{t-1}\cdot  \bigiprod{Ae_\ell}{ \partial_1 \mathsf{L}_{t-1} (A \mu^{(t-1)},Y)}-\sum_{s \in [1:t]} \tau_{t,s}\cdot \mu^{(s-1)}_\ell - \delta_t\cdot \mu_{\ast,\ell}\\
	& =v^{(t)}_{\ell}-\mathsf{R}_{t-1;\ell}\big(v^{(t-1)}_{\ell}\big)- \sum_{s \in [1:t]} \tau_{t,s}\cdot \mathsf{R}_{s-1;\ell}\big(v^{(s-1)}_{\ell}\big)-\delta_t\cdot v^{(-1)}_{\ell}\\
	&\equiv F_W\big(v_\ell^{([-1:t])}\big).
	\end{align*}
	By (O3), $F_W\big(\Xi_{t;\ell}(\mathfrak{V}_\ell^{(-1)},\mathfrak{V}^{([0:t])})\big)=\mathfrak{V}^{(t)}\equald \mathfrak{W}^{(t)}$.  So similar to (\ref{ineq:gd_se_0}), with
	\begin{align}\label{ineq:gd_se_W_1}
	\Psi_W\big(v_\ell^{([-1:t])}\big)\equiv \Psi\big(\mathsf{R}_{t;\ell}(v_{\ell}^{(t)}),F_W(v_\ell^{([-1:t])}),v_{\ell}^{(-1)}\big),
	\end{align}
	by Theorem \ref{thm:GFOM_se_asym}, modulo verification of the condition (\ref{cond:Psi_asym}) for $\Psi_W$, we have 
	\begin{align}\label{ineq:gd_se_W_2}
	&\Psi\big(\mu^{(t)}_\ell, {W}_\ell^{(t)},\mu_{\ast,\ell}\big) =\Psi\big(\mathsf{R}_{t;\ell}(v_{\ell}^{(t)}),F_W(v_\ell^{([-1:t])}),v_{\ell}^{(-1)}\big) =\Psi_W\big(v_\ell^{([-1:t])}\big)\nonumber\\
	&\stackrel{d}{\approx}  \Psi_W\big(\Xi_{t;\ell}(\mathfrak{V}_\ell^{(-1)},\mathfrak{V}^{([0:t])})\big)\equald \Psi\big(\mathsf{R}_{t;\ell}(\Delta_{t;\ell}(\mathfrak{W}^{[1:t]})),\mathfrak{W}^{(t)},\mu_{\ast,\ell}\big).
	\end{align}
	Here, $\stackrel{d}{\approx}$ is used in the sense of the statement of Theorem~\ref{thm:GFOM_se_asym}, with the associated error bounds stated therein. From here, in view of Lemma \ref{lem:rho_tau_bound}, the condition (\ref{cond:Psi_asym}) is satisfied for $\Psi_W$ with the choice $\Lambda_{\Psi_W}\equiv \Lambda_{\Psi}(1+\delta_t) ( K\Lambda)^{c_t}$. Using the assumption (A5), we have $\delta_t\leq \Lambda$. The claim for the entrywise distributional characterization follows from (\ref{ineq:gd_se_W_2}). 
	
	For the averaged distributional characterization, it suffices to note that the estimates in Step 3 of the proof of Theorem \ref{thm:gd_se} and $\delta_t\leq \Lambda$ remain valid under (A4')-(A5'), so we may apply Theorem \ref{thm:GFOM_se_asym_avg} to conclude. 
\end{proof}

\subsection{Proof of Theorem \ref{thm:db_gd_oracle}}

\begin{lemma}\label{lem:tau_diag}
	The following formula holds: for any $s \in [1:t]$,
	\begin{align*}
	\tau_{s,s} = -\eta_{s-1} \cdot \E^{(0)} \partial_{11} \mathsf{L}_{s-1;\pi_m}\big(\Theta_{s;\pi_m}(\mathfrak{Z}^{([0:s])}),\mathcal{F}(\mathfrak{Z}^{(0)},\xi_{\pi_m})\big).
	\end{align*}
\end{lemma}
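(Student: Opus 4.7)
The plan is to differentiate the equivalent representation of $\Upsilon_t$ via $\Theta_t$ given in (\ref{def:Theta_fcn}), namely
\begin{align*}
\Upsilon_t(\mathfrak{z}^{([0:t])}) = -\eta_{t-1}\,\partial_1\mathsf{L}_{t-1}\big(\Theta_t(\mathfrak{z}^{([0:t])}),\mathcal{F}(\mathfrak{z}^{(0)},\xi)\big),
\end{align*}
and then plug the resulting expression into the definition of $\tau_{t,t}$ in (S3). The key structural observation is that, by the recursive formula (\ref{eqn:Theta_recur}),
\begin{align*}
\Theta_t(\mathfrak{z}^{([0:t])}) = \mathfrak{z}^{(t)} - \sum_{s \in [1:t-1]} \eta_{s-1}\rho_{t-1,s}\,\partial_1\mathsf{L}_{s-1}\big(\Theta_s(\mathfrak{z}^{([0:s])}),\mathcal{F}(\mathfrak{z}^{(0)},\xi)\big),
\end{align*}
and each $\Theta_s$ appearing on the right hand side depends only on $\mathfrak{z}^{([0:s])}$ with $s\leq t-1$, which excludes $\mathfrak{z}^{(t)}$.

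First I would note that this triangular dependence immediately yields $\partial_{\mathfrak{z}^{(t)}}\Theta_{t;k}(\mathfrak{z}^{([0:t])})=1$ for every $k\in[m]$. An induction on $r=1,\ldots,t-1$ also gives $\partial_{\mathfrak{z}^{(t)}}\Upsilon_{r;k}=0$ for all $r<t$, which is the same observation reformulated via $\Upsilon_r = -\eta_{r-1}\partial_1\mathsf{L}_{r-1}(\Theta_r,\mathcal{F}(\mathfrak{z}^{(0)},\xi))$. Applying the chain rule then yields
\begin{align*}
\partial_{\mathfrak{z}^{(t)}}\Upsilon_{t;k}(\mathfrak{z}^{([0:t])}) = -\eta_{t-1}\,\partial_{11}\mathsf{L}_{t-1;k}\big(\Theta_{t;k}(\mathfrak{z}^{([0:t])}),\mathcal{F}(\mathfrak{z}^{(0)},\xi_k)\big)\cdot \partial_{\mathfrak{z}^{(t)}}\Theta_{t;k}(\mathfrak{z}^{([0:t])}),
\end{align*}
which by the preceding step reduces to the single term $-\eta_{t-1}\,\partial_{11}\mathsf{L}_{t-1;k}(\Theta_{t;k}(\mathfrak{z}^{([0:t])}),\mathcal{F}(\mathfrak{z}^{(0)},\xi_k))$. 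This same computation already appears implicitly in Step 1 of the proof of Lemma \ref{lem:rho_tau_bound}, where the diagonal of $\bm{L}^{[t]}_k$ is read off from $\bm{\Upsilon}^{';[t]}_k$ after observing that $\bm{L}^{[t]}_k\mathfrak{O}_t(\bm{\rho}^{[t-1]})$ has vanishing diagonal.

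Finally, substituting the above identity into the definition
\begin{align*}
\tau_{t,t} = \phi\cdot \E^{(0)}\partial_{\mathfrak{Z}^{(t)}}\Upsilon_{t;\pi_m}(\mathfrak{Z}^{([0:t])})
\end{align*}
from (S3) and taking the expectation over $\pi_m$ and $\mathfrak{Z}^{([0:t])}$ gives the claimed formula. There is essentially no technical obstacle here: the content of the lemma is purely the observation that the lower-triangular (Volterra) structure of the state evolution causes the diagonal derivative to collapse to a single non-recursive term. All required regularity to justify the differentiation is already subsumed by Assumption (A5) (or its averaged variant), which ensures $\partial_{11}\mathsf{L}_{t-1;k}$ is well-defined and bounded.
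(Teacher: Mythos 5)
Your argument is correct and is essentially the paper's own proof viewed directly via the chain rule: the paper extracts the $(s,s)$ entry of $[I_s-\bm{L}^{[s]}_k\mathfrak{O}_s(\bm{\rho}^{[s-1]})]^{-1}\bm{L}^{[s]}_k$ and observes that the lower-triangular-with-zero-diagonal structure makes $e_s^\top[\cdots]^{-1}e_s=1$, which is exactly the observation you make when noting $\partial_{\mathfrak{z}^{(t)}}\Theta_{t;k}=1$. Both routes reduce to the same collapse of the diagonal derivative under the Volterra structure, so this is the same proof.
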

\begin{proof}
	Using the inversion formula (\ref{ineq:rho_tau_bound_Upsilon}) and the notation therein, for any $s \in [t]$,
	\begin{align*}
	\partial_{(s)} \Upsilon_{s;k}(z^{([0:s])})&= e_s^\top \big[I_s- \bm{L}^{[s]}_{k}(z^{([0:s])}) \mathfrak{O}_{s}(\bm{\rho}^{[s-1]})\big]^{-1} \bm{L}^{[s]}_{k}(z^{([0:s])}) e_s\\
	&=  -\eta_{s-1}\partial_{11} \mathsf{L}_{s-1;k}\big(\Theta_{s;k}(z^{([0:s])}),\mathcal{F}(z^{(0)},\xi_k)\big),
	\end{align*}
	where in the second identity we used the fact that $e_s^\top \big[I_s- \bm{L}^{[s]}_{k}(z^{([0:s])}) \mathfrak{O}_{s}(\bm{\rho}^{[s-1]})\big]^{-1}e_s=1$, as the matrix in the middle is lower triangular with $0$ diagonal elements. The claim follows by the definition of $\tau_{s,s}$. 
\end{proof}

\begin{proof}[Proof of Theorem \ref{thm:db_gd_oracle}]
	By definition of $\mu^{(t)}_{\textrm{db}}$ in (\ref{def:debias_gd_oracle}), it suffices to control 
	\begin{align*}
	\E^{(0)}\psi\big(b^{(t)}_{\mathrm{db}}\cdot \mu_{\ast,\ell}- e_\ell^\top \bm{W}^{[t]} \bm{\omega}^{[t],\top} e_t\big).
	\end{align*}
	As $e_\ell^\top \bm{W}^{[t]}\in \R^{1\times t}$ only involves the elements in the $\ell$-th row of $\bm{W}^{[t]}$, by letting
	\begin{align*}
	\psi_0(w_{[1:t]})\equiv \psi\big(b^{(t)}_{\mathrm{db}}\cdot \mu_{\ast,\ell}- w_{[1:t]}^\top \bm{\omega}^{[t],\top} e_t\big),\quad \forall w_{[1:t]}\in \R^{[1:t]},
	\end{align*}
	we have $\psi_0(\{W_\ell^{(s)}\}_{s \in [1:t]})\equiv \psi\big(b^{(t)}_{\mathrm{db}}\cdot \mu_{\ast,\ell}- e_\ell^\top \bm{W}^{[t]} \bm{\omega}^{[t],\top} e_t\big)$. Note that for any multi-index $\alpha$ with $\abs{\alpha}\leq 3$, for some constant $c_t=c_t(t,\mathfrak{p})>1$,
	\begin{align*}
	\bigabs{\partial_\alpha \psi_0(w_{[1:t]})}&\leq \Lambda_\psi \Big(1+\bigabs{b^{(t)}_{\mathrm{db}}\cdot \mu_{\ast,\ell}- w_{[1:t]}^\top \bm{\omega}^{[t],\top} e_t}\Big)^{\mathfrak{p}}\cdot \pnorm{\bm{\omega}_{t\cdot}^{[t]}}{}^3\\
	&\leq \Lambda_\psi \big(\Lambda L_\mu\cdot (1+\pnorm{\bm{\omega}^{[t]}_{t\cdot} }{})\big)^{c_t}\cdot \big(1+\pnorm{w_{[1:t]}}{}\big)^{\mathfrak{p}}.
	\end{align*}
	Here the second line follows as $\abs{b^{(t)}_{\mathrm{db}}}\leq \pnorm{\bm{\omega}^{[t]}_{t\cdot} }{}\cdot \pnorm{\bm{\delta}^{[t]}}{} \leq  \Lambda^{c_t} \pnorm{\bm{\omega}^{[t]}_{t\cdot} }{}$. By using Lemma \ref{lem:lower_tri_mat_inv} coupled with Lemmas \ref{lem:rho_tau_bound} and \ref{lem:tau_diag}, we have
	\begin{align*}
	\pnorm{\bm{\omega}^{[t]}_{t\cdot} }{}\leq \bigg(\frac{t\cdot \pnorm{\bm{\tau}^{[t]}}{\op} }{\min_{s \in [t]} \abs{\bm{\tau}^{[t]} _{ss}}}\bigg)^t \leq \big(K\Lambda\cdot  \tau_\ast^{(t),-1}\big)^{c_t}.
	\end{align*}
	Now we may apply Theorem \ref{thm:gd_se} to conclude the general bounds.
\end{proof}

\subsection{Proof of Theorem \ref{thm:gen_error_oracle}}

\begin{lemma}\label{lem:gen_error_replace_pop}
	Suppose $\{\mathsf{H}_{\mathcal{F};k}\}\subset C^3(\R^2)$ have mixed derivatives of order 3 all bounded by $\Lambda$. Then for any $q>1$, there exists some $c_t=c_t(t,q)>1$ such that
	\begin{align*}
	\E^{(0)}\bigabs{\mathscr{E}_{\mathsf{H}}^{(t)}(A,Y)-\E\mathsf{H}_{\mathcal{F}}\big(\mathfrak{Z}^{(t+1)},\mathfrak{Z}^{(0)}\big)}^q\leq (K\Lambda L_\mu)^{c_t}\cdot n^{-1/c_t}.
	\end{align*}
\end{lemma}
\begin{proof}
	With $\mathsf{Z}_n\sim \mathcal{N}(0,I_n/n)$, let
	\begin{align}\label{ineq:gen_error_replace_pop_1}
	\mathscr{E}_{\mathsf{H};\mathsf{Z}_n}^{(t)}(A,Y)&\equiv \E\big[\mathsf{H}\big(\iprod{\mathsf{Z}_n}{\mu^{(t)}},\mathcal{F}(\iprod{\mathsf{Z}_n}{\mu_\ast},\xi_{\pi_m})\big)|(A,Y)\big]\nonumber\\
	&= \E \big[\mathsf{H}_{\mathcal{F}}\big(\iprod{\mathsf{Z}_n}{\mu^{(t)}}, \iprod{\mathsf{Z}_n}{\mu_\ast}\big)|(A,Y)\big].
	\end{align}
	
	\noindent (\textbf{Step 1}). We shall prove in this step that for some universal constant $c_0>0$,
	\begin{align}\label{ineq:gen_error_replace_pop_step1}
	\bigabs{\mathscr{E}_{\mathsf{H};\mathsf{Z}_n}^{(t)}(A,Y)-\mathscr{E}_{\mathsf{H}}^{(t)}(A,Y)}\leq\frac{c_0\Lambda}{\sqrt{n}}\cdot \big(\pnorm{\mu^{(t)}}{\infty}^3+\pnorm{\mu_\ast}{\infty}^3\big).
	\end{align}
	Let the function $G:\R^n\to \R$ be defined by 
	\begin{align*}
	G(z)&\equiv \E_{\pi_m}\big[\mathsf{H}\big(\iprod{z}{\mu^{(t)}},\mathcal{F}(\iprod{z}{\mu_\ast},\xi_{\pi_m})\big)|(A,Y)\big]=\mathsf{H}_{\mathcal{F}}\big(\iprod{z}{\mu^{(t)}},\iprod{z}{\mu_\ast}\big).
	\end{align*}
	It is easy to compute that $\pnorm{\partial_i^3 G}{\infty}\leq c_0\Lambda\cdot (\abs{\mu^{(t)}_i}^3+\abs{\mu_{\ast,i}}^3)$, so by Lindeberg's universality principle (cf. Lemma \ref{lem:lindeberg}), we have 
	\begin{align*}
	\bigabs{\mathscr{E}_{\mathsf{H};\mathsf{Z}_n}^{(t)}(A,Y)-\mathscr{E}_{\mathsf{H}}^{(t)}(A,Y)}& = \abs{\E G(\mathsf{Z}_n)-\E G(A_{\mathrm{new}})}\leq \frac{c_0\Lambda}{\sqrt{n}}\cdot \big(\pnorm{\mu^{(t)}}{\infty}^3+\pnorm{\mu_\ast}{\infty}^3\big),
	\end{align*}
	proving the claim (\ref{ineq:gen_error_replace_pop_step1}).
	
	\noindent (\textbf{Step 2}). In this step we prove that for any $q>1$, there exists some $c_t=c_t(t,q)>1$ such that
	\begin{align}\label{ineq:gen_error_replace_pop_step2}
	\E^{(0)}\bigabs{\mathscr{E}_{\mathsf{H};\mathsf{Z}_n}^{(t)}(A,Y)-\E\mathsf{H}_{\mathcal{F}}\big(\mathfrak{Z}^{(t+1)},\mathfrak{Z}^{(0)}\big)}^q\leq (K\Lambda L_\mu)^{c_t}\cdot n^{-1/c_t}.
	\end{align}
	To this end, let 
	\begin{align*}
	\Sigma^{(t+1)}\equiv \frac{1}{n}
	\begin{pmatrix}
	\pnorm{\mu^{(t)}}{}^2 & \iprod{ \mu^{(t)}}{\mu_\ast}\\
	\iprod{ \mu^{(t)}}{\mu_\ast} & \pnorm{\mu_\ast}{}^2
	\end{pmatrix}, \, {\Sigma}_0^{(t+1)}\equiv 
	\begin{pmatrix}
	\var(\mathfrak{Z}^{(t+1)}) & \cov(\mathfrak{Z}^{(t+1)},\mathfrak{Z}^{(0)})\\
	\cov(\mathfrak{Z}^{(t+1)},\mathfrak{Z}^{(0)}) & \var(\mathfrak{Z}^{(0)}) 
	\end{pmatrix}.
	\end{align*}
	Then we have
	\begin{align*}
	&\bigabs{\mathscr{E}_{\mathsf{H};\mathsf{Z}_n}^{(t)}(A,Y)-\E\mathsf{H}_{\mathcal{F}}\big(\mathfrak{Z}^{(t+1)},\mathfrak{Z}^{(0)}\big)}\\
	&= \bigabs{ \E \big[\mathsf{H}_{\mathcal{F}}\big(\Sigma^{(t+1),1/2}\mathcal{N}(0,I_2)\big)|(A,Y)\big] - \E \mathsf{H}_{\mathcal{F}}\big({\Sigma}_0^{(t+1),1/2}\mathcal{N}(0,I_2)\big) }\\
	&\leq c_0\Lambda\cdot \pnorm{\Sigma^{(t+1),1/2}-\Sigma_0^{(t+1),1/2}}{\op} \leq c_0\Lambda\cdot \pnorm{\Sigma^{(t+1)}-\Sigma_0^{(t+1)}}{\op}^{1/2}.
	\end{align*}
	Now (\ref{ineq:gen_error_replace_pop_step2}) follows by Theorem \ref{thm:gd_se}-(2) applied to the right hand side of the above display, upon noting the definition of covariance for $\mathfrak{Z}^{(\cdot)}$ in (S2).
	
	The claim now follows combining (\ref{ineq:gen_error_replace_pop_step1}) in Step 1, (\ref{ineq:gen_error_replace_pop_step2}) in Step 2, and the delocalization estimate for $\mu^{(t)}$ obtained in \cite[Proposition 6.2]{han2025entrywise}.
\end{proof}

\begin{proof}[Proof of Theorem \ref{thm:gen_error_oracle}]
As $\overline{\mathscr{E}}_{\mathsf{H}}^{(t)}=m^{-1}\sum_{k \in [m]} \mathsf{H}_{\mathcal{F};k}\big(Z^{(t)}_k, (A\mu_\ast)_k\big)$, an application of Theorem \ref{thm:gd_se}-(2) yields that for some $c_t=c_t(t,q)>1$,
\begin{align*}
\E^{(0)}\abs{\overline{\mathscr{E}}_{\mathsf{H}}^{(t)} - \E\mathsf{H}_{\mathcal{F}}\big(\mathfrak{Z}^{(t+1)},\mathfrak{Z}^{(0)}\big)}^q\leq (K\Lambda L_\mu)^{c_t}\cdot n^{-1/c_t}.
\end{align*}
The claim now follows from Lemma \ref{lem:gen_error_replace_pop}.
\end{proof}

\section{Proofs for Section \ref{section:iterative_inf}}

\subsection{Proof of Theorem \ref{thm:consist_tau_rho}}

We first prove a preliminary estimate.

\begin{lemma}\label{lem:hat_rho_tau_bound}
	Suppose (A1), (A3), (A4') and (A5') hold. Then there exists some constant $c_t=c_t(t)>1$ such that
	\begin{align*}
	\pnorm{\hat{\bm{\tau}}^{[t]}}{\op}\vee \pnorm{\hat{\bm{\rho}}^{[t]}}{\op}\leq (K\Lambda)^{c_t}.
	\end{align*}
\end{lemma}
\begin{proof}
	By Algorithm \ref{def:alg_tau_rho}, as $\hat{\bm{L}}^{[t]}_k \mathfrak{O}_t(\hat{\bm{\rho}}^{[t-1]})$ is a lower triangular matrix with diagonal elements $0$, we have $\big(\hat{\bm{L}}^{[t]}_k \mathfrak{O}_t(\hat{\bm{\rho}}^{[t-1]})\big)^t=0_{t\times t}$, and therefore for $k \in [m]$,
	\begin{align*}
	1+\pnorm{\hat{\bm{\tau}}^{[t]}_k}{\op} &\leq 1+\phi\cdot  \bigpnorm{\big[I_t - \hat{\bm{L}}^{[t]}_k \mathfrak{O}_t(\hat{\bm{\rho}}^{[t-1]})\big]^{-1}}{\op} \pnorm{\hat{\bm{L}}^{[t]}_k}{\op}\\
	&\leq 1+\phi\cdot \sum_{r=0}^{t-1} \bigpnorm{\hat{\bm{L}}^{[t]}_k \mathfrak{O}_t(\hat{\bm{\rho}}^{[t-1]})}{\op}^r\cdot \pnorm{\hat{\bm{L}}^{[t]}_k}{\op}\leq  (K\Lambda)^{c_0 t}\cdot \big(1+\pnorm{\hat{\bm{\rho}}^{[t-1]}}{\op}\big)^t.
	\end{align*}
	Taking average over $k \in [m]$, we have
	\begin{align}\label{ineq:hat_rho_tau_bound_1}
	1+\pnorm{\hat{\bm{\tau}}^{[t]}}{\op} &\leq (K\Lambda)^{c_0 t}\cdot \big(1+\pnorm{\hat{\bm{\rho}}^{[t-1]}}{\op}\big)^t.
	\end{align}
	Here $c_0>0$ is a universal constant whose numeric value may change from line to line. On the other hand, using Algorithm \ref{def:alg_tau_rho} and the above display
	\begin{align*}
	1+\pnorm{\hat{\bm{\rho}}_\ell^{[t]}}{\op}&\leq 1+ \pnorm{\hat{\bm{P}}_\ell^{[t]}}{\op}\cdot \Big[1+\big(\pnorm{\hat{\bm{\tau}}^{[t]}}{\op}+1\big) \cdot \pnorm{\hat{\bm{\rho}}_\ell^{[t-1]}}{\op}\Big]\\
	&\leq (K\Lambda)^{c_0 t}\cdot \big(1+\pnorm{\hat{\bm{\rho}}^{[t-1]}}{\op}\big)^{c_0 t}.
	\end{align*}
	Taking average and iterating the above bound, we obtain $
	\pnorm{\hat{\bm{\rho}}^{[t]}}{\op}\leq (K\Lambda)^{ c_t}$. 
	The claim for $\pnorm{\hat{\bm{\tau}}^{[t]}}{\op}$ follows from the above display in combination with (\ref{ineq:hat_rho_tau_bound_1}).
\end{proof}

For notational convenience, let 
\begin{align*}
\epsilon_{\rho;t}\equiv \pnorm{\hat{\bm{\rho}}^{[t]}-\bm{\rho}^{[t]} }{\op},\quad \epsilon_{\tau;t}\equiv \pnorm{\hat{\bm{\tau}}^{[t]}-\bm{\tau}^{[t]} }{\op}.
\end{align*}

\begin{lemma}\label{lem:epi_tau}
	Under the same assumptions as in Theorem \ref{thm:consist_tau_rho}, for any $q>1$, there exists some constant $c_t=c_t(t,q)>0$ such that 
	\begin{align*}
	\E^{(0)} \epsilon_{\tau;t}^q&\leq \big(K\Lambda\big)^{c_t}\cdot \E^{(0)} \epsilon_{\rho;t-1}^q +  (K\Lambda L_\mu)^{ c_t} \cdot n^{-1/c_t}.
	\end{align*}
\end{lemma}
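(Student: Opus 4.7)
The plan is to introduce an intermediate quantity that isolates the two sources of error in $\hat{\bm{\tau}}^{[t]}$: (i) the plug-in of $\hat{\bm{\rho}}^{[t-1]}$ in place of the population $\bm{\rho}^{[t-1]}$, and (ii) the empirical average approximation to the expectation defining $\bm{\tau}^{[t]}$. Specifically, define the oracle version
\begin{align*}
\tilde{\bm{\tau}}^{[t]}_k \equiv \phi\cdot \big[I_t - \hat{\bm{L}}^{[t]}_k \mathfrak{O}_t(\bm{\rho}^{[t-1]})\big]^{-1} \hat{\bm{L}}^{[t]}_k,\quad \tilde{\bm{\tau}}^{[t]}\equiv m^{-1}\sum_{k\in[m]} \tilde{\bm{\tau}}^{[t]}_k,
\end{align*}
so that by the triangle inequality $\epsilon_{\tau;t}\le \pnorm{\hat{\bm{\tau}}^{[t]}-\tilde{\bm{\tau}}^{[t]}}{\op}+\pnorm{\tilde{\bm{\tau}}^{[t]}-\bm{\tau}^{[t]}}{\op}$.

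For the first term, the plan is to apply the resolvent identity $A^{-1}-B^{-1}=A^{-1}(B-A)B^{-1}$ to write
\begin{align*}
\hat{\bm{\tau}}^{[t]}_k-\tilde{\bm{\tau}}^{[t]}_k = \phi\cdot\big[I_t-\hat{\bm{L}}^{[t]}_k\mathfrak{O}_t(\hat{\bm{\rho}}^{[t-1]})\big]^{-1}\hat{\bm{L}}^{[t]}_k\mathfrak{O}_t\big(\hat{\bm{\rho}}^{[t-1]}-\bm{\rho}^{[t-1]}\big)\big[I_t-\hat{\bm{L}}^{[t]}_k\mathfrak{O}_t(\bm{\rho}^{[t-1]})\big]^{-1}\hat{\bm{L}}^{[t]}_k.
\end{align*}
Under (A5*), the Lipschitzness of $\partial_1\mathsf{L}_{s;k}$ forces $\pnorm{\hat{\bm{L}}^{[t]}_k}{\op}\le \Lambda^2$ deterministically, and both resolvent factors are bounded in operator norm by $(K\Lambda)^{c_t}$ via the nilpotency of the strictly lower triangular matrices $\hat{\bm{L}}^{[t]}_k\mathfrak{O}_t(\cdot)$ together with Lemmas \ref{lem:rho_tau_bound} and \ref{lem:hat_rho_tau_bound}. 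Averaging over $k$ and raising to the $q$-th power then yields
\begin{align*}
\E^{(0)}\pnorm{\hat{\bm{\tau}}^{[t]}-\tilde{\bm{\tau}}^{[t]}}{\op}^q\le (K\Lambda)^{c_t}\cdot \E^{(0)}\epsilon_{\rho;t-1}^q.
\end{align*}

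For the second term, the key observation is that the population identity (\ref{ineq:rho_tau_bound_Upsilon}) from the proof of Lemma \ref{lem:rho_tau_bound} gives
\begin{align*}
\bm{\tau}^{[t]}=\phi\cdot m^{-1}\sum_{k\in[m]}\E^{(0)}\big[I_t-\bm{L}^{[t]}_k(\mathfrak{Z}^{([0:t])})\mathfrak{O}_t(\bm{\rho}^{[t-1]})\big]^{-1}\bm{L}^{[t]}_k(\mathfrak{Z}^{([0:t])}).
\end{align*}
Therefore, for each $(r,s)\in[t]\times[t]$, defining
\begin{align*}
\psi_{k;r,s}\big(\{u_{s'}\}_{s'\in[t]},w\big)\equiv \Big(\phi\big[I_t-\bm{M}_k(\{u_{s'}\},w)\mathfrak{O}_t(\bm{\rho}^{[t-1]})\big]^{-1}\bm{M}_k(\{u_{s'}\},w)\Big)_{r,s},
\end{align*}
where $\bm{M}_k(\{u_{s'}\},w)\equiv \mathrm{diag}\big(\{-\eta_{s'-1}\partial_{11}\mathsf{L}_{s'-1;k}(u_{s'},\mathcal{F}(w,\xi_k))\}_{s'\in[t]}\big)$, the matrix entry $(\tilde{\bm{\tau}}^{[t]})_{r,s}-(\bm{\tau}^{[t]})_{r,s}$ is exactly the deviation $m^{-1}\sum_k\big(\psi_{k;r,s}-\E^{(0)}\psi_{k;r,s}\big)$ evaluated on $((A\mu^{(s'-1)})_k, (A\mu_\ast)_k)$.

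The main technical step is to verify that each $\psi_{k;r,s}$ is pseudo-Lipschitz of bounded order with constant $(K\Lambda)^{c_t}$. This will follow from: (a) $\bm{M}_k$ is $\Lambda^2$-Lipschitz in its arguments by (A5*); (b) the resolvent $(I_t-\bm{M}\mathfrak{O}_t(\bm{\rho}^{[t-1]}))^{-1}$ is a smooth function of $\bm{M}$ on the (bounded) range of $\bm{M}_k$, with derivatives controlled by $\pnorm{\bm{\rho}^{[t-1]}}{\op}^{c_t}\le (K\Lambda)^{c_t}$ (Lemma \ref{lem:rho_tau_bound}). Granting this verification, applying Theorem \ref{thm:gd_se_avg} entrywise and summing over the $t^2$ entries yields
\begin{align*}
\E^{(0)}\pnorm{\tilde{\bm{\tau}}^{[t]}-\bm{\tau}^{[t]}}{\op}^q\le (K\Lambda L_\mu)^{c_t}\cdot n^{-1/c_t},
\end{align*}
and combining the two bounds concludes the proof.

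The main obstacle will be the smoothness/pseudo-Lipschitz verification for $\psi_{k;r,s}$, since one must carefully chain the Lipschitz constant for the matrix inverse (via another application of the resolvent identity) with the Lipschitzness of $\partial_{11}\mathsf{L}_{s;k}(\cdot,\mathcal{F}(\cdot,\xi_k))$ guaranteed by (A5*); all subsequent ingredients are then direct consequences of the a priori operator-norm bounds in Lemmas \ref{lem:rho_tau_bound}--\ref{lem:hat_rho_tau_bound} and the averaged state evolution in Theorem \ref{thm:gd_se_avg}.
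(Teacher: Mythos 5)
Your proposal follows essentially the same route as the paper's proof: you introduce exactly the intermediate quantity the paper calls $\overline{\bm{\tau}}^{[t]}_k$, split the error by the triangle inequality into a plug-in error in $\hat{\bm{\rho}}^{[t-1]}$ and a law-of-large-numbers error controlled by Theorem \ref{thm:gd_se_avg}, and bound both via the nilpotency/Neumann-series estimate on the resolvent together with the a priori operator-norm bounds of Lemmas \ref{lem:rho_tau_bound} and \ref{lem:hat_rho_tau_bound}. The paper's map $H_{t;k}$ and its Lipschitz verification correspond to your $\psi_{k;r,s}$ and the pseudo-Lipschitz check, and your use of the resolvent identity makes explicit what the paper states as a direct comparison in its Step 2.
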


\begin{proof}
	Consider the auxiliary sequence defined by 
	\begin{align}\label{ineq:cons_tau_rho_1}
	\overline{\bm{\tau}}^{[t]}_k \equiv \phi\cdot \big[I_t - \hat{\bm{L}}^{[t]}_k  \mathfrak{O}_t(\bm{\rho}^{[t-1]})\big]^{-1} \hat{\bm{L}}^{[t]}_k  \in \R^{t\times t},\quad k \in [m].
	\end{align}
	Here recall $\hat{\bm{L}}^{[t]}_k =\mathrm{diag}\big(\{-\eta_{s-1} \iprod{e_k}{\partial_{11}\mathsf{L}_{s-1}(A\mu^{(s-1)},Y)}\}_{s \in [1:t]}\big)\in \R^{t\times t}$ defined in Algorithm \ref{def:alg_tau_rho}.
	
	\noindent (\textbf{Step 1}). In this step, we will prove that for any $q>1$, there exists some $c_t=c_t(t,q)>0$ such that
	\begin{align}\label{ineq:cons_tau_rho_step1}
	\E^{(0)}\pnorm{\E_{\pi_m} \overline{\bm{\tau}}^{[t]}_{\pi_m}-\bm{\tau}^{[t]} }{\op}^q\leq \big(K\Lambda L_\mu\big)^{ c_t} \cdot n^{-1/c_t}.
	\end{align}
	Consider the map $H_{t;k}:\R^{[0:t]}\to \R^{t\times t}$:
	\begin{align}\label{ineq:cons_tau_rho_step1_1}
	H_{t;k}(u_{[0:t]}) &\equiv \phi\cdot \Big[I_t- M_{t;k}(u_{[0:t]}) \mathfrak{O}_t(\bm{\rho}^{[t-1]})\Big]^{-1}  M_{t;k}(u_{[0:t]}),
	\end{align}
	where
	\begin{align*}
	M_{t;k}(u_{[0:t]})\equiv \mathrm{diag}\Big(\Big\{-\eta_{s-1}\partial_{11} \mathsf{L}_{s-1;k}(u_s,\mathcal{F}(u_0,\xi_k))  \Big\}_{s \in [1:t]}\Big).
	\end{align*}
	By (\ref{ineq:cons_tau_rho_1}),
	\begin{align}\label{ineq:cons_tau_rho_step1_2}
	\E_{\pi_m} \overline{\bm{\tau}}^{[t]}_{\pi_m} = \E_{\pi_m} H_{t;\pi_m}\big((A\mu_\ast)_{\pi_m},\big\{(A\mu^{(r-1)})_{\pi_m}\big\}_{r \in [1:t]}\big).
	\end{align}
	On the other hand, by (\ref{ineq:rho_tau_bound_Upsilon}), 
	\begin{align}\label{ineq:cons_tau_rho_step1_3}
	\bm{\tau}^{[t]}& =  \E^{(0)} H_{t;\pi_m}\big(\mathfrak{Z}^{(0)},\{\Theta_{r;\pi_m}(\mathfrak{Z}^{([0:r])})\}_{r \in [1:t]}\big).
	\end{align}
	Combining (\ref{ineq:cons_tau_rho_step1_2})-(\ref{ineq:cons_tau_rho_step1_3}), in view of Theorem \ref{thm:gd_se}-(2), it remains to provide a bound on the Lipschitz constant $\Lambda_{H_t}$ of the maps $\{(H_{t;k})_{r,s}: \R^{[0:t]\to \R}\}_{k \in [m], r,s \in [t]}$. To this end, as $M_{t;k}(u_{[0:t]}) \mathfrak{O}_t(\bm{\rho}^{[t-1]})$ is lower triangular with diagonal elements all equal to $0$, $\big(M_{t;k}(u_{[0:t]}) \mathfrak{O}_t(\bm{\rho}^{[t-1]})\big)^r=0_{t\times t}$ for all $r\geq t$, so using Lemma \ref{lem:rho_tau_bound},
	\begin{align*}
	\bigpnorm{\big(I_t- M_{t;k}(u_{[0:t]}) \mathfrak{O}_t(\bm{\rho}^{[t-1]}) \big)^{-1}}{\op}&\leq \sum_{r=0}^{t-1} \bigpnorm{M_{t;k}(u_{[0:t]}) \mathfrak{O}_t(\bm{\rho}^{[t-1]}) }{\op}^r\leq \big(K\Lambda\big)^{c_t}.
	\end{align*}
	We may now proceed to control 
	\begin{align*}
	\pnorm{H_{t;k}(u_{[0:t]})-H_{t;k}(u_{[0:t]}')}{\op}&\leq \big( K\Lambda\big)^{c_t}\cdot \pnorm{ M_{t;k}(u_{[0:t]})-M_{t;k}(u_{[0:t]}') }{\op}\\
	&\leq \big(K\Lambda\big)^{c_t}\cdot \pnorm{u_{[0:t]}-u_{[0:t]}'}{}.
	\end{align*}
	Consequently, we may take $\Lambda_{H_t}= \big( K\Lambda\big)^{c_t}$ to conclude (\ref{ineq:cons_tau_rho_step1}).
	
	\noindent (\textbf{Step 2}). In this step, we prove that 
	\begin{align}\label{ineq:cons_tau_rho_step2}
	\pnorm{\E_{\pi_m} \overline{\bm{\tau}}^{[t]}_{\pi_m}-\hat{\bm{\tau}}^{[t]} }{\op}\leq \big(K\Lambda\big)^{c_t}\cdot \epsilon_{\rho;t-1}.
	\end{align}
	Comparing (\ref{ineq:cons_tau_rho_1}) and Algorithm \ref{def:alg_tau_rho}, we have for any $k \in [m]$,
	\begin{align*}
	\pnorm{\overline{\bm{\tau}}^{[t]}_k-\hat{\bm{\tau}}^{[t]}_k}{\op}\leq \big(K\Lambda\big)^{c_t}\cdot \epsilon_{\rho;t-1}.
	\end{align*}
	Taking average over $k \in [m]$ we conclude (\ref{ineq:cons_tau_rho_step2}) by adjusting constants.
	
	\noindent (\textbf{Step 3}). Finally, we combine (\ref{ineq:cons_tau_rho_step1}) in Step 1 and (\ref{ineq:cons_tau_rho_step2}) in Step 2 to conclude the desired estimate.
\end{proof}

\begin{lemma}\label{lem:epi_rho}
	Under the same assumptions as in Theorem \ref{thm:consist_tau_rho}, for any $q>1$, there exists some constant $c_t=c_t(t,q)>0$ such that 
	\begin{align*}
	\E^{(0)} \epsilon_{\rho;t}^q&\leq (K\Lambda)^{c_t}\cdot \E^{(0)} \epsilon_{\tau;t}^q + (K\Lambda L_\mu )^{ c_t} \cdot n^{-1/c_t}.
	\end{align*}
\end{lemma}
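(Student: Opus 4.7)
The plan mirrors the proof of Lemma~\ref{lem:epi_tau} with the roles of $\bm{\tau}$ and $\bm{\rho}$ interchanged. Introduce an auxiliary sequence $\{\overline{\bm{\rho}}_\ell^{[t]}\}$ that runs the algorithmic recursion for $\hat{\bm{\rho}}$ but substitutes the \emph{true} $\bm{\tau}^{[\cdot]}$ in place of $\hat{\bm{\tau}}^{[\cdot]}$: with $\overline{\bm{\rho}}_\ell^{[0]}\equiv \emptyset$, set
\begin{align*}
\overline{\bm{\rho}}_\ell^{[t]}\equiv \hat{\bm{P}}_\ell^{[t]}\big[I_t+(\bm{\tau}^{[t]}+I_t)\mathfrak{O}_t(\overline{\bm{\rho}}_\ell^{[t-1]})\big].
\end{align*}
Comparing with (\ref{ineq:rho_tau_bound_Omega}), this is precisely the recursion satisfied by $\bm{\Omega}_\ell^{';[t]}$, with the data-driven diagonal $\hat{\bm{P}}_\ell^{[t]}$ replacing the Gaussian-evaluated $\bm{P}_\ell^{[t]}(w^{([1:t])})$. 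Unfolding the recursion gives $\overline{\bm{\rho}}_\ell^{[t]}=G_t(\hat{\bm{P}}_\ell^{[t]})$ for a deterministic map $G_t$ depending only on $(\bm{\tau}^{[s]})_{s\leq t}$, and by Lemma~\ref{lem:rho_tau_bound} combined with a direct induction, $G_t$ is Lipschitz in its diagonal-entry inputs with constant at most $(K\Lambda)^{c_t}$.

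The first substep shows $\E^{(0)}\pnorm{\E_{\pi_n}\overline{\bm{\rho}}_{\pi_n}^{[t]}-\bm{\rho}^{[t]}}{\op}^q\leq (K\Lambda L_\mu)^{c_t}\cdot n^{-1/c_t}$ via Theorem~\ref{thm:gd_se_avg}. Solving (\ref{def:Z_W}) for the gradient term yields $\eta_{s-1}A^\top\partial_1\mathsf{L}_{s-1}(A\mu^{(s-1)},Y)=-W^{(s)}-\delta_s\mu_\ast-\sum_{r\in[1:s]}\tau_{s,r}\mu^{(r-1)}$, so the $s$-th diagonal entry of $\hat{\bm{P}}_\ell^{[t]}$ becomes
\begin{align*}
\mathsf{P}_{s;\ell}'\Big(\mu_\ell^{(s-1)}+W^{(s)}_\ell+\delta_s\mu_{\ast,\ell}+\sum_{r\in[1:s]}\tau_{s,r}\mu_\ell^{(r-1)}\Big),
\end{align*}
a Lipschitz function (by (A4*)) of the observables $(\mu^{(s)}_\ell,W^{(s)}_\ell,\mu_{\ast,\ell})_{s\in[1:t]}$ tracked by Theorem~\ref{thm:gd_se_avg}. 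On the Gaussian side, $\bm{\rho}^{[t]}=\E^{(0)}\bm{\Omega}_{\pi_n}^{';[t]}(\mathfrak{W}^{([1:t])})=\E^{(0)}G_t(\bm{P}_{\pi_n}^{[t]}(\mathfrak{W}^{([1:t])}))$ is the identical functional of the matching Gaussian-evaluated diagonal. Since $G_t\circ\hat{\bm{P}}_\ell^{[t]}$ is a pseudo-Lipschitz matrix-valued map with constant $(K\Lambda)^{c_t}$ in the relevant variables, Theorem~\ref{thm:gd_se_avg} applied entrywise (with a union bound over the $t^2$ entries absorbed into $c_t$) delivers the claimed bound.

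The second substep is purely algebraic. Subtracting the algorithmic recursion from the auxiliary one and using linearity of $\mathfrak{O}_t$,
\begin{align*}
\hat{\bm{\rho}}_\ell^{[t]}-\overline{\bm{\rho}}_\ell^{[t]}=\hat{\bm{P}}_\ell^{[t]}\Big[(\hat{\bm{\tau}}^{[t]}-\bm{\tau}^{[t]})\mathfrak{O}_t(\hat{\bm{\rho}}_\ell^{[t-1]})+(\bm{\tau}^{[t]}+I_t)\mathfrak{O}_t(\hat{\bm{\rho}}_\ell^{[t-1]}-\overline{\bm{\rho}}_\ell^{[t-1]})\Big].
\end{align*}
Bounding $\pnorm{\hat{\bm{P}}_\ell^{[t]}}{\op}\leq \Lambda$, $\pnorm{\hat{\bm{\rho}}_\ell^{[t-1]}}{\op}\leq (K\Lambda)^{c_t}$ (Lemma~\ref{lem:hat_rho_tau_bound}), and $\pnorm{\bm{\tau}^{[t]}}{\op}\leq (K\Lambda)^{c_t}$ (Lemma~\ref{lem:rho_tau_bound}), together with the observation $\pnorm{\hat{\bm{\tau}}^{[s]}-\bm{\tau}^{[s]}}{\op}\leq \epsilon_{\tau;t}$ for all $s\leq t$ (which follows because both $\hat{\bm{\tau}}^{[s]},\bm{\tau}^{[s]}$ are the top-left $s\times s$ principal submatrices of their length-$t$ counterparts, a consequence of the lower-triangularity preserved under $\mathfrak{O}_\cdot$), a straightforward induction on $t$ yields $\max_{\ell\in[n]}\pnorm{\hat{\bm{\rho}}_\ell^{[t]}-\overline{\bm{\rho}}_\ell^{[t]}}{\op}\leq (K\Lambda)^{c_t}\epsilon_{\tau;t}$. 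The triangle inequality $\pnorm{\hat{\bm{\rho}}^{[t]}-\bm{\rho}^{[t]}}{\op}\leq \E_{\pi_n}\pnorm{\hat{\bm{\rho}}_{\pi_n}^{[t]}-\overline{\bm{\rho}}_{\pi_n}^{[t]}}{\op}+\pnorm{\E_{\pi_n}\overline{\bm{\rho}}_{\pi_n}^{[t]}-\bm{\rho}^{[t]}}{\op}$ then combines the two substeps to finish.

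The main obstacle is the first substep: one must express the matrix-valued $G_t(\hat{\bm{P}}_\ell^{[t]})$ as a pseudo-Lipschitz function of observables whose joint averaged distribution is controlled by Theorem~\ref{thm:gd_se_avg}, and track the Lipschitz constant through the $t$-fold recursion defining $G_t$. The exponential-in-$t$ inflation of constants inherited from Lemma~\ref{lem:rho_tau_bound} is unavoidable under this scheme, but is harmlessly absorbed into $c_t$.
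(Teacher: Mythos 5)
Your proof follows the paper's structure exactly: the same auxiliary sequence $\overline{\bm{\rho}}_\ell^{[t]}$ substituting $\bm{\tau}^{[\cdot]}$ for $\hat{\bm{\tau}}^{[\cdot]}$, the same two-substep split into a state-evolution consistency estimate and an algebraic perturbation argument. The only genuine difference is in the first substep: the paper evaluates the distributional limit by dipping into the GFOM identification $\hat{\Delta}_t = v^{(t)}$ from the proof of Theorem~\ref{thm:gd_se} and cites Theorem~\ref{thm:GFOM_se_asym_avg} directly, whereas you rewrite $\hat{\Delta}_{s;\ell}$ via the $W^{(s)}$ identity as a linear combination of $(\mu^{(r)}_\ell, W^{(r)}_\ell, \mu_{\ast,\ell})$ and invoke the already-stated Theorem~\ref{thm:gd_se_avg}. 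Both are valid and land on the same bound, but your route is self-contained at the level of the paper's public distributional theorems (verifying that $\hat{\Delta}_{s;\ell}$ matches $\Delta_{s;\ell}(\mathfrak{W}^{([1:s])})$ is exactly the algebra linking $W^{(s)}$ to $\Delta_s$), at the cost of spelling out that rewriting.

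One detail you handle more carefully than the paper: in the second substep, iterating the perturbation recursion naturally produces $\max_{s\leq t}\pnorm{\hat{\bm{\tau}}^{[s]}-\bm{\tau}^{[s]}}{\op}$ rather than $\epsilon_{\tau;t}$ outright, since at level $s$ the recursion compares $\hat{\bm{\tau}}^{[s]}$ with $\bm{\tau}^{[s]}$. The paper's proof silently writes $\epsilon_{\tau;t}$ after iterating. Your observation that both $\hat{\bm{\tau}}^{[s]}$ and $\bm{\tau}^{[s]}$ are top-left principal submatrices of the length-$t$ versions (which is indeed preserved through the recursions in Algorithm~\ref{def:alg_tau_rho} and in~(S1)--(S3), since $\mathfrak{O}_\cdot$ and products/inverses of lower-triangular matrices nest compatibly), so that $\pnorm{\hat{\bm{\tau}}^{[s]}-\bm{\tau}^{[s]}}{\op}\leq\epsilon_{\tau;t}$, is the clean way to justify the lemma as stated.
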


\begin{proof}
	Consider the auxiliary sequence defined by 
	\begin{align}\label{ineq:cons_rho_tau_1}
	\overline{\bm{\rho}}_\ell^{[t]}\equiv \hat{\bm{P}}^{[t]}_\ell\big[I_t+(\bm{\tau}^{[t]}+I_t)\mathfrak{O}_t(\overline{\bm{\rho}}_\ell^{[t-1]})\big],\quad \ell \in [n].
	\end{align}
	Here recall $\hat{\bm{P}}^{[t]}_\ell=\mathrm{diag}\big(\big\{\mathsf{P}_{s;\ell}'\big(\bigiprod{e_\ell}{\mu^{(s-1)}-\eta_{s-1} A^\top \partial_1 \mathsf{L}_{s-1}(A \mu^{(s-1)},Y) }\big)\big\}_{s \in [t]}\big)\in \R^{t\times t}$ defined in Algorithm \ref{def:alg_tau_rho}.

	\noindent (\textbf{Step 1}). In this step, we will prove that for any $q>1$, there exists some $c_t=c_t(t,q)>0$ such that 
	\begin{align}\label{ineq:cons_rho_tau_step1}
	\E^{(0)}\pnorm{\E_{\pi_n} \overline{\bm{\rho}}^{[t]}_{\pi_n}-\bm{\rho}^{[t]} }{\op}^q\leq   (K\Lambda L_\mu)^{ c_t} \cdot n^{-1/c_t}.
	\end{align}
	For any $\ell \in [n]$, let the mapping $G_{t;\ell}:\R^{[1:t]}\to \R^{t\times t}$ be defined recursively via
	\begin{align}\label{ineq:cons_rho_tau_step1_1}
	G_{t;\ell}(v_{[1:t]})\equiv N_{t;\ell}(v_{[1:t]}) \big[I_t + (\bm{\tau}^{[t]}+I_t)\mathfrak{O}_t\big(G_{t-1;\ell}(v_{[1:t-1]}) \big)\big],
	\end{align}
	where 
	\begin{align*}
	N_{t;\ell}(v_{[1:t]})\equiv \mathrm{diag}\big(\big\{(\mathsf{P}_{s;\ell})'(v_s)\big\}_{s \in [1:t]}\big).
	\end{align*}
	For notational convenience, we also let 
	\begin{align*}
	\hat{\Delta}_t\equiv \mu^{(t-1)}-\eta_{t-1}\cdot A^\top \partial_1 \mathsf{L}_{t-1}\big(A\mu^{(t-1)},Y\big).
	\end{align*}
	Then by comparing (\ref{ineq:cons_rho_tau_1}) and (\ref{ineq:cons_rho_tau_step1_1}), we have 
	\begin{align}\label{ineq:cons_rho_tau_step1_2}
	\E_{\pi_n} \overline{\bm{\rho}}_{\pi_n}^{[t]} = \E_{\pi_n} G_{t;\pi_n}\big(\big\{\hat{\Delta}_{r;\pi_n}\big\}_{r \in [1:t]}\big).
	\end{align}
	Recall $\bm{\Omega}_{\ell}^{';[t]}(w_{[1:t]})\in \R^{t\times t}$ in (\ref{ineq:rho_tau_bound_Omega}). Compared with (\ref{ineq:cons_rho_tau_step1_1}), we have
	\begin{align*}
	\bm{\Omega}_{\ell}^{';[t]}(w_{[1:t]}) = G_{t;\ell}\Big(\big\{\Delta_{r;\ell}(w_{[1:r]})\big\}_{r\in [1:t]}\Big).
	\end{align*}
	So by the definition of $\rho_{\cdot,\cdot}$,
	\begin{align}\label{ineq:cons_rho_tau_step1_3}
	\bm{\rho}^{[t]}&=\E^{(0)}  \bm{\Omega}_{\pi_n}^{';[t]}(\mathfrak{W}^{([1:t])}) = \E^{(0)} G_{t;\pi_n}\Big(\big\{\Delta_{r;\pi_n}(\mathfrak{W}^{([1:r])})\big\}_{r\in [1:t]}\Big).
	\end{align}
	On the other hand, using the same notation as in Step 1 in the proof of Theorem \ref{thm:gd_se}, by noting that $\hat{\Delta}_t=v^{(t)}$ and the distributional relation (\ref{ineq:gd_se_chg_not}), Theorem \ref{thm:GFOM_se_asym_avg} shows that for a sequence of $\Lambda_\psi$-pseudo-Lipschitz functions $\{\psi_\ell:\R^{t} \to \R\}_{\ell \in [n]}$ of order $\mathfrak{p}$ and any $q>0$, by enlarging $c_t=c_t(t,\mathfrak{p},q)>1$ if necessary,
	\begin{align}\label{ineq:cons_rho_tau_step1_4}
	& \E^{(0)}\biggabs{ \frac{1}{n}\sum_{\ell \in [n]} \Big(\psi_{\ell}\big(\{\hat{\Delta}_{r;\ell}\}_{r \in [1:t]}\big)- \E^{(0)} \psi_{\ell}\big(\big\{\Delta_{r,\ell}(\mathfrak{W}^{[1:r]})\big\}_{r \in [1:t]}\big)\Big) }^{q}\nonumber\\
	& \leq \big(K\Lambda \Lambda_\psi L_\mu\big)^{c_t} \cdot n^{-1/c_t}\equiv \err(\Lambda_\psi).
	\end{align}
	Consequently, using (\ref{ineq:cons_rho_tau_step1_2})-(\ref{ineq:cons_rho_tau_step1_4}), with $\Lambda_{G_t}$ denoting the maximal Lipschitz constant of $\big\{G_{t;\ell}: (\R^{[1:t]},\pnorm{\cdot}{})\to (\R^{t\times t},\pnorm{\cdot}{\op})\}_{\ell \in [n]}$, we have 
	\begin{align}\label{ineq:cons_rho_tau_step1_5}
	\E^{(0)}\pnorm{\E_{\pi_n} \overline{\bm{\rho}}_{\pi_n}^{[t]}-\bm{\rho}^{[t]}  }{\op}^q  \lesssim_q \err(\Lambda_{G_t}).
	\end{align}
	It therefore remains to provide a control for $\Lambda_{G_t}$. Using (\ref{ineq:cons_rho_tau_step1_1}) and Lemma \ref{lem:rho_tau_bound}, 
	\begin{align*}
	\bigpnorm{G_{t;\ell}(v_{[1:t]})}{\op}&\leq \pnorm{N_{t;\ell}(v_{[1:t]})}{\op}\cdot \big[ 1 +  \big(\pnorm{\bm{\tau}^{[t]}}{\op}+1\big)\cdot \pnorm{ G_{t-1;\ell}(v_{[1:t-1]}) }{\op}\big]\\
	&\leq \Lambda +  \big(K\Lambda\big)^{c_t}\cdot \bigpnorm{ G_{t-1;\ell}(v_{[1:t-1]}) }{\op}.
	\end{align*}
	Iterating the bound, 
	\begin{align*}
	\sup_{v_{[1:t]} \in \R^{[1:t]} }\pnorm{G_{t;\ell}(v_{[1:t]})}{\op}\leq (K\Lambda)^{c_t}.
	\end{align*}
	Next, for $v_{[1:t]}, v_{[1:t]}'\in \R^{[1:t]}$, using the above estimate,
	\begin{align*}
	&\bigpnorm{G_{t;\ell}(v_{[1:t]})-G_{t;\ell}(v_{[1:t]}')}{\op}\leq \bigpnorm{N_{t;\ell}(v_{[1:t]}) - N_{t;\ell}(v_{[1:t]}')}{\op}\cdot (K\Lambda)^{c_t}\\
	&\qquad + \pnorm{N_{t;\ell}(v_{[1:t]}')}{\op}\cdot (K\Lambda)^{c_t}\cdot  \bigpnorm{G_{t-1;\ell}(v_{[1:t-1]})-G_{t-1;\ell}(v_{[1:t-1]}')}{\op}\\
	&\leq (K\Lambda)^{c_t}\cdot \pnorm{v_{[1:t]}-v_{[1:t]}'}{}+ (K\Lambda)^{c_t}\cdot  \bigpnorm{G_{t-1;\ell}(v_{[1:t-1]})-G_{t-1;\ell}(v_{[1:t-1]}')}{\op}.
	\end{align*}
	Iterating the bound, we obtain 
	\begin{align*}
	\bigpnorm{G_{t;\ell}(v_{[1:t]})-G_{t;\ell}(v_{[1:t]}')}{\op}\leq  (K\Lambda)^{c_t}\cdot \pnorm{v_{[1:t]}-v_{[1:t]}'}{}.
	\end{align*}
	So we may take $\Lambda_{G_t}=(K\Lambda)^{c_t}$, 	
	\begin{align}\label{ineq:cons_rho_tau_step1_I0_2}
	\err(\Lambda_{G_t})\leq (K\Lambda L_\mu)^{c_t} \cdot n^{-1/c_1}.
	\end{align}
	The claim (\ref{ineq:cons_rho_tau_step1})  follows now by combining (\ref{ineq:cons_rho_tau_step1_4}), (\ref{ineq:cons_rho_tau_step1_5}) and the above display (\ref{ineq:cons_rho_tau_step1_I0_2}).

	\noindent (\textbf{Step 2}). In this step, we prove that 
	\begin{align}\label{ineq:cons_rho_tau_step2}
	\pnorm{\E_{\pi_n} \overline{\bm{\rho}}^{[t]}_{\pi_n}-\hat{\bm{\rho}}^{[t]} }{\op}\leq (K\Lambda)^{c_t}\cdot \epsilon_{\tau;t}.
	\end{align}
	Comparing (\ref{ineq:cons_rho_tau_1}) and Algorithm \ref{def:alg_tau_rho}, we have
	\begin{align*}
	\pnorm{\overline{\bm{\rho}}^{[t]}_{\ell}-\hat{\bm{\rho}}^{[t]}_{\ell}}{\op}&\leq \pnorm{ \hat{\bm{\tau}}^{[t]}-\bm{\tau}^{[t]}}{\op}\cdot  \pnorm{\hat{\bm{P}}^{[t-1]}_\ell}{\op}\cdot \pnorm{\hat{\bm{\rho}}_\ell^{[t-1]}}{\op}\\
	&\quad + \big(1+ \pnorm{\bm{\tau}^{[t]}}{\op}\big)\cdot \pnorm{\hat{\bm{P}}^{[t-1]}_\ell}{\op}\cdot \pnorm{\overline{\bm{\rho}}^{[t-1]}_{\ell}-\hat{\bm{\rho}}^{[t-1]}_{\ell}}{\op}.
	\end{align*}
	Using Lemmas \ref{lem:rho_tau_bound} and \ref{lem:hat_rho_tau_bound},
	\begin{align*}
	\pnorm{\overline{\bm{\rho}}^{[t]}_{\ell}-\hat{\bm{\rho}}^{[t]}_{\ell}}{\op}\leq (K\Lambda)^{c_t}\cdot \epsilon_{\tau;t}+(K\Lambda)^{c_t}\cdot \pnorm{\overline{\bm{\rho}}^{[t-1]}_{\ell}-\hat{\bm{\rho}}^{[t-1]}_{\ell}}{\op}.
	\end{align*}
	Iterating the above bound proves the claim (\ref{ineq:cons_rho_tau_step2}) upon averaging over $\ell \in [n]$.
	
	\noindent (\textbf{Step 3}). Finally we combine (\ref{ineq:cons_rho_tau_step1}) in Step 1 and (\ref{ineq:cons_rho_tau_step2}) in Step 2 to conclude.
\end{proof}

\begin{proof}[Proof of Theorem \ref{thm:consist_tau_rho}]
	Combining Lemmas \ref{lem:epi_tau} and \ref{lem:epi_rho}, the following estimate holds for both $*\in \{\rho,\tau\}$:
	\begin{align*}
	\E^{(0)} \epsilon_{*;t}^q&\leq (K\Lambda)^{c_t}\cdot \E^{(0)} \epsilon_{*;t-1}^q +  (K\Lambda L_\mu)^{ c_t} \cdot n^{-1/c_t}.
	\end{align*} 
	We may conclude the desired estimate by iterating the above estimate, and using the initial condition $\E^{(0)} \epsilon_{\tau;t}^q\leq \big(K\Lambda L_\mu\big)^{ c_t} \cdot n^{-1/c_t}$ (which follows by an application of Theorem \ref{thm:gd_se}-(2)).
\end{proof}

\subsection{Proof of Theorem \ref{thm:gen_error}}

\begin{lemma}\label{lem:gen_err_replace_Z}
	Suppose the assumptions in Theorem \ref{thm:consist_tau_rho} hold, and additionally $\{\mathsf{H}_{\mathcal{F};k}\}_{k \in [m]}$ are $\Lambda$-pseudo-Lipschitz functions of order 2.  Then for any $q>1$ there exists some $c_t=c_t(t,q)>1$ such that
	\begin{align}\label{ineq:gen_err_step1}
	\E^{(0)}\abs{\overline{\mathscr{E}}_{\mathsf{H}}^{(t)}-\hat{\mathscr{E}}_{\mathsf{H}}^{(t) }}^q\leq (K\Lambda L_\mu)^{c_t}\cdot n^{-1/c_t}.
	\end{align} 
\end{lemma}

\begin{proof}
	Note that by the pseudo-Lipschitz property of $\mathsf{H}$,
	\begin{align}\label{ineq:gen_err_step1_3}
	\abs{\overline{\mathscr{E}}_{\mathsf{H}}^{(t)}-\hat{\mathscr{E}}_{\mathsf{H}}^{(t) }}&\leq \frac{c_0\Lambda }{m}\sum_{k \in [m]} \abs{ \hat{Z}^{(t)}_k-Z^{(t)}_k }\cdot \big(1+\abs{Z^{(t)}_k}+\abs{\hat{Z}^{(t)}_k}+\abs{\iprod{A_k}{\mu_\ast}}\big)\nonumber\\
	&\leq c_0\Lambda\cdot \frac{\pnorm{\hat{Z}^{(t)}-Z^{(t)}}{}}{\sqrt{m}}\cdot \bigg(1+ \frac{ \pnorm{Z^{(t)}}{} }{\sqrt{m} }+ \frac{ \pnorm{\hat{Z}^{(t)}}{} }{\sqrt{m} } + \frac{\pnorm{A}{\op} \pnorm{\mu_\ast}{}}{\sqrt{m}} \bigg).
	\end{align}
	
	\noindent \emph{Bound for $\pnorm{\hat{Z}^{(t)}-Z^{(t)}}{}$}. First, note that for any $s \in [t]$,
	\begin{align}\label{ineq:gen_err_step1_0}
	\pnorm{\partial_1\mathsf{L}_{s-1}\big(A\mu^{(s-1)},Y\big)}{}\leq c_0 \Lambda (1+\pnorm{A}{\op})\cdot \big(\sqrt{m}+\pnorm{\mu_\ast}{}+\pnorm{\mu^{(s-1)}}{}\big).
	\end{align}
	Comparing the definitions of (\ref{def:Z_W}) for $Z^{(t)}$ and (\ref{def:Z_hat}) for $\hat{Z}^{(t)}$, we then have 
	\begin{align}\label{ineq:gen_err_step1_1}
	&\pnorm{\hat{Z}^{(t)}-Z^{(t)}}{ }\leq \Lambda t\cdot \pnorm{\hat{\bm{\rho}}^{[t]}-\bm{\rho}^{[t]} }{\op}\cdot \max_{s \in [1:t]} \pnorm{\partial_1\mathsf{L}_{s-1}\big(A\mu^{(s-1)},Y\big)}{}\nonumber\\
	&\leq c_0\cdot \Lambda^2 t\cdot \big(1+\pnorm{A}{\op}\big)\cdot  \pnorm{\hat{\bm{\rho}}^{[t]}-\bm{\rho}^{[t]} }{\op}\cdot \Big(\sqrt{m}+\pnorm{\mu_\ast}{}+\max_{s \in [1:t]}\pnorm{\mu^{(s-1)}}{}\Big).
	\end{align}
	On the other hand, using (\ref{def:grad_descent}), we have
	\begin{align*}
	\pnorm{\mu^{(t)}}{}&\leq \pnorm{\mathsf{P}_t(0)}{}+\Lambda\cdot \pnorm{\mu^{(t-1)}-\eta_{t-1}\cdot A^\top \partial_1 \mathsf{L}_{t-1} (A\mu^{(t-1)},Y)}{}\\
	&\leq \sqrt{n} \Lambda+ \Lambda \pnorm{\mu^{(t-1)}}{}+ \Lambda^2\cdot \pnorm{A}{\op}\cdot \pnorm{\partial_1 \mathsf{L}_{t-1} (A\mu^{(t-1)},Y)}{}\\
	&\leq  \big(K\Lambda (1+\pnorm{A}{\op})\big)^{c_0} \cdot \big(\sqrt{m}+\pnorm{\mu_\ast}{}+\pnorm{\mu^{(t-1)}}{}\big).
	\end{align*}
	Iterating the bound we obtain
	\begin{align}\label{ineq:gen_err_step1_2}
	\pnorm{\mu^{(t)}}{}/\sqrt{n}&\leq \big(K\Lambda L_\mu (1+\pnorm{A}{\op})\big)^{c_0 t}.
	\end{align}
	Combined with (\ref{ineq:gen_err_step1_1}), we have
	\begin{align}\label{ineq:gen_err_step1_4}
	\pnorm{\hat{Z}^{(t)}-Z^{(t)}}{}/\sqrt{m}\leq \big(K\Lambda L_\mu (1+\pnorm{A}{\op})\big)^{c_0 t}\cdot \pnorm{\hat{\bm{\rho}}^{[t]}-\bm{\rho}^{[t]} }{\op}.
	\end{align}
	
	\noindent \emph{Bounds for $\pnorm{{Z}^{(t)}}{}$ and $\pnorm{\hat{Z}^{(t)}}{}$}. Using the definition of $Z^{(t)}$ in (\ref{def:Z_W}), along with the estimates (\ref{ineq:gen_err_step1_0}), (\ref{ineq:gen_err_step1_2}) and Lemma \ref{lem:rho_tau_bound}, we have
	\begin{align}\label{ineq:gen_err_step1_5}
	\pnorm{Z^{(t)}}{}/\sqrt{m}&\leq \pnorm{A}{\op} \cdot \pnorm{\mu^{(t)}}{}/\sqrt{m} + t \Lambda \pnorm{\bm{\rho}^{[t]} }{\op}\cdot \max_{s \in [1:t]} \pnorm{\partial_1\mathsf{L}_{s-1}\big(A\mu^{(s-1)},Y\big)}{}/\sqrt{m}\nonumber\\
	&\leq \big(K\Lambda L_\mu\big)^{c_t}\cdot (1+\pnorm{A}{\op})^{c_0 t}.
	\end{align}
	On the other hand, using the definition of $\hat{Z}^{(t)}$ in (\ref{def:Z_hat}), and now using Lemma \ref{lem:hat_rho_tau_bound}, the above estimate remains valid.
	
	The claimed estimate now follows by combining (\ref{ineq:gen_err_step1_3}) with the estimates in (\ref{ineq:gen_err_step1_4})-(\ref{ineq:gen_err_step1_5}), and then using Theorem \ref{thm:consist_tau_rho}.
\end{proof}

\begin{proof}[Proof of Theorem \ref{thm:gen_error}]
	The claim follows from Theorem \ref{thm:gen_error_oracle} and Lemma \ref{lem:gen_err_replace_Z} above.
\end{proof}

\section{Proofs for Section \ref{section:example_linear}}\label{section:proof_linear}

\subsection{Proof of Lemma \ref{lem:linear_delta}}
%\begin{proof}[Proof of Lemma \ref{lem:linear_delta}]
	Note that in the single-index regression model,
	\begin{align*}
	\Upsilon_t(\mathfrak{z}^{([0:t])})& = - \eta \cdot  \mathsf{L}_\ast'\bigg(\mathfrak{z}^{(t)}-\varphi_\ast(\mathfrak{z}^{(0)}) + \sum_{s \in [1:t-1]}\rho_{t-1,s}\Upsilon_s(\mathfrak{z}^{([0:s])})-\xi\bigg).
	\end{align*}
	This means $\Upsilon_t(\mathfrak{z}^{([0:t])})$ depend on $\mathfrak{z}^{([0:t])}$ only through $\{\mathfrak{z}^{(s)}-\varphi_\ast(\mathfrak{z}^{(0)})\}_{s \in [1:t]}$. In other words, let $\mathsf{F}_t: \R^{m\times [1:t]}\to \R^m$ be defined recursively via
	\begin{align*}
	\mathsf{F}_t(u^{[1:t]})\equiv - \eta \cdot  \mathsf{L}_\ast'\bigg(u^{(t)} + \sum_{s \in [1:t-1]}\rho_{t-1,s}\mathsf{F}_s(u^{([1:s])})-\xi\bigg).
	\end{align*}
	Then $
	\Upsilon_t(\mathfrak{z}^{([0:t])})=\mathsf{F}_t\big(\mathfrak{z}^{(1)}-\varphi_\ast(\mathfrak{z}^{(0)}),\ldots,\mathfrak{z}^{(t)}-\varphi_\ast(\mathfrak{z}^{(0)})\big)$, and therefore by chain rule,
	\begin{align*}
	\partial_{ \mathfrak{z}^{(0)}}\Upsilon_t(\mathfrak{z}^{([0:t])}) &= -\sum_{s \in [1:t]} \partial_s \mathsf{F}_t \big(\mathfrak{z}^{(1)}-\varphi_\ast(\mathfrak{z}^{(0)}),\ldots,\mathfrak{z}^{(t)}-\varphi_\ast(\mathfrak{z}^{(0)})\big)\cdot \varphi_\ast'(\mathfrak{z}^{(0)})\\
	&=-\sum_{s \in [1:t]} \partial_{ \mathfrak{z}^{(s)}}\Upsilon_t(\mathfrak{z}^{([0:t])})\cdot \varphi_\ast'(\mathfrak{z}^{(0)}).
	\end{align*}
	Taking expectation over the Gaussian laws of $\mathfrak{Z}^{([0:t])}$ and using the definition of $\delta_t$ and $\{\tau_{t,s}\}$ in (S3), we have
	\begin{align*}
	\delta_t &= \phi\cdot \E^{(0)}\partial_{\mathfrak{Z}^{(0)}} \Upsilon_{t;\pi_m}(\mathfrak{Z}^{([0:t])})\cdot \varphi_\ast'(\mathfrak{Z}^{(0)})\\
	&= -  \sum_{s \in [1:t]} \phi\cdot  \E^{(0)}\partial_{\mathfrak{Z}^{(s)}} \Upsilon_{t;\pi_m}(\mathfrak{Z}^{([0:t])})\cdot \varphi_\ast'(\mathfrak{Z}^{(0)}).
	\end{align*}
	For linear model, as $\varphi_\ast'\equiv 1$, the last display equals to $-\sum_{s \in [1:t]} \tau_{t,s}$. This completes the proof. \qed
%\end{proof}

\subsection{Proof of Proposition \ref{prop:linear_bias_var}}

%\begin{proof}[Proof of Proposition \ref{prop:linear_bias_var}]
For the bias, by Lemma \ref{lem:linear_delta},
\begin{align*}
b^{(t)}_{\mathrm{db}} = -e_t^\top \big(\bm{\tau}^{[t]}\big)^{-1}\bm{\delta}^{[t]}& = -\sum_{s \in [1:t]} \big(\bm{\tau}^{[t]}\big)^{-1}_{t,s} \delta_s = \sum_{s \in [1:t]} \big(\bm{\tau}^{[t]}\big)^{-1}_{t,s} \sum_{r \in [1:s]} (\bm{\tau}^{[t]})_{s,r} \\
& = \sum_{r \in [1:t]} \sum_{s \in [r:t]} \big(\bm{\tau}^{[t]}\big)^{-1}_{t,s}(\bm{\tau}^{[t]})_{s,r} = \sum_{r \in [1:t]}  \bm{1}_{t=r} = 1,
\end{align*}
proving the first claim.

For the variance, under the squared loss we have a further simplification:
\begin{align}\label{ineq:linear_bias_var_2}
\Upsilon_t(\mathfrak{z}^{([0:t])})& = - \eta \cdot  \bigg(\mathfrak{z}^{(t)}-\mathfrak{z}^{(0)} -\xi + \sum_{r \in [1:t-1]}\rho_{t-1,r}\Upsilon_r(\mathfrak{z}^{([0:r])})\bigg).
\end{align}
Consequently, for any $k \in [m]$, by writing $\bm{\Upsilon}_k^{(t)}(z_{[0:t]})\equiv \big(  \Upsilon_{r;k}(z_{[0:r]})\big)_{r \in [1:t]} \in \R^t$, we may represent the above display in the matrix form:
\begin{align*}
\bm{\Upsilon}_k^{(t)}(z_{[0:t]}) = -\eta \big(z_{r}-z_{0}-\xi_k\big)_{r \in [1:t]} -\eta\cdot \mathfrak{O}_t(\bm{\rho}^{[t-1]})\bm{\Upsilon}_k^{(t)}(z_{[0:t]}).
\end{align*}
Solving for $\bm{\Upsilon}_k^{(t)}(z_{[0:t]})$ we obtain
\begin{align}\label{ineq:linear_bias_var_1}
\bm{\Upsilon}_k^{(t)}(z_{[0:t]}) = -\eta\cdot \big(I+\eta \mathfrak{O}_t(\bm{\rho}^{[t-1]})\big)^{-1}  \big(z_r-z_{0}-\xi_k\big)_{r \in [1:t]}.
\end{align}
This means with 
\begin{align*}
\Sigma_{\mathscr{E}}^{[t]}\equiv \E^{(0)}(\mathfrak{Z}^{(r)}-\mathfrak{Z}^{(0)})_{r \in [1:t]} (\mathfrak{Z}^{(r)}-\mathfrak{Z}^{(0)})_{r \in [1:t]}^\top + \sigma_m^2\cdot \bm{1}_t \bm{1}_t^\top,
\end{align*}
we have
\begin{align}\label{ineq:linear_bias_var_3}
\Sigma_{\mathfrak{W}}^{[t]}&=\phi\cdot \E^{(0)} \bm{\Upsilon}_{\pi_m}^{(t)}(\mathfrak{Z}^{([0:t])}) \bm{\Upsilon}_{\pi_m}^{(t),\top}(\mathfrak{Z}^{([0:t])})\nonumber\\
& = \phi\eta^2\cdot \big(I+\eta \mathfrak{O}_t(\bm{\rho}^{[t-1]})\big)^{-1} \Sigma_{\mathscr{E}}^{[t]} \big(I+\eta \mathfrak{O}_t(\bm{\rho}^{[t-1]})\big)^{-\top}.
\end{align}
By taking derivative on both side of (\ref{ineq:linear_bias_var_1}),  $\big(  \partial_{(s)}\Upsilon_{r;k}(z_{[0:r]})\big)_{s,r \in [1:t]}=-\eta \big(I+\eta \mathfrak{O}_t(\bm{\rho}^{[t-1]})\big)^{-1}$. By definition of $\bm{\tau}^{[t]}$, we then have 
\begin{align}\label{ineq:linear_bias_var_4}
\bm{\omega}^{[t]}=(\bm{\tau}^{[t]})^{-1}= -(\phi\eta)^{-1} \cdot \big(I+\eta \mathfrak{O}_t(\bm{\rho}^{[t-1]})\big).
\end{align}
Combining (\ref{ineq:linear_bias_var_3}) and (\ref{ineq:linear_bias_var_4}), we may compute
\begin{align*}
(\sigma^{(t)}_{\mathrm{db}})^2&\equiv e_t^\top \bm{\omega}^{[t]} \Sigma_{\mathfrak{W}}^{[t]} \bm{\omega}^{[t],\top} e_t = \phi^{-1}\cdot e_t^\top \Sigma_{\mathscr{E}}^{[t]} e_t = \phi^{-1}\big\{\E^{(0)}\big(\mathfrak{Z}^{(t)}-\mathfrak{Z}^{(0)}\big)^2+\sigma_m^2\big\},
\end{align*} 
proving (1). The claim in (2) follows immediately.\qed
%\end{proof}

\section{Proofs for Section \ref{section:example_1bit}}\label{section:proof_logistic}

\subsection{The smoothed problem}

Let $\varphi \in C^\infty(\R)$ be such that $\varphi$ is non-decreasing, taking values in $[-1,1]$ and $\varphi|_{(-\infty,-1]}\equiv -1$ and $\varphi|_{[1,\infty)}\equiv 1$. For any $\epsilon>0$, let $\varphi_\epsilon(\cdot)\equiv \varphi(\cdot/\epsilon)$. For notational convenience, we write $\varphi_0(\cdot)\equiv \sign(\cdot)$. We also define the following `smoothed' quantities:
\begin{itemize}
	\item Let $\mathcal{F}_\epsilon(z,\xi)\equiv \varphi_\epsilon (z+\xi)$.
	\item Let $\texttt{SE}_\epsilon^{(t)}\equiv \big(\{\Upsilon_{\epsilon;t}\},\{\Omega_{\epsilon;t}\},\Sigma_{\epsilon;\mathfrak{Z}}^{[t]},\Sigma_{\epsilon;\mathfrak{W}}^{[t]},\bm{\tau}_\epsilon^{[t]},\bm{\rho}_\epsilon^{[t]},\bm{\delta}_\epsilon^{[t]}\big)$ be smoothed state evolution parameters obtained by replacing $\mathcal{F}$ with $\mathcal{F}_\epsilon$.
	\item Let $\mathfrak{Z}_\epsilon^{([0:t])}\sim \mathcal{N}(0,\Sigma_{\epsilon;\mathfrak{Z}}^{[t]})$ and $\mathfrak{W}_\epsilon^{([1:t])}\sim \mathcal{N}(0,\Sigma_{\epsilon;\mathfrak{W}}^{[t]})$ be the smoothed versions of $\mathfrak{Z}^{([0:t])},\mathfrak{W}^{([1:t])}$.
	\item Let $\{\Theta_{\epsilon;t}\}$, $\{\Delta_{\epsilon;t}\}$ be defined as (\ref{def:Theta_fcn}), (\ref{def:Delta_fcn}) by replacing $\texttt{SE}_0^{(t)}$ with $\texttt{SE}_\epsilon^{(t)}$.
	\item Let $Y_{\epsilon;i}\equiv \varphi_\epsilon(\iprod{A_i}{\mu_\ast}+\xi_i)$, $i \in [m]$, be smoothed observations.
	\item Let $\mu_\epsilon^{(t)} = \prox_{\eta \mathsf{f}}\big(\mu^{(t-1)}_\epsilon-\eta \cdot A^\top \partial_1\mathsf{L}(A\mu^{(t-1)}_\epsilon,Y_\epsilon)\big)$ be the smoothed gradient descent iterate. 
	\item Let $Z_\epsilon^{(t)}, W_\epsilon^{(t)}$ be defined as (\ref{def:Z_W}) by replacing $(\{\mu^{(\cdot)}\},\texttt{SE}_0^{(\cdot)})$ with $(\{\mu_\epsilon^{(\cdot)}\},\texttt{SE}_\epsilon^{(\cdot)})$.
	\item Let $\hat{\bm{\tau}}_\epsilon^{[t]}$, $\hat{\bm{\rho}}_\epsilon^{[t]}$ be the output of Algorithm \ref{def:alg_tau_rho} by replacing $(\{\mu^{(\cdot)}\},Y)$ with $(\{\mu_\epsilon^{(\cdot)}\},Y_\epsilon)$.
	\item Let $\mathscr{E}_{\epsilon;\mathsf{H}}^{(t)}$ be defined as (\ref{def:gen_err}) by replacing $(\{\mu^{(\cdot)}\},Y)$ with $(\{\mu_\epsilon^{(\cdot)}\},Y_\epsilon)$.
	\item Let $\hat{Z}_\epsilon^{(t)}$ be defined as (\ref{def:Z_hat}) by replacing $(\{\mu^{(\cdot)}\},Y,\hat{\bm{\rho}}^{[t]})$ with $(\{\mu_\epsilon^{(\cdot)}\},Y_\epsilon,\hat{\bm{\rho}}_\epsilon^{[t]})$.
	\item Let $\hat{\mathscr{E}}_{\epsilon;\mathsf{H}}^{(t)}$ be defined as (\ref{def:gen_err_est}) by replacing $(Y,\hat{Z})$ with $(Y_\epsilon,\hat{Z}_\epsilon)$.
	\item Let $\mu^{(t)}_{\epsilon;\mathrm{db}}$, $b^{(t)}_{\epsilon;\mathrm{db}}$ and $\sigma^{(t)}_{\epsilon;\mathrm{db}}$ be defined as (\ref{def:debias_gd_oracle})-(\ref{def:db_gd_bias_var}), by replacing $(\{\mu^{(\cdot)}\},Y,\texttt{SE}_0^{(t)})$ with $(\{\mu_\epsilon^{(\cdot)}\},Y_\epsilon,\texttt{SE}_\epsilon^{(t)})$.
\end{itemize}
Notation with subscript $0$ will be understood as the unsmoothed version.

\subsection{Stability of smoothed state evolution}
The following apriori estimates will be useful.
\begin{lemma}\label{lem:1bit_cs_apriori_est}
	Suppose (A1), (A3), (A4') and (A5') hold. The following hold for some $c_t=c_t(t)>1$:
	\begin{enumerate}
		\item $\sup_{\epsilon\geq 0} (\pnorm{\bm{\tau}_\epsilon^{[t]}}{\op}+ \pnorm{\bm{\rho}_\epsilon^{[t]}}{\op})\leq (K\Lambda)^{c_t}$.
		\item $\sup_{\epsilon\geq 0}\big(\pnorm{\Sigma_{\epsilon;\mathfrak{Z}}^{[t]}}{\op}+ \pnorm{\Sigma_{\epsilon;\mathfrak{W}}^{[t]}}{\op}+ \pnorm{\bm{\delta}_\epsilon^{[t]}}{}\big)\leq \big(K\Lambda L_\mu (1\wedge \sigma_{\mu_\ast})^{-1}\big)^{c_t}$.
		\item $\sup\limits_{\epsilon\geq 0}\max\limits_{k \in [m], r \in [1:t]} \big(\abs{\Upsilon_{\epsilon;r;k}(z^{([0:r])})}+\abs{\Theta_{\epsilon;r;k}(z^{([0:r])})}\big)\leq (K\Lambda)^{c_t}\cdot \big(1+\pnorm{z^{([0:t])} }{}\big)$.
		\item $\sup\limits_{\epsilon\geq 0}\max\limits_{\ell \in [n], r \in [1:t]} \abs{\Omega_{\epsilon;r;\ell}(w^{([1:r])})}\leq \big(K\Lambda L_\mu(1\wedge\sigma_{\mu_\ast})^{-1}\big)^{c_t}\cdot \big(1+\pnorm{w^{([1:t])}}{}\big)$.
		\item $\sup\limits_{\epsilon\geq 0}\max\limits_{\substack{k \in [m], \ell \in [n], r,s \in [1:t]}} \big(\abs{\partial_{(s)}\Upsilon_{\epsilon;r;k}(z^{([0:r])})}+\abs{\partial_{(s)}\Omega_{\epsilon;r;\ell}(w^{([1:r])})}\big)\leq (K\Lambda)^{c_t}.$
	\end{enumerate}
\end{lemma}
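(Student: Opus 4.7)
The plan is to mirror the proof of Lemma \ref{lem:rho_tau_bound} while exploiting two structural features specific to the one-bit setting. First, the output of $\mathcal{F}_\epsilon$ lies in $[-1,1]$ uniformly in $\epsilon \geq 0$, and by hypothesis (\ref{ineq:111112bLound}) the derivatives $\partial_{11}\mathsf{L}, \partial_{111}\mathsf{L}, \partial_{12}\mathsf{L}, \partial_{112}\mathsf{L}$ are uniformly bounded by $\Lambda$ on $\R\times[-1,1]$. Second, the bound on $\bm{\delta}_\epsilon^{[t]}$ cannot be obtained by differentiating $\Upsilon_{\epsilon;t}$ in $\mathfrak{z}^{(0)}$, since that would introduce the unbounded Lipschitz constant $\epsilon^{-1}$ of $\varphi_\epsilon$; instead we rely on the Gaussian integration-by-parts formula (\ref{def:delta_t_alternative}). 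These two features together will make all estimates $\epsilon$-uniform, with only the extra factor $(1\wedge\sigma_{\mu_\ast})^{-1}$ arising from the IBP inversion.

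Parts (1), (3), (5) transfer directly from Steps 1--2 of Lemma \ref{lem:rho_tau_bound}. The matrix identity (\ref{ineq:rho_tau_bound_Upsilon}) for $\bm{\Upsilon}_{\epsilon;k}^{';[t]}$ uses only $\pnorm{\bm{L}_{\epsilon;k}^{[t]}}{\op}\leq \Lambda^2$, which holds uniformly in $\epsilon$ by the strip regularity of $\mathsf{L}$ just noted; and the identity (\ref{ineq:rho_tau_bound_Omega}) for $\bm{\Omega}_{\epsilon;\ell}^{';[t]}$ uses (A4') together with $\bm{\tau}_\epsilon^{[t]}$. Iterating yields (1) and (5) uniformly in $\epsilon$. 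The pointwise bound (3) follows by a direct induction on (\ref{def:Theta_fcn}), using $|\partial_1\mathsf{L}(0,y)|\leq|\partial_1\mathsf{L}(0,0)|+\Lambda|y|\leq 2\Lambda$ for $|y|\leq 1$ and the Lipschitz dependence of $\partial_1\mathsf{L}$ in its first argument.

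The heart of the argument is a joint induction on $t$ producing the bounds on $\pnorm{\Sigma_{\epsilon;\mathfrak{Z}}^{[t]}}{\op}+\pnorm{\Sigma_{\epsilon;\mathfrak{W}}^{[t]}}{\op}+\pnorm{\bm{\delta}_\epsilon^{[t]}}{}$ in (2) and on $\Omega_{\epsilon;t;\ell}$ in (4). Given control at level $t-1$, the recursion in (S2) together with (4) for $\Omega_{\epsilon;t-1}$ and (3) bounds $\pnorm{\Sigma_{\epsilon;\mathfrak{Z}}^{[t]}}{\op}$ in terms of $\pnorm{\bm{\delta}_\epsilon^{[t-1]}}{}$ and $\pnorm{\Sigma_{\epsilon;\mathfrak{W}}^{[t-1]}}{\op}$. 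We then invoke (\ref{def:delta_t_alternative}) and Cauchy--Schwarz,
\begin{align*}
|\delta_{\epsilon;t}|\leq \frac{\phi}{\sigma_{\mu_\ast}}\bigg(\sigma_{\mu_\ast}\sqrt{\E^{(0)}\Upsilon_{\epsilon;t;\pi_m}^2}+\phi^{-1}\sigma_{\mu_\ast}\sum_{s\in[1:t]}|\tau_{\epsilon;t,s}|\sqrt{\var(\mathfrak{Z}_\epsilon^{(s)})}\bigg),
\end{align*}
and combine with (1), (3) and the just-derived bound on $\pnorm{\Sigma_{\epsilon;\mathfrak{Z}}^{[t]}}{\op}$ to obtain the advertised control on $|\delta_{\epsilon;t}|$; the surviving $\sigma_{\mu_\ast}^{-1}$ factor arises because $\Sigma_{00}=\sigma_{\mu_\ast}^2$ in the Gaussian IBP inversion. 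The bound on $\pnorm{\Sigma_{\epsilon;\mathfrak{W}}^{[t]}}{\op}$ follows from the second line of (S2) together with (3), and (4) follows by induction on (S3) with the newly bounded $|\delta_{\epsilon;t}|$ absorbed into the constant. The main obstacle throughout is precisely the non-uniform Lipschitz dependence of $\mathcal{F}_\epsilon$ on its first argument in the limit $\epsilon\downarrow 0$, which forces the IBP route and is the sole source of the $(1\wedge\sigma_{\mu_\ast})^{-1}$ factor in (2) and (4).
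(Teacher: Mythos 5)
Your proposal takes essentially the same route as the paper: replicate Steps 1--2 of Lemma~\ref{lem:rho_tau_bound} using the uniform boundedness of $\mathcal{F}_\epsilon$ on $[-1,1]$ and of $\partial_{11}\mathsf{L}$ on $\R\times[-1,1]$, and replace Step~3 (the bound on $\bm{\delta}_\epsilon^{[t]}$) by the Gaussian integration-by-parts representation (\ref{def:delta_t_alternative}) to avoid the $\epsilon^{-1}$ Lipschitz constant. One small narration slip: your own Cauchy--Schwarz display has $\sigma_{\mu_\ast}$ appearing in \emph{both} terms of the bracket and thus cancelling the prefactor $\phi/\sigma_{\mu_\ast}$ exactly, so no $\sigma_{\mu_\ast}^{-1}$ "survives" from the IBP inversion as you assert; the paper instead uses a cruder bound $\pnorm{\bm{\delta}_\epsilon^{[t]}}{}\leq \sigma_{\mu_\ast}^{-1}(K\Lambda)^{c_t}\pnorm{\Sigma_{\epsilon;\mathfrak{Z}}^{[t]}}{\op}$ which keeps the $\sigma_{\mu_\ast}^{-1}$ factor, and your sharper estimate is consistent with (indeed stronger than) the stated conclusion, only the attributed source of the $(1\wedge\sigma_{\mu_\ast})^{-1}$ factor is misstated.
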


\begin{proof}
For notational convenience, we assume $\sigma_{\mu_\ast}^{-1}\leq 1$. We may follow exactly the same proof as in Lemma \ref{lem:rho_tau_bound} until Step 2. A crucial modification lies in Step 3. In the current setting, in order to get a uniform-in-$\epsilon$ estimate, by using the Gaussian integration-by-parts formula (\ref{def:delta_t_alternative}) for $\delta_{t}$, 
\begin{align}\label{ineq:1bit_cs_apriori_est_1}
\pnorm{\bm{\delta}_\epsilon^{[t]}}{}\leq \sigma_{\mu_\ast}^{-1} (K\Lambda)^{c_t}\cdot \pnorm{\Sigma_{\epsilon;\mathfrak{Z}}^{[t]}}{\op}.
\end{align}
The above estimate is different from (\ref{ineq:rho_tau_bound_3}), as it compensate the large Lipschitz constants involving $\epsilon^{-c_t}$ with the factor $\sigma_{\mu_\ast}^{-1}$ via the formula (\ref{def:delta_t_alternative}).

Now combining the estimate (\ref{ineq:1bit_cs_apriori_est_1}) with the first line of (c) in Step 2 of the proof of Lemma \ref{lem:rho_tau_bound}, we have
\begin{align*}
\pnorm{\Sigma_{\epsilon;\mathfrak{Z}}^{[t]}}{\op}&\leq (K\Lambda L_\mu \sigma_{\mu_\ast}^{-1})^{c_t}\cdot \big(1+\pnorm{\Sigma_{\epsilon;\mathfrak{W}}^{[t-1]}}{\op}+ \pnorm{\Sigma_{\epsilon;\mathfrak{Z}}^{[t-1]}}{\op} \big).
\end{align*}
Iterating the bound, we have
\begin{align*}
\pnorm{\Sigma_{\epsilon;\mathfrak{Z}}^{[t]}}{\op}&\leq (K\Lambda L_\mu \sigma_{\mu_\ast}^{-1})^{c_t}\cdot \Big(1+\max_{r\in [1:t-1]}\pnorm{\Sigma_{\epsilon;\mathfrak{W}}^{[r]}}{\op}\Big).
\end{align*}
Further combined with the second line of (c) in Step 2 of the proof of Lemma \ref{lem:rho_tau_bound}, 
\begin{align*}
\pnorm{\Sigma_{\epsilon;\mathfrak{Z}}^{[t]}}{\op}&\leq (K\Lambda L_\mu \sigma_{\mu_\ast}^{-1})^{c_t}\cdot \Big(1+\max_{r\in [1:t-1]}\pnorm{\Sigma_{\epsilon;\mathfrak{Z}}^{[r]}}{\op}\Big).
\end{align*}
Coupled with initial condition $\pnorm{\Sigma_{\epsilon;\mathfrak{Z}}^{[1]}}{\op}\leq L_\mu^2$, we arrive at the estimate
\begin{align*}
\pnorm{\Sigma_{\epsilon;\mathfrak{Z}}^{[t]}}{\op}+ \pnorm{\Sigma_{\epsilon;\mathfrak{W}}^{[t]}}{\op}+ \pnorm{\bm{\delta}_\epsilon^{[t]}}{}\leq (K\Lambda L_\mu \sigma_{\mu_\ast}^{-1})^{c_t}.
\end{align*}
The modified estimate for $\pnorm{\bm{\delta}_\epsilon^{[t]}}{}$ also impacts the estimate for $\Omega_\cdot$ via (b) in Step 2 of the proof of Lemma \ref{lem:rho_tau_bound}. 
\end{proof}

We now quantify the smoothing effect for state evolution parameters. Let us define a few further notation. For a state evolution parameter, and later on, a gradient descent iterate statistics $\star$, we write $\d_\epsilon \star\equiv \star_{\epsilon}-\star_{0}$. For instance, $\d_\epsilon \Upsilon_t\equiv \Upsilon_{\epsilon;t}-\Upsilon_{0;t}=\Upsilon_{\epsilon;t}-\Upsilon_{t}$, $\d_\epsilon \mu^{(t)}\equiv \mu_\epsilon^{(t)}-\mu_0^{(t)}=\mu_\epsilon^{(t)}-\mu^{(t)}$, and similar notation applies to other quantities. 

With these further notation, we define for $t\geq 1$,
\begin{align*}
\mathcal{P}_{\texttt{SE}}^{(t)}(\epsilon)\equiv \pnorm{\d_\epsilon \bm{\tau}^{[t]}}{\op}+ \pnorm{\d_\epsilon \bm{\rho}^{[t]}}{\op}+\pnorm{\d_\epsilon \Sigma_{\mathfrak{Z}}^{[t]} }{\op}+\pnorm{\d_\epsilon \Sigma_{\mathfrak{W}}^{[t]}}{\op}+\pnorm{\d_\epsilon \bm{\delta}^{[t]} }{}.
\end{align*}

	\begin{lemma}\label{lem:1bit_cs_se_smooth_diff}
		Suppose (A1), (A3) and (A4*), and (\ref{ineq:111112bLound}) hold. Then the following hold for some $c_t=c_t(t)>1$:
		\begin{enumerate}
			\item For any $\epsilon>0$, $\mathcal{P}_{\texttt{SE}}^{(t)}(\epsilon)\leq \big(K\Lambda L_\mu(1\wedge\sigma_{\mu_\ast})^{-1}\big)^{c_t}\cdot \epsilon^{1/c_t}$.
			\item For any $\epsilon>0$ and $k \in [m]$,
			\begin{align*}
			&\max_{r \in [1:t]} \big(\abs{\d_\epsilon \Upsilon_{r;k}(z^{([0:r])})}+\abs{\d_\epsilon \Theta_{r;k}(z^{([0:r])})}\big)\nonumber\\
			&\leq \big(K\Lambda L_\mu(1\wedge\sigma_{\mu_\ast})^{-1}\big)^{c_t}\cdot \Big[\big(1+\pnorm{z^{([0:t])} }{}\big)\cdot \epsilon^{1/c_t}+\abs{\d_\epsilon \varphi (z^{(0)}+\xi_k)}\Big].
			\end{align*}
			\item For any $\epsilon>0$,
			\begin{align*}
			\max_{r \in [1:t]}\max_{\ell \in [n]}\abs{\d_\epsilon \Omega_{r;\ell}(w^{([1:r])})}\leq \big(K\Lambda L_\mu(1\wedge\sigma_{\mu_\ast})^{-1}\big)^{c_t}\cdot \big(1+\pnorm{w^{([1:t])} }{}\big)\cdot \epsilon^{1/c_t}.
			\end{align*}
		\end{enumerate}
	\end{lemma}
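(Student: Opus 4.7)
The proof proceeds by simultaneous induction on $t$, respecting the dependency graph of Definition~\ref{def:gd_se}: bound (2) at iteration $t$ is established first from $\d_\epsilon\bm{\rho}^{[t-1]}$ and previous-iteration quantities; the covariance and $(\bm{\tau}^{[t]},\bm{\delta}^{[t]})$-parts of (1) then follow by integrating (2) against a coupled Gaussian; (3) comes next using the freshly controlled $\d_\epsilon\bm{\tau}^{[t]}$ and $\d_\epsilon\bm{\delta}^{[t]}$; and finally the $\bm{\rho}^{[t]}$-part of (1) is handled from (3). The fundamental mechanism is that $\d_\epsilon\varphi$ is pointwise $O(1)$ but concentrated on the slab $\{\abs{u}\leq\epsilon\}$, so that for any centered Gaussian $G$ with variance bounded below by $\sigma_{\mu_\ast}^2$, one has $\E\abs{\d_\epsilon\varphi(G+\xi_k)}^q\leq C_q\cdot\epsilon/\sigma_{\mu_\ast}$ uniformly over realizations of $\xi_k$. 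The pointwise $\abs{\d_\epsilon\varphi(z^{(0)}+\xi_k)}$ that appears in (2) is precisely the object we later integrate to convert smoothing into a polynomial rate in $\epsilon$, whereas (3) carries no such term because $\Omega_\cdot$ never directly touches $\varphi_\epsilon(\mathfrak{z}^{(0)}+\xi)$.

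For the inductive step of (2), I differentiate the recursion (\ref{eqn:Theta_recur}). Using (\ref{ineq:111112bLound}), the fact that $\varphi_\epsilon,\sign\in[-1,1]$, and the uniform a~priori bounds from Lemma~\ref{lem:1bit_cs_apriori_est}, I obtain
\begin{align*}
\abs{\d_\epsilon\Theta_{t;k}(z^{[0:t]})}&\leq (K\Lambda)^{c_t}\bigg[\pnorm{\d_\epsilon\bm{\rho}^{[t-1]}}{\op}\cdot(1+\pnorm{z^{[0:t]}}{})\\
&\qquad\qquad\quad+\sum_{s<t}\Big(\abs{\d_\epsilon\Theta_{s;k}(z^{[0:s]})}+\abs{\d_\epsilon\varphi(z^{(0)}+\xi_k)}\Big)\bigg].
\end{align*}
Iterating on $s<t$ and invoking (1) at iteration $t-1$ produces (2) for $\Theta_{t;k}$; the corresponding bound for $\Upsilon_{t;k}$ then follows from (\ref{def:Theta_fcn}) together with Lipschitzness of $\partial_1\mathsf{L}$ in both arguments (again via (\ref{ineq:111112bLound})). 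Step (3) is structurally analogous, using the recursion (\ref{def:Delta_fcn}) and the Lipschitz property (A4*) of $\mathsf{P}_{s;\ell}$, but now drawing on $\d_\epsilon\bm{\tau}^{[t]}$ and $\d_\epsilon\bm{\delta}^{[t]}$ already established at the current iteration.

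For (1), I couple the two Gaussian systems on a common white noise $G$ via Cholesky-type factorizations $\mathfrak{Z}_\epsilon^{[0:t]}=\Sigma_{\epsilon;\mathfrak{Z}}^{[t],1/2}G$ and $\mathfrak{Z}_0^{[0:t]}=\Sigma_{0;\mathfrak{Z}}^{[t],1/2}G$ (and analogously for $\mathfrak{W}$), so that $\pnorm{\mathfrak{Z}_\epsilon-\mathfrak{Z}_0}{}+\pnorm{\mathfrak{W}_\epsilon-\mathfrak{W}_0}{}$ is controlled by $\bigl(\pnorm{\d_\epsilon\Sigma_{\mathfrak{Z}}^{[t-1]}}{\op}+\pnorm{\d_\epsilon\Sigma_{\mathfrak{W}}^{[t-1]}}{\op}\bigr)^{1/2}$ from the previous iteration. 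Each covariance increment from (S2) then splits into (i) Lipschitz-controlled differences in the arguments of $\Omega_\cdot,\Upsilon_\cdot$, and (ii) pointwise differences at a fixed coupled Gaussian, which are exactly governed by (2)-(3). The sole genuinely non-Lipschitz contribution, $\E^{(0)}\abs{\d_\epsilon\varphi(\mathfrak{Z}^{(0)}+\xi_{\pi_m})}^q$, is absorbed by the anti-concentration bound above. The $\bm{\tau}^{[t]}$- and $\bm{\rho}^{[t]}$-increments reduce to the same objects via the derivative formulas (\ref{ineq:rho_tau_bound_Upsilon}), (\ref{ineq:rho_tau_bound_Omega}) combined with Lemma~\ref{lem:1bit_cs_apriori_est}(5); the $\bm{\delta}^{[t]}$-increment must be estimated via the Gaussian integration-by-parts formula (\ref{def:delta_t_alternative}), since $\partial_{\mathfrak{z}^{(0)}}\Upsilon_{0;t}$ is strictly ill-defined at $\epsilon=0$, and this reroutes the estimate through $\d_\epsilon\Upsilon_t$, $\d_\epsilon\bm{\tau}^{[t]}$, and $\d_\epsilon\Sigma_{\mathfrak{Z}}^{[t]}$, all just controlled. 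The main obstacle is bookkeeping the polynomial rate in $\epsilon$ across iterations: each anti-concentration/Cauchy-Schwarz step costs a power, so the net exponent degrades to $1/c_t$, and the delicate point is to verify that this $c_t$ depends only on $t$ (and not on $n$ or $\epsilon$), with the compensating blow-up absorbed into the prefactor $(K\Lambda L_\mu(1\wedge\sigma_{\mu_\ast})^{-1})^{c_t}$ via the uniform estimates of Lemma~\ref{lem:1bit_cs_apriori_est}.
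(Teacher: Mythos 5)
Your proposal is correct and follows essentially the same proof strategy as the paper: a simultaneous recursion across the state-evolution quantities, with the anti-concentration bound $\E^{(0)}\abs{\d_\epsilon\varphi(\mathfrak{Z}^{(0)}+\xi_k)}^2\lesssim\epsilon/\sigma_{\mu_\ast}$ converting the pointwise $\d_\epsilon\varphi$ term (which survives only in $\d_\epsilon\Upsilon,\d_\epsilon\Theta$) into a polynomial rate in $\epsilon$, Gaussian coupling via matrix square roots absorbing the covariance increments at cost of a $1/2$-power, the derivative formulas (\ref{ineq:rho_tau_bound_Upsilon})--(\ref{ineq:rho_tau_bound_Omega}) for the $\bm{\tau},\bm{\rho}$ increments, and the integration-by-parts representation (\ref{def:delta_t_alternative}) for $\d_\epsilon\bm{\delta}^{[t]}$ to avoid the ill-defined derivative at $\epsilon=0$, culminating in the self-referential bound $\mathcal{P}_{\texttt{SE}}^{(t)}(\epsilon)\lesssim(\epsilon+\mathcal{P}_{\texttt{SE}}^{(t-1)}(\epsilon))^{1/c_t}$. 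The minor reordering of sub-steps within an iteration (you establish $\d_\epsilon\bm{\tau}^{[t]},\d_\epsilon\bm{\delta}^{[t]}$ before writing (3), whereas the paper writes (3) conditionally on them first) is immaterial since the bounds are combined at the end of the iteration in either case.
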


	\begin{proof}
		For notational convenience, we assume $\sigma_{\mu_\ast}\leq 1$, and for formal consistency, we let $\mathcal{P}_{\texttt{SE}}^{(0)}(\epsilon)\equiv 0$. Fix $t\geq 1$.

		\noindent (1). Using (S1), for any $k \in [m]$,
		\begin{align*}
		&\abs{\d_\epsilon \Upsilon_{t;k} (z^{([0:t])}) } = \abs{ \Upsilon_{\epsilon;t;k}(z^{([0:t])} ) - \Upsilon_{0;t;k}(z^{([0:t])} ) } \leq \Lambda^2 \cdot  \Big(   \abs{\d_\epsilon \varphi(z^{(0)}+\xi_k )  } \\
		&\qquad + t\cdot  \max_{r \in [1:t-1]} \abs{\Upsilon_{0;r;k}(z^{([0:r])} )} \cdot  \pnorm{\d_\epsilon \bm{\rho}^{[t-1]}}{\op} +  t\cdot  \pnorm{\bm{\rho}_{\epsilon}^{[t-1]}}{\op} \cdot \max_{r \in [1:t-1]} \abs{\d_\epsilon \Upsilon_{r;k}}  \Big).
		\end{align*}
		Using the apriori estimates in Lemma \ref{lem:1bit_cs_apriori_est}, we then arrive at
		\begin{align*}
		\abs{\d_\epsilon \Upsilon_{t;k} (z^{([0:t])}) } &\leq (K\Lambda)^{c_t} \cdot  \Big(\abs{\d_\epsilon \varphi(z^{(0)}+\xi_k )  } \\
		&\qquad + (1 + \pnorm{z^{[0:t]} }{ } ) \cdot  \pnorm{\d_\epsilon \bm{\rho}^{[t-1]}}{\op}  + \max_{r \in [1:t-1]} \abs{\d_\epsilon \Upsilon_{r;k}} \Big).
		\end{align*}
		Iterating the bound and  using the initial condition $\abs{\d_\epsilon \Upsilon_{1;k} (z^{([0:1])}) } \leq \Lambda^2\cdot  \abs{\d_\epsilon \varphi(z^{(0)}+\xi_k ) } $, we have 
		\begin{align}\label{ineq:1bit_cs_se_smooth_diff_Upsilon}
		\abs{\d_\epsilon \Upsilon_{t;k} (z^{([0:t])}) } &\leq (K\Lambda)^{c_t}\cdot (1 + \pnorm{z^{([0:t])} }{ } ) \cdot \big[\pnorm{\d_\epsilon \bm{\rho}^{[t-1]}}{\op} + \abs{\d_\epsilon \varphi(z^{(0)}+\xi_k )}\big].
		\end{align}
		
		\noindent (2). Using (S3) and the apriori estimates in Lemma \ref{lem:1bit_cs_apriori_est}, for $\ell \in [n]$,
		\begin{align*}
		&\abs{\d_\epsilon \Omega_{t;\ell} (w^{([1:t])}) } = \abs{\Omega_{\epsilon;t;\ell} (w^{([1:t])}  ) - \Omega_{0;t;\ell} (w^{([1:t])}  )  } \\
		&\leq (K\Lambda L_\mu\sigma_{\mu_\ast}^{-1})^{c_t}\cdot \Big( (1 + \pnorm{w^{([1:t])} }{ } ) \cdot \pnorm{\d_\epsilon \bm{\tau}^{[t]} }{ }+  \pnorm{\d_\epsilon\bm{\delta}^{[t]}}{} +\max_{r \in [0:t-1]} \abs{\d_\epsilon \Omega_{r;\ell}}   \Big).
		\end{align*}
		Iterating the bound using the initial condition $\abs{\d_\epsilon \Omega_{0;\ell}} =0$, we obtain
		\begin{align}\label{ineq:1bit_cs_se_smooth_diff_Omega}
		\max_{\ell \in [n]}\abs{\d_\epsilon \Omega_{t;\ell} (w^{([1:t])}) } &\leq (K\Lambda L_\mu\sigma_{\mu_\ast}^{-1})^{c_t}\cdot(1 + \pnorm{w^{([1:t])} }{ } )  \cdot \big[ \pnorm{\d_\epsilon \bm{\tau}^{[t]} }{ } + \pnorm{\d_\epsilon\bm{\delta}^{[t]}}{} \big].
		\end{align}

		\noindent (3). Recall $\Sigma_{\epsilon;\mathfrak{Z}}^{[t]}=\E^{(0)} \mathfrak{Z}_\epsilon^{([0:t])} \mathfrak{Z}_\epsilon^{([0:t]),\top}$, and $\Sigma_{\epsilon;\mathfrak{W}}^{[t]}=\E^{(0)} \mathfrak{W}_\epsilon^{([1:t])} \mathfrak{W}_\epsilon^{([1:t]),\top}$. Using (S2), the apriori estimates in Lemma \ref{lem:1bit_cs_apriori_est} and (\ref{ineq:1bit_cs_se_smooth_diff_Omega}),
		\begin{align}\label{ineq:1bit_cs_se_smooth_diff_cov_Z}
		\pnorm{\d_\epsilon \Sigma_{\mathfrak{Z}}^{[t]}}{\op}&\leq  (t+1)\cdot \max_{-1\leq r,s \leq t-1} \E^{(0)} \biggabs{ \prod_{\ast \in \{r,s\}} \Omega_{\ast;\pi_n}(\mathfrak{W}^{([1:\ast])}) - \prod_{\ast \in \{r,s\}}\Omega_{\epsilon;\ast;\pi_n}(\mathfrak{W}_\epsilon^{([1:\ast])}) }\nonumber\\
		&\leq (K\Lambda L_\mu\sigma_{\mu_\ast}^{-1})^{c_t}\cdot \max_{1\leq r\leq t-1} \E^{(0),1/2} \big( \Omega_{r;\pi_n}(\mathfrak{W}^{([1:r])})-\Omega_{\epsilon;r;\pi_n}(\mathfrak{W}_\epsilon^{([1:r])}) \big)^2\nonumber\\
		&\leq (K\Lambda L_\mu\sigma_{\mu_\ast}^{-1})^{c_t}\cdot \big(\mathcal{P}_{\texttt{SE}}^{(t-1)}(\epsilon)+\pnorm{\d_\epsilon \Sigma_{\mathfrak{W}}^{[t-1]}}{\op}^{1/2}\big).
		\end{align}
		Similarly we may estimate, using the apriori estimates in Lemma \ref{lem:1bit_cs_apriori_est} and (\ref{ineq:1bit_cs_se_smooth_diff_Upsilon}),
		\begin{align*}
		\pnorm{\d_\epsilon \Sigma_{\mathfrak{W}}^{[t]}}{\op}
		&\leq (K\Lambda L_\mu\sigma_{\mu_\ast}^{-1})^{c_t} \cdot \big( \pnorm{\d_\epsilon \bm{\rho}^{[t-1]}}{\op}+ \E^{(0),1/2} \abs{\d_\epsilon \varphi (\mathfrak{Z}^{(0)}+\xi_{\pi_m})}^2 +\pnorm{\d_\epsilon \Sigma_{\mathfrak{Z}}^{[t]}}{\op}^{1/2}\big).
		\end{align*}	
		Using that for $k \in [m]$,
		\begin{align}\label{ineq:1bit_cs_se_smooth_diff_anti_conc}
		\E^{(0)} \abs{\d_\epsilon \varphi (\mathfrak{Z}^{(0)}+\xi_k)}^2\leq 4 \E^{(0)}\bm{1}\big(\abs{\mathfrak{Z}^{(0)}+\xi_k}\leq \epsilon\big)\leq 8\epsilon/\sigma_{\mu_\ast},
		\end{align}
		we have
		\begin{align}\label{ineq:1bit_cs_se_smooth_diff_cov_W}
		\pnorm{\d_\epsilon \Sigma_{\mathfrak{W}}^{[t]}}{\op}\leq (K\Lambda L_\mu\sigma_{\mu_\ast}^{-1})^{c_t}\cdot \big(\epsilon^{1/2}+\mathcal{P}_{\texttt{SE}}^{(t-1)}(\epsilon) +\pnorm{\d_\epsilon \Sigma_{\mathfrak{Z}}^{[t]}}{\op}^{1/2}\big).
		\end{align}
		Iterating across (\ref{ineq:1bit_cs_se_smooth_diff_cov_Z}) and (\ref{ineq:1bit_cs_se_smooth_diff_cov_W}), and using the apriori estimates in Lemma \ref{lem:1bit_cs_apriori_est}, we arrive at 
		\begin{align}\label{ineq:1bit_cs_se_smooth_diff_cov_Z_vee_W}
		\pnorm{\d_\epsilon \Sigma_{\mathfrak{Z}}^{[t]}}{\op}\vee \pnorm{\d_\epsilon \Sigma_{\mathfrak{W}}^{[t]}}{\op}\leq (K\Lambda L_\mu\sigma_{\mu_\ast}^{-1})^{c_t}\cdot \big(\epsilon+\mathcal{P}_{\texttt{SE}}^{(t-1)}(\epsilon)\big)^{1/c_t}.
		\end{align}

		\noindent (4). Similar to (\ref{ineq:rho_tau_bound_Upsilon}), for $k \in [m]$, with $\bm{\Upsilon}_{\epsilon;k}^{';[t]}\equiv \big(\partial_{(s)}\Upsilon_{\epsilon;r;k}(z^{([0:r])})\big)_{r,s \in [1:t]}$, we have with $\bm{L}_{\epsilon;k}^{[t]}(z^{([0:t])})\equiv \mathrm{diag}\big(\big\{-\eta  \cdot \partial_{11} \mathsf{L}_{s-1}\big(\Theta_{\epsilon;s;k}(z^{([0:s])}) ,\varphi_\epsilon(z^{(0)},\xi_k)\big)\big\}_{s \in [1:t]}\big)$, 
		\begin{align*}
		\bm{\Upsilon}_{\epsilon;k}^{';[t]}(z^{([0:t])}) =  \bm{L}^{[t]}_{\epsilon;k}(z^{([0:t])}) + \bm{L}^{[t]}_{\epsilon;k}(z^{([0:t])}) \mathfrak{O}_{t}(\bm{\rho}_\epsilon^{[t-1]}) \bm{\Upsilon}_{\epsilon;k}^{';[t]}(z^{([0:t])}).
		\end{align*}
		Solving for $\bm{\Upsilon}_{\epsilon;k}^{';[t]}$ yields that
		\begin{align}\label{ineq:1bit_cs_se_smooth_diff_rho_tau_1}
		\bm{\Upsilon}_{\epsilon;k}^{';[t]} (z^{([0:t])}) = \big[I_t- \bm{L}^{[t]}_{\epsilon;k}(z^{([0:t])}) \mathfrak{O}_{t}(\bm{\rho}_\epsilon^{[t-1]})\big]^{-1} \bm{L}^{[t]}_{\epsilon;k}(z^{([0:t])}).
		\end{align}
		Using that the matrix $\bm{L}^{[t]}_{\epsilon;k}(z^{([0:t])}) \mathfrak{O}_{t}(\bm{\rho}_\epsilon^{[t-1]})$ is lower triangular, and therefore $\big(\bm{L}^{[t]}_{\epsilon;k}(z^{([0:t])}) \mathfrak{O}_{t}(\bm{\rho}_\epsilon^{[t-1]})\big)^t=0_{t\times t}$, by the apriori estimates in Lemma \ref{lem:1bit_cs_apriori_est}, we have  
		\begin{align*}
		\bigpnorm{\big[I_t- \bm{L}^{[t]}_{\epsilon;k}(z^{([0:t])}) \mathfrak{O}_{t}(\bm{\rho}_\epsilon^{[t-1]})\big]^{-1}}{\op}\leq 1+\sum_{r \in [1:t]} \pnorm{  \bm{L}^{[t]}_{\epsilon;k}(z^{([0:t])}) \mathfrak{O}_{t}(\bm{\rho}_\epsilon^{[t-1]}) }{}^r \leq (K\Lambda)^{c_t}.
		\end{align*}
		Therefore, uniformly in $\epsilon>0$,
		\begin{align}\label{ineq:1bit_cs_se_smooth_diff_rho_tau_2}
		\pnorm{\d_\epsilon \bm{\tau}^{[t]}}{\op} 
		&\overset{(\ref{ineq:1bit_cs_se_smooth_diff_anti_conc})}{\leq}   (K\Lambda L_\mu \sigma_{\mu_\ast}^{-1})^{c_t} \cdot \big( \pnorm{\d_\epsilon \Sigma_{\mathfrak{Z}}^{[t]}}{\op}^{1/2}+   \pnorm{\d_\epsilon \bm{\rho}^{[t-1]}}{\op} + \epsilon^{1/2}  \big)\nonumber\\
		&\overset{(\ref{ineq:1bit_cs_se_smooth_diff_cov_Z_vee_W})}{\leq}  (K\Lambda L_\mu \sigma_{\mu_\ast}^{-1})^{c_t} \cdot \big(\epsilon^{1/2} +  \mathcal{P}_{\texttt{SE}}^{(t-1)}(\epsilon)\big)^{1/c_t}.
		\end{align}
		Next, similar to (\ref{ineq:rho_tau_bound_Omega}), for any $\ell \in [n]$, with $\bm{\Omega}_{\epsilon;\ell}^{';[t]}\equiv \big(\partial_{(s)}\Omega_{\epsilon;r;\ell}(w^{([1:r])})\big)_{r,s \in [1:t]}$, we have with $\bm{P}^{[t]}_{\epsilon;\ell}(w^{([1:t])})\equiv \mathrm{diag}\big(\big\{\mathsf{P}^{'}_{s;\ell}(\Delta_{\epsilon;s;\ell} (w^{([1:t])}) )\big\}_{s \in [1:t]}\big)$, 
		\begin{align}\label{ineq:1bit_cs_se_smooth_diff_rho_tau_3}
		\bm{\Omega}_{\epsilon;\ell}^{';[t]}(w^{([1:t])}) = \bm{P}^{[t]}_{\epsilon;\ell}(w^{([1:t])})\big[I_t+(\bm{\tau}_\epsilon^{[t]}+I_t)\mathfrak{O}_t\big(\bm{\Omega}_{\epsilon;\ell}^{';[t-1]}(w^{([1:t-1])}) \big)\big].
		\end{align}
		From the above display, it is easy to deduce with the apriori estimates in Lemma \ref{lem:1bit_cs_apriori_est} that uniformly in $\epsilon>0$,
		\begin{align*}
		\pnorm{\bm{\Omega}_{\epsilon;\ell}^{';[t]}(w^{([1:t])}) }{\op}\leq (K\Lambda)^{c_t}.
		\end{align*}
		This means, with the apriori estimates in Lemma \ref{lem:1bit_cs_apriori_est},
		\begin{align}\label{ineq:1bit_cs_se_smooth_diff_rho_tau_4}
		\pnorm{\d_\epsilon \bm{\Omega}_{\ell}^{';[t]}(w^{([1:t])})}{\op}&\leq (K\Lambda)^{c_t}\cdot \Big(\pnorm{\d_\epsilon \bm{P}^{[t]}_{\ell}(w^{([1:t])})}{\op}\nonumber\\
		&\qquad +\pnorm{\d_\epsilon \bm{\tau}^{[t]}}{\op}+\pnorm{\d_\epsilon \bm{\Omega}_{\ell}^{';[t-1]}(w^{([1:t-1])})}{\op}\Big).
		\end{align}
		In order to control $\pnorm{\d_\epsilon \bm{P}^{[t]}_{\ell}(w^{([1:t])})}{\op}$ in the above display, it suffices to control $\max_{\ell \in [n]} \abs{ \d_\epsilon \Delta_{t;\ell} (w^{([1:t])}) }$. To this end, using the definition (\ref{def:Delta_fcn}) and the apriori estimates in Lemma \ref{lem:1bit_cs_apriori_est}, for any $\ell \in [n]$,
		\begin{align*}
		\abs{\d_\epsilon \Delta_{t;\ell}} &= \abs{\Delta_{\epsilon;t;\ell}(w^{([1:t])} )  - \Delta_{0;t;\ell}(w^{([1:t])} )} \\
		& \leq (K\Lambda L_\mu\sigma_{\mu_\ast}^{-1})^{c_t}\cdot \big[ \pnorm{\d_\epsilon \bm{\tau}^{[t]}}{\op} \cdot \big(1+\pnorm{w^{([1:t-1])} }{}\big)+ \max_{r\in[0:t-1]}  	\abs{\d_\epsilon \Delta_{r;\ell}} + \pnorm{\d_\epsilon \bm{\delta}^{[t]} }{} \big].
		\end{align*}
		Iterating the estimate and using the initial condition $\abs{\d_\epsilon \Delta_{0;\ell} }= 0$, we have
		\begin{align*}
		&\pnorm{\d_\epsilon \bm{P}^{[t]}_{\ell}(w^{([1:t])})}{\op}\leq \Lambda\cdot \max_{\ell \in [n]}\abs{\d_\epsilon \Delta_{t;\ell} }\\
		&\leq (K\Lambda L_\mu\sigma_{\mu_\ast}^{-1})^{c_t}\cdot  \Big[\pnorm{\d_\epsilon \bm{\tau}^{[t]}}{\op}\cdot \big(1+\pnorm{w^{([1:t-1])} }{}\big) +\pnorm{\d_\epsilon \bm{\delta}^{[t]} }{}\Big].
		\end{align*}
		Combined with (\ref{ineq:1bit_cs_se_smooth_diff_rho_tau_4}),
		\begin{align*}
		\pnorm{\d_\epsilon \bm{\Omega}_{\ell}^{';[t]}(w^{([1:t])})}{\op}&\leq  (K\Lambda L_\mu\sigma_{\mu_\ast}^{-1})^{c_t}\cdot \Big[\pnorm{\d_\epsilon \bm{\Omega}_{\ell}^{';[t-1]}(w^{([1:t-1])})}{\op}\nonumber\\
		&\qquad +\pnorm{\d_\epsilon \bm{\tau}^{[t]}}{\op}\cdot \big(1+\pnorm{w^{([1:t-1])} }{}\big)+ \pnorm{\d_\epsilon \bm{\delta}^{[t]} }{}\Big].
		\end{align*}
		Iterating the above estimate and using the initial condition $\pnorm{\d_\epsilon \bm{\Omega}_{\ell}^{';[1]}(w^{(1)})}{\op}\leq (KL_\mu) \cdot [\pnorm{\d_\epsilon \bm{\tau}^{[t]}}{\op}+ \pnorm{\d_\epsilon \bm{\delta}^{[t]} }{}]$, we arrive at
		\begin{align*}
		\pnorm{\d_\epsilon \bm{\Omega}_{\epsilon;\ell}^{';[t]}(w^{([1:t])})}{\op}&\leq  (K\Lambda L_\mu\sigma_{\mu_\ast}^{-1})^{c_t}\cdot \Big[ \pnorm{\d_\epsilon \bm{\tau}^{[t]}}{\op}\cdot \big(1+\pnorm{w^{([1:t-1])} }{}\big)+ \pnorm{\d_\epsilon \bm{\delta}^{[t]} }{}\Big].
		\end{align*}
		Consequently, using again the apriori estimates in Lemma \ref{lem:1bit_cs_apriori_est}, 
		\begin{align}\label{ineq:1bit_cs_se_smooth_diff_rho_tau_5}
		\pnorm{\d_\epsilon \bm{\rho}^{[t]}}{\op}&\leq  (K\Lambda L_\mu\sigma_{\mu_\ast}^{-1})^{c_t}\cdot \big[\pnorm{\d_\epsilon \bm{\tau}^{[t]}}{\op}+ \pnorm{\d_\epsilon \bm{\delta}^{[t]} }{} + \pnorm{\d_\epsilon \Sigma^{[t]}_{\mathfrak{W}} }{\op}^{1/2}\big].
		\end{align}
		Combining (\ref{ineq:1bit_cs_se_smooth_diff_cov_Z_vee_W}), (\ref{ineq:1bit_cs_se_smooth_diff_rho_tau_2}) and (\ref{ineq:1bit_cs_se_smooth_diff_rho_tau_5}),
		\begin{align}\label{ineq:1bit_cs_se_smooth_diff_rho_tau_6}
		\pnorm{\d_\epsilon \bm{\rho}^{[t]}}{\op}
		&\leq (K\Lambda L_\mu\sigma_{\mu_\ast}^{-1})^{c_t}\cdot \Big[\big(\epsilon+\mathcal{P}_{\texttt{SE}}^{(t-1)}(\epsilon)\big)^{1/c_t} + \pnorm{\d_\epsilon \bm{\delta}^{[t]} }{}\Big].
		\end{align}
		\noindent (5). Using the Gaussian integration-by-parts representation (\ref{def:delta_t_alternative}), by using the apriori estimates in Lemma \ref{lem:1bit_cs_apriori_est}, with some calculations,
		\begin{align*}
		\pnorm{\d_\epsilon \bm{\delta}^{[t]} }{}&\leq (K\Lambda L_\mu\sigma_{\mu_\ast}^{-1})^{c_t}\cdot \Big( \E^{(0),1/2} \abs{\d_\epsilon \Upsilon_{r;\pi_n} (\mathfrak{Z}^{([0:t])})  }^2+\pnorm{\d_\epsilon \Sigma_{\mathfrak{Z}}^{[t]}}{\op}^{1/2}+\pnorm{\d_\epsilon \bm{\tau}^{[t]}}{\op}\Big).
		\end{align*}
		Plugging the estimates  (\ref{ineq:1bit_cs_se_smooth_diff_Upsilon}) for $\abs{\d_\epsilon \Upsilon_{r;\pi_n}}^2$, (\ref{ineq:1bit_cs_se_smooth_diff_cov_Z_vee_W}) for $\pnorm{\d_\epsilon \Sigma_{\mathfrak{Z}}^{[t]}}{\op}^{1/2}$ and (\ref{ineq:1bit_cs_se_smooth_diff_rho_tau_2}) for $\pnorm{\d_\epsilon \bm{\tau}^{[t]}}{\op}$ into the above display,
		\begin{align}
		\pnorm{\d_\epsilon \bm{\delta}^{[t]} }{}&\leq (K\Lambda L_\mu\sigma_{\mu_\ast}^{-1})^{c_t}\cdot \big(\epsilon+ \mathcal{P}_{\texttt{SE}}^{(t-1)}(\epsilon)\big)^{1/c_t}.
		\end{align}
		Plugging the above estimate into (\ref{ineq:1bit_cs_se_smooth_diff_rho_tau_6}), $\pnorm{\d_\epsilon \bm{\rho}^{[t]}}{\op}$ can also be bounded by the right hand side of the above display. In view of the proven estimate (\ref{ineq:1bit_cs_se_smooth_diff_rho_tau_2}), for $t\geq 1$,
		\begin{align*}
		\mathcal{P}_{\texttt{SE}}^{(t)}(\epsilon)\leq (K\Lambda L_\mu\sigma_{\mu_\ast}^{-1})^{c_t}\cdot \big(\epsilon+ \mathcal{P}_{\texttt{SE}}^{(t-1)}(\epsilon)\big)^{1/c_t}.
		\end{align*}
		Iterating the bound and using the trivial initial condition $	\mathcal{P}_{\texttt{SE}}^{(0)}(\epsilon)\equiv 0$, we arrive at the desired estimate
		\begin{align*}
		\mathcal{P}_{\texttt{SE}}^{(t)}(\epsilon)\leq (K\Lambda L_\mu\sigma_{\mu_\ast}^{-1})^{c_t}\cdot \epsilon^{1/c_t}.
		\end{align*}
		The estimates for functions follow from (\ref{ineq:1bit_cs_se_smooth_diff_Upsilon}) and (\ref{ineq:1bit_cs_se_smooth_diff_Omega}).
	\end{proof}

\subsection{Stability of smoothed gradient descent iterates}

We now compare the smoothed data and the original data statistics. Let
\begin{align*}
\hat{\mathcal{P}}_{\texttt{DT}}^{(t)}(\epsilon) &\equiv \frac{\pnorm{\d_\epsilon Y}{}}{\sqrt{n}}+ \pnorm{\d_\epsilon \hat{\bm{\tau}}^{[t]}}{\op}+ \pnorm{\d_\epsilon \hat{\bm{\rho}}^{[t]}}{\op} + \max_{0\leq s\leq t}\Big(\abs{\d_\epsilon \mathscr{E}_{\mathsf{H}}^{(s)}}+\abs{\d_\epsilon \hat{\mathscr{E}}_{\mathsf{H} }^{(s)}}\Big)\\
&\quad + \max_{0\leq s\leq t} \frac{1}{\sqrt{n}} \Big( \pnorm{\d_\epsilon \mu^{(s)}}{} + \pnorm{\d_\epsilon Z^{(s)}}{}+\pnorm{\d_\epsilon W^{(s)}}{}+ \pnorm{\d_\epsilon \hat{Z}^{(s)}}{}\Big).
\end{align*}

We note the following useful apriori estimate.

	\begin{lemma}\label{lem:1bit_cs_apriori_est_data}
		Suppose (A1), (A3) and (A4'), and (\ref{ineq:111112bLound}) hold. Then the following hold for some $c_t=c_t(t)>1$:
		\begin{enumerate}
			\item $\sup_{\epsilon\geq 0}\big(\pnorm{\mu_\epsilon^{(t)}}{}/\sqrt{n}\big)\leq \big(K\Lambda (1+\pnorm{A}{\op})\big)^{c_t}$.
			\item $\sup_{\epsilon\geq 0}\big(\pnorm{\hat{\bm{\tau}}_{\epsilon}^{[t]}}{\op}+\pnorm{\hat{\bm{\rho}}_{\epsilon}^{[t]}}{\op}\big)\leq (K\Lambda)^{c_t}$.
			\item $\sup_{\epsilon\geq 0}\big(\pnorm{Z_\epsilon^{(t)}}{}+\pnorm{\hat{Z}_\epsilon^{(t)}}{}\big)/\sqrt{n}\leq \big(K\Lambda (1+\pnorm{A}{\op})\big)^{c_t}$.
			\item $\sup_{\epsilon\geq 0} \pnorm{\mu^{(t)}_{\epsilon;\mathrm{db}}}{}/\sqrt{n}\leq  \big(K\Lambda (1\wedge \tau_\ast^{(t)})^{-1} (1+\pnorm{A}{\op})\big)^{c_t}$.
		\end{enumerate}
	\end{lemma}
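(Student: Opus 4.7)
The four bounds proceed in order, each building on the previous and on the uniform (in $\epsilon$) state-evolution estimates already established in Lemmas \ref{lem:1bit_cs_apriori_est} and \ref{lem:1bit_cs_se_smooth_diff}, together with the data-free bounds on $\hat{\bm{\tau}},\hat{\bm{\rho}}$ in Lemma \ref{lem:hat_rho_tau_bound}. The key qualitative observation is that $Y_\epsilon = \varphi_\epsilon(A\mu_\ast+\xi)\in [-1,1]^m$ uniformly in $\epsilon\ge 0$, so all bounds involving $Y_\epsilon$ that use only its boundedness and the Lipschitz regularity of $\mathsf{L},\mathsf{P}$ pass through verbatim to an $\epsilon$-uniform bound.

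For part (1), I will mimic the iterative argument around (\ref{ineq:gen_err_step1_2}). Using (A4') to bound $\pnorm{\mathsf{P}_t(0)}{}\lesssim \sqrt{n}\Lambda$ and $\pnorm{\mathsf{P}_t}{\mathrm{Lip}}\le\Lambda$, and using (\ref{ineq:111112bLound}) together with $\pnorm{Y_\epsilon}{\infty}\le 1$ to bound
\begin{align*}
\pnorm{\partial_1 \mathsf{L}(A\mu_\epsilon^{(s-1)},Y_\epsilon)}{}\le c_0\Lambda(1+\pnorm{A}{\op})\cdot\big(\sqrt{m}+\pnorm{\mu_\epsilon^{(s-1)}}{}\big),
\end{align*}
one-step application of the recursion (\ref{def:grad_descent}) gives $\pnorm{\mu_\epsilon^{(t)}}{}\le (K\Lambda(1+\pnorm{A}{\op}))^{c_0}(\sqrt{n}+\pnorm{\mu_\epsilon^{(t-1)}}{})$ uniformly in $\epsilon$, and iterating yields (1). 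Part (2) follows directly from the proof of Lemma \ref{lem:hat_rho_tau_bound}, because that proof only invokes the Lipschitz bounds on $\mathsf{L},\mathsf{P}$ and not the specific values of $(A,Y)$; replacing $Y$ by $Y_\epsilon$ and $\mu^{(s)}$ by $\mu_\epsilon^{(s)}$ everywhere gives the $\epsilon$-uniform bound. Part (3) follows from the definitions (\ref{def:Z_W}) and (\ref{def:Z_hat}): applying the triangle inequality, the control on $\bm{\rho}_\epsilon^{[t]}$ and $\hat{\bm{\rho}}_\epsilon^{[t]}$ from Lemma \ref{lem:1bit_cs_apriori_est}(1) and the just-proved (2), and the bound on $\pnorm{\partial_1 \mathsf{L}(A\mu_\epsilon^{(s-1)},Y_\epsilon)}{}$ noted above, combined with (1), yields the desired estimate on $\pnorm{Z_\epsilon^{(t)}}{}$ and $\pnorm{\hat{Z}_\epsilon^{(t)}}{}$.

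Part (4) is the main obstacle. By (\ref{def:debias_gd_oracle}) we have
\begin{align*}
\pnorm{\mu^{(t)}_{\epsilon;\mathrm{db}}}{}\le \pnorm{\mu_\epsilon^{(t-1)}}{}+ \eta \pnorm{A}{\op}\cdot \pnorm{\bm{\omega}_\epsilon^{[t]}}{\op}\cdot \max_{s\in [1:t]}\pnorm{\partial_1\mathsf{L}(A\mu_\epsilon^{(s-1)},Y_\epsilon)}{},
\end{align*}
so the task reduces to controlling $\pnorm{\bm{\omega}_\epsilon^{[t]}}{\op}=\pnorm{(\bm{\tau}_\epsilon^{[t]})^{-1}}{\op}$ by a constant times $(1\wedge \tau_\ast^{(t)})^{-c_t}$ uniformly in $\epsilon\ge 0$. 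Since $\bm{\tau}_\epsilon^{[t]}$ is lower triangular, Lemma \ref{lem:lower_tri_mat_inv} gives $\pnorm{(\bm{\tau}_\epsilon^{[t]})^{-1}}{\op}\le (t\pnorm{\bm{\tau}_\epsilon^{[t]}}{\op}/\tau_{\epsilon;\ast}^{(t)})^t$, where $\tau_{\epsilon;\ast}^{(t)}\equiv \min_{s\in [1:t]}|\tau_{\epsilon;s,s}|$. The numerator is controlled $\epsilon$-uniformly by Lemma \ref{lem:1bit_cs_apriori_est}(1), so the only remaining issue is lower-bounding $\tau_{\epsilon;\ast}^{(t)}$. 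By Lemma \ref{lem:1bit_cs_se_smooth_diff}(1), $|\tau_{\epsilon;s,s}-\tau_{s,s}|\le (K\Lambda L_\mu(1\wedge \sigma_{\mu_\ast})^{-1})^{c_t}\epsilon^{1/c_t}$, so for $\epsilon\le \epsilon_0$ with $\epsilon_0$ small enough (polynomially in $\tau_\ast^{(t)}$ and the other parameters) one has $\tau_{\epsilon;\ast}^{(t)}\ge \tau_\ast^{(t)}/2$; for $\epsilon>\epsilon_0$ the estimate is vacuous in the sense that one may absorb $\epsilon_0^{-1}$ into the polynomial factor $(1\wedge \tau_\ast^{(t)})^{-c_t}$. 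Combining everything with parts (1) and (3) then yields (4).
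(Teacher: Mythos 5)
Your plan for parts (1)--(3) matches the paper's proof: a one-step Lipschitz recursion on \eqref{def:grad_descent} (using $\pnorm{Y_\epsilon}{\infty}\le 1$ uniformly in $\epsilon$) for (1), the nilpotent--lower-triangular geometric-series argument behind Lemma \ref{lem:hat_rho_tau_bound} (which only needs $\pnorm{\partial_{11}\mathsf{L}(\cdot,y)}{\infty}\le\Lambda$ for $y\in[-1,1]$ and $\pnorm{\mathsf{P}'}{\infty}\le\Lambda$, both preserved after smoothing) for (2), and the triangle inequality with Lemma \ref{lem:1bit_cs_apriori_est}(1) and part (2) for (3). These are sound.

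For part (4), you correctly reduce the claim to an $\epsilon$-uniform bound on $\pnorm{\bm{\omega}_\epsilon^{[t]}}{\op}=\pnorm{(\bm{\tau}_\epsilon^{[t]})^{-1}}{\op}$ and, via Lemma \ref{lem:lower_tri_mat_inv}, to a lower bound on $\tau_{\epsilon;\ast}^{(t)}\equiv\min_{s\in[1:t]}|\tau_{\epsilon;s,s}|$. The small-$\epsilon$ case, using Lemma \ref{lem:1bit_cs_se_smooth_diff}(1) to get $\tau_{\epsilon;\ast}^{(t)}\ge\tau_\ast^{(t)}/2$ once $\epsilon\le\epsilon_0$ with $\epsilon_0$ polynomial in $\tau_\ast^{(t)},K,\Lambda,L_\mu,\sigma_{\mu_\ast}$, is correct and is exactly the threshold mechanism the paper uses later in the proof of Lemma \ref{lem:1bit_cs_data_smooth_diff}(6). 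However, the sentence that for $\epsilon>\epsilon_0$ ``the estimate is vacuous in the sense that one may absorb $\epsilon_0^{-1}$ into the polynomial factor'' is not a valid step: absorbing $\epsilon_0^{-1}$ into the constant does not produce any bound on $\pnorm{\bm{\omega}_\epsilon^{[t]}}{\op}$ there, and indeed for the logistic loss $\tau_{\epsilon;\ast}^{(t)}$ can degenerate as $\epsilon\to\infty$ (since $Y_\epsilon\to\varphi(0)$ and $\partial_{11}\mathsf{L}(x,y)\to 0$ as $y\to 0$). Worth noting: the paper's own proof of (4) also does not address this --- it simply cites Lemma \ref{lem:lower_tri_mat_inv} with Lemma \ref{lem:1bit_cs_apriori_est} and writes $\pnorm{\bm{\omega}^{[t]}}{\op}$ (not $\pnorm{\bm{\omega}_\epsilon^{[t]}}{\op}$) without discussing the lower bound on $\tau_{\epsilon;\ast}^{(t)}$. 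In all downstream uses (e.g., the proof of Proposition \ref{prop:1bit_cs_bias_var}), $\epsilon$ is eventually optimized to a negative power of $n$, so only the range $\epsilon\in(0,\epsilon_0]$ is ever needed; effectively both the paper and your proposal establish the bound on that restricted range. Replacing ``vacuous'' by an explicit statement that the $\sup_{\epsilon\ge 0}$ should be read as $\sup_{\epsilon\in(0,\epsilon_0]}$ would make your version both correct and more transparent than the paper's.
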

	\begin{proof}
		\noindent (1). By definition of gradient descent (\ref{def:grad_descent}), for any $\epsilon\geq0$,
		\begin{align*}
		\pnorm{\mu_\epsilon^{(t)}}{}&\leq \Lambda \pnorm{\mu_\epsilon^{(t-1)}}{}+\Lambda \pnorm{A}{\op}\big(\pnorm{A}{\op} \pnorm{\mu_\epsilon^{(t-1)}}{}+\pnorm{Y_\epsilon}{} \big)\\
		&\leq \Lambda \big(1+\pnorm{A}{\op}^2\big)\cdot \pnorm{\mu_\epsilon^{(t-1)}}{}+2\Lambda \pnorm{A}{\op} \sqrt{m}\leq \cdots \leq \big(\Lambda(1+\pnorm{A}{\op})\big)^{c_t} \sqrt{m}.
		\end{align*}	
	
		\noindent (2). As for any $k \in [m]$, 
		\begin{align*}
		\hat{\bm{L}}^{[t]}_{\epsilon;k} &\equiv \mathrm{diag}\Big(\Big\{-\eta \iprod{e_k}{\partial_{11}\mathsf{L}_{s-1}(A\mu_\epsilon^{(s-1)},Y_\epsilon)}\Big\}_{s \in [1:t]}\Big),
		\end{align*}
		for some universal $c_0 > 0$ whose value may change from line to line,
		\begin{align*}
		\pnorm{\hat{\bm{\tau}}_{\epsilon;k}^{[t]}  }{\op} \leq (K\Lambda)^{c_0} \cdot  \bigg(1 +  \sum_{r \in [1:t]} \pnorm{\hat{\bm{\rho}}_\epsilon^{[t-1]}}{\op}^r \bigg) \leq (K\Lambda)^{c_0}\cdot  \big(1+ \pnorm{\hat{\bm{\rho}}_\epsilon^{[t-1]}}{\op}\big)^t.
		\end{align*}
		By taking average, it holds for any $\epsilon\geq 0$ that
		\begin{align}\label{ineq:1bit_cs_apriori_est_data_1}
		\pnorm{\hat{\bm{\tau}}_\epsilon^{[t]} }{\op}\leq  (K\Lambda)^{c_0}\cdot \big(1+ \pnorm{\hat{\bm{\rho}}_\epsilon^{[t-1]}}{\op}\big)^t.
		\end{align}
		On the other hand, recall that for any $\ell \in [n]$, $
		\hat{\bm{P}}^{[t]}_{\epsilon;\ell}= \mathrm{diag}\big(\big\{\prox_{\eta \mathsf{f}}'\big(\iprod{e_\ell}{\mu_\epsilon^{(s-1)}-\eta A^\top \partial_1 \mathsf{L}_{s-1} (A \mu_\epsilon^{(s-1)},Y_\epsilon) }\big)\big\}_{s \in [1:t]}\big)$, 
		so $\pnorm{\hat{\bm{P}}^{[t]}_{\epsilon;\ell}}{\op}\leq \Lambda$. This means 
		\begin{align*}
		\pnorm{\hat{\bm{\rho}}_{\epsilon;\ell}^{[t]}}{\op}\leq  \Lambda \cdot \big( 1+(\pnorm{\hat{\bm{\tau}}_\epsilon^{[t]}}{\op}+1)\cdot \pnorm{\hat{\bm{\rho}}_{\epsilon;\ell}^{[t-1]}}{\op}\big).
		\end{align*}
		Iterating the estimate with trivial initial condition and taking average we have
		\begin{align}\label{ineq:1bit_cs_apriori_est_data_2}
		\pnorm{\hat{\bm{\rho}}_{\epsilon}^{[t]}}{\op}\leq   \Lambda^{c_0t}\cdot \big(\pnorm{\hat{\bm{\tau}}_\epsilon^{[t]}}{\op}+1\big)^t.
		\end{align} 
		Combining (\ref{ineq:1bit_cs_apriori_est_data_1}) and (\ref{ineq:1bit_cs_apriori_est_data_2}), and using the trivial initial condition $\pnorm{\hat{\bm{\tau}}_\epsilon^{[1]} }{\op}\leq (K\Lambda)^{c_0}$, we arrive at the desired estimates. 
		
		\noindent (3). Using the definition for $Z_\epsilon^{(t)}$ in (\ref{def:Z_W}),
		\begin{align*}
		\pnorm{Z_\epsilon^{(t)}}{}\leq \pnorm{A}{\op} \pnorm{\mu_\epsilon^{(t)}}{}+ t \Lambda^2 (1+\pnorm{A}{\op})^2\cdot  \pnorm{\bm{\rho}_\epsilon^{[t]}}{\op}\cdot \Big(\max_{s\in [0:t-1]} \pnorm{\mu_\epsilon^{(s)}}{}+\pnorm{Y_\epsilon}{}\Big).
		\end{align*}
		Using the apriori estimate in Lemma \ref{lem:1bit_cs_apriori_est} and the estimate in (1), we have 
		\begin{align*}
		\pnorm{Z_\epsilon^{(t)}}{}/\sqrt{n}\leq \big(K\Lambda (1+\pnorm{A}{\op})\big)^{c_t}. 
		\end{align*}
		On the other hand, using the definition for $\hat{Z}_\epsilon^{(t)}$ in (\ref{def:Z_hat}), we may use estimate in (2) to conclude the same bound as above.
		
		\noindent (4). Using the definition of $\mu^{(t)}_{\epsilon;\mathrm{db}}$ in (\ref{def:debias_gd_oracle}), we have
		\begin{align*}
		\pnorm{\mu^{(t)}_{\epsilon;\mathrm{db}}}{}&\leq \pnorm{\mu^{(t)}_{\epsilon} }{}+t \Lambda^2 (1+\pnorm{A}{\op})^2\cdot \pnorm{\bm{\omega}^{[t]}}{\op}\cdot \Big(\max_{s\in [0:t-1]} \pnorm{\mu_\epsilon^{(s)}}{}+\pnorm{Y_\epsilon}{}\Big).
		\end{align*}
		We may conclude now using Lemma \ref{lem:lower_tri_mat_inv}, the apriori estimate in Lemma \ref{lem:1bit_cs_apriori_est} and the estimate in (1).
	\end{proof}

	\begin{lemma}\label{lem:1bit_cs_data_smooth_diff}
		Suppose (A1), (A3) and (A4*), and (\ref{ineq:111112bLound}) hold. Further assume the conditions on $\mathsf{H}$ in Proposition \ref{prop:gen_error_linear}. Then there exists some $c_t=c_t(t)>1$ such that 
		\begin{align*}
		\hat{\mathcal{P}}_{\texttt{DT}}^{(t)}(\epsilon) \leq \big(K\Lambda L_\mu (1\wedge\sigma_{\mu_\ast})^{-1}(1+\pnorm{A}{\op})\big)^{c_t}\cdot \bigg(\epsilon^{1/c_t}+ \frac{\pnorm{\d_\epsilon \varphi (A\mu_\ast+\xi)}{} }{\sqrt{n}}\bigg).
		\end{align*}
		Moreover, recall $\tau_\ast^{(t)}$ defined in (\ref{def:tau_ast}), 
		\begin{align*}
		\frac{\pnorm{\d_\epsilon \mu^{(s)}_{\mathrm{db}} }{}}{\sqrt{n}}\leq \bigg(\frac{K\Lambda L_\mu }{(1\wedge \sigma_{\mu_\ast})(1\wedge  \tau_\ast^{(t)}) }(1+\pnorm{A}{\op})\bigg)^{c_t}\cdot \bigg(\epsilon^{1/c_t}+ \frac{\pnorm{\d_\epsilon \varphi (A\mu_\ast+\xi)}{} }{\sqrt{n}}\bigg).
		\end{align*}
	\end{lemma}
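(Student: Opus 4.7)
The strategy is to control each summand in $\hat{\mathcal{P}}_{\texttt{DT}}^{(t)}(\epsilon)$ by a layered induction, combining Lipschitz estimates for the gradient descent recursion and Algorithm~\ref{def:alg_tau_rho} with the state evolution stability bounds from Lemma~\ref{lem:1bit_cs_se_smooth_diff} and the apriori estimates in Lemmas~\ref{lem:1bit_cs_apriori_est} and~\ref{lem:1bit_cs_apriori_est_data}. First, since $Y=\varphi_0(A\mu_\ast+\xi)$ and $Y_\epsilon = \varphi_\epsilon(A\mu_\ast+\xi)$, the identity $\pnorm{\d_\epsilon Y}{}=\pnorm{\d_\epsilon \varphi(A\mu_\ast+\xi)}{}$ is trivial. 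For the iterate $\pnorm{\d_\epsilon \mu^{(t)}}{}$, I would use (A4*) for the Lipschitz property of $\prox_{\eta\mathsf{f}}$ and (\ref{ineq:111112bLound}) for that of $\partial_1\mathsf{L}$, obtaining the recursion
\begin{align*}
\pnorm{\d_\epsilon \mu^{(t)}}{}\leq \Lambda(1+\Lambda^2\pnorm{A}{\op}^2)\pnorm{\d_\epsilon \mu^{(t-1)}}{}+\Lambda^2\pnorm{A}{\op}\pnorm{\d_\epsilon Y}{},
\end{align*}
which iterates to the required form.

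Next, I would turn to $\pnorm{\d_\epsilon Z^{(t)}}{}$ and $\pnorm{\d_\epsilon W^{(t)}}{}$ using definition (\ref{def:Z_W}). Each is a sum of three types of terms: (i)~linear terms in $\d_\epsilon \mu^{(s)}$, handled by the previous step; (ii)~terms involving $\d_\epsilon \partial_1\mathsf{L}_{s-1}(A\mu^{(s-1)},Y)$, controlled by Lipschitzness of $\partial_1\mathsf{L}$ together with $\d_\epsilon \mu^{(s-1)}$ and $\d_\epsilon Y$; (iii)~terms containing $\d_\epsilon \rho_{t,s}$, $\d_\epsilon \tau_{t,s}$, $\d_\epsilon \delta_t$, bounded by Lemma~\ref{lem:1bit_cs_se_smooth_diff}, multiplied by suitably bounded quantities from Lemma~\ref{lem:1bit_cs_apriori_est_data}. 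For the data-driven estimators $\hat{\bm{\tau}}^{[t]}, \hat{\bm{\rho}}^{[t]}$, I would argue recursively on $t$ using the explicit form in Algorithm~\ref{def:alg_tau_rho}: the matrices $\hat{\bm{L}}^{[t]}_k$ are $\Lambda$-Lipschitz in $(A\mu^{(s-1)},Y)$ by (\ref{ineq:111112bLound}) (second derivatives of $\mathsf{L}$ are $\Lambda$-Lipschitz), and $\hat{\bm{P}}^{[t]}_\ell$ is $\Lambda$-Lipschitz by (A4*). Combined with uniform bounds from Lemma~\ref{lem:1bit_cs_apriori_est_data}-(2) on the matrix inverses appearing in the algorithm, this yields a recursion that propagates the bound on $\pnorm{\d_\epsilon \hat{\bm{\tau}}^{[t]}}{\op}+\pnorm{\d_\epsilon \hat{\bm{\rho}}^{[t]}}{\op}$.

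The remaining quantities follow by substitution: $\pnorm{\d_\epsilon \hat{Z}^{(s)}}{}$ from (\ref{def:Z_hat}) is controlled using $\d_\epsilon \hat{\bm{\rho}}^{[t]}$, $\d_\epsilon \mu^{(s)}$, and $\d_\epsilon Y$; the generalization error differences $\abs{\d_\epsilon \mathscr{E}_{\mathsf{H}}^{(s)}}$ and $\abs{\d_\epsilon \hat{\mathscr{E}}_{\mathsf{H}}^{(s)}}$ use the pseudo-Lipschitz property of $\mathsf{H}$ and the previously controlled quantities. Finally, for the debiased iterate $\pnorm{\d_\epsilon \mu^{(s)}_{\mathrm{db}}}{}$, I would write
\begin{align*}
\d_\epsilon \mu^{(s)}_{\mathrm{db}} = \d_\epsilon \mu^{(s-1)}+\eta \sum_{r\in [1:s]}\big[\omega_{s,r}\cdot\d_\epsilon (A^\top\partial_1 \mathsf{L}(A\mu^{(r-1)},Y))+ (\d_\epsilon \omega_{s,r}) A^\top \partial_1\mathsf{L}(A\mu_\epsilon^{(r-1)},Y_\epsilon)\big],
\end{align*}
and bound $\pnorm{\d_\epsilon \bm{\omega}^{[t]}}{\op}\leq \pnorm{\bm{\omega}^{[t]}}{\op}^2\pnorm{\d_\epsilon \bm{\tau}^{[t]}}{\op}$. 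The operator norm $\pnorm{\bm{\omega}^{[t]}}{\op}$ is controlled by $\tau_\ast^{(t),-c_t}$ via Lemma~\ref{lem:lower_tri_mat_inv} and Lemma~\ref{lem:tau_diag}, explaining the extra $(1\wedge \tau_\ast^{(t)})^{-1}$ factor.

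The main obstacle is the compound recursive dependence: $\hat{\bm{\rho}}^{[t]}$ depends on $\hat{\bm{\tau}}^{[t]}$, which depends on $\hat{\bm{\rho}}^{[t-1]}$, and both depend on all iterates $\{\mu^{(s)}\}_{s\leq t}$ as well as on the (unsmoothed) state evolution parameters through the $\bm{\rho}$-dependence inside the algorithm versus the $\bm{\rho}_\epsilon$-dependence in the smoothed version. The accounting of these cross-dependencies together with the polynomial blow-up in $\pnorm{A}{\op}$, $L_\mu$, $(1\wedge\sigma_{\mu_\ast})^{-1}$, and $(1\wedge \tau_\ast^{(t)})^{-1}$ is routine but tedious; it is the reason for absorbing everything into the exponent $c_t$ rather than tracking sharp constants.
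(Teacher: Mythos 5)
Your strategy is essentially identical to the paper's: split $\hat{\mathcal{P}}_{\texttt{DT}}^{(t)}(\epsilon)$ into its summands, start from the trivial identity $\pnorm{\d_\epsilon Y}{}=\pnorm{\d_\epsilon\varphi(A\mu_\ast+\xi)}{}$, iterate a Lipschitz recursion for $\pnorm{\d_\epsilon\mu^{(t)}}{}$, propagate the recursion in Algorithm~\ref{def:alg_tau_rho} for $\hat{\bm{\tau}}^{[t]},\hat{\bm{\rho}}^{[t]}$ using (A4*) and \eqref{ineq:111112bLound} together with the uniform bounds from Lemma~\ref{lem:1bit_cs_apriori_est_data}, then treat $Z^{(t)}, W^{(t)}, \hat{Z}^{(t)}$ and the generalization errors by substitution with the state-evolution stability from Lemma~\ref{lem:1bit_cs_se_smooth_diff}, and finish $\mu^{(s)}_{\mathrm{db}}$ via the matrix-inverse perturbation bound plus Lemma~\ref{lem:lower_tri_mat_inv}.

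One small technical point worth flagging in the last step: your inequality $\pnorm{\d_\epsilon\bm{\omega}^{[t]}}{\op}\leq\pnorm{\bm{\omega}^{[t]}}{\op}^2\pnorm{\d_\epsilon\bm{\tau}^{[t]}}{\op}$ should really read $\pnorm{\bm{\omega}^{[t]}}{\op}\pnorm{\bm{\omega}_\epsilon^{[t]}}{\op}\pnorm{\d_\epsilon\bm{\tau}^{[t]}}{\op}$, and $\pnorm{\bm{\omega}_\epsilon^{[t]}}{\op}$ is not controlled by $\tau_\ast^{(t)}$ a priori because the diagonal of $\bm{\tau}_\epsilon^{[t]}$ could in principle degenerate. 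The paper handles this with a case split: when $\pnorm{\d_\epsilon\bm{\tau}^{[t]}}{\op}\leq\tau_\ast^{(t)}/2$ the diagonal of $\bm{\tau}_\epsilon^{[t]}$ stays bounded away from zero and the Neumann-series argument via Lemma~\ref{lem:lower_tri_mat_inv} applies; when $\pnorm{\d_\epsilon\bm{\tau}^{[t]}}{\op}>\tau_\ast^{(t)}/2$ the claimed bound is trivial because the left side is a priori bounded (Lemma~\ref{lem:1bit_cs_apriori_est_data}-(4)) and the right side is already of order one. This is a minor repair, not a gap in the strategy.
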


\begin{proof}
	   We assume for notational simplicity that $\sigma_{\mu_\ast}\vee \tau_\ast^{(t)}\leq 1$. 
	   
		\noindent (1). It is easy to see that $\pnorm{\d_\epsilon Y}{} = \pnorm{\d_\epsilon \varphi (A\mu_\ast+\xi)}{}$.
		
		\noindent (2). By definition of gradient descent (\ref{def:grad_descent_1bit_cs}),	
		\begin{align}\label{ineq:1bit_cs_data_smooth_diff_gd}
		\pnorm{\d_\epsilon \mu^{(t)}}{}&\leq \pnorm{\d_\epsilon \mu^{(t-1)}}{}+\Lambda^2  \pnorm{A}{\op}\cdot \big(\pnorm{A}{\op} \pnorm{\d_\epsilon \mu^{(t-1)}}{}+ \pnorm{\d_\epsilon Y}{}\big)\nonumber\\
		& \leq  \Lambda^2 \big(1+\pnorm{A}{\op}^2\big)\cdot \pnorm{\d_\epsilon \mu^{(t-1)}}{}+ \Lambda^2 \pnorm{A}{\op}\cdot \pnorm{\d_\epsilon Y}{}\nonumber\\
		&\leq \cdots \leq \big(\Lambda(1+ \pnorm{A}{\op})\big)^{c_t}\cdot \pnorm{\d_\epsilon Y}{}.
		\end{align}

		\noindent (3). By Algorithm \ref{def:alg_tau_rho}, as for any $k \in [m]$,
		\begin{align*}
		\hat{\bm{L}}^{[t]}_{\epsilon;k} &\equiv \mathrm{diag}\Big(\Big\{-\eta \iprod{e_k}{\partial_{11}\mathsf{L}_{s-1}(A\mu_\epsilon^{(s-1)},Y_\epsilon)}\Big\}_{s \in [1:t]}\Big),
		\end{align*} 
		using the apriori estimates in Lemma \ref{lem:1bit_cs_apriori_est_data} and then taking average, we have 
		\begin{align}\label{ineq:1bit_cs_data_smooth_diff_tau_rho_hat_1}
		\pnorm{\d_\epsilon \hat{\bm{\tau}}^{[t]}}{\op} \leq \big(K\Lambda (1+\pnorm{A}{\op})\big)^{c_t}\cdot \bigg( \pnorm{\d_\epsilon \hat{\bm{\rho}}^{[t-1]}}{\op} +  \frac{\pnorm{\d_\epsilon Y}{}}{\sqrt{n}}  \bigg).
		\end{align}
		On the other hand, using Lemma \ref{lem:1bit_cs_apriori_est_data}, for $r,s \in [t]$,
		\begin{align*}
		\E_{\pi_n}\abs{(\d_\epsilon \hat{\bm{\rho}}_{\pi_n}^{[t]})_{r,s}}&\leq \E_{\pi_n} \abs{e_r^\top (\hat{\bm{P}}_{\epsilon;\pi_n}^{[t]}-\hat{\bm{P}}_{\pi_n}^{[t]})\big[I_t+(\hat{\bm{\tau}}_\epsilon^{[t]}+I_t)\mathfrak{O}_t(\hat{\bm{\rho}}_{\epsilon;\pi_n}^{[t-1]})\big] e_s}\\
		&\qquad + \Lambda \cdot \E_{\pi_n} \pnorm{ \d_\epsilon \hat{\bm{\tau}}^{[t]} \mathfrak{O}_t(\hat{\bm{\rho}}_{\epsilon;\pi_n}^{[t-1]}) }{\op} + \E_{\pi_n} \abs{e_r^\top \hat{\bm{P}}_{\pi_n}^{[t]} \hat{\bm{\tau}}^{[t]} \mathfrak{O}_t(\d_\epsilon \hat{\bm{\rho}}_{\pi_n}^{[t-1]}) e_s}\\
		&\leq (K\Lambda)^{c_t}\cdot \Big(\E_{\pi_n}^{1/2} \abs{e_r^\top (\hat{\bm{P}}_{\epsilon;\pi_n}^{[t]}-\hat{\bm{P}}_{\pi_n}^{[t]}) e_r}^2+\pnorm{\d_\epsilon \hat{\bm{\tau}}^{[t]}}{\op}+\E_{\pi_n} \pnorm{\d_\epsilon \hat{\bm{\rho}}_{\pi_n}^{[t-1]}}{\op}\Big).
		\end{align*}
		Using that
		\begin{align*}
		\E_{\pi_n}^{1/2} \abs{e_r^\top (\hat{\bm{P}}_{\epsilon;\pi_n}^{[t]}-\hat{\bm{P}}_{\pi_n}^{[t]})e_r}^2&\leq \big(K\Lambda(1+\pnorm{A}{\op})\big)^{c_t}\cdot \bigg(\max_{s \in [1:t-1]}\frac{\pnorm{\d_\epsilon\mu^{(s)}}{} }{\sqrt{n}}+ \frac{\pnorm{\d_\epsilon Y}{}}{\sqrt{n}}\bigg)\\
		& \leq \big(K\Lambda(1+\pnorm{A}{\op})\big)^{c_t}\cdot \frac{\pnorm{\d_\epsilon Y}{}}{\sqrt{n}},
		\end{align*}
		we arrive at the estimate 
		\begin{align*}
		\E_{\pi_n}\abs{(\d_\epsilon \hat{\bm{\rho}}_{\pi_n}^{[t]})_{r,s}}
		&\leq \big(K\Lambda(1+\pnorm{A}{\op})\big)^{c_t}\cdot \bigg(\frac{\pnorm{\d_\epsilon Y}{}}{\sqrt{n}}+\pnorm{\d_\epsilon \hat{\bm{\tau}}^{[t]}}{\op}+\E_{\pi_n} \pnorm{\d_\epsilon \hat{\bm{\rho}}_{\pi_n}^{[t-1]}}{\op}\bigg).
		\end{align*}
		Using the trivial estimate $\E_{\pi_n} \pnorm{\d_\epsilon \hat{\bm{\rho}}_{\pi_n}^{[t]}}{\op}\leq t^3\cdot \max_{r,s\in [t]} \E_{\pi_n}\abs{(\d_\epsilon \hat{\bm{\rho}}_{\pi_n}^{[t]})_{r,s}}$, we may then iterate the above bound until the initial condition $\E_{\pi_n}\pnorm{\d_\epsilon \hat{\bm{\rho}}_{\pi_n}^{[1]}}{\op} \leq \Lambda^2\cdot \pnorm{A}{\op} \pnorm{\d_\epsilon Y}{}/\sqrt{n} $, so that
		\begin{align}\label{ineq:1bit_cs_data_smooth_diff_tau_rho_hat_2}
		\pnorm{\d_\epsilon \hat{\bm{\rho}}^{[t]}}{\op}\leq \E_{\pi_n} \pnorm{\d_\epsilon \hat{\bm{\rho}}_{\pi_n}^{[t]}}{\op}\leq \big(K\Lambda(1+\pnorm{A}{\op})\big)^{c_t}\cdot \bigg(\frac{\pnorm{\d_\epsilon Y}{}}{\sqrt{n}}+\pnorm{\d_\epsilon \hat{\bm{\tau}}^{[t]}}{\op}\bigg).
		\end{align}
		Combining (\ref{ineq:1bit_cs_data_smooth_diff_tau_rho_hat_1}) and (\ref{ineq:1bit_cs_data_smooth_diff_tau_rho_hat_2}), we may iterate the estimate until the initial condition $\pnorm{\d_\epsilon\hat{\bm{\tau}}^{[1]}}{\op}\leq (K\Lambda) \cdot \pnorm{\d_\epsilon Y}{}/\sqrt{n} $, so that
		\begin{align}\label{ineq:1bit_cs_data_smooth_diff_tau_rho_hat_3}
		\pnorm{\d_\epsilon \hat{\bm{\tau}}^{[t]}}{\op}+ \pnorm{\d_\epsilon \hat{\bm{\rho}}^{[t]}}{\op}\leq \big(K\Lambda(1+\pnorm{A}{\op})\big)^{c_t}\cdot \frac{\pnorm{\d_\epsilon Y}{}}{\sqrt{n}}.
		\end{align}
		
		\noindent (4). Using the definition for $Z^{(s)}$ in (\ref{def:Z_W}), we have
		\begin{align*}
		\frac{\pnorm{\d_\epsilon Z^{(t)}}{}}{\sqrt{n}}&\leq \pnorm{A}{\op}\cdot \frac{\pnorm{\d_\epsilon \mu^{(t)}}{}}{\sqrt{n}}+\big(K\Lambda L_\mu (1+\pnorm{A}{\op})\big)^{c_0 t}\cdot \pnorm{\d_\epsilon\bm{\rho}^{[t]} }{\op}\\
		&\qquad + t\pnorm{ \bm{\rho}^{[t]}}{\op}\cdot \bigg(\pnorm{A}{\op}\cdot \max_{s \in [1:t-1]} \frac{\pnorm{\d_\epsilon \mu^{(s)}}{}}{\sqrt{n}}+\frac{\pnorm{\d_\epsilon Y}{}}{\sqrt{n}} \bigg).
		\end{align*}
		Using Lemma \ref{lem:rho_tau_bound} for $\pnorm{ \bm{\rho}^{[t]}}{\op}$ and (\ref{ineq:1bit_cs_data_smooth_diff_gd}) for $\pnorm{\d_\epsilon \mu^{(\cdot)}}{}$, 
		\begin{align}\label{ineq:1bit_cs_data_smooth_diff_Z}
		\frac{\pnorm{\d_\epsilon Z^{(t)}}{}}{n}\leq \big(K\Lambda L_\mu \sigma_{\mu_\ast}^{-1} (1+\pnorm{A}{\op})\big)^{c_t}\cdot \bigg(\mathcal{P}_{\texttt{SE}}^{(t)}(\epsilon)+ \frac{\pnorm{\d_\epsilon Y}{}}{\sqrt{n}}\bigg).
		\end{align}
		The estimate for $\pnorm{\d_\epsilon \hat{Z}^{(t)}}{}$ is the same as above, upon using Lemma \ref{lem:1bit_cs_apriori_est_data} for $\pnorm{ \hat{\bm{\rho}}^{[t]}}{\op}$ and (\ref{ineq:1bit_cs_data_smooth_diff_tau_rho_hat_3}) for $\pnorm{\d_\epsilon \hat{\bm{\rho}}^{[t]} }{\op}$.
		
		Using the definition for $W^{(t)}$ in (\ref{def:Z_W}) and similar calculations as above, we have
		\begin{align}\label{ineq:1bit_cs_data_smooth_diff_W}
		\frac{\pnorm{\d_\epsilon W^{(t)}}{}}{\sqrt{n}}
		&  \leq \big(K\Lambda L_\mu\sigma_{\mu_\ast}^{-1} (1+\pnorm{A}{\op})\big)^{c_t}\cdot \bigg(\mathcal{P}_{\texttt{SE}}^{(t)}(\epsilon)+ \frac{\pnorm{\d_\epsilon Y}{}}{\sqrt{n}}\bigg).
		\end{align}
	\noindent (5). For notation convenience, we shall omit $\mathsf{H}$ in the subscript of $\mathscr{E}$. Then using the definition in (\ref{def:gen_err}) and the apriori estimates in Lemma \ref{lem:1bit_cs_apriori_est_data},
	\begin{align*}
	\abs{\d_\epsilon \mathscr{E}^{(t)}}&\leq \Lambda^{c_0}\cdot  \E^{1/2} \big[\big(\iprod{A_{\mathrm{new}}}{\d_\epsilon \mu^{(t)}}-\d_\epsilon\varphi( \iprod{A_{\mathrm{new}}}{\mu_\ast}+\xi_{\pi_m})\big)^2|(A,Y)\big]\\
	&\qquad \times \Big(1+ \E^{1/2} \big[\big(\iprod{A_{\mathrm{new}}}{ \mu^{(t)}+\mu^{(t)}_\epsilon}\big)^4|(A,Y)\big]\Big)\\
	&\leq \big(K\Lambda (1+\pnorm{A}{\op})\big)^{c_t}\cdot \bigg(\frac{\pnorm{\d_\epsilon \mu^{(t)}}{}}{\sqrt{n}}+\sup_{x \in \R}\Prob^{1/2}\big(\abs{\sigma_{\mu_\ast}\mathsf{Z}-x}\leq \epsilon\big) \bigg).
	\end{align*}
	Using (\ref{ineq:1bit_cs_data_smooth_diff_gd}), we now arrive at the estimate
	\begin{align}\label{ineq:1bit_cs_data_smooth_diff_gen_err}
	\abs{\d_\epsilon \mathscr{E}^{(t)}}\leq \big(K\Lambda \sigma_{\mu_\ast}^{-1} (1+\pnorm{A}{\op})\big)^{c_t}\cdot \bigg(\epsilon^{1/2} + \frac{\pnorm{\d_\epsilon Y}{}}{\sqrt{n}}\bigg).
	\end{align}
	Next, using the definition in (\ref{def:gen_err_est}) and the apriori estimates in Lemma \ref{lem:1bit_cs_apriori_est_data},
	\begin{align*}
	\abs{\d_\epsilon \hat{\mathscr{E}}^{(t)}}&\leq \Lambda^{c_0}\cdot \E_{\pi_m} \abs{\d_\epsilon \hat{Z}_{\pi_m}^{(t)}-\d_\epsilon Y_{\pi_m} }\cdot \big(\abs{\hat{Z}_{\pi_m}^{(t)} }+\abs{\hat{Z}_{\epsilon;\pi_m}^{(t)} }+ \abs{Y_{\pi_m}}+\abs{Y_{\epsilon;\pi_m}}\big)^2\\
	&\leq \big(K\Lambda (1+\pnorm{A}{\op})\big)^{c_t}\cdot \bigg(\frac{\pnorm{\d_\epsilon \hat{Z}^{(t)}}{}}{\sqrt{n}}+ \frac{\pnorm{\d_\epsilon Y}{}}{\sqrt{n}}\bigg).
	\end{align*}
	In view of the argument below (\ref{ineq:1bit_cs_data_smooth_diff_Z}), we have 
	\begin{align}\label{ineq:1bit_cs_data_smooth_diff_gen_err_est}
	\abs{\d_\epsilon \hat{\mathscr{E}}^{(t)}}
	\leq \big(K\Lambda (1+\pnorm{A}{\op})\big)^{c_t}\cdot \bigg(\mathcal{P}_{\texttt{SE}}^{(t)}(\epsilon)+ \frac{\pnorm{\d_\epsilon Y}{}}{\sqrt{n}}\bigg).
	\end{align}
	
	\noindent (6). Using the definition in (\ref{def:debias_gd_oracle}),
	\begin{align*}
	\frac{\pnorm{\d_\epsilon \mu^{(s)}_{\mathrm{db}}}{}}{\sqrt{n}}\leq \big(K\Lambda (1+\pnorm{A}{\op})\big)^{c_t}\cdot \bigg(\pnorm{\d_\epsilon \bm{\omega}^{[t]}}{}+\frac{\pnorm{\d_\epsilon Y}{}}{\sqrt{n}}\bigg).
	\end{align*}
	Using the estimate Lemma \ref{lem:1bit_cs_se_smooth_diff} and Lemma \ref{lem:lower_tri_mat_inv}, we have for $\pnorm{\d_\epsilon \bm{\tau}^{[t]}}{}\leq \tau_\ast^{(t)}/2$, 
	\begin{align*}
	\pnorm{\d_\epsilon \bm{\omega}^{[t]}}{}\leq (\tau_\ast^{(t)})^{-c_t} \pnorm{\d_\epsilon \bm{\tau}^{[t]}}{}\leq \big(K\Lambda L_\mu/(\sigma_{\mu_\ast}\tau_\ast^{(t)})\big)^{c_t}\cdot \epsilon^{1/c_t}.
	\end{align*}
	The bound is trivial for $\pnorm{\d_\epsilon \bm{\tau}^{[t]}}{}> \tau_\ast^{(t)}/2$.
	
	The proof is now complete by collecting all estimates and using Lemma \ref{lem:1bit_cs_se_smooth_diff} to replace $\mathcal{P}_{\texttt{SE}}^{(t)}(\epsilon)$ with an explicit estimate.
\end{proof}

\subsection{Proof of Theorem \ref{thm:1bit_cs}}

%\begin{proof}[Proof of Theorem \ref{thm:1bit_cs}]
	For notational convenience, we assume $\sigma_{\mu_\ast}\leq 1$.
	
	\noindent (1). We only prove the averaged distributional characterizations for $\big(\{(A \mu^{(s-1)}), {Z}^{(s-1)}\}, (A \mu_\ast)\big)$; the proof for the other one $\big(\{\mu^{(s)},W^{(s)}\},\mu_\ast\big)$ is completely analogous. Note that  Theorem \ref{thm:gd_se}-(2) applies to the smoothed gradient descent iterates: for any $\epsilon>0$,
	\begin{align}\label{ineq:1bit_cs_se_1}
	I_0(\epsilon)&\equiv \E^{(0)} \biggabs{\frac{1}{m}\sum_{k \in [m]}\Big( \psi_{k}\big(\big\{(A \mu_\epsilon^{(s-1)})_{k}, {Z}_{\epsilon;k}^{(s-1)}\big\}, (A \mu_\ast)_{k}\big)\nonumber\\
		&\qquad\qquad\qquad - \E^{(0)}\psi_{k}\big(\big\{\Theta_{\epsilon;s;k}(\mathfrak{Z}_\epsilon^{([0:s])}), \mathfrak{Z}_\epsilon^{(s)}\big\}, \mathfrak{Z}_\epsilon^{(0)}\big) \Big) }^{q}\nonumber\\
	& \leq \big(\epsilon^{-1}\cdot K\Lambda \Lambda_\psi L_\mu \big)^{c_t} \cdot n^{-1/c_t}.
	\end{align}
	Now we shall control the errors incurred by smoothing. First, using Lemmas \ref{lem:1bit_cs_apriori_est} and \ref{lem:1bit_cs_se_smooth_diff}, the smoothed state evolution parameters are stable:
	\begin{align}\label{ineq:1bit_cs_se_2}
	I_1(\epsilon)&\equiv  \biggabs{\frac{1}{m}\sum_{k \in [m]}\Big( \E^{(0)}\psi_{k}\big(\big\{\Theta_{\epsilon;s;k}(\mathfrak{Z}_\epsilon^{([0:s])}), \mathfrak{Z}_\epsilon^{(s)}\big\}, \mathfrak{Z}_\epsilon^{(0)}\big)- \E^{(0)}\psi_{k}\big(\big\{\Theta_{s;k}(\mathfrak{Z}^{([0:s])}), \mathfrak{Z}^{(s)}\big\}, \mathfrak{Z}^{(0)}\big) \Big) }^{q}\nonumber\\
	&\leq \Lambda_\psi (K\Lambda L_\mu \sigma_{\mu_\ast}^{-1})^{c_t}\cdot \max_{k \in [m]}\max_{0\leq s\leq t}\E^{(0)}\Big(\abs{\d_\epsilon \Theta_{s;k} (\mathfrak{Z}_\epsilon^{([0:s])}) }+ \pnorm{\d_\epsilon \mathfrak{Z}^{([0:s])} }{}  \Big)\nonumber\\
	&\leq \Lambda_\psi (K\Lambda L_\mu \sigma_{\mu_\ast}^{-1})^{c_t}\cdot \epsilon^{1/c_t}.
	\end{align}
	Next, using Lemmas \ref{lem:1bit_cs_apriori_est_data} and \ref{lem:1bit_cs_data_smooth_diff}, the smoothed gradient descent iterates are stable:
	\begin{align}\label{ineq:1bit_cs_se_3}
	I_2(\epsilon)&\equiv \E^{(0)} \biggabs{\frac{1}{m}\sum_{k \in [m]}\Big( \psi_{k}\big(\big\{(A \mu^{(s-1)})_{k}, {Z}_{k}^{(s-1)}\big\}, (A \mu_\ast)_{k}\big)- \psi_{k}\big(\big\{(A \mu_\epsilon^{(s-1)})_{k}, {Z}_{\epsilon;k}^{(s-1)}\big\}, (A \mu_\ast)_{k}\big) \Big) }^{q}\nonumber\\
	&\leq \Lambda_\psi (K\Lambda)^{c_t}\cdot \max_{s \in [1:t]}\E^{(0), 1/2} \bigg(\frac{\pnorm{A \d_\epsilon \mu^{(s-1)}}{}}{\sqrt{n}}+ \frac{\pnorm{\d_\epsilon Z^{(s-1)}}{}}{\sqrt{n}}\bigg)^{2q} \nonumber\\
	&\qquad\qquad \times \max_{s \in [1:t]} \max_{\alpha=0,\epsilon}\E^{(0),1/2}\bigg( \frac{\pnorm{A \mu_\alpha^{(s-1)}}{}}{\sqrt{n}}+ \frac{\pnorm{Z_\alpha^{(s-1)}}{}}{\sqrt{n}}+\frac{\pnorm{A \mu_\ast}{}}{\sqrt{n}}\bigg)^{2q}\nonumber\\
	&\leq (K\Lambda L_\mu \sigma_{\mu_\ast}^{-1})^{c_t}\cdot \epsilon^{1/c_t}.
	\end{align}
	Combining (\ref{ineq:1bit_cs_se_1})-(\ref{ineq:1bit_cs_se_3}), we then have 
	\begin{align*}
	&\E^{(0)} \biggabs{\frac{1}{m}\sum_{k \in [m]}\Big( \psi_{k}\big(\big\{(A \mu^{(s-1)})_{k}, {Z}_{k}^{(s-1)}\big\}, (A \mu_\ast)_{k}\big)- \E^{(0)}\psi_{k}\big(\big\{\Theta_{s;k}(\mathfrak{Z}^{([0:s])}), \mathfrak{Z}^{(s)}\big\}, \mathfrak{Z}^{(0)}\big) \Big) }^{q}\\
	&\lesssim_q I_0(\epsilon)+I_1(\epsilon)+I_2(\epsilon)\leq \big( K\Lambda \Lambda_\psi L_\mu\sigma_{\mu_\ast}^{-1}\big)^{c_t}\cdot \big(\epsilon^{-c_t}n^{-1/c_t}+ \epsilon^{1/c_t}\big).
	\end{align*}
	Optimizing $\epsilon>0$ to conclude. 
	
	\noindent (2). The claim follows by a similar argument as above by invoking Theorem \ref{thm:consist_tau_rho}, Lemma \ref{lem:1bit_cs_se_smooth_diff} and Lemma \ref{lem:1bit_cs_data_smooth_diff}. \qed
%\end{proof}

\subsection{Verification of (\ref{ineq:111112bLound}) for logistic regression}\label{subsubsection:verification_logistic}

	We now verify that the logistic regression model satisfies the boundedness condition in (\ref{ineq:111112bLound}). Recall $\mathsf{L}(x,y)=\rho(-xy)=\log(1+e^{-xy})$. As the role of $x,y$ is symmetric, we only need to check
\begin{itemize}
	\item $\partial_{11}\mathsf{L}(x,y) = \frac{y^2 e^{xy}}{(1+e^{xy})^2}$, $\partial_{12}\mathsf{L}(x,y)  = \frac{e^{xy}(xy-1)-1}{(e^{xy}+1)^2}$,
	\item $\partial_{111}\mathsf{L}(x,y) = -\frac{y^3 e^{xy}(e^{xy}-1)}{(e^{xy}+1)^3}$, $\partial_{112}\mathsf{L}(x,y)=-\frac{ye^{xy}(-xy+e^{xy}(xy-2)-2)}{(e^{xy}+1)^3}$.
\end{itemize}
Consequently, the quantity of interest $\sup_{x \in \R, y \in [-1,1]}\max_{\abs{\alpha}=2, 3}\abs{\partial_\alpha\mathsf{L}(x,y)} $ can be bounded, by an absolute constant multiple of
\begin{align*}
1+\sup_{u \in \R} \frac{(1+\abs{u}^3) e^u}{(e^u+1)^2}<\infty.
\end{align*}
This verifies (\ref{ineq:111112bLound}).

\subsection{Proof of Lemma \ref{lem:1bit_cs_delta_general_loss}}

%\begin{proof}[Proof of Lemma \ref{lem:1bit_cs_delta_general_loss}]
	Let $\{\varphi_\epsilon\}_{\epsilon>0}$ be smooth approximations of $\sign(\cdot)$ as before. Note that $\Theta_{\epsilon;t;k}(z_{[0:t]})$ depends on $z_0$ only through $\varphi_\epsilon(z_0,\xi_k)$. More precisely, with $\Theta_t^\circ$ defined in (\ref{def:Theta_circ}), we have $\Theta_{\epsilon;t;k}(z_{[0:t]}) = \Theta_t^\circ\big(\varphi_\epsilon(z_0,\xi_k),z_1,\ldots,z_t\big)$. Moreover, for $a=1,2,$ we further let $\mathfrak{G}^{\circ,(a)}_{\partial_1\mathsf{L};t}:\R^{[0:t]}\to \R$ defined via
	\begin{align*}
	\mathfrak{G}^{\circ,(a)}_{\partial_1\mathsf{L};t}(z_0,z_{[1:t]}) &\equiv \partial_{1a} \mathsf{L}\big( \Theta_t^\circ(z_0,z_{[1:t]}),z_0   \big). 
	\end{align*}
	Then for $a=1,2$,
	\begin{align*}
	\partial_{1a} \mathsf{L} \big( \Theta_{\epsilon;t;k}(z_{[0:t]}), \varphi_\epsilon(z_0+\xi_k)  \big)= \mathfrak{G}^{\circ,(a)}_{\partial_1\mathsf{L};t}\big(\varphi_\epsilon(z_0+\xi_k),z_{[1:t]}\big).
	\end{align*}
	Now taking derivative on (S1) with respect to $z_0$,
	\begin{align*}
	\partial_{(0)} \Upsilon_{\epsilon;t;k}(z_{[0:t]}) &= -\eta\cdot \mathfrak{G}^{\circ,(1)}_{\partial_1\mathsf{L};t}\big(\varphi_\epsilon(z_0+\xi_k),z_{[1:t]}\big)\cdot \bigg(\sum_{r \in [1:t-1]}\rho_{\epsilon;t-1,r}\partial_{(0)} \Upsilon_{\epsilon;r;k}(z_{[0:r]}) \bigg)\\
	&\qquad -\eta\cdot \mathfrak{G}^{\circ,(2)}_{\partial_1\mathsf{L};t}\big(\varphi_\epsilon(z_0+\xi_k),z_{[1:t]}\big)\cdot \varphi_\epsilon'(z_0+\xi_k).
	\end{align*}
	Let 
	\begin{align*}
	\bm{G}^{[1]}_{\epsilon;t;k}(z_0,z_{[1:t]}) &\equiv \mathrm{diag}\big(\big\{  \mathfrak{G}^{\circ,(1)}_{\partial_1\mathsf{L};s}(\varphi_\epsilon(z_0+\xi_k),z_{[1:s]})  \big\}_{s\in [1:t]}\big)\in \R^{t\times t},\\
	\bm{g}^{[2]}_{\epsilon;t;k}(z_0,z_{[1:t]})&\equiv \big(\big\{  \mathfrak{G}^{\circ,(2)}_{\partial_1\mathsf{L};s}(\varphi_\epsilon(z_0+\xi_k),z_{[1:s]}) \varphi_\epsilon'(z_0+\xi_k)  \big\}_{s\in [1:t]}\big) \in \R^t.
	\end{align*}
	Then we may solve
	\begin{align*}
	\big(\partial_{(0)} \Upsilon_{\epsilon;s;k}(z_{[0:s]})\big)_{s \in [1:t]} =-\eta\cdot \big(I_t + \eta \bm{G}^{[1]}_{\epsilon;t;k}(z_0,z_{[1:t]}) \mathfrak{O}_t(\bm{\rho}_\epsilon^{[t-1]})\big)^{-1} \bm{g}^{[2]}_{\epsilon;t;k}(z_0,z_{[1:t]}).
	\end{align*}
	The elements of the vector in the second line above is a linear combination of at most $2^t$ terms of the following form indexed by $I \in \{0,1\}^t$,
	\begin{align*}
	H_I\big(\varphi_\epsilon(z_0+\xi_k),z_{[1:t]}\big)\varphi_\epsilon'(z_0+\xi_k),\quad H_I = \prod_{s \in [1:t]: I_s\neq 0, s\leq \pnorm{I}{0}-1}\mathfrak{G}^{\circ,(1)}_{\partial_1\mathsf{L};s}\cdot \mathfrak{G}^{\circ,(2)}_{\partial_1\mathsf{L};\pnorm{I}{0}},
	\end{align*}
	with coefficients $w_I$'s bounded by $(K\Lambda)^{c_t}$. On the other hand, with 
	\begin{align*}
	\mathfrak{b}_t= (\Sigma_{\mathfrak{Z}}^{[t]})_{[1:t]^2}^{-1} (\Sigma_{\mathfrak{Z}}^{[t]})_{[1:t],0},\quad \mathfrak{v}_t^2= \sigma_{\mu_\ast}^2- ((\Sigma_{\mathfrak{Z}}^{[t]})_{[1:t],0})^\top  (\Sigma_{\mathfrak{Z}}^{[t]})_{[1:t]^2}^{-1} (\Sigma_{\mathfrak{Z}}^{[t]})_{[1:t],0},
	\end{align*}	
	we have $\mathfrak{Z}^{(0)}|\mathfrak{Z}^{[1:t]}\sim \mathcal{N}\big(\iprod{\mathfrak{b}_t}{ \mathfrak{Z}^{[1:t]}}, \mathfrak{v}_t^2\big)$. So for a bounded generic function $\mathsf{H}:\R^{[0:t]}\to \R$, if $\mathfrak{v}_t>0$,
	\begin{align}\label{ineq:1bit_cs_delta_general_loss_1}
	&\E^{(0)} \mathsf{H}\big(\varphi_\epsilon(\mathfrak{Z}^{(0)}+\xi_{\pi_m}),\mathfrak{Z}^{([1:t])} \big)\varphi_\epsilon'(\mathfrak{Z}^{(0)}+\xi_{\pi_m})\nonumber\\
	& = \frac{1}{\epsilon} \E_{ \mathfrak{Z}^{([1:t])},\pi_m } \int  \mathsf{H}\big(\varphi(\epsilon^{-1}(z+\xi_{\pi_m})),\mathfrak{Z}^{([1:t])}\big) \varphi'\big(\epsilon^{-1}(z+\xi_{\pi_m})\big) \mathfrak{g}_{\mathfrak{v}_t}\big(z-\iprod{\mathfrak{b}_t}{ \mathfrak{Z}^{[1:t]}}\big)\,\d{z}\nonumber\\
	& = \E_{ \mathfrak{Z}^{([1:t])},\pi_m } \int  \mathsf{H}(\varphi(v), \mathfrak{Z}^{([1:t])}) \varphi'(v)\cdot \mathfrak{g}_{\mathfrak{v}_t}\big(\epsilon v-\xi_{\pi_m}-\iprod{\mathfrak{b}_t}{ \mathfrak{Z}^{[1:t]}}\big)\,\d{v}\nonumber\\
	&\to \E_{ \mathfrak{Z}^{([1:t])}} \big(\E_{\pi_m} \mathfrak{g}_{\mathfrak{v}_t}(\xi_{\pi_m}+\iprod{\mathfrak{b}_t}{ \mathfrak{Z}^{[1:t]}}) \big)\cdot \int  \mathsf{H}(\varphi(v), \mathfrak{Z}^{([1:t])}) \varphi'(v)\,\d{v}
	\end{align}
	as $\epsilon\to 0$. As $\varphi:[-1,1]\to [-1,1]$ is a smooth bijection,  we may compute $\int  \mathsf{H}(\varphi(v), \mathfrak{Z}^{([1:t])}) \varphi'(v) \,\d{v} = \int  \mathsf{H}(\varphi(v), \mathfrak{Z}^{([1:t])})\, \d{\varphi(v)} = \int_{-1}^1 \mathsf{H}(y, \mathfrak{Z}^{([1:t])})\, \d{y} $. Moreover, with some calculations,
	\begin{align*}
	&\cov\left[\binom{ \iprod{ \mathfrak{b}_t}{ \mathfrak{Z}^{[1:t]} }  }{\mathfrak{Z}^{[1:t]}},  \binom{ \iprod{ \mathfrak{b}_t}{ \mathfrak{Z}^{[1:t]} }  }{\mathfrak{Z}^{[1:t]}} \right] = \Sigma_{\mathfrak{Z}}^{[t]}-\mathrm{diag}\big(\mathfrak{v}_t^2,0_{[1:t]}\big)\\
	& = \var(\mathfrak{Z}^{([0:t])})-\E^{(0)} \var(\mathfrak{Z}^{([0:t])}|\mathfrak{Z}^{([1:t])}) = \var\big(\E^{(0)}\big[\mathfrak{Z}^{([0:t])}|\mathfrak{Z}^{([1:t])}\big]\big).
	\end{align*}
	Therefore, with $(\mathsf{Z}_0,\mathsf{Z}_{[1:t]})\sim \mathcal{N}\big(0, \var\big(\E^{(0)}\big[\mathfrak{Z}^{([0:t])}|\mathfrak{Z}^{([1:t])}\big]\big)\big)$,  if $\mathfrak{v}_t>0$,
	\begin{align*}
	& \E^{(0)} \mathsf{H}\big(\varphi_\epsilon(\mathfrak{Z}^{(0)}+\xi_{\pi_m}),\mathfrak{Z}^{([1:t])} \big)\varphi_\epsilon'(\mathfrak{Z}^{(0)}+\xi_{\pi_m})\\
	& \to 2 \E^{(0)}  \mathfrak{g}_{\mathfrak{v}_t}(\xi_{\pi_m}+\mathsf{Z}_0) \cdot  \mathsf{H}(U, \mathsf{Z}_{[1:t]}),\hbox{ as } \epsilon\to 0.
	\end{align*}
	Combined with the stability estimates in Lemma \ref{lem:1bit_cs_se_smooth_diff} for $\pnorm{\d_\epsilon\bm{\delta}^{[t]}}{}$ and $\pnorm{\d_\epsilon\bm{\rho}^{[t]}}{}$,
	we then conclude that if $\mathfrak{v}_t>0$, with $\mathfrak{L}_1(U,\mathsf{Z}_{[1:t]})= \mathrm{diag}\big(\big\{\partial_{11} \mathsf{L}\big( \Theta_t^\circ(U,\mathsf{Z}_{[1:s]}),U   \big)\}_{s \in [t]}\big)$ and $\mathfrak{l}_2(U,\mathsf{Z}_{[1:t]})= \big(\partial_{12} \mathsf{L}( \Theta_t^\circ(U,\mathsf{Z}_{[1:s]}),U)\big)_{s \in [t]}$ defined in the statement of the lemma,
    \begin{align*}
    \delta_t&= -2\phi\eta\cdot \E^{(0)} \Big\{ \mathfrak{g}_{\mathfrak{v}_t}(\xi_{\pi_m}+\mathsf{Z}_0)   e_t^\top\big(I_t + \eta\cdot  \mathfrak{L}_1(U,\mathsf{Z}_{[1:t]}) \mathfrak{O}_t(\bm{\rho}^{[t-1]})\big)^{-1}\mathfrak{l}_2(U,\mathsf{Z}_{[1:t]})\Big\}. 
    \end{align*}
    For the squared loss, as $\partial_{11}\mathsf{L}\equiv \partial_{12} \mathsf{L}\equiv 1$,
    \begin{align*}
    \delta_t&= -2\phi\eta\cdot \E^{(0)}\mathfrak{g}_{\mathfrak{v}_t}(\xi_{\pi_m}+\mathsf{Z}_0)\cdot e_t^\top (I_t+\eta \mathfrak{O}_t(\bm{\rho}^{[t-1]})\big)^{-1}\bm{1}_t.
    \end{align*}
    On the other hand, we may compute
    \begin{align}\label{ineq:1bit_cs_delta_general_loss_2}
    \E^{(0)}\mathfrak{g}_{\mathfrak{v}_t}(\xi_{\pi_m}+\mathsf{Z}_0)& = \E^{(0)} \mathfrak{g}_{\mathfrak{v}_t}(\xi_{\pi_m}+\{\sigma_{\mu_\ast}^2-\mathfrak{v}_t^2\}^{1/2} \mathsf{Z})= \E^{(0)} \mathfrak{g}_{\sigma_{\mu_\ast}}(\xi_{\pi_m}).
    \end{align}
    The term $e_t^\top (I_t+\eta \mathfrak{O}_t(\bm{\rho}^{[t-1]})\big)^{-1}\bm{1}_t$ is computed in Lemma \ref{lem:1bit_cs_delta_tau_smooth}-(2) below. \qed
%\end{proof}

\subsection{Proof of Proposition \ref{prop:1bit_cs_bias_var}}

\begin{lemma}\label{lem:1bit_cs_tau_general_loss}
Suppose the conditions in Proposition \ref{prop:1bit_cs_bias_var} hold. Then there exists some $c_t=c_t(t)>1$ such that for any $\epsilon>0$, 
\begin{align*}
\pnorm{\bm{\tau}_\epsilon^{[t]} - \bm{\tau}^{[t]} }{\op}\leq \big(K\Lambda L_\mu(1\wedge\sigma_{\mu_\ast})^{-1}\big)^{c_t}\cdot \epsilon^{1/c_t}.
\end{align*}
Here $\bm{\tau}^{[t]}$ is defined in (\ref{def:1bit_tau}).
\end{lemma}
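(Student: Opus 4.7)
The plan is to compare the smoothed Onsager matrix $\bm{\tau}_\epsilon^{[t]}$ with its non-smoothed counterpart $\bm{\tau}^{[t]}$ given by formula (\ref{def:1bit_tau}) by going through an explicit matrix-inversion representation. Indeed, applying the derivative identity (\ref{ineq:1bit_cs_se_smooth_diff_rho_tau_1}) from the proof of Lemma \ref{lem:1bit_cs_se_smooth_diff} to the smoothed problem and using the definition of $\bm{\tau}^{[t]}$ as the row-averaged derivative of $\Upsilon_{\epsilon;t}$, one obtains the representation
\begin{align*}
\bm{\tau}_\epsilon^{[t]} = -\phi\eta\cdot \E^{(0)}\bigl(I_t + \eta\cdot \bm{L}_{\epsilon;11;\pi_m}^{[t]}(\mathfrak{Z}_\epsilon^{([0:t])})\mathfrak{O}_t(\bm{\rho}_\epsilon^{[t-1]})\bigr)^{-1}\bm{L}_{\epsilon;11;\pi_m}^{[t]}(\mathfrak{Z}_\epsilon^{([0:t])}),
\end{align*}
where $\bm{L}_{\epsilon;11;k}^{[t]}(z_{[0:t]}) = \mathrm{diag}(\{\partial_{11}\mathsf{L}(\Theta_{\epsilon;s;k}(z_{[0:s]}), \varphi_\epsilon(z_0+\xi_k))\}_{s\in[1:t]})$. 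This has exactly the same structure as (\ref{def:1bit_tau}), so the difference reduces to replacing four smoothed ingredients by their non-smoothed counterparts.

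The four replacements are: (i) $\bm{\rho}_\epsilon^{[t-1]}\to \bm{\rho}^{[t-1]}$, (ii) $\mathfrak{Z}_\epsilon^{([0:t])}\to \mathfrak{Z}^{([0:t])}$ (coupled via their covariances), (iii) $\Theta_{\epsilon;s;k}\to \Theta_{s;k}$ inside the argument of $\partial_{11}\mathsf{L}$, and (iv) $\varphi_\epsilon(z_0+\xi_k)\to\sign(z_0+\xi_k)$ inside the second argument of $\partial_{11}\mathsf{L}$. For (i)--(iii), Lemma \ref{lem:1bit_cs_se_smooth_diff} already supplies $\epsilon^{1/c_t}$-type error bounds: the bound on $\pnorm{\d_\epsilon\bm{\rho}^{[t-1]}}{\op}$ handles (i); the bound on $\pnorm{\d_\epsilon\Sigma_{\mathfrak{Z}}^{[t]}}{\op}$ gives a Gaussian coupling with $L^2$-error $\epsilon^{1/c_t}$ to handle (ii); and the bound on $|\d_\epsilon\Theta_{s;k}|$ (together with $\partial_{11}\mathsf{L}$ being globally $\Lambda$-Lipschitz in its first argument via (\ref{ineq:111112bLound}) applied to $\partial_{111}\mathsf{L}$) handles (iii).

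The main obstacle will be (iv), since $\varphi_\epsilon(\cdot)-\sign(\cdot)$ is $O(1)$ on the strip $\{|z_0+\xi_k|\leq \epsilon\}$ and does \emph{not} vanish with $\epsilon$ in any Lipschitz sense. However, exactly as in (\ref{ineq:1bit_cs_se_smooth_diff_anti_conc}), anti-concentration of the Gaussian $\mathfrak{Z}^{(0)}\sim\mathcal{N}(0,\sigma_{\mu_\ast}^2)$ gives
\begin{align*}
\E^{(0)}|\d_\epsilon\varphi(\mathfrak{Z}^{(0)}+\xi_{\pi_m})|^2\leq 8\epsilon/\sigma_{\mu_\ast},
\end{align*}
and this is where the factor $(1\wedge\sigma_{\mu_\ast})^{-1}$ in the bound appears. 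Combined with the $\Lambda$-Lipschitz property of $\partial_{11}\mathsf{L}$ in its second argument (from $\sup|\partial_{112}\mathsf{L}|\leq\Lambda$), this will convert the $L^2$ anti-concentration estimate into an $L^2$-bound on the diagonal entries of $\bm{L}_{\epsilon;11}^{[t]}-\bm{L}_{11}^{[t]}$.

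Finally, to translate these entrywise/$L^2$ perturbations to an operator-norm bound on $\bm{\tau}_\epsilon^{[t]}-\bm{\tau}^{[t]}$, one uses the standard resolvent identity $M_1^{-1}-M_2^{-1}=M_1^{-1}(M_2-M_1)M_2^{-1}$ together with the uniform-in-$\epsilon$ operator-norm bounds
\begin{align*}
\bigpnorm{\big(I_t+\eta\cdot \bm{L}_{\epsilon;11;k}^{[t]}\mathfrak{O}_t(\bm{\rho}_\epsilon^{[t-1]})\big)^{-1}}{\op}\leq (K\Lambda)^{c_t},
\end{align*}
which hold because the matrix being inverted is lower-triangular with unit diagonal, so its inverse is a finite geometric series whose length is $t$ (all powers beyond $t$ vanish), each term controlled by $\Lambda^{c_t}\pnorm{\bm{\rho}_\epsilon^{[t-1]}}{\op}^{c_t}$ together with the apriori bound from Lemma \ref{lem:1bit_cs_apriori_est}. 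Pulling expectations outside via Cauchy--Schwarz and using the moment bounds on $\pnorm{\mathfrak{Z}_\epsilon^{([0:t])}}{}$ (to dominate the linear-growth estimate of $\Theta_\epsilon$ from Lemma \ref{lem:1bit_cs_apriori_est}), one collects all four contributions into a single error of the form $(K\Lambda L_\mu(1\wedge\sigma_{\mu_\ast})^{-1})^{c_t}\cdot\epsilon^{1/c_t}$, matching the stated bound.
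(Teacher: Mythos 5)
Your proposal is correct and takes essentially the same approach as the paper's proof: start from the same explicit matrix-inversion representation
$\bm{\tau}_\epsilon^{[t]} = -\phi\eta\,\E^{(0)}(I_t + \eta\,\bm{G}^{[1]}_{\epsilon;t;\pi_m}\mathfrak{O}_t(\bm{\rho}_\epsilon^{[t-1]}))^{-1}\bm{G}^{[1]}_{\epsilon;t;\pi_m}$
(the paper's $\bm{G}^{[1]}_{\epsilon;t;k}$ is your $\bm{L}_{\epsilon;11;k}^{[t]}$), then apply the stability estimates of Lemma~\ref{lem:1bit_cs_se_smooth_diff}, use the anti-concentration bound~(\ref{ineq:1bit_cs_se_smooth_diff_anti_conc}) to control the non-Lipschitz replacement $\varphi_\epsilon\to\sign$, and finish with the uniform operator-norm bound on the lower-triangular inverse and the apriori estimates of Lemma~\ref{lem:1bit_cs_apriori_est}. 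Your version is somewhat more explicit than the paper's (you spell out the four-way decomposition $\bm{\rho}_\epsilon\to\bm{\rho}$, Gaussian coupling via $\d_\epsilon\Sigma_{\mathfrak{Z}}^{[t]}$, $\Theta_\epsilon\to\Theta$, $\varphi_\epsilon\to\sign$, the resolvent identity, and the Cauchy--Schwarz step), whereas the paper compresses the matrix-difference estimate into a single pointwise bound and then invokes ``taking the expectation''---in particular the paper leaves the Gaussian-law coupling step (your item (ii)) implicit, which you handle more carefully; but the underlying argument is the same.
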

\begin{proof}
Using the same notation as in the proof of Lemma \ref{lem:1bit_cs_delta_general_loss} to translate the identity in (\ref{ineq:rho_tau_bound_Upsilon}), we obtain
\begin{align*}
\big(\partial_{(s)} \Upsilon_{\epsilon;r;k}(z_{[0:r]})\big)_{s,r \in [1:t]} =-\eta\cdot \big(I_t + \eta \bm{G}^{[1]}_{\epsilon;t;k}(z_0,z_{[1:t]}) \mathfrak{O}_t(\bm{\rho}_\epsilon^{[t-1]})\big)^{-1} \bm{G}^{[1]}_{\epsilon;t;k}(z_0,z_{[1:t]}).
\end{align*}
Here recall 
\begin{align*}
\bm{G}^{[1]}_{\epsilon;t;k}(z_0,z_{[1:t]})& = \mathrm{diag}\big(\big\{  \mathfrak{G}^{\circ,(1)}_{\partial_1\mathsf{L};s}(\varphi_\epsilon(z_0+\xi_k),z_{[1:s]})  \big\}_{s\in [1:t]}\big)\\
& = \mathrm{diag}\big(\big\{ \partial_{11} \mathsf{L}(\Theta_{\epsilon;s;k}(z_{[0:t]}),\varphi_\epsilon(z_0+\xi_k)) \big\}_{s\in [1:t]}\big).
\end{align*}
Using the stability estimates in Lemma \ref{lem:1bit_cs_se_smooth_diff}, we have
\begin{align*}
&\bigpnorm{\big(\partial_{(s)} \Upsilon_{\epsilon;r;k}(z_{[0:r]})\big)_{s,r \in [1:t]}-\big(\partial_{(s)} \Upsilon_{r;k}(z_{[0:r]})\big)_{s,r \in [1:t]}}{\op}\\
&\leq \big(K\Lambda L_\mu(1\wedge\sigma_{\mu_\ast})^{-1}\big)^{c_t}\cdot \Big[\big(1+\pnorm{z^{([0:t])} }{}\big)\cdot \epsilon^{1/c_t}+\abs{\d_\epsilon \varphi (z^{(0)}+\xi_k)}\Big].
\end{align*} 
The claim follows by taking the expectation and using the apriori estimates in Lemma \ref{lem:1bit_cs_apriori_est}.
\end{proof}

\begin{proof}[Proof of Proposition \ref{prop:1bit_cs_bias_var}-(1)]
We assume for notational simplicity that $\sigma_{\mu_\ast}\vee \tau_\ast^{(t)}\leq 1$. We use the same proof method as in Theorem \ref{thm:1bit_cs}. To this end, with $\bm{\tau}^{[t]}$ defined in (\ref{def:1bit_tau}), and $\bm{\delta}^{[t]}$ defined in Lemma \ref{lem:1bit_cs_delta_general_loss}, let  $\bm{\omega}^{[t]}\equiv (\bm{\tau}^{[t]})^{-1}$ and
\begin{align*}
b^{(t)}_{\mathrm{db}}\equiv -  \iprod{\bm{\omega}^{[t]}\bm{\delta}^{[t]} }{e_t},\quad (\sigma^{(t)}_{\mathrm{db}})^2\equiv \bm{\omega}_{t\cdot}^{[t]}\Sigma_{\mathfrak{W}}^{[t]}\bm{\omega}_{t\cdot}^{[t],\top}.
\end{align*}
Note that
\begin{align*}
&\E^{(0)} \bigabs{\E_{\pi_n} \psi_{\pi_n} \big(\mu^{(t)}_{\mathrm{db};\pi_n}\big)- \E \psi_{\pi_n}\big(b^{(t)}_{\mathrm{db}}\cdot \mu_{\ast,\pi_n}+ {\sigma}^{(t)}_{\mathrm{db}}\mathsf{Z}\big) }^q\\
&\lesssim_q \E^{(0)} \bigabs{\E_{\pi_n} \psi_{\pi_n} \big(\mu^{(t)}_{\epsilon;\mathrm{db};\pi_n}\big)- \E^{(0)} \psi_{\pi_n}\big(b^{(t)}_{\epsilon;\mathrm{db}}\cdot \mu_{\ast,\pi_n}+ \sigma^{(t)}_{\epsilon;\mathrm{db}}\mathsf{Z}\big)}^q\\
&\qquad + \E^{(0)} \bigabs{\E \psi_{\pi_n}\big({b}^{(t)}_{\mathrm{db}}\cdot \mu_{\ast,\pi_n}+ {\sigma}^{(t)}_{\mathrm{db}}\mathsf{Z}\big)- \E^{(0)} \psi_{\pi_n}\big(b^{(t)}_{\epsilon;\mathrm{db}}\cdot \mu_{\ast,\pi_n}+ \sigma^{(t)}_{\epsilon;\mathrm{db}}\mathsf{Z}\big) }^q\\
&\qquad + \E^{(0)} \bigabs{ \E_{\pi_n} \psi_{\pi_n} \big(\mu^{(t)}_{\epsilon;\mathrm{db};\pi_n}\big)- \E_{\pi_n} \psi_{\pi_n} \big(\mu^{(t)}_{\mathrm{db};\pi_n}\big) }^q\\
&\equiv I_0(\epsilon)+I_1(\epsilon)+I_2(\epsilon).
\end{align*}
For $I_0(\epsilon)$, we may apply Theorem \ref{thm:db_gd_oracle} to obtain
\begin{align*}
I_0(\epsilon)\leq \big(\tau_\ast^{(t),-1} \epsilon^{-1}\cdot K\Lambda \Lambda_\psi L_\mu\big)^{c_t} \cdot n^{-1/c_t}.
\end{align*}
For $I_1(\epsilon)$, using the stability estimate in Lemma \ref{lem:1bit_cs_se_smooth_diff}, Lemma \ref{lem:1bit_cs_tau_general_loss}, along with the apriori estimates in Lemma \ref{lem:1bit_cs_apriori_est} in combination with Lemma \ref{lem:lower_tri_mat_inv}, 
\begin{align*}
I_1(\epsilon)&\leq \big(K\Lambda \Lambda_\psi L_\mu/(\sigma_{\mu_\ast}\tau_\ast^{(t)})\big)^{c_t}\cdot \big(\abs{b^{(t)}_{\epsilon;\mathrm{db}}-b^{(t)}_{\mathrm{db}}} + \abs{\sigma^{(t)}_{\epsilon;\mathrm{db}}-\sigma^{(t)}_{\mathrm{db}}} \big)^q\\
&\leq \big(K\Lambda \Lambda_\psi L_\mu/(\sigma_{\mu_\ast}\tau_\ast^{(t)})\big)^{c_t}\cdot\epsilon^{1/c_t}.
\end{align*}
For $I_2(\epsilon)$, using Lemmas \ref{lem:1bit_cs_apriori_est_data} and \ref{lem:1bit_cs_data_smooth_diff},
\begin{align*}
I_2 \leq \big(K\Lambda \Lambda_\psi L_\mu/(\sigma_{\mu_\ast}\tau_\ast^{(t)})\big)^{c_t}\cdot \epsilon^{1/c_t}.
\end{align*}
Combining the above estimates to conclude upon choosing appropriately $\epsilon>0$.
\end{proof}

Let us now deal with the squared loss case. 
\begin{lemma}\label{lem:1bit_cs_delta_tau_smooth}
	Consider the squared loss $\mathsf{L}(x,y)\equiv (x-y)^2/2$. Suppose the conditions in Proposition \ref{prop:1bit_cs_bias_var} hold. The following hold for any $\epsilon>0$.
	\begin{enumerate}
		\item $\bm{\delta}_\epsilon^{[t]} = \phi\eta\cdot \E^{(0)} \varphi_\epsilon'(\sigma_{\mu_\ast}\mathsf{Z}+\xi_{\pi_m})\cdot \big[I_t+\eta \cdot  \mathfrak{O}_t(\bm{\rho}_\epsilon^{[t-1]})\big]^{-1} \bm{1}_t$.
		\item $\bm{\tau}_\epsilon^{[t]}= -\phi\eta \cdot \big[I_t+\eta \cdot  \mathfrak{O}_t(\bm{\rho}_\epsilon^{[t-1]})\big]^{-1}$.
		\item $b^{(t)}_{\epsilon;\mathrm{db}}=\E^{(0)} \varphi_\epsilon'(\sigma_{\mu_\ast}\mathsf{Z}+\xi_{\pi_m})$.
		\item $(\sigma^{(t)}_{\epsilon;\mathrm{db}})^2 = \phi^{-1}\cdot \E^{(0)}\big(\mathfrak{Z}_\epsilon^{(t)}-\varphi_\epsilon(\mathfrak{Z}_\epsilon^{(0)}+\xi_{\pi_m})\big)^2$.
	\end{enumerate}
\end{lemma}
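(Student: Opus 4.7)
The decisive simplification is that for the squared loss $\mathsf{L}(x,y) = (x-y)^2/2$ we have $\partial_{11}\mathsf{L} \equiv 1$ and $\partial_{12}\mathsf{L} \equiv -1$. Consequently, for every $k \in [m]$ and every $z^{([0:t])}$, the diagonal matrix $\bm{L}^{[t]}_{\epsilon;k}(z^{([0:t])})$ appearing in the derivative formula (\ref{ineq:rho_tau_bound_Upsilon}) collapses to the deterministic matrix $-\eta I_t$, independent of the index $k$ and the point of evaluation. Setting $M_\epsilon \equiv I_t + \eta \cdot \mathfrak{O}_t(\bm{\rho}_\epsilon^{[t-1]})$ throughout, this converts all the nonlinear state-evolution relations into linear-algebra manipulations in $M_\epsilon$.

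For part (2), plugging $\bm{L}^{[t]}_{\epsilon;k} = -\eta I_t$ into (\ref{ineq:rho_tau_bound_Upsilon}) yields $\bm{\Upsilon}_{\epsilon;k}^{';[t]}(z^{([0:t])}) = -\eta \cdot M_\epsilon^{-1}$, again deterministic. Taking the expectation in the definition of $\bm{\tau}_\epsilon^{[t]}$ immediately gives $\bm{\tau}_\epsilon^{[t]} = -\phi\eta M_\epsilon^{-1}$, hence $\bm{\omega}_\epsilon^{[t]} = (\bm{\tau}_\epsilon^{[t]})^{-1} = -(\phi\eta)^{-1} M_\epsilon$. For part (1), differentiating (S1) with respect to $z^{(0)}$ gives the linear system
\begin{align*}
\bm{u}_k^{[t]}(z^{([0:t])}) = -\eta \, \mathfrak{O}_t(\bm{\rho}_\epsilon^{[t-1]}) \bm{u}_k^{[t]}(z^{([0:t])}) + \eta \, \varphi_\epsilon'(z^{(0)} + \xi_k)\, \bm{1}_t,
\end{align*}
where $\bm{u}_k^{[t]}(z^{([0:t])}) \equiv (\partial_{(0)}\Upsilon_{\epsilon;r;k}(z^{([0:r])}))_{r \in [1:t]}$; solving gives $\bm{u}_k^{[t]} = \eta \varphi_\epsilon'(z^{(0)}+\xi_k) \, M_\epsilon^{-1} \bm{1}_t$. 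Taking expectation under $\mathfrak{Z}^{(0)} \sim \mathcal{N}(0,\sigma_{\mu_\ast}^2)$ and averaging over $\pi_m$ produces the claimed formula for $\bm{\delta}_\epsilon^{[t]}$.

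Parts (3) and (4) then reduce to the algebraic cancellation $M_\epsilon \cdot M_\epsilon^{-1} = I_t$. Indeed, for (3),
\begin{align*}
b^{(t)}_{\epsilon;\mathrm{db}} = -\bigiprod{\bm{\omega}_\epsilon^{[t]} \bm{\delta}_\epsilon^{[t]}}{e_t} = -\bigiprod{-(\phi\eta)^{-1} M_\epsilon \cdot \phi\eta\, \E^{(0)}\varphi_\epsilon'(\sigma_{\mu_\ast}\mathsf{Z}+\xi_{\pi_m})\, M_\epsilon^{-1}\bm{1}_t}{e_t},
\end{align*}
where $M_\epsilon M_\epsilon^{-1} \bm{1}_t = \bm{1}_t$ collapses the bracket, and $\iprod{\bm{1}_t}{e_t} = 1$. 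For (4), the recursion (\ref{ineq:linear_bias_var_2}) (which is identical in structure since the squared-loss dependence on $y$ is linear) yields $\bm{\Upsilon}_{\epsilon;k}^{[t]}(z^{([0:t])}) = -\eta\, M_\epsilon^{-1} (z^{(r)} - \varphi_\epsilon(z^{(0)}+\xi_k))_{r \in [1:t]}$, so
\begin{align*}
\Sigma_{\epsilon;\mathfrak{W}}^{[t]} = \phi\eta^2 \, M_\epsilon^{-1} \, \Sigma_{\epsilon;E}^{[t]} \, M_\epsilon^{-\top}, \qquad \Sigma_{\epsilon;E}^{[t]} \equiv \E^{(0)} E E^\top,
\end{align*}
with $E \equiv (\mathfrak{Z}_\epsilon^{(r)} - \varphi_\epsilon(\mathfrak{Z}_\epsilon^{(0)}+\xi_{\pi_m}))_{r \in [1:t]}$. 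Then
\begin{align*}
(\sigma^{(t)}_{\epsilon;\mathrm{db}})^2 = \bm{\omega}_{t\cdot}^{[t]}\Sigma_{\epsilon;\mathfrak{W}}^{[t]}\bm{\omega}_{t\cdot}^{[t],\top} = (\phi\eta)^{-2}\, e_t^\top M_\epsilon \cdot \phi\eta^2 M_\epsilon^{-1}\Sigma_{\epsilon;E}^{[t]} M_\epsilon^{-\top}\cdot M_\epsilon^\top e_t = \phi^{-1}(\Sigma_{\epsilon;E}^{[t]})_{t,t},
\end{align*}
which is exactly the stated expression. The main obstacle, such as it is, is bookkeeping the lower-triangular structure of $\mathfrak{O}_t(\bm{\rho}_\epsilon^{[t-1]})$ to justify invertibility of $M_\epsilon$ (its eigenvalues are all $1$ since $\mathfrak{O}_t$ is strictly lower triangular), after which everything is elementary linear algebra; there is no probabilistic or analytic difficulty peculiar to the smoothed one-bit setting.
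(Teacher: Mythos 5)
Your proposal is correct and follows essentially the same route as the paper: both reduce to the observation that for the squared loss the diagonal derivative matrix degenerates to $-\eta I_t$, solve the linear system $(I_t+\eta\mathfrak{O}_t(\bm{\rho}_\epsilon^{[t-1]}))\bm{\Upsilon}=\cdots$ explicitly, and then chase the telescoping cancellation $M_\epsilon M_\epsilon^{-1}=I_t$ in parts (3)–(4). (Incidentally, you are right that $\partial_{12}\mathsf{L}\equiv -1$ for the squared loss, whereas the paper's proof of Lemma \ref{lem:1bit_cs_delta_general_loss} misstates this as $+1$; the sign cancels and the final formulas are unaffected.)
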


\begin{proof}
	Fix $k \in [m]$. The state evolution in (S1) reads
	\begin{align}\label{ineq:1bit_cs_delta_tau_smooth_1}
	\Upsilon_{\epsilon;t;k}(z_{[0:t]})&= -\eta\cdot  \bigg(z_t+\sum_{r \in [1:t-1]}\rho_{\epsilon;t-1,r} \Upsilon_{\epsilon;r;k}(z_{[0:r]})- \varphi_\epsilon(z_0+\xi_k)\bigg).
	\end{align}
	In the matrix form, with ${\bm{\Upsilon}}^{(t)}_{\epsilon;k}(z_{[0:t]})\equiv \big( \Upsilon_{\epsilon;r;k}(z_{[0:r]})\big)_{r \in [1:t]}$, we have
	\begin{align*}
	\bm{\Upsilon}^{(t)}_{\epsilon;k}(z_{[0:t]})= -\eta\cdot \big(z_r-\varphi_\epsilon(z_0+\xi_k)\big)_{r \in [1:t]} - \eta\cdot \mathfrak{O}_t(\bm{\rho}_\epsilon^{[t-1]}) {\bm{\Upsilon}}^{(t)}_{\epsilon;k}(z_{[0:t]}).
	\end{align*}
	Consequently, we may solve
	\begin{align}\label{ineq:1bit_cs_delta_tau_smooth_2}
	\bm{\Upsilon}^{(t)}_{\epsilon;k}(z_{[0:t]}) = -\eta\cdot \big[I_t+\eta \cdot  \mathfrak{O}_t(\bm{\rho}_\epsilon^{[t-1]})\big]^{-1} \big(z_r-\varphi_\epsilon(z_0+\xi_k)\big)_{r \in [1:t]}.
	\end{align}
	
	\noindent (1). This claim is already contained in the Lemma \ref{lem:1bit_cs_delta_general_loss}. In the squared case, we may directly take derivative with respect to $z_0$ on both sides of (\ref{ineq:1bit_cs_delta_tau_smooth_2}) to conclude.  
	
	\noindent (2). Taking derivative with respect to $z_s$ on both sides of (\ref{ineq:1bit_cs_delta_tau_smooth_1}), with ${\bm{\Upsilon}}^{';(t)}_{\epsilon;k}(z_{[0:t]})\equiv \big(\partial_{(s)} \Upsilon_{\epsilon;r;k}(z_{[0:r]})\big)_{r,s \in [1:t]}$, we have $
	{\bm{\Upsilon}}^{';(t)}_{\epsilon;k}(z_{[0:t]})= - \eta I_t - \eta\cdot \mathfrak{O}_t(\bm{\rho}_\epsilon^{[t-1]}) {\bm{\Upsilon}}^{';(t)}_{\epsilon;k}(z_{[0:t]})$. 
	Solving for ${\bm{\Upsilon}}^{';(t)}_{\epsilon;k}(z_{[0:t]})$ we obtain
	\begin{align*}
	{\bm{\Upsilon}}^{';(t)}_{\epsilon;k}(z_{[0:t]}) = -\eta \big[I_t+\eta \cdot  \mathfrak{O}_t(\bm{\rho}_\epsilon^{[t-1]})\big]^{-1}.
	\end{align*}
	Taking expectation to conclude. 
	
	\noindent (3). Using the definition, we have
	\begin{align*}
	b^{(t)}_{\epsilon;\mathrm{db}} = -e_t^\top (\bm{\tau}_\epsilon^{[t]})^{-1} \bm{\delta}_\epsilon^{[t]}=\E^{(0)} \varphi_\epsilon'(\sigma_{\mu_\ast}\mathsf{Z}+\xi_{\pi_m}),
	\end{align*}
	as desired.
	
	\noindent (4). By (\ref{ineq:1bit_cs_delta_tau_smooth_2}), we have
	\begin{align*}
	(\sigma^{(t)}_{\epsilon;\mathrm{db}})^2 
	& = e_t^\top \bm{\omega}_\epsilon^{[t]}\big(\phi\cdot \E^{(0)} \bm{\Upsilon}^{(t)}_{\epsilon;k}(\mathfrak{Z}_\epsilon^{([0:t])}) \bm{\Upsilon}^{(t),\top}_{\epsilon;k}(\mathfrak{Z}_\epsilon^{([0:t])})   \big)\bm{\omega}_\epsilon^{[t],\top} e_t \\
	& = \phi^{-1}\cdot \E^{(0)}\big(\mathfrak{Z}_\epsilon^{(t)}-\varphi_\epsilon(\mathfrak{Z}_\epsilon^{(0)}+\xi_{\pi_m})\big)^2,
	\end{align*}
	completing the proof.
\end{proof}

\begin{proof}[Proof of Proposition \ref{prop:1bit_cs_bias_var}-(2)]
	We assume for notational simplicity that $\sigma_{\mu_\ast}\leq 1$. Note that for the squared loss, $\tau_\ast^{(t)}=1$. 
   
   We first consider bias. As $b^{(t)}_{\epsilon;\mathrm{db}}= \E^{(0)} \varphi_\epsilon'(\sigma_{\mu_\ast}\mathsf{Z}+\xi_{\pi_m})$, we have $\err_\xi(\epsilon)\equiv \abs{\E^{(0)} \varphi_\epsilon'(\sigma_{\mu_\ast}\mathsf{Z}+\xi_{\pi_m})-2\E^{(0)} \mathfrak{g}_{\sigma_{\mu_\ast}}(\xi_{\pi_m}) }\to 0$ as $\epsilon\to 0$. So
	\begin{align*}
	&\abs{b^{(t)}_{\mathrm{db}}-2\E^{(0)} \mathfrak{g}_{\sigma_{\mu_\ast}}(\xi_{\pi_m})} \leq \abs{b^{(t)}_{\mathrm{db}}-b^{(t)}_{\epsilon;\mathrm{db}}}+\err_\xi(\epsilon)\\
	&\leq \pnorm{ \bm{\omega}^{[t]}}{\op}\pnorm{ \bm{\omega}_\epsilon^{[t]}}{\op}\cdot \pnorm{\d_\epsilon \bm{\tau}^{[t]}}{\op} \pnorm{ \bm{\delta}_\epsilon^{[t]}}{}+\pnorm{ \bm{\omega}^{[t]}}{\op} \pnorm{\d_\epsilon \bm{\delta}^{[t]}}{}+ \err_\xi(\epsilon).
	\end{align*}
	So if $\pnorm{\d_\epsilon \bm{\tau}^{[t]}}{\op} \leq \tau_\ast^{(t)}/2$, using Lemmas \ref{lem:1bit_cs_apriori_est} and \ref{lem:1bit_cs_se_smooth_diff},
	\begin{align*}
	\abs{b^{(t)}_{\mathrm{db}}-2\E^{(0)} \mathfrak{g}_{\sigma_{\mu_\ast}}(\xi_{\pi_m}) } \leq \big(K\Lambda L_\mu/\sigma_{\mu_\ast}\big)^{c_t}\cdot \epsilon^{1/c_t} + \err_\xi(\epsilon).
	\end{align*}
	Now we may let $\epsilon\to 0$ to conclude. 
	
	We next consider variance under the squared loss. By Lemma \ref{lem:1bit_cs_delta_tau_smooth},
	\begin{align*}
	&\bigabs{(\sigma^{(t)}_{\mathrm{db}})^2 -  \phi^{-1}\E^{(0)}\big(\mathfrak{Z}^{(t)}-\sign(\mathfrak{Z}^{(0)}+\xi_{\pi_m})\big)^2 }\leq \bigabs{(\sigma^{(t)}_{\epsilon;\mathrm{db}})^2-(\sigma^{(t)}_{\mathrm{db}})^2}\\
	&\quad + \phi^{-1}\bigabs{ \E^{(0)}\big(\mathfrak{Z}_\epsilon^{(t)}-\varphi_\epsilon(\mathfrak{Z}_\epsilon^{(0)}+\xi_{\pi_m})\big)^2-  \E^{(0)}\big(\mathfrak{Z}^{(t)}-\sign(\mathfrak{Z}^{(0)}+\xi_{\pi_m})\big)^2 }\\
	& \equiv V_1+V_2.
	\end{align*}
	First we handle $V_1$. Using that for two covariance matrices $\Sigma_1,\Sigma_2 \in \R^{t\times t}$, $\pnorm{\Sigma_1^{1/2}-\Sigma_2^{1/2}}{\op}\leq t \pnorm{\Sigma_1-\Sigma_2}{\op}^{1/2}$ (cf. \cite[Lemma A.3]{bao2025leave}),
	\begin{align*}
	\bigabs{\sigma^{(t)}_{\epsilon;\mathrm{db}}-\sigma^{(t)}_{\mathrm{db}}}&\leq \bigabs{\pnorm{\Sigma_{\epsilon;\mathfrak{W}}^{[t],1/2} \bm{\omega}_\epsilon^{[t],\top} e_t }{}-\pnorm{\Sigma_{\mathfrak{W}}^{[t],1/2} \bm{\omega}^{[t],\top} e_t }{} }\\
	&\leq t\cdot \pnorm{\d_\epsilon \Sigma_{\mathfrak{W}}^{[t]}}{\op}\cdot \pnorm{ \bm{\omega}_\epsilon^{[t]} }{}+ \pnorm{\Sigma_{\mathfrak{W}}^{[t]}}{\op}\cdot \pnorm{\d_\epsilon \bm{\omega}^{[t]}}{}\\
	&\leq \big(K\Lambda L_\mu/\sigma_{\mu_\ast}\big)^{c_t}\cdot \epsilon^{1/c_t}.
	\end{align*}
	Here the last line follows from similar arguments for the bias. Now using the apriori estimates $\abs{\sigma^{(t)}_{\epsilon;\mathrm{db}}}\vee \abs{\sigma^{(t)}_{\mathrm{db}}}\leq \big(K\Lambda L_\mu/\sigma_{\mu_\ast}\big)^{c_t}$, we conclude that
	\begin{align*}
	V_1\leq \big(K\Lambda L_\mu/\sigma_{\mu_\ast}\big)^{c_t}\cdot \epsilon^{1/c_t}.
	\end{align*}
	Next we handle $V_2$. With some calculations using Lemmas \ref{lem:1bit_cs_apriori_est} and \ref{lem:1bit_cs_se_smooth_diff},
	\begin{align*}
	V_2 &\lesssim \phi^{-1}\cdot \big(t\cdot \pnorm{\d_\epsilon \Sigma_{\mathfrak{Z}}^{[t]}}{\op }^{1/2}+ \E^{(0),1/2} \abs{\d_\epsilon \varphi (\mathfrak{Z}^{(0)}+\xi_{\pi_m})}^2\big)\\
	&\qquad \times \max_{\alpha = 0,\epsilon} \E^{(0),1/2}\big(1+ \abs{\mathfrak{Z}_\alpha^{[t]}}\big)^2\leq \big(K\Lambda L_\mu/\sigma_{\mu_\ast}\big)^{c_t}\cdot \epsilon^{1/c_t}.
	\end{align*}
	The claim follows by taking $\epsilon \to 0$.
\end{proof}

\subsection{Proof of Proposition \ref{prop:gen_error_linear}}

%\begin{proof}[Proof of Proposition \ref{prop:gen_error_linear}]
For any $\epsilon>0$, let
\begin{align*}
\mathscr{E}_{\epsilon;\mathsf{H}}^{(t)}(A,Y)&\equiv \E\big[\mathsf{H}\big(\iprod{A_{\mathrm{new}}}{\mu_\epsilon^{(t)}},\varphi_\epsilon(\iprod{A_{\mathrm{new}}}{\mu_\ast}+\xi_{\pi_m})\big)|(A,Y)\big],\\
\hat{\mathscr{E}}_{\epsilon;\mathsf{H}}^{(t)}&\equiv m^{-1} \iprod{\mathsf{H} (\hat{Z}_\epsilon^{(t)},Y_\epsilon)}{\bm{1}_m}.
\end{align*}
Then we have 
\begin{align*}
&\E^{(0)}\abs{\hat{\mathscr{E}}_{\mathsf{H}}^{(t)}- \mathscr{E}_{\mathsf{H}}^{(t)}(A,Y) }^q\lesssim_q \E^{(0)}\abs{\hat{\mathscr{E}}_{\epsilon;\mathsf{H}}^{(t)}- \mathscr{E}_{\epsilon;\mathsf{H}}^{(t)}(A,Y) }^q\\
&\qquad + \E^{(0)}\abs{\hat{\mathscr{E}}_{\mathsf{H}}^{(t)}- \hat{\mathscr{E}}_{\epsilon;\mathsf{H}}^{(t)} }^q + \E^{(0)}\abs{\mathscr{E}_{\mathsf{H}}^{(t)}(A,Y)- \mathscr{E}_{\epsilon;\mathsf{H}}^{(t)}(A,Y) }^q\\
&\equiv I_0(\epsilon)+I_1(\epsilon)+I_2(\epsilon).
\end{align*}
For $I_0(\epsilon)$, we may apply Theorem \ref{thm:gen_error} to obtain 
\begin{align*}
I_0(\epsilon)\leq (\epsilon^{-1}\cdot K\Lambda L_\mu)^{c_t}\cdot n^{-1/c_t}.
\end{align*}
For $I_1(\epsilon)$ and $I_2(\epsilon)$, using the stability estimates in Lemmas \ref{lem:1bit_cs_se_smooth_diff} and \ref{lem:1bit_cs_data_smooth_diff},
\begin{align*}
I_1(\epsilon)+I_2(\epsilon)\leq \big(K\Lambda L_\mu(1\wedge \sigma_{\mu_\ast})^{-1}\big)^{c_t}\cdot \epsilon^{1/c_t}.
\end{align*}
Combining the above displays to conclude by choosing appropriately $\epsilon>0$. \qed
%\end{proof}

\appendix

\section{GFOM state evolution theory in \cite{han2025entrywise}}\label{section:GFOM_se}

This section reviews some basics for the theory of general first order methods (GFOM) in \cite{han2025entrywise} that will be used in the proof in this paper. We shall only present a simplified version with the design matrix $A$ satisfying (A2) in Assumption \ref{assump:setup}. The reader is referred to \cite{han2025entrywise} for a more general theory allowing for $A$ with a general variance profile.

Consider an asymmetric GFOM initialized with $(u^{(0)},v^{(0)})\in \R^{m}\times \R^{n}$, and subsequently updated according to 
\begin{align}\label{def:GFOM_asym}
\begin{cases}
u^{(t)} = A \mathsf{F}_t^{\langle 1\rangle}(v^{([0:t-1])})+ \mathsf{G}_{t}^{\langle 1\rangle}(u^{([0:t-1])}) \in \R^{m},\\
v^{(t)}= A^\top \mathsf{G}_t^{\langle 2\rangle}(u^{([0:t])})+\mathsf{F}_t^{\langle 2\rangle}(v^{([0:t-1])})\in \R^{n}.
\end{cases}
\end{align}
Here we denote $A$ as an $m\times n$ random matrix, and the row-separate functions $\mathsf{F}_t^{\langle 1 \rangle}, \mathsf{F}_t^{\langle 2 \rangle}:\R^{n\times [0:t-1]} \to \R^{n}$, $\mathsf{G}_t^{\langle 1 \rangle}:\R^{m\times [0:t-1]} \to \R^{m}$ and $\mathsf{G}_t^{\langle 2 \rangle}:\R^{m\times [0:t]} \to \R^{m}$ are understood as applied row-wise. 

The state evolution for the asymmetric GFOM (\ref{def:GFOM_asym}) is iteratively described, in the following definition, by (i) two row-separate maps $\Phi_t: \R^{m\times [0:t]} \to \R^{m\times [0:t]}$ and $\Xi_t: \R^{n\times [0:t]} \to \R^{n\times [0:t]}$, and (ii) two centered Gaussian matrices $\mathfrak{U}^{([1:\infty))}\in \R^{ [1:\infty)}$ and $\mathfrak{V}^{([1:\infty))}\in \R^{ [1:\infty)}$.

\begin{definition}\label{def:GFOM_se_asym}
	Initialize with $\Phi_0=\mathrm{id}(\R^m)$, $\Xi_0\equiv \mathrm{id}(\R^n)$, and $\mathfrak{U}^{(0)}\equiv u^{(0)}$, $\mathfrak{V}^{(0)}\equiv v^{(0)}$. For $t=1,2,\ldots$, with 
	\begin{align*}
	\{ \mathsf{F}_{t,\ell}^{\langle 1\rangle}\circ \Xi_{t-1,\ell}\}^\circ \big(v^{([1:t-1])}\big) &\equiv \{\mathsf{F}_{t,\ell}^{\langle 1\rangle}\circ \Xi_{t-1,\ell}\} \big( \mathfrak{V}_\ell^{(0)}, v^{([1:t-1])}\big),\\
	\big\{ \mathsf{G}_{t,k}^{\langle 2\rangle}\circ\Phi_{t,k}\big\}^\circ (u^{([1:t])})& \equiv \big\{ \mathsf{G}_{t,k}^{\langle 2\rangle}\circ\Phi_{t,k}\big\} (\mathfrak{U}_k^{(0)},u^{([1:t])}),
	\end{align*}
	$\E^{(0)}\equiv \E\big[\cdot|(\mathfrak{U}^{(0)},\mathfrak{V}^{(0)})\big]$, and $\pi_n$ denoting the uniform distribution on $[n]$, we execute the following steps:
	\begin{enumerate}
		\item Let $\Phi_{t}:\R^{m\times [0:t]}\to \R^{m\times [0:t]}$ be defined as follows: for $w \in [0:t-1]$, $\big[\Phi_{t}(\mathfrak{u}^{([0:t])})\big]_{\cdot,w}\equiv \big[\Phi_{w}(\mathfrak{u}^{([0:w])})\big]_{\cdot,w}$, and for $w=t$,
		\begin{align*}
		\big[\Phi_{t}(\mathfrak{u}^{([0:t])})\big]_{\cdot,t} \equiv \mathfrak{u}^{(t)}+\sum_{s \in [1:t-1]}  \mathsf{G}_{s}^{\langle 2\rangle}\big(\Phi_{s}(\mathfrak{u}^{([0:s])}) \big) \mathfrak{f}_{s}^{(t-1)}+\mathsf{G}_{t}^{\langle 1\rangle}\big(\Phi_{t-1}(\mathfrak{u}^{([0:t-1])}) \big),
		\end{align*}
		where the correction coefficients $\{\mathfrak{f}_{s}^{(t-1) } \}_{s \in [1:t-1]}\subset \R$ are determined by
		\begin{align*}
		\mathfrak{f}_{s}^{(t-1) } \equiv   \E^{(0)} \partial_{\mathfrak{V}^{(s)}} \big\{ \mathsf{F}_{t,\pi_n}^{\langle 1\rangle}\circ \Xi_{t-1,\pi_n}\big\}^\circ (\mathfrak{V}^{([1:t-1])}).
		\end{align*}
		\item Let the Gaussian law of $\mathfrak{U}^{(t)}$ be determined via the following correlation specification: for $s \in [1:t]$,
		\begin{align*}
		\mathrm{Cov}\big(\mathfrak{U}^{(t)}, \mathfrak{U}^{(s)} \big)\equiv   \E^{(0)} \prod_{*\in \{t,s\} } \big\{ \mathsf{F}_{*,\pi_n}^{\langle 1\rangle}\circ \Xi_{*-1,\pi_n}\big\}^\circ (\mathfrak{V}^{([1:*-1])}).
		\end{align*}
		\item Let $\Xi_{t}:\R^{n\times [0:t]}\to \R^{n\times [0:t]}$ be defined as follows: for $w \in [0:t-1]$, $\big[\Xi_{t}(\mathfrak{v}^{([0:t])})\big]_{\cdot,w}\equiv \big[\Xi_{w}(\mathfrak{v}^{([0:w])})\big]_{\cdot,w}$, and for $w=t$,
		\begin{align*}
		\big[\Xi_{t}(\mathfrak{v}^{([0:t])})\big]_{\cdot,t} \equiv \mathfrak{v}^{(t)}+\sum_{s \in [1:t]}  \mathsf{F}_{s}^{\langle 1\rangle}\big(\Xi_{s-1}(\mathfrak{v}^{([0:s-1])}) \big) \mathfrak{g}_{s}^{(t)}+\mathsf{F}_{t}^{\langle 2\rangle}\big(\Xi_{t-1}(\mathfrak{v}^{([0:t-1])}) \big),
		\end{align*}
		where the correction coefficients $\{\mathfrak{g}_{s}^{(t)}\}_{s \in [1:t]}\subset \R$ are determined via
		\begin{align*}
		\mathfrak{g}_{s}^{(t)}\equiv \phi\cdot  \E^{(0)} \partial_{\mathfrak{U}^{(s)}} \big\{ \mathsf{G}_{t,\pi_m}^{\langle 2\rangle}\circ\Phi_{t,\pi_m}\big\}^\circ  (\mathfrak{U}^{([1:t])}).
		\end{align*}
		\item Let the Gaussian law of $\mathfrak{V}^{(t)}$ be determined via the following correlation specification: for $s \in [1:t]$,
		\begin{align*}
		\mathrm{Cov}(\mathfrak{V}^{(t)},\mathfrak{V}^{(s)})\equiv  \phi\cdot  \E^{(0)} \prod_{*\in \{t,s\} } \big\{ \mathsf{G}_{*,\pi_m}^{\langle 2\rangle}\circ\Phi_{*,\pi_m}\big\}^\circ  (\mathfrak{U}^{([1:*])}).
		\end{align*}
	\end{enumerate}
\end{definition}

The next two theorems provide distributional characterizations for $\{u^{(t)},v^{(t)}\}$ in both an entrywise and an averaged sense. 

\begin{theorem}\label{thm:GFOM_se_asym}
	Fix $t \in \N$ and $n \in \N$. Suppose the following hold:
	\begin{enumerate}
		\item[($D^\ast$1)] $A\equiv A_0/\sqrt{n}$, where the entries of $A_0\in \R^{m\times n}$ are independent mean $0$ variables such that $\max_{i,j \in [n]}\pnorm{A_{0,ij}}{\psi_2}\leq K$ holds for some $K\geq 2$.
		\item[($D^\ast$2)] For all $s \in [t], \#\in \{1,2\}, k \in [m], \ell \in [n]$, $\mathsf{F}_{s,\ell}^{\langle \# \rangle},\mathsf{G}_{s,k}^{\langle 1 \rangle} \in C^3(\R^{[0:s-1]})$ and $\mathsf{G}_{s,k}^{\langle 2 \rangle} \in C^3(\R^{[0:s]})$. Moreover, there exists some $\Lambda\geq 2$ and $\mathfrak{p}\in \N$ such that 
		\begin{align*}
		&\max_{s \in [t]}\max_{\# =1,2}\max_{k\in[m], \ell \in [n]}\Big\{ \abs{ \mathsf{F}_{s,\ell}^{\langle \# \rangle}(0) }+ \abs{\mathsf{G}_{s,k}^{\langle \# \rangle}(0)}\\
		&\quad +\max_{0\neq a \in \mathbb{Z}_{\geq 0}^{[0:s-1]},0\neq b \in \mathbb{Z}_{\geq 0}^{[0:s]},\abs{a}\vee \abs{b}\leq 3 } \Big(\bigpnorm{ \partial^a \mathsf{F}_{s,\ell}^{\langle \# \rangle} }{\infty}+\bigpnorm{ \partial^a \mathsf{G}_{s,k}^{\langle 1 \rangle}  }{\infty}+ \bigpnorm{  \partial^b \mathsf{G}_{s,k}^{\langle 2 \rangle}   }{\infty} \Big)   \Big\}\leq \Lambda.
		\end{align*}
	\end{enumerate}
	Further suppose $1/K\leq m/n\leq K$. Then for any $\Psi \in C^3(\R^{ [0:t]})$ satisfying
	\begin{align}\label{cond:Psi_asym}
	\max_{a \in \mathbb{Z}_{\geq 0}^{ [0:t]}, \abs{a}\leq 3} \sup_{x \in \R^{ [0:t] } }\bigg(\sum_{ \tau \in  [0:t] }(1+\abs{x_{\tau}})^{\mathfrak{p}}\bigg)^{-1} \abs{\partial^a \Psi(x)} \leq \Lambda_{\Psi}
	\end{align}
	for some $\Lambda_{\Psi}\geq 2$, it holds for some universal $c_0>0$ and $c_1\equiv c_1(\mathfrak{p})>0$, such that with $\E^{(0)}\equiv \E[\cdot|(u^{(0)},v^{(0)})]$, 
	\begin{align*}
	& \max_{k \in [m]}\bigabs{ \E^{(0)} \Psi\big(u_{k}^{([0:t])}(A)\big)-\E^{(0)}\Psi\big(\Phi_{t,k}(\mathfrak{U}_k^{(0)},\mathfrak{U}^{([1:t])})\big) } \\
	& \quad \vee \max_{\ell \in [n]} \bigabs{\E^{(0)} \Psi\big(v_{\ell}^{([0:t])}(A)\big) -\E^{(0)} \Psi\big(\Xi_{t,\ell}(\mathfrak{V}_\ell^{(0)},\mathfrak{V}^{([1:t])})\big) } \\
	& \leq \Lambda_\Psi \cdot \big(K\Lambda \log n\cdot (1+\pnorm{u^{(0)}}{\infty}+\pnorm{v^{(0)}}{\infty})\big)^{c_1 t^5} \cdot n^{-1/c_0^t}.
	\end{align*}
\end{theorem}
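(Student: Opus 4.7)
The plan is to prove this entrywise state evolution result through three interlocking ingredients: Gaussian conditioning for the Gaussian-design case, a leave-one-out (LOO) construction to lift averaged guarantees to entrywise ones, and Lindeberg-type universality to transfer from Gaussian to sub-Gaussian designs.

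First I would treat the Gaussian case, where $A$ has i.i.d.\ $\mathcal{N}(0,1/n)$ entries. Proceeding by induction on $t$, at each iteration I would condition on the $\sigma$-algebra generated by the past linear constraints $A \mathsf{F}_s^{\langle 1\rangle}(v^{([0:s-1])}) = u^{(s)} - \mathsf{G}_s^{\langle 1\rangle}(u^{([0:s-1])})$ and $A^\top \mathsf{G}_s^{\langle 2\rangle}(u^{([0:s])}) = v^{(s)} - \mathsf{F}_s^{\langle 2\rangle}(v^{([0:s-1])})$ for $s \leq t$. By Bolthausen's conditioning identity, under this $\sigma$-algebra the action of $A$ on any vector orthogonal to the past $\mathsf{F}^{\langle 1\rangle}$-outputs is a fresh independent Gaussian. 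Decomposing $\mathsf{F}_{t+1}^{\langle 1\rangle}(v^{([0:t])})$ along its projection onto past directions plus an orthogonal residual produces precisely the Onsager correction coefficients $\{\mathfrak{f}_s^{(t)}\}$ of Definition \ref{def:GFOM_se_asym} (and symmetrically for $\{\mathfrak{g}_s^{(t)}\}$). Iterating this yields an averaged (empirical-measure) matching of the joint law of $(u^{([0:t])}, v^{([0:t])})$ to $(\Phi_t(\mathfrak{U}^{([0:t])}), \Xi_t(\mathfrak{V}^{([0:t])}))$.

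Second, to upgrade the averaged guarantee to the entrywise form, for each fixed $k \in [m]$ I would construct a leave-one-row-out GFOM $(\tilde u^{(t,[k])}, \tilde v^{(t,[k])})$ driven by $A^{(-k)}$, the matrix with the $k$-th row replaced by an independent copy. The smoothness of $\mathsf{F}_\cdot^{\langle \#\rangle}, \mathsf{G}_\cdot^{\langle \#\rangle}$ in (D*2) together with a Lipschitz-propagation argument gives $\pnorm{\tilde v^{(t,[k])} - v^{(t)}}{} = \mathcal{O}_{\mathbf{P}}(1)$ (with the hidden constant growing iteratively in $t$) while $\pnorm{\tilde v^{(t,[k])}}{}/\sqrt{n}$ concentrates around its state-evolution value. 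Crucially, $A_{k,\cdot}$ is independent of $\tilde v^{(t-1,[k])}$, so the conditional law of $A_{k,\cdot}^\top \tilde v^{(t-1,[k])}$ is exactly $\mathcal{N}(0, n^{-1}\pnorm{\tilde v^{(t-1,[k])}}{}^2)$, matching the target variance $\mathrm{Cov}(\mathfrak{U}^{(t)})$ up to the concentration error. Plugging the swap back into the entrywise update for $u_k^{(t)}$ and iterating gives the desired $k$-entrywise statement; a symmetric leave-one-column-out argument handles $v_\ell^{(t)}$.

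Third, to pass from Gaussian to sub-Gaussian $A_0$, I would invoke Lindeberg's swapping method: enumerate the entries of $A_0$ and replace them one at a time from Gaussian to the target distribution. For each swap, a third-order Taylor expansion of the map $A_{0,ij} \mapsto \E^{(0)} \Psi_k(u_k^{([0:t])}(A))$ kills the first- and second-order terms by matching moments under (D*1), and the remainder is bounded by $n^{-3/2}$ times a uniform bound on the third partial derivative of the GFOM output as a function of $A_{0,ij}$; summing over $mn$ swaps produces an $n^{-1/2}$-type rate.

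The main obstacle is controlling these third derivatives (and more generally the Lipschitz/smoothness constants of all intermediate maps) uniformly through $t$ nonlinear iterations. Each iteration composes a smooth nonlinearity with multiplication by $A$ or $A^\top$, so by the chain rule the relevant Jacobians and Hessians accumulate factors polynomial in $\pnorm{A}{\op}$, in the previous iterates' norms, and in the running Lipschitz constants. I would truncate to the high-probability event $\{\pnorm{A}{\op} \leq C \log n\}$ and carry out the bookkeeping on that event; the compounded growth through $t$ iterations is precisely what forces the $t^5$-type exponent in the factor $(K\Lambda\log n)^{c_1 t^5}$ and, via the LOO step, the doubly-exponential $n^{-1/c_0^t}$ rate. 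Obtaining a rate with polynomial rather than iterated-exponential dependence on $t$ would seem to require a qualitatively different argument and is not pursued here.
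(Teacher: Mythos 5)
This theorem is not proven in the paper you were given. It appears in Appendix \ref{section:GFOM_se}, whose opening sentence is ``This section reviews some basics for the theory of general first order methods (GFOM) in \cite{han2024entrywise} that will be used in the proof in this paper,'' and the reader is explicitly referred to that reference for the proof. Consequently there is no in-document proof to compare your attempt against, and a genuine comparison would require consulting \cite{han2024entrywise}.

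As a standalone sketch, your three-ingredient plan (Bolthausen conditioning for the Gaussian case, leave-one-out upgrade to an entrywise statement, Lindeberg swapping for universality) is a plausible high-level roadmap and is consistent with the structure of the error bound: the $(K\Lambda\log n)^{c_1 t^5}$ prefactor and the iterated-exponential rate $n^{-1/c_0^t}$ are exactly the kind of loss one incurs when tracking norms, operator-norm truncations, and Lipschitz constants through $t$ recursive compositions. One point worth tightening: Bolthausen's Gaussian conditioning actually produces an \emph{exact} conditional Gaussian law for the fresh component at each step, so for Gaussian $A$ the entrywise characterization does not intrinsically require a leave-one-out device --- what remains is to show that the empirical Onsager coefficients and empirical covariances concentrate around their state-evolution values, and that this concentration propagates through the recursion. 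The leave-one-out construction is one way to organize that concentration argument, but it is not forced, and your sketch glosses over the crucial step of showing the data-driven correction terms (inner products of iterates) self-average to the deterministic $\mathfrak{f}_s^{(t)},\mathfrak{g}_s^{(t)}$. That is where most of the technical work lives, and without it the conditioning step alone does not deliver the claimed approximation. The Lindeberg step is standard in outline but, as you note, the third-derivative bookkeeping through $t$ nonlinear iterations is nontrivial and is presumably where the $t^5$ exponent and the $\log n$ truncation arise.
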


\begin{theorem}\label{thm:GFOM_se_asym_avg}
	Fix $t \in \N$ and $n \in \N$, and suppose $1/K\leq m/n\leq K$ for some $K\geq 2$. Suppose $(D^\ast 1)$ in Theorem \ref{thm:GFOM_se_asym} holds and $(D^\ast2)$ therein is replaced by
	\begin{enumerate}
		\item[($D^\ast $2)'] 
		$
		\max\limits_{s \in [t]}\max\limits_{  \# = 1,2   }\max\limits_{k \in [m], \ell \in [n]} \big\{\pnorm{ \mathsf{F}_{s,\ell}^{\langle \# \rangle} }{\mathrm{Lip}}+ \pnorm{\mathsf{G}_{s,k}^{\langle \# \rangle}}{\mathrm{Lip}}+\abs{\mathsf{F}_{s,\ell}^{\langle \# \rangle} (0)}+ \abs{\mathsf{G}_{s,k}^{\langle \# \rangle} (0)}\big\}\leq \Lambda$ for some $\Lambda\geq 2$.
	\end{enumerate}
	Fix a sequence of $\Lambda_\psi$-pseudo-Lipschitz functions $\{\psi_k:\R^{[0:t]} \to \R\}_{k \in [m\vee n]}$ of order $\mathfrak{p}$, where $\Lambda_\psi\geq 2$. Then for any $r_0 \in \N$, there exists some $C_0=C_0(\mathfrak{p},r_0)>0$ such that with $\E^{(0)}\equiv \E[\cdot|(u^{(0)},v^{(0)})]$,  
	\begin{align*}
	&\E^{(0)} \bigg[\biggabs{\frac{1}{m}\sum_{k \in [m]} \psi_k\big(u_k^{([0:t])}(A)\big) - \frac{1}{m}\sum_{k \in [m]}  \E^{(0)}  \psi_k\big(\Phi_{s,k}(\mathfrak{U}_k^{(0)},\mathfrak{U}^{([1:s])})\big)  }^{r_0}\bigg]\\
	&\quad \vee \E^{(0)}  \bigg[\biggabs{\frac{1}{n}\sum_{\ell \in [n]} \psi_\ell\big(v_\ell^{([0:t])}(A)\big) - \frac{1}{n}\sum_{\ell \in [n]}  \E^{(0)}  \psi_\ell\big(\Xi_{t,\ell}(\mathfrak{V}_\ell^{(0)},\mathfrak{V}^{([1:t])})\big)   }^{r_0}\bigg]  \\
	&\leq \big(K\Lambda\Lambda_\psi\log n\cdot (1+\pnorm{u^{(0)}}{\infty}+\pnorm{v^{(0)}}{\infty})\big)^{C_0 t^5}\cdot n^{-1/C_0^t}. 
	\end{align*}
\end{theorem}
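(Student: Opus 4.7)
The plan is to deduce the averaged $L^{r_0}$ concentration bound from two ingredients: (i) the entrywise mean-matching provided by Theorem~\ref{thm:GFOM_se_asym}, which after averaging over $k$ controls the bias $m^{-1}\sum_k \E^{(0)}\psi_k(u_k^{([0:t])}(A))-m^{-1}\sum_k\E^{(0)}\psi_k(\Phi_{t,k}(\cdots))$, and (ii) a subgaussian concentration estimate for the random fluctuation of the empirical average $F_m(A)\equiv m^{-1}\sum_k \psi_k(u_k^{([0:t])}(A))$ around its conditional mean $\E^{(0)}F_m(A)$. Since $(D^*2)'$ only requires the $\mathsf F,\mathsf G$ maps to be Lipschitz (not $C^3$ as in $(D^*2)$) and $\psi_k$ only pseudo-Lipschitz, a smoothing step is also required to bridge the regularity gap with Theorem~\ref{thm:GFOM_se_asym}.

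\textbf{Step 1 (smoothing and bias control).} I would regularize both the GFOM coefficients and the test functions $\psi_k$ by convolution with a Gaussian kernel of bandwidth $\delta>0$, yielding $C^3$ replacements whose $a$-th derivatives are bounded by $\Lambda_\psi(1+|x|)^{\mathfrak p}\delta^{1-a}$ and which satisfy $|\psi_k-\psi_{k,\delta}|\lesssim \Lambda_\psi\delta\cdot(1+|x|)^{\mathfrak p-1}$. Applying Theorem~\ref{thm:GFOM_se_asym} to the smoothed functionals and averaging over $k\in[m]$ gives a bound of the form $\delta^{-c_1t^5}\cdot n^{-1/c_0^t}$ for the smoothed bias; the smoothing error is absorbed using a uniform $L^{2\mathfrak p r_0}$ apriori moment bound on the iterates, and $\delta$ is optimized as a small power of $n^{-1/c_0^t}$.

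\textbf{Step 2 (concentration of $F_m$).} Under $(D^*2)'$, a straightforward induction on $t$ shows that $u_k^{([0:t])}(A)$ is a (locally) pseudo-Lipschitz function of $A$ with local Lipschitz constants controlled by $(K\Lambda(1+\pnorm{A}{\op}))^{c_t}$, and likewise for $v_\ell^{([0:t])}(A)$. For the random fluctuation I would employ a martingale decomposition along the independent entries of $A_0$: resampling one entry $(A_0)_{ij}$ perturbs $F_m$ by $O(m^{-1}\cdot n^{-1/2})$ in $L^{r_0}$ conditional on the remaining entries (after a standard truncation to tame the subgaussian tails), so Burkholder--Rosenthal's inequality applied to the resulting sum of $mn$ martingale differences yields $\E^{(0)}|F_m-\E^{(0)}F_m|^{r_0}\leq (K\Lambda\Lambda_\psi\log n\cdot L_\mu)^{c_tr_0}\cdot n^{-r_0/c_t}$. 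The $v$-side functional is handled by exchanging the roles of rows and columns.

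\textbf{Main obstacle.} The principal difficulty is propagating Lipschitz constants and moment bounds through the GFOM recursion without sacrificing the final $n^{-1/C_0^t}$ rate. Each iteration introduces an Onsager correction whose coefficients $\mathfrak f_s^{(t-1)},\mathfrak g_s^{(t)}$ are themselves derivatives of \emph{averaged} state-evolution quantities, so the Lipschitz and moment estimates grow multiplicatively with $t$; balancing this growth against the deterministic smoothing error (Step 1) and the subgaussian concentration scale $n^{-1/2}$ (Step 2) requires re-running the inductive bookkeeping inside the entrywise argument rather than invoking Theorem~\ref{thm:GFOM_se_asym} as a black box, which is precisely what gives rise to the $t^5$ exponent in the prefactor of the stated bound.
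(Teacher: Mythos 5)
This theorem is not proved in the paper you were given: Appendix~\ref{section:GFOM_se} explicitly states that it ``reviews some basics for the theory of general first order methods (GFOM) in \cite{han2024entrywise}'', i.e.\ Theorem~\ref{thm:GFOM_se_asym_avg} is quoted as a black box from that reference. There is therefore no ``paper's own proof'' to compare against, and any evaluation of your sketch must be on its own merits.

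On the merits, Step~2 contains a genuine gap. You claim that conditionally resampling a single entry $(A_0)_{ij}$ perturbs $F_m\equiv m^{-1}\sum_k\psi_k(u_k^{([0:t])})$ by $\mathcal O(m^{-1}n^{-1/2})$ in $L^{r_0}$. That is correct for $t=1$, because changing $A_{ij}$ only moves $u^{(1)}_i$ and hence only the $k=i$ summand. It fails for $t\ge 2$: after one more matrix--vector multiplication the perturbation spreads to \emph{every} coordinate. Concretely, $\|\partial_{A_{0,ij}}v^{(1)}\|_2=\mathcal O(n^{-1/2})$, so by Cauchy--Schwarz each $\partial_{A_{0,ij}}u_k^{(2)}$ is $\mathcal O(n^{-1/2})$ \emph{uniformly in $k$}; thus the worst-case bounded-difference of $F_m$ per entry is $\mathcal O(n^{-1/2})$, and $\sum_{ij}c_{ij}^2 = mn\cdot\mathcal O(n^{-1}) = \mathcal O(n)$ blows up, so Burkholder--Rosenthal with these constants gives no concentration at all. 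Getting back down to $\mathcal O(m^{-1}n^{-1/2})$ requires showing that the conditional $L^2$ size of each martingale increment is that small \emph{in expectation}, which in turn requires exhibiting the random cancellation in inner products such as $\sum_\ell A_{k\ell}A_{i\ell}\mathsf F'_{2,\ell}(\cdots)$ where the factors are correlated with $A$ through the iterates themselves. This is exactly the hard part of \cite{han2024entrywise}, and your ``straightforward induction'' does not reach it.

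Step~1 also glosses over a real issue: smoothing the update maps $\mathsf F,\mathsf G$ changes the GFOM trajectory itself, not just the test functions, so you must additionally control $\|u^{(t)}-u^{(t)}_\delta\|$ and $\|v^{(t)}-v^{(t)}_\delta\|$ through all $t$ iterations, with constants depending on $\|A\|_{\op}$ and the recursion depth. The paper carries out an argument of this flavor in Lemmas~\ref{lem:1bit_cs_se_smooth_diff}--\ref{lem:1bit_cs_data_smooth_diff} for the one-bit model, and the bookkeeping there is substantially heavier than ``absorbed using a uniform moment bound''. Your instinct that the $t^5$ exponent and the $n^{-1/C_0^t}$ rate emerge from re-running this kind of inductive bookkeeping is plausible, but the sketch as written does not actually justify the stated bound.
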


\section{Auxiliary technical results}\label{section:aux_result}

\begin{lemma}\label{lem:lindeberg}
	Let $X=(X_1,\ldots,X_n)$ and $Y=(Y_1,\ldots,Y_n)$ be two random vectors in $\R^n$ with independent components such that $\E X_i^\ell=\E Y_i^\ell$ for $i \in [n]$ and $\ell=1,2$. Then for any $f \in C^3(\R^n)$,
	\begin{align*}
	\bigabs{\E f(X) - \E f(Y)}&\leq \max_{U_i \in \{X_i,Y_i\}}\biggabs{\sum_{i=1}^n\E U_i^3 \int_0^{1} \partial_i^3 f(X_{[1:(i-1)]},tU_i, Y_{[(i+1):n]} )(1-t)^2\,\d{t}}.
	\end{align*}
\end{lemma}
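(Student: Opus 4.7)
The plan is to establish this via the classical Lindeberg telescoping swap. I would introduce the hybrid interpolants $Z^{(i)} \equiv (X_{[1:i]}, Y_{[(i+1):n]})$ for $i \in [0:n]$, with $Z^{(0)} = Y$ and $Z^{(n)} = X$, and write
\begin{align*}
\E f(X) - \E f(Y) = \sum_{i=1}^n \big[\E f(Z^{(i)}) - \E f(Z^{(i-1)})\big].
\end{align*}
Each summand differs only in its $i$-th coordinate ($X_i$ vs.\ $Y_i$), while all other coordinates are fixed at their respective $X$- or $Y$-block values; by the independence hypothesis, both $X_i$ and $Y_i$ are then independent of $(X_{[1:(i-1)]}, Y_{[(i+1):n]})$.

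For each $i$, I would Taylor-expand the map $u \mapsto f(X_{[1:(i-1)]}, u, Y_{[(i+1):n]})$ to third order around $u = 0$ using the integral form of the remainder, which delivers exactly the factor $u^3 \int_0^1 \tfrac{(1-t)^2}{2}\, \partial_i^3 f(\cdots, tu, \cdots)\,\d{t}$ for the cubic term. Substituting $u = X_i$ and $u = Y_i$, taking expectations, and using independence to factor the linear and quadratic contributions as $\E U_i \cdot \E\,\partial_i f(\cdots, 0, \cdots)$ and $\tfrac{1}{2}\E U_i^2 \cdot \E\,\partial_i^2 f(\cdots, 0, \cdots)$ for $U_i \in \{X_i, Y_i\}$, the hypothesis $\E X_i^\ell = \E Y_i^\ell$ for $\ell \in \{1,2\}$ forces these terms to agree between $U_i = X_i$ and $U_i = Y_i$, so they cancel in the difference. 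Only the third-order remainders survive, yielding $\E f(Z^{(i)}) - \E f(Z^{(i-1)}) = \tfrac{1}{2}[R_i(X_i) - R_i(Y_i)]$, where
\begin{align*}
R_i(U_i) \equiv \E\bigg[U_i^3 \int_0^1 (1-t)^2\, \partial_i^3 f(X_{[1:(i-1)]}, tU_i, Y_{[(i+1):n]})\,\d{t}\bigg].
\end{align*}

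Summing over $i$ and applying the elementary bound $|a - b| \leq |a| + |b| \leq 2 \max(|a|, |b|)$ with $a = \sum_i R_i(X_i)$ and $b = \sum_i R_i(Y_i)$, I would conclude
\begin{align*}
\bigabs{\E f(X) - \E f(Y)} \leq \max\Big(\bigabs{{\textstyle\sum_i} R_i(X_i)}, \bigabs{{\textstyle\sum_i} R_i(Y_i)}\Big) \leq \max_{U_i \in \{X_i, Y_i\}}\biggabs{\sum_{i=1}^n R_i(U_i)},
\end{align*}
which is precisely the stated bound; the final inequality is automatic because enlarging the feasible set from the two constant sign patterns $U \equiv X$ and $U \equiv Y$ to the $2^n$ coordinate-wise mixtures can only increase the maximum. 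This argument is essentially routine bookkeeping with no conceptual obstacle; the only care required is to use the Taylor remainder in the precise $\tfrac{(1-t)^2}{2}$ form so that the $(1-t)^2$ weighting and the $\frac{1}{2}$ prefactor line up with the expression in the statement, and to order the swap so that at step $i$ the coordinate $i$ remains independent of every other coordinate appearing in both $Z^{(i)}$ and $Z^{(i-1)}$.
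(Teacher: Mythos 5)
The paper states this lemma in Appendix B without proof, treating it as a standard Lindeberg-swap result, so there is no paper proof to compare against; your argument is the natural one and it is correct. You telescope over the hybrid interpolants $Z^{(i)} = (X_{[1:i]}, Y_{[(i+1):n]})$, expand the $i$-th coordinate to third order around $0$ with the integral-form remainder $\tfrac{u^3}{2}\int_0^1 (1-t)^2 \partial_i^3 f(\cdots, tu, \cdots)\,\mathrm{d}t$, use independence of $X_i$ (resp.\ $Y_i$) from the frozen block $(X_{[1:(i-1)]}, Y_{[(i+1):n]})$ to factor the linear and quadratic terms, and the matching moments kill them. This leaves
\begin{align*}
\E f(X) - \E f(Y) = \tfrac{1}{2}\Big[{\textstyle\sum_i} R_i(X_i) - {\textstyle\sum_i} R_i(Y_i)\Big],
\end{align*}
and the elementary estimate $\tfrac{1}{2}|a-b| \leq \max(|a|,|b|)$ absorbs the $\tfrac12$ prefactor and gives the claimed bound, with the trivial enlargement of the feasible set from the two constant patterns $U\equiv X, U\equiv Y$ to all $2^n$ coordinate-wise mixtures. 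The only place one might stumble — the $\tfrac{1}{2}$ bookkeeping against the absence of any $\tfrac12$ in the stated bound, and the interpretation of the $\max_{U_i}$ notation — you handle explicitly and correctly.
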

\begin{proof}
The claim is essentially contained in \cite{chatterjee2006generalization}; see e.g., \cite[Proposition 6.1]{han2025entrywise} for a detailed proof.
\end{proof}

			\begin{figure}[t]
	\begin{minipage}[t]{0.3\textwidth}
		\includegraphics[width=\textwidth]{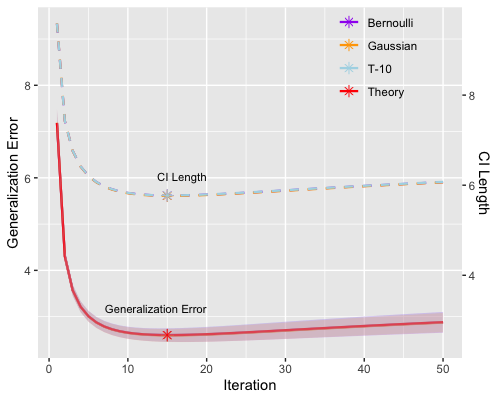}
	\end{minipage}
	\begin{minipage}[t]{0.3\textwidth}
		\includegraphics[width=\textwidth]{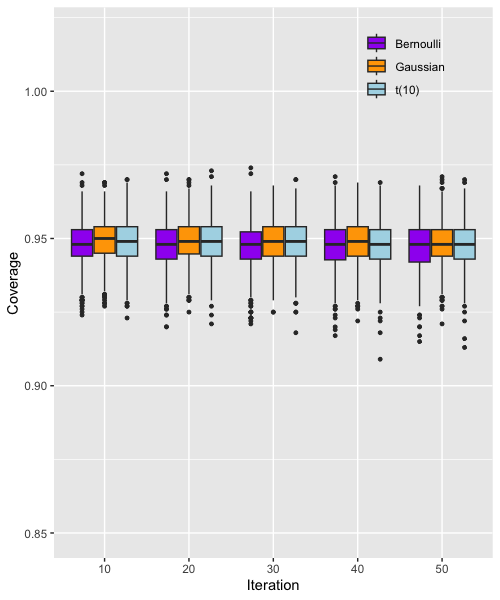}
	\end{minipage}
	\begin{minipage}[t]{0.3\textwidth}
		\includegraphics[width=\textwidth]{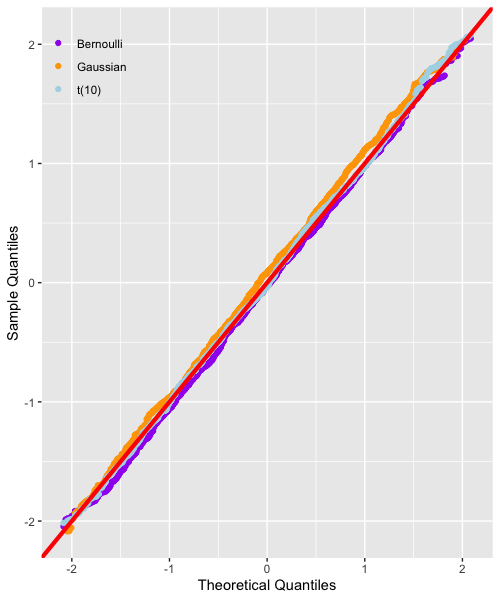}
	\end{minipage}
	\begin{minipage}[t]{0.3\textwidth}
		\includegraphics[width=\textwidth]{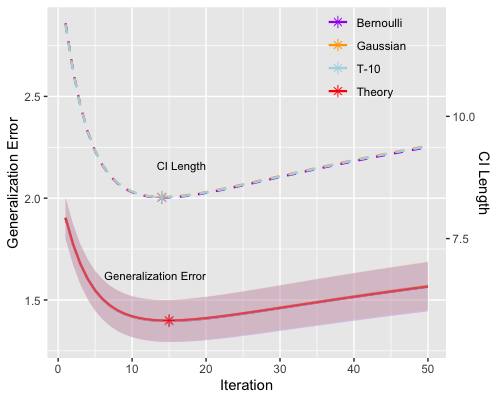}
	\end{minipage}
	\begin{minipage}[t]{0.3\textwidth}
		\includegraphics[width=\textwidth]{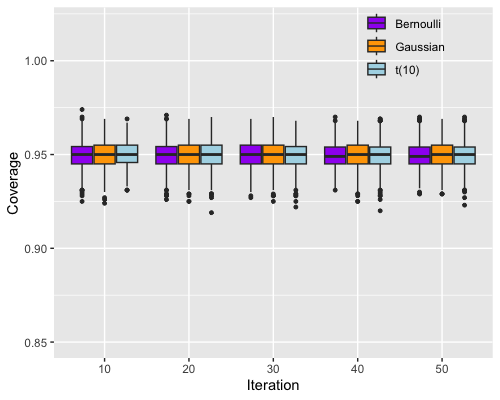}
	\end{minipage}
	\begin{minipage}[t]{0.3\textwidth}
		\includegraphics[width=\textwidth]{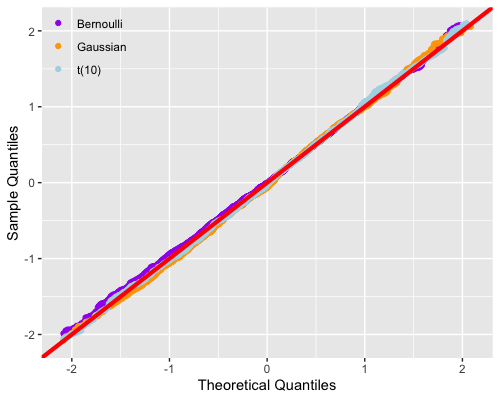}
	\end{minipage}
	\caption{Linear regression with $\ell_1$ penalty. \emph{Top row}: Squared loss. \emph{Bottom row}: Pseudo-Huber loss.}
	\label{fig:4}
\end{figure}

\begin{lemma}\label{lem:lower_tri_mat_inv}
	Let $M \in \R^{t\times t}$ be a lower triangular matrix. Then its inverse $L\equiv M^{-1}$ satisfies, for some universal constant $c_0>0$,
	\begin{align*}
	\pnorm{L}{\op}\leq \bigg(\frac{c_0t\cdot \pnorm{M}{\op}}{\min_{s \in [t]} \abs{M_{ss}}}\bigg)^t.
	\end{align*}
\end{lemma}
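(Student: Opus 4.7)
The plan is to reduce to a Neumann series using the triangular structure. Write $M = D + N$ where $D \equiv \mathrm{diag}(M_{11},\ldots,M_{tt})$ and $N \equiv M - D$ is strictly lower triangular, and factor $M = D(I_t + D^{-1}N)$. Since $D^{-1}N$ is again strictly lower triangular, it is nilpotent with $(D^{-1}N)^t = 0_{t\times t}$, so the Neumann expansion
\begin{align*}
(I_t + D^{-1}N)^{-1} = \sum_{k=0}^{t-1} (-D^{-1}N)^k
\end{align*}
terminates and is an identity (not just a formal series), yielding $L = (I_t + D^{-1}N)^{-1} D^{-1}$.

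To bound the operator norm, I would use that for a triangular matrix the diagonal entries are precisely the eigenvalues, hence $\pnorm{D}{\op} = \max_s \abs{M_{ss}} \leq \pnorm{M}{\op}$, which gives $\pnorm{N}{\op} \leq 2\pnorm{M}{\op}$ and therefore $\pnorm{D^{-1}N}{\op} \leq 2\pnorm{M}{\op}/\min_s\abs{M_{ss}} \equiv 2\kappa$. Summing the $t$ terms of the Neumann series and multiplying by $\pnorm{D^{-1}}{\op} = 1/\min_s\abs{M_{ss}}$ gives
\begin{align*}
\pnorm{L}{\op} \leq \sum_{k=0}^{t-1} (2\kappa)^{k}\cdot \frac{1}{\min_s \abs{M_{ss}}} \leq \frac{t(2\kappa)^{t-1}}{\min_s \abs{M_{ss}}}.
\end{align*}
Since $\kappa \geq 1$, by the homogeneity $M \mapsto \lambda M$ (which fixes $\kappa$) one may normalize $\min_s \abs{M_{ss}} = 1$, in which case the right-hand side is at most $t\,2^{t-1}\kappa^{t-1} \leq (2t\kappa)^t$, which is the claimed bound with $c_0 = 2$. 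There is no real obstacle here; the only point worth noting is that the factor $t$ in the target bound is exactly what accommodates the length of the terminating Neumann series, while the factor $\kappa^t$ absorbs both the worst-case growth of $\pnorm{D^{-1}N}{\op}^{t-1}$ and the final $\pnorm{D^{-1}}{\op}$.
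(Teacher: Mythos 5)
Your factorization $M = D(I_t + D^{-1}N)$ with the terminating Neumann series is a cleaner packaging than the paper's argument, which iterates the entry-wise recursion $\abs{L_{r,r-k}}\leq \kappa\sum_{r-k<v\leq r}\abs{L_{r,v}}$ (with $\kappa\equiv\pnorm{M}{\op}/\min_s\abs{M_{ss}}$) starting from $L_{rr}=1/M_{rr}$; unrolling that recursion is the Neumann series in disguise, so the two routes amount to the same computation. Your intermediate estimates, through
\[
\pnorm{L}{\op}\leq\frac{t(2\kappa)^{t-1}}{\min_s\abs{M_{ss}}},
\]
are all correct.

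The final normalization step, however, is not valid. The claimed inequality $\pnorm{L}{\op}\leq(c_0 t\kappa)^t$ is not jointly homogeneous: under $M\mapsto\lambda M$ the left side scales as $\lambda^{-1}$ while $\kappa$, and hence the entire right side, is invariant. Proving the bound in the normalized case $\min_s\abs{M_{ss}}=1$ therefore does \emph{not} imply it in general; undoing the normalization (apply your bound to $M/\min_s\abs{M_{ss}}$ and rescale $L$ back) simply reinstates the $1/\min_s\abs{M_{ss}}$ prefactor you already had. In fact $M=\epsilon I_t$ shows the lemma as printed cannot hold as stated: there $\pnorm{L}{\op}=\epsilon^{-1}$ and $\kappa=1$, so the right side is the fixed constant $(c_0 t)^t$ and the inequality fails as $\epsilon\to 0$. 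This is a defect in the printed statement rather than in your computation --- the paper's own iteration delivers the same extra $1/\min_s\abs{M_{ss}}$ factor --- and it is harmless where the lemma is invoked (the factor is absorbed into the $(1\wedge\tau_\ast^{(t)})^{-c_t}$ in the error bounds). But your proof should record what it actually establishes, namely the bound with the prefactor, rather than appeal to a scaling reduction that the asymmetric homogeneity of the two sides does not permit.
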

\begin{proof}
	Note that $L$ is also lower triangular. Using $ L M= I_t$, for any $r,s\in [t]$ we have $\delta_{r,s}=\sum_{v \in [t]} L_{r,v} M_{v,s}=\sum_{v\in [s:r]} L_{r,v} M_{v,s}$. Consequently, for $s=r$, we have $L_{rr}=1/M_{rr}$. For general $k \in [1:r]$, as $0=\sum_{r-k+1\leq v\leq r} L_{r,v} M_{v,r-k}+L_{r,r-k}M_{r-k,r-k}$, we obtain the bound
	\begin{align*}
	\abs{L_{r,r-k}} \leq \frac{1}{ \abs{M_{r-k,r-k}} } \sum_{r-k+1\leq v\leq r} \abs{L_{r,v}} \abs{ M_{v,t-k}}\leq \frac{\pnorm{M}{\op}}{\min_{s \in [r]} \abs{M_{ss}}}\cdot \sum_{r-k+1\leq v\leq r} \abs{L_{r,v}}.
	\end{align*}
	Iterating the bound to conclude. 
\end{proof}

	\section{Additional numerical experiments}\label{section:additional_simulation}

	\subsection{Simulations with the $\ell_1$ regularization}
	In this subsection, we revisit all previously considered simulation settings in Section \ref{sec:numerics} with an added $\ell_1$ penalty $\mathsf{f}(x) = \lambda \abs{x}$ to assess the performance of our inference procedure under regularization.  In addition, we include a new setting: 
	\begin{itemize}
		\item One-bit compressed sensing under Gaussian errors, using squared loss with and without $\ell_1$ regularization.
	\end{itemize}
	In all simulations in this section we take $\lambda = 0.1$ in the regularization.
	
	\begin{figure}[t]
		\begin{minipage}[t]{0.3\textwidth}
			\includegraphics[width=\textwidth]{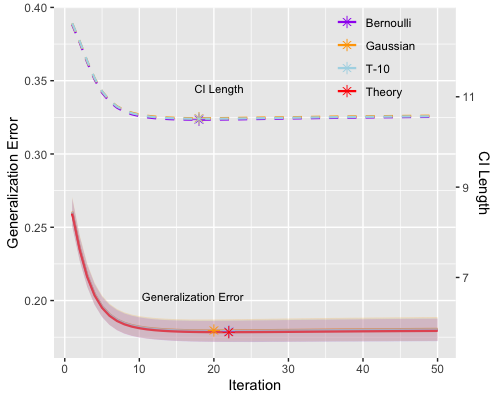}
		\end{minipage}
		\begin{minipage}[t]{0.3\textwidth}
			\includegraphics[width=\textwidth]{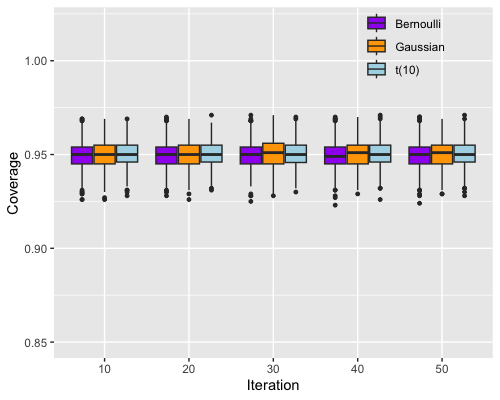}
		\end{minipage}
		\begin{minipage}[t]{0.3\textwidth}
			\includegraphics[width=\textwidth]{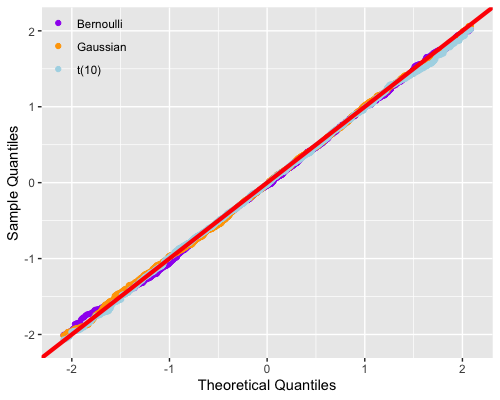}
		\end{minipage}
		\begin{minipage}[t]{0.3\textwidth}
			\includegraphics[width=\textwidth]{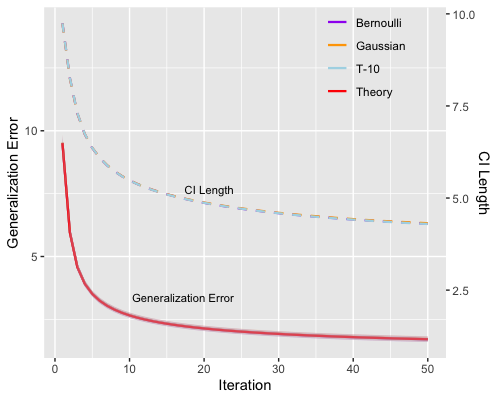}
		\end{minipage}
		\begin{minipage}[t]{0.3\textwidth}
			\includegraphics[width=\textwidth]{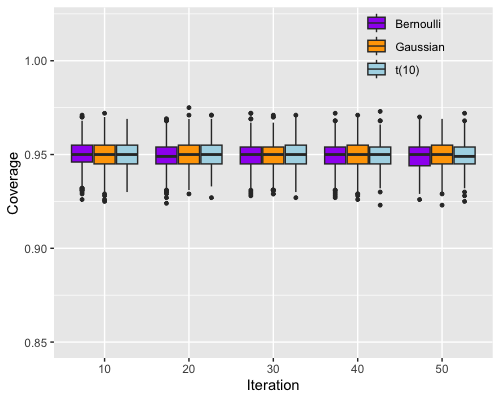}
		\end{minipage}
		\begin{minipage}[t]{0.3\textwidth}
			\includegraphics[width=\textwidth]{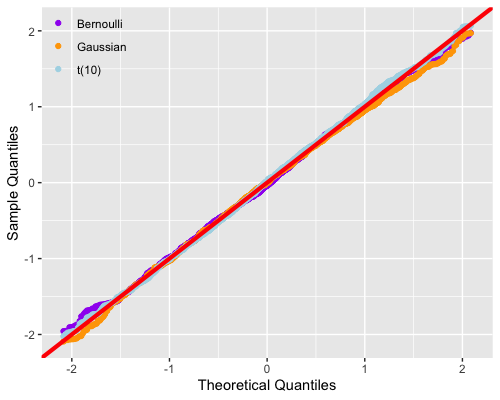}
		\end{minipage}
		\caption{Single-index regression with $\ell_1$ penalty and squared loss. \emph{Top row}: sigmoid link $\varphi_\ast(x) = 1/(1 + e^{-x})$. \emph{Bottom row}: nonlinear link $\varphi_\ast(x) = x + \sin(x)$.}
		\label{fig:5}
	\end{figure}

	\subsection*{Different simulation settings for one-bit compressed sensing in (3):}
	We examine the performance of the gradient descent inference algorithm for Example \ref{model:1bit} with the squared loss function, under i.i.d. $\mathcal{N}(0,1)$ noises $\{\xi_i\}$, considering both the unregularized and $\ell_1$-regularized cases. 
	
	In this setting, the signal strength $\sigma_{\mu_\ast}$ can be estimated by
	\begin{align}\label{def:1bit_signal_strength_est_gaussan}
	\hat{\sigma}_{\mu_\ast} \equiv  \bigg( \frac{ \hat{Q}(A,Y)}{1 - \hat{Q}(A,Y) } \bigg)^{1/2},\quad \hat{Q}(A,Y)\equiv\frac{\pi}{2\phi}\bigg(\frac{\pnorm{A^\top Y}{}^2}{m}-1\bigg)_+\wedge 1.
	\end{align}
	To justify this estimator $\hat{\sigma}_{\mu_\ast}$ for $\sigma_{\mu_\ast}$, consider the initialization $\mu^{(0)}=0$ and step size $\eta=1$. At iteration $1$, the debiased gradient descent iterate is given by $\mu^{(1)}_{\mathrm{db}}=\phi^{-1} A^\top Y$. Moreover, Proposition \ref{prop:1bit_cs_bias_var}-(2) shows that $b^{(1)}_{\mathrm{db}}=2 \E^{(0)} \mathfrak{g}_{\sigma_{\mu_\ast}}(\xi_{\pi_m})\approx 2\E \mathfrak{g}_1(\sigma_{\mu_\ast}\mathsf{Z})=2\big(2\pi(1+\sigma_{\mu_\ast}^2)\big)^{-1/2}$ and $(\sigma^{(1)}_{\mathrm{db}} )^2 =\phi^{-1}$. Then applying Proposition \ref{prop:1bit_cs_bias_var}-(1) leads to 
	\begin{align*}
	\phi^{-1}\frac{\pnorm{A^\top Y}{}^2}{ m}=\frac{\pnorm{\mu^{(1)}_{\mathrm{db}} }{}^2}{n}\approx (b^{(1)}_{\mathrm{db}} )^2 \sigma_{\mu_\ast}^2 + (\sigma^{(1)}_{\mathrm{db}} )^2 \approx \frac{2}{\pi}\cdot \frac{\sigma_{\mu_\ast}^2}{1+\sigma_{\mu_\ast}^2}+\phi^{-1}.
	\end{align*}
	The proposal (\ref{def:1bit_signal_strength_est_gaussan}) follows by inverting the above approximation.

	\begin{figure}[t]
		\begin{minipage}[t]{0.3\textwidth}
			\includegraphics[width=\textwidth]{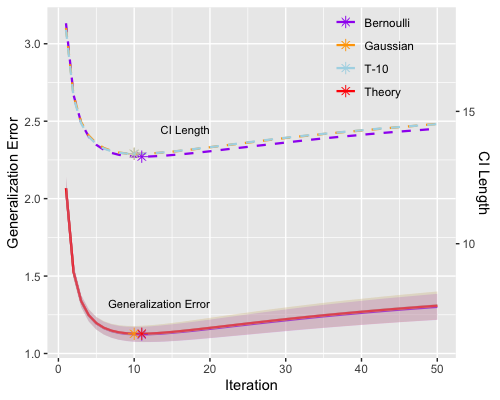}
		\end{minipage}
		\begin{minipage}[t]{0.3\textwidth}
			\includegraphics[width=\textwidth]{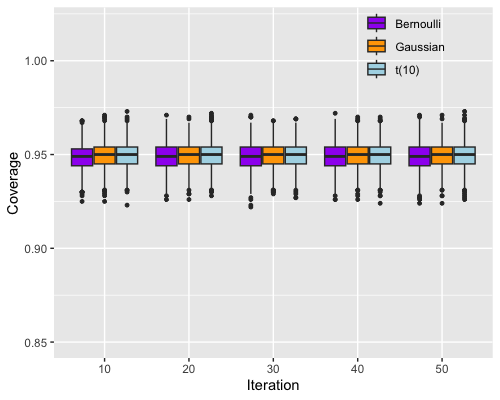}
		\end{minipage}
		\begin{minipage}[t]{0.3\textwidth}
			\includegraphics[width=\textwidth]{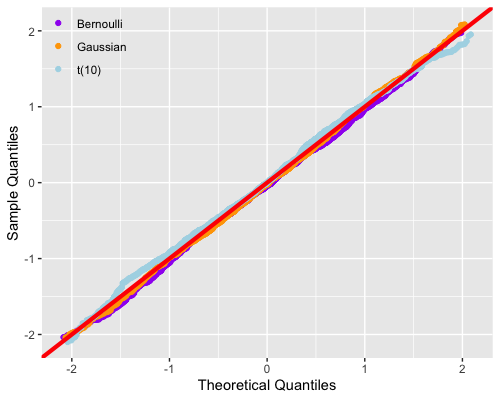}
		\end{minipage}
		\begin{minipage}[t]{0.3\textwidth}
			\includegraphics[width=\textwidth]{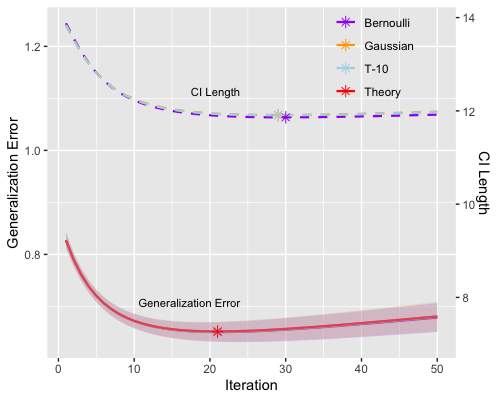}
		\end{minipage}
		\begin{minipage}[t]{0.3\textwidth}
			\includegraphics[width=\textwidth]{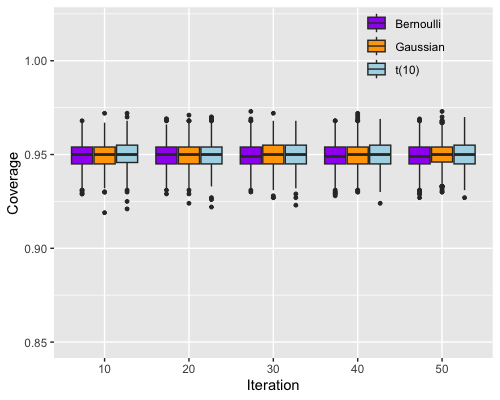}
		\end{minipage}
		\begin{minipage}[t]{0.3\textwidth}
			\includegraphics[width=\textwidth]{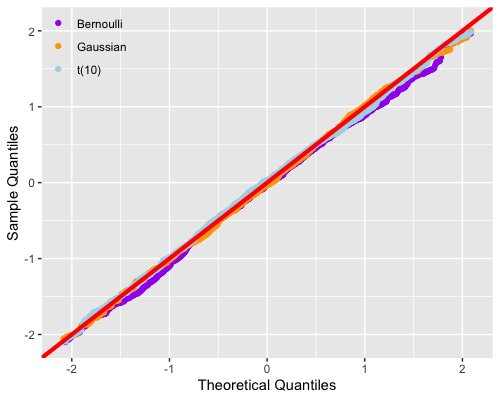}
		\end{minipage}
		\caption{Logistic regression with $\ell_1$ penalty. \emph{Top row}: Squared loss. \emph{Bottom row}: Logistic loss.}
		\label{fig:6}
	\end{figure}
	
	\subsection*{Numerical findings:}
	We present the simulation results for linear regression model in Figure \ref{fig:4}, for single-index regression model in Figure \ref{fig:5}, for logistic regression model in Figure \ref{fig:6}, and for one-bit compressed sensing in Figure \ref{fig:7}.  These figures further confirm the key findings discussed in Section \ref{sec:numerics}:
	\begin{itemize}
		\item The left panels in all Figures \ref{fig:4}-\ref{fig:7} highlight the early stopping phenomenon in gradient descent. The minimizing points for generalization error and CI length may differ across models and loss functions, and there does not appear to be a general pattern governing their relative positions.
		\item The middle panels confirm that the CIs in all settings achieve the nominal coverage level, demonstrating robust performance across varying settings. 
		\item The right panels validate the approximate normality of  $\hat{\mu}^{(t)}_{\mathrm{db}}$; specifically, we present $(\hat{\mu}^{(t)}_{\mathrm{db};1} - \mu_{\ast;1}) / \hat{\sigma}^{(t)}_{\mathrm{db}}$ at the final iteration, showing strong agreement with theoretical predictions.
	\end{itemize} 
	These findings are consistently observed across a wide range of simulation parameters. To avoid redundancy, we omit additional figures of a similar nature.

	\begin{figure}[t]
		\begin{minipage}[t]{0.3\textwidth}
			\includegraphics[width=\textwidth]{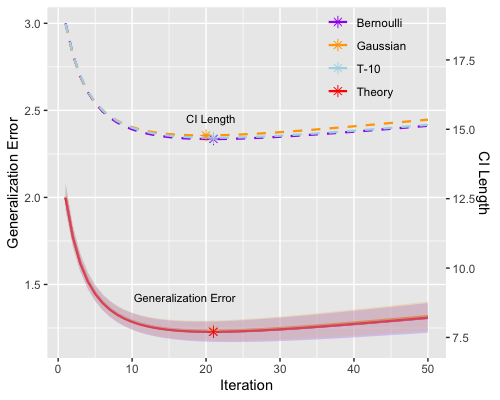}
		\end{minipage}
		\begin{minipage}[t]{0.3\textwidth}
			\includegraphics[width=\textwidth]{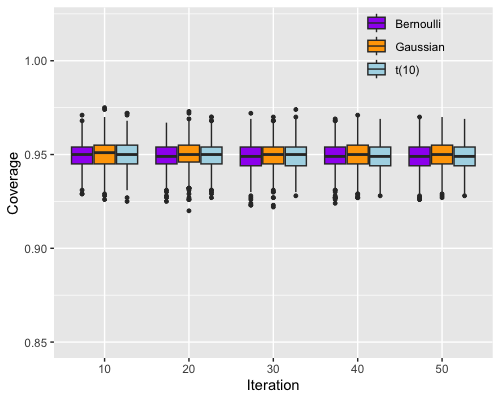}
		\end{minipage}
		\begin{minipage}[t]{0.3\textwidth}
			\includegraphics[width=\textwidth]{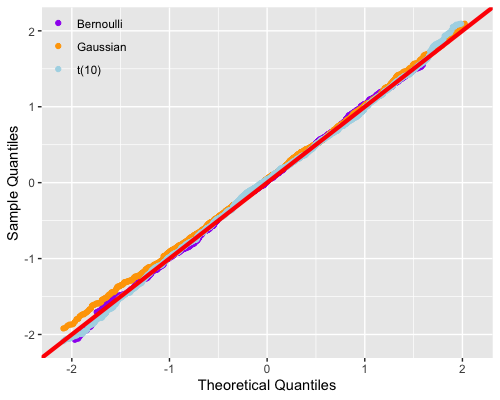}
		\end{minipage}
		\begin{minipage}[t]{0.3\textwidth}
			\includegraphics[width=\textwidth]{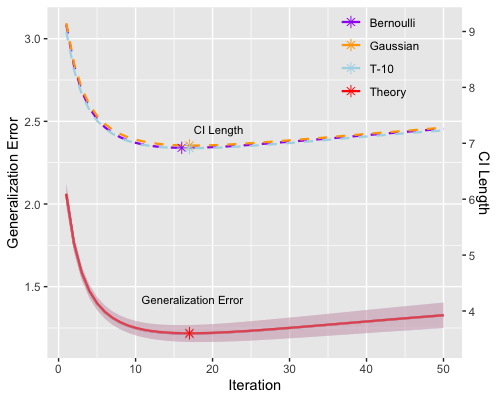}
		\end{minipage}
		\begin{minipage}[t]{0.3\textwidth}
			\includegraphics[width=\textwidth]{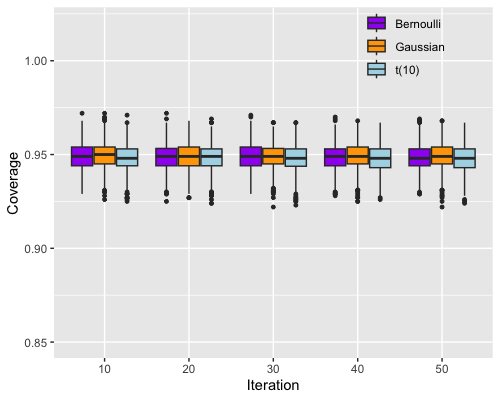}
		\end{minipage}
		\begin{minipage}[t]{0.3\textwidth}
			\includegraphics[width=\textwidth]{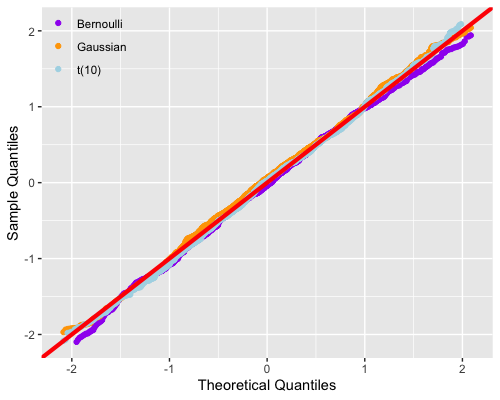}
		\end{minipage}
		\caption{One-bit compressed sensing under Gaussian errors with squared loss. \emph{Top row}: Unregularized case. \emph{Bottom row}: Regularized case with $\ell_1$ penalty. }
		\label{fig:7}
	\end{figure}

	\subsection{Comparison with leave-one-out cross-validation}\label{subsection:loo_gen_est}
	In this subsection, we compare our proposed generalization error estimator $\hat{\mathscr{E}}^{(\cdot)}_{\abs{\cdot}^2/2}$ with the leave-one-out cross-validation (LOOCV) method in \cite{patil2024failures}.
	
	We consider two models: (i) linear regression and (ii) single-index regression with a sigmoid link, both fitted using gradient descent on the squared loss as follows: starting from an initialization $\mu^{(0)} \in \R^n$, for a fixed step size $\eta > 0$, we iteratively update
	\begin{align*}
	\mu^{(t)} = \mu^{(t-1)} - \eta \cdot A^\top ( A \mu^{(t-1)} - Y), \quad t = 1,2,\ldots.
	\end{align*}
	The LOOCV estimator is defined as
	\begin{align*}
	\hat{\mathscr{E}}^{(t)}_{\mathsf{loo}} \equiv \frac{1}{2m}\sum_{i \in [m]} (Y_i - A_i^\top \mu_{[-i]}^{(t)} )^2,
	\end{align*}
	where $\mu_{[-i]}^{(t)}$ denotes the $t$-th iterate computed on the dataset with the $i$-th observation removed, using the same initialization and step size. Note that for the single-index regression with a sigmoid link, the above gradient descent mis-specifies the model (and therefore the loss function).

	Figure \ref{fig:8} shows that both estimators $\hat{\mathscr{E}}^{(\cdot)}_{\abs{\cdot}^2/2}$ and $\hat{\mathscr{E}}^{(\cdot)}_{\mathsf{loo}}$ produce similar generalization error estimates across iterations. However, our method is significantly more efficient: at iteration $t$, computing all $m$ LOOCV iterates requires $\mathcal{O}(n^3)$ operations, whereas our proposal has complexity at most $\mathcal{O}(nt^2+n^2)$. As illustrated in the figure, our method is several hundred times faster than LOOCV when the number of iterations is moderate. This observation is consistent with the theoretical complexity comparison and highlights the practical benefit of our approach when $t \ll n$.

	\begin{figure}[t]
	\begin{minipage}[t]{0.4\textwidth}
		\includegraphics[width=\textwidth]{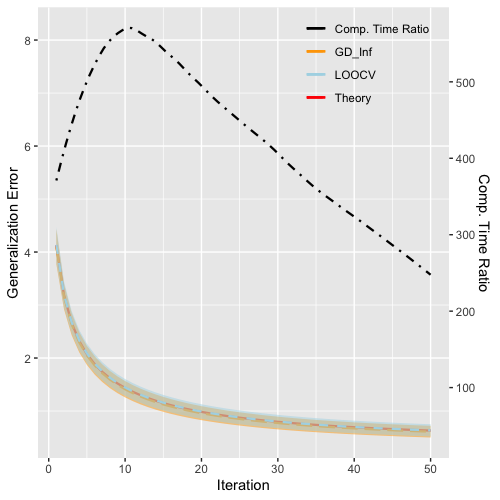}
	\end{minipage}
	\quad \quad \quad 
	\begin{minipage}[t]{0.4\textwidth}
		\includegraphics[width=\textwidth]{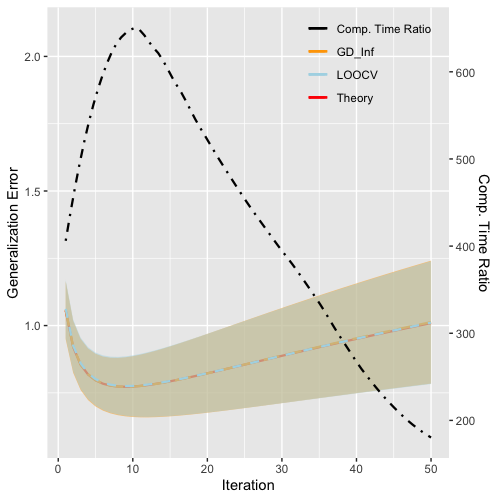}
	\end{minipage}
	\caption{Comparison with leave-one-out cross-validation. \emph{Left panel}: Linear regression model. \emph{Right panel}: Single-index regression model with sigmoid link function. \emph{Simulation parameters}: $\eta = 0.2$, $m = 120$, $n = 100$, $\mu_\ast \in \R^n$ are i.i.d. $\abs{\mathcal{N}(0,5)}$, $\xi \in \R^m$  has i.i.d. entries  drawn from $\mathcal{N}(0,0.1)$, $\sqrt{n}A$ has i.i.d. entries following $\mathcal{N}(0,1)$.
		The algorithms are run for $50$ iterations with Monte Carlo repetition $B = 1000$.}
	\label{fig:8}
\end{figure}

 \section*{Acknowledgments}
 The authors would like to thank the Editor, an Associate Editor and three referees for their helpful comments and suggestions that significantly improved the quality of the paper.

\bibliographystyle{alpha}
\bibliography{mybib}

@article{han2025long,
	title={Long-time dynamics and universality of nonconvex gradient descent},
	author={Han, Qiyang},
	journal={arXiv preprint arXiv:2509.11426},
	year={2025}
}

@inproceedings{patil2024failures,
	title={Failures and successes of cross-validation for early-stopped gradient descent},
	author={Patil, Pratik and Wu, Yuchen and Tibshirani, Ryan},
	booktitle={International Conference on Artificial Intelligence and Statistics},
	pages={2260--2268},
	year={2024},
	organization={PMLR}
}

@article{cugliandolo1993analytical,
	title={Analytical solution of the off-equilibrium dynamics of a long-range spin-glass model},
	author={Cugliandolo, Leticia F and Kurchan, Jorge},
	journal={Physical Review Letters},
	volume={71},
	number={1},
	pages={173},
	year={1993},
	publisher={APS}
}

@article{lee2024neural,
	title={Neural network learns low-dimensional polynomials with {SGD} near the information-theoretic limit},
	author={Lee, Jason D and Oko, Kazusato and Suzuki, Taiji and Wu, Denny},
	journal={Advances in Neural Information Processing Systems},
	volume={37},
	pages={58716--58756},
	year={2024}
}

@article{dandi2024benefits,
	title={The benefits of reusing batches for gradient descent in two-layer networks: Breaking the curse of information and leap exponents},
	author={Dandi, Yatin and Troiani, Emanuele and Arnaboldi, Luca and Pesce, Luca and Zdeborov{\'a}, Lenka and Krzakala, Florent},
	journal={arXiv preprint arXiv:2402.03220},
	year={2024}
}

@article{yadlowsky2021sloe,
	title={{SLOE}: A faster method for statistical inference in high-dimensional logistic regression},
	author={Yadlowsky, Steve and Yun, Taedong and McLean, Cory Y and D'Amour, Alexander},
	journal={Advances in Neural Information Processing Systems},
	volume={34},
	pages={29517--29528},
	year={2021}
}

@techreport{ruppert1988efficient,
	title={Efficient estimations from a slowly convergent Robbins-Monro process},
	author={Ruppert, David},
	year={1988},
	institution={Cornell University Operations Research and Industrial Engineering}
}

@article {polyak1992acceleration,
	AUTHOR = {Polyak, B. T. and Juditsky, A. B.},
	TITLE = {Acceleration of stochastic approximation by averaging},
	JOURNAL = {SIAM J. Control Optim.},
	FJOURNAL = {SIAM Journal on Control and Optimization},
	VOLUME = {30},
	YEAR = {1992},
	NUMBER = {4},
	PAGES = {838--855},
	ISSN = {0363-0129},
	MRCLASS = {62L20 (93E25)},
	MRNUMBER = {1167814},
	MRREVIEWER = {George\ Yin},
	DOI = {10.1137/0330046},
	URL = {https://doi.org/10.1137/0330046},
}

@article {fang2018online,
	AUTHOR = {Fang, Yixin and Xu, Jinfeng and Yang, Lei},
	TITLE = {Online bootstrap confidence intervals for the stochastic
	gradient descent estimator},
	JOURNAL = {J. Mach. Learn. Res.},
	FJOURNAL = {Journal of Machine Learning Research (JMLR)},
	VOLUME = {19},
	YEAR = {2018},
	PAGES = {Paper No. 78, 21},
	ISSN = {1532-4435,1533-7928},
	MRCLASS = {62F10 (62F12 62F40)},
	MRNUMBER = {3899780},
}

@article {zhu2023online,
	AUTHOR = {Zhu, Wanrong and Chen, Xi and Wu, Wei Biao},
	TITLE = {Online covariance matrix estimation in stochastic gradient
	descent},
	JOURNAL = {J. Amer. Statist. Assoc.},
	FJOURNAL = {Journal of the American Statistical Association},
	VOLUME = {118},
	YEAR = {2023},
	NUMBER = {541},
	PAGES = {393--404},
	ISSN = {0162-1459,1537-274X},
	MRCLASS = {99-01},
	MRNUMBER = {4571129},
	DOI = {10.1080/01621459.2021.1933498},
	URL = {https://doi.org/10.1080/01621459.2021.1933498},
}

@article {altieri2020dynamical,
	AUTHOR = {Altieri, Ada and Biroli, Giulio and Cammarota, Chiara},
	TITLE = {Dynamical mean-field theory and aging dynamics},
	JOURNAL = {J. Phys. A},
	FJOURNAL = {Journal of Physics. A. Mathematical and Theoretical},
	VOLUME = {53},
	YEAR = {2020},
	NUMBER = {37},
	PAGES = {375006, 34},
	ISSN = {1751-8113,1751-8121},
	MRCLASS = {82D30},
	MRNUMBER = {4152046},
	DOI = {10.1088/1751-8121/aba3dd},
	URL = {https://doi.org/10.1088/1751-8121/aba3dd},
}

@article {agoritsas2018out,
	AUTHOR = {Agoritsas, Elisabeth and Biroli, Giulio and Urbani,
	Pierfrancesco and Zamponi, Francesco},
	TITLE = {Out-of-equilibrium dynamical mean-field equations for the
	perceptron model},
	JOURNAL = {J. Phys. A},
	FJOURNAL = {Journal of Physics. A. Mathematical and Theoretical},
	VOLUME = {51},
	YEAR = {2018},
	NUMBER = {8},
	PAGES = {085002, 36},
	ISSN = {1751-8113,1751-8121},
	MRCLASS = {81Q80 (81T28)},
	MRNUMBER = {3763797},
	DOI = {10.1088/1751-8121/aaa68d},
	URL = {https://doi.org/10.1088/1751-8121/aaa68d},
}

@article {mignacco2022effective,
	AUTHOR = {Mignacco, Francesca and Urbani, Pierfrancesco},
	TITLE = {The effective noise of stochastic gradient descent},
	JOURNAL = {J. Stat. Mech. Theory Exp.},
	FJOURNAL = {Journal of Statistical Mechanics: Theory and Experiment},
	YEAR = {2022},
	NUMBER = {8},
	PAGES = {Paper No. 083405, 23},
	ISSN = {1742-5468},
	MRCLASS = {68T05},
	MRNUMBER = {4504824},
	DOI = {10.1088/1742-5468/ac841d},
	URL = {https://doi.org/10.1088/1742-5468/ac841d},
}

@article{mignacco2020dynamical,
	title={Dynamical mean-field theory for stochastic gradient descent in gaussian mixture classification},
	author={Mignacco, Francesca and Krzakala, Florent and Urbani, Pierfrancesco and Zdeborov{\'a}, Lenka},
	journal={Advances in Neural Information Processing Systems},
	volume={33},
	pages={9540--9550},
	year={2020}
}

@article {balasubramanian2025high,
	AUTHOR = {Balasubramanian, Krishnakumar and Ghosal, Promit and He, Ye},
	TITLE = {High-dimensional scaling limits and fluctuations of online
	least-squares {SGD} with smooth covariance},
	JOURNAL = {Ann. Appl. Probab.},
	FJOURNAL = {The Annals of Applied Probability},
	VOLUME = {35},
	YEAR = {2025},
	NUMBER = {5},
	PAGES = {2983--3045},
	ISSN = {1050-5164,2168-8737},
	MRCLASS = {62J05 (34 60F99 60G15 60G60 60H10 62-08)},
	MRNUMBER = {4975041},
	DOI = {10.1214/24-aap2123},
	URL = {https://doi-org.proxy.libraries.rutgers.edu/10.1214/24-aap2123},
}

@article {chen2020statistical,
	AUTHOR = {Chen, Xi and Lee, Jason D. and Tong, Xin T. and Zhang, Yichen},
	TITLE = {Statistical inference for model parameters in stochastic
	gradient descent},
	JOURNAL = {Ann. Statist.},
	FJOURNAL = {The Annals of Statistics},
	VOLUME = {48},
	YEAR = {2020},
	NUMBER = {1},
	PAGES = {251--273},
	ISSN = {0090-5364,2168-8966},
	MRCLASS = {62J10 (62L20 62M02)},
	MRNUMBER = {4065161},
	DOI = {10.1214/18-AOS1801},
	URL = {https://doi.org/10.1214/18-AOS1801},
}

@article {collins2024hitting,
	AUTHOR = {Collins-Woodfin, Elizabeth and Paquette, Courtney and
	Paquette, Elliot and Seroussi, Inbar},
	TITLE = {Hitting the {H}igh-dimensional notes: an {ODE} for {SGD}
	learning dynamics on {GLM}s and multi-index models},
	JOURNAL = {Inf. Inference},
	FJOURNAL = {Information and Inference. A Journal of the IMA},
	VOLUME = {13},
	YEAR = {2024},
	NUMBER = {4},
	PAGES = {Paper No. iaae028},
	ISSN = {2049-8764,2049-8772},
	MRCLASS = {90C90 (62J12 65K10 90C52)},
	MRNUMBER = {4811680},
	DOI = {10.1093/imaiai/iaae028},
	URL = {https://doi.org/10.1093/imaiai/iaae028},
}

@article{paquette2021dynamics,
	title={Dynamics of stochastic momentum methods on large-scale, quadratic models},
	author={Paquette, Courtney and Paquette, Elliot},
	journal={Advances in Neural Information Processing Systems},
	volume={34},
	pages={9229--9240},
	year={2021}
}

@inproceedings{paquette2021sgd,
	title={{SGD} in the large: Average-case analysis, asymptotics, and stepsize criticality},
	author={Paquette, Courtney and Lee, Kiwon and Pedregosa, Fabian and Paquette, Elliot},
	booktitle={Conference on Learning Theory},
	pages={3548--3626},
	year={2021},
	organization={PMLR}
}

@article {benarous2024high,
	AUTHOR = {Ben Arous, G\'erard and Gheissari, Reza and Jagannath, Aukosh},
	TITLE = {High-dimensional limit theorems for {SGD}: effective dynamics
	and critical scaling},
	JOURNAL = {Comm. Pure Appl. Math.},
	FJOURNAL = {Communications on Pure and Applied Mathematics},
	VOLUME = {77},
	YEAR = {2024},
	NUMBER = {3},
	PAGES = {2030--2080},
	ISSN = {0010-3640,1097-0312},
	MRCLASS = {60F05 (62H12 62H30)},
	MRNUMBER = {4692886},
	MRREVIEWER = {Christian\ Hirsch},
	DOI = {10.1002/cpa.22169},
	URL = {https://doi.org/10.1002/cpa.22169},
}

@article {benarous2021online,
	AUTHOR = {Ben Arous, G\'erard and Gheissari, Reza and Jagannath, Aukosh},
	TITLE = {Online stochastic gradient descent on non-convex losses from
	high-dimensional inference},
	JOURNAL = {J. Mach. Learn. Res.},
	FJOURNAL = {Journal of Machine Learning Research (JMLR)},
	VOLUME = {22},
	YEAR = {2021},
	PAGES = {Paper No. 106, 51},
	ISSN = {1532-4435,1533-7928},
	MRCLASS = {62L20 (62H25 62J12 90C15 90C52)},
	MRNUMBER = {4279757},
}

@inproceedings{tan2024estimating,
	title={Estimating Generalization Performance Along the Trajectory of Proximal {SGD} in Robust Regression},
	author={Kai Tan and Pierre C Bellec},
	booktitle={Advances in Neural Information Processing Systems},
	year={2024},
}

@article {huang2018robust,
	AUTHOR = {Huang, Jian and Jiao, Yuling and Lu, Xiliang and Zhu, Liping},
	TITLE = {Robust decoding from 1-bit compressive sampling with ordinary
	and regularized least squares},
	JOURNAL = {SIAM J. Sci. Comput.},
	FJOURNAL = {SIAM Journal on Scientific Computing},
	VOLUME = {40},
	YEAR = {2018},
	NUMBER = {4},
	PAGES = {A2062--A2086},
	ISSN = {1064-8275,1095-7197},
	MRCLASS = {94A12 (49N45 65N21)},
	MRNUMBER = {3820388},
	DOI = {10.1137/17M1154102},
	URL = {https://doi.org/10.1137/17M1154102},
}

@article {han2025entrywise,
	AUTHOR = {Han, Qiyang},
	TITLE = {Entrywise dynamics and universality of general first order
	methods},
	JOURNAL = {Ann. Statist.},
	FJOURNAL = {The Annals of Statistics},
	VOLUME = {53},
	YEAR = {2025},
	NUMBER = {4},
	PAGES = {1783--1807},
	ISSN = {0090-5364,2168-8966},
	MRCLASS = {60E15 (60G15)},
	MRNUMBER = {4959811},
	DOI = {10.1214/25-AOS2544},
	URL = {https://doi.org/10.1214/25-AOS2544},
}

@article{jones2024diagram,
	title={Fourier Analysis of Iterative Algorithms},
	author={Jones, Chris and Pesenti, Lucas},
	journal={arXiv preprint arXiv:2404.07881v2},
	year={2024}
}

@article{bellec2024uncertainty,
	title={Uncertainty quantification for iterative algorithms in linear models with application to early stopping},
	author={Bellec, Pierre C. and Tan, Kai},
	journal={arXiv preprint arXiv:2404.17856},
	year={2024}
}

@article {gerbelot2024rigorous,
	AUTHOR = {Gerbelot, C\'{e}dric and Troiani, Emanuele and Mignacco,
	Francesca and Krzakala, Florent and Zdeborov\'{a}, Lenka},
	TITLE = {Rigorous {D}ynamical {M}ean-{F}ield {T}heory for {S}tochastic
	{G}radient {D}escent {M}ethods},
	JOURNAL = {SIAM J. Math. Data Sci.},
	FJOURNAL = {SIAM Journal on Mathematics of Data Science},
	VOLUME = {6},
	YEAR = {2024},
	NUMBER = {2},
	PAGES = {400--427},
	ISSN = {2577-0187},
	MRCLASS = {90C30 (60 62 68T05 90C15 90C56)},
	MRNUMBER = {4741502},
	DOI = {10.1137/23M1594388},
	URL = {https://doi.org/10.1137/23M1594388},
}

@inproceedings{ali2020implicit,
	title={The implicit regularization of stochastic gradient flow for least squares},
	author={Ali, Alnur and Dobriban, Edgar and Tibshirani, Ryan},
	booktitle={International {C}onference on {M}achine {L}earning},
	pages={233--244},
	year={2020},
	organization={PMLR}
}

@inproceedings{ali2019continuous,
	title={A continuous-time view of early stopping for least squares regression},
	author={Ali, Alnur and Kolter, J Zico and Tibshirani, Ryan J},
	booktitle={The 22nd International Conference on Artificial Intelligence and Statistics},
	pages={1370--1378},
	year={2019},
	organization={PMLR}
}

@article{bao2025leave,
	title={A leave-one-out approach to approximate message passing},
	author={Bao, Zhigang and Han, Qiyang and Xu, Xiaocong},
	JOURNAL = {Ann. Appl. Probab.},
	FJOURNAL = {The Annals of Applied Probability},
	VOLUME = {35},
	YEAR = {2025},
	NUMBER = {4},
	PAGES = {2716--2766},
}

@article {rush2018finite,
	AUTHOR = {Rush, Cynthia and Venkataramanan, Ramji},
	TITLE = {Finite sample analysis of approximate message passing
	algorithms},
	JOURNAL = {IEEE Trans. Inform. Theory},
	FJOURNAL = {Institute of Electrical and Electronics Engineers.
	Transactions on Information Theory},
	VOLUME = {64},
	YEAR = {2018},
	NUMBER = {11},
	PAGES = {7264--7286},
	ISSN = {0018-9448,1557-9654},
	MRCLASS = {62B10 (60E15 62H12)},
	MRNUMBER = {3876443},
	DOI = {10.1109/TIT.2018.2816681},
	URL = {https://doi.org/10.1109/TIT.2018.2816681},
}

@article{celentano2021high,
	title={The high-dimensional asymptotics of first order methods with random data},
	author={Celentano, Michael and Cheng, Chen and Montanari, Andrea},
	journal={arXiv preprint arXiv:2112.07572},
	year={2021}
}

@article{li2022non,
	title={A non-asymptotic framework for approximate message passing in spiked models},
	author={Li, Gen and Wei, Yuting},
	journal={arXiv preprint arXiv:2208.03313},
	year={2022}
}

@article{han2023distribution,
	title={The distribution of Ridgeless least squares interpolators},
	author={Han, Qiyang and Xu, Xiaocong},
	journal={arXiv preprint arXiv:2307.02044},
	year={2023}
}

@article {sur2019likelihood,
	AUTHOR = {Sur, Pragya and Chen, Yuxin and Cand\`es, Emmanuel J.},
	TITLE = {The likelihood ratio test in high-dimensional logistic
	regression is asymptotically a {\it rescaled} chi-square},
	JOURNAL = {Probab. Theory Related Fields},
	FJOURNAL = {Probability Theory and Related Fields},
	VOLUME = {175},
	YEAR = {2019},
	NUMBER = {1-2},
	PAGES = {487--558},
	ISSN = {0178-8051},
	MRCLASS = {62F03 (62J12)},
	MRNUMBER = {4009715},
	MRREVIEWER = {Gabriela Ciuperca},
	DOI = {10.1007/s00440-018-00896-9},
	URL = {https://doi.org/10.1007/s00440-018-00896-9},
}

@article {bellec2025observable,
	AUTHOR = {Bellec, Pierre C.},
	TITLE = {Observable adjustments in single-index models for regularized
	{M}-estimators with bounded $p/n$},
	JOURNAL = {Ann. Statist.},
	FJOURNAL = {The Annals of Statistics},
	VOLUME = {53},
	YEAR = {2025},
	NUMBER = {2},
	PAGES = {531--560},
	ISSN = {0090-5364,2168-8966},
	MRCLASS = {99-06},
	MRNUMBER = {4900157},
	DOI = {10.1214/24-aos2464},
	URL = {https://doi.org/10.1214/24-aos2464},
}

@article {han2023universality,
	AUTHOR = {Han, Qiyang and Shen, Yandi},
	TITLE = {Universality of regularized regression estimators in high
	dimensions},
	JOURNAL = {Ann. Statist.},
	FJOURNAL = {The Annals of Statistics},
	VOLUME = {51},
	YEAR = {2023},
	NUMBER = {4},
	PAGES = {1799--1823},
	ISSN = {0090-5364,2168-8966},
	MRCLASS = {62J05 (60F17 62E17 62J07)},
	MRNUMBER = {4658577},
	DOI = {10.1214/23-aos2309},
	URL = {https://doi.org/10.1214/23-aos2309},
}

@article {fan2022approximate,
	AUTHOR = {Fan, Zhou},
	TITLE = {Approximate message passing algorithms for rotationally
	invariant matrices},
	JOURNAL = {Ann. Statist.},
	FJOURNAL = {The Annals of Statistics},
	VOLUME = {50},
	YEAR = {2022},
	NUMBER = {1},
	PAGES = {197--224},
	ISSN = {0090-5364},
	MRCLASS = {62E20},
	MRNUMBER = {4382014},
	DOI = {10.1214/21-aos2101},
	URL = {https://doi.org/10.1214/21-aos2101},
}

@article {javanmard2014hypothesis,
	AUTHOR = {Javanmard, Adel and Montanari, Andrea},
	TITLE = {Hypothesis testing in high-dimensional regression under the
	{G}aussian random design model: asymptotic theory},
	JOURNAL = {IEEE Trans. Inform. Theory},
	FJOURNAL = {Institute of Electrical and Electronics Engineers.
	Transactions on Information Theory},
	VOLUME = {60},
	YEAR = {2014},
	NUMBER = {10},
	PAGES = {6522--6554},
	ISSN = {0018-9448},
	MRCLASS = {62F03 (62C20 62J07 94A15)},
	MRNUMBER = {3265038},
	DOI = {10.1109/TIT.2014.2343629},
	URL = {https://doi.org/10.1109/TIT.2014.2343629},
}

@article {han2023noisy,
	AUTHOR = {Han, Qiyang},
	TITLE = {Noisy linear inverse problems under convex constraints:
	{E}xact risk asymptotics in high dimensions},
	JOURNAL = {Ann. Statist.},
	FJOURNAL = {The Annals of Statistics},
	VOLUME = {51},
	YEAR = {2023},
	NUMBER = {4},
	PAGES = {1611--1638},
	ISSN = {0090-5364,2168-8966},
	MRCLASS = {62G08 (62E17 62G20)},
	MRNUMBER = {4658570},
	DOI = {10.1214/23-aos2301},
	URL = {https://doi.org/10.1214/23-aos2301},
}

@article{elkaroui2013asymptotic,
	title={Asymptotic behavior of unregularized and ridge-regularized high-dimensional robust regression estimators: rigorous results},
	author={El Karoui, Noureddine},
	journal={arXiv preprint arXiv:1311.2445},
	year={2013}
}

@article {celentano2023lasso,
	AUTHOR = {Celentano, Michael and Montanari, Andrea and Wei, Yuting},
	TITLE = {The {L}asso with general {G}aussian designs with applications to hypothesis testing},
	JOURNAL = {Ann. Statist.},
	FJOURNAL = {The Annals of Statistics},
	VOLUME = {51},
	YEAR = {2023},
	NUMBER = {5},
	PAGES = {2194--2220},
	ISSN = {0090-5364,2168-8966},
	MRCLASS = {62J07 (62E17 62F05 62F12)},
	MRNUMBER = {4678801},
	DOI = {10.1214/23-aos2327},
	URL = {https://doi.org/10.1214/23-aos2327},
}

@inproceedings {montanari2018mean,
	AUTHOR = {Montanari, Andrea},
	TITLE = {Mean field asymptotics in high-dimensional statistics: from
	exact results to efficient algorithms},
	BOOKTITLE = {Proceedings of the {I}nternational {C}ongress of
	{M}athematicians---{R}io de {J}aneiro 2018. {V}ol. {IV}.
	{I}nvited lectures},
	PAGES = {2973--2994},
	PUBLISHER = {World Sci. Publ., Hackensack, NJ},
	YEAR = {2018},
	MRCLASS = {68Q87 (60K35 62F10)},
	MRNUMBER = {3966519},
	MRREVIEWER = {Christian Hirsch},
}

@article {javanmard2013state,
	AUTHOR = {Javanmard, Adel and Montanari, Andrea},
	TITLE = {State evolution for general approximate message passing
	algorithms, with applications to spatial coupling},
	JOURNAL = {Inf. Inference},
	FJOURNAL = {Information and Inference. A Journal of the IMA},
	VOLUME = {2},
	YEAR = {2013},
	NUMBER = {2},
	PAGES = {115--144},
	ISSN = {2049-8764},
	MRCLASS = {94A12},
	MRNUMBER = {3311445},
	MRREVIEWER = {Dmitry Vladimirovich Batenkov},
	DOI = {10.1093/imaiai/iat004},
	URL = {https://doi.org/10.1093/imaiai/iat004},
}

@inproceedings{thrampoulidis2015regularized,
	title={Regularized linear regression: A precise analysis of the estimation error},
	author={Thrampoulidis, Christos and Oymak, Samet and Hassibi, Babak},
	booktitle={Conference on Learning Theory},
	pages={1683--1709},
	year={2015},
	organization={PMLR}
}

@article {miolane2021distribution,
	AUTHOR = {Miolane, L\'{e}o and Montanari, Andrea},
	TITLE = {The distribution of the {L}asso: uniform control over sparse
	balls and adaptive parameter tuning},
	JOURNAL = {Ann. Statist.},
	FJOURNAL = {The Annals of Statistics},
	VOLUME = {49},
	YEAR = {2021},
	NUMBER = {4},
	PAGES = {2313--2335},
	ISSN = {0090-5364},
	MRCLASS = {62J05 (62J07)},
	MRNUMBER = {4319252},
	DOI = {10.1214/20-aos2038},
	URL = {https://doi.org/10.1214/20-aos2038},
}

@article{stojnic2013framework,
	title={A framework to characterize performance of lasso algorithms},
	author={Stojnic, Mihailo},
	journal={arXiv preprint arXiv:1303.7291},
	year={2013}
}

@article {bellec2023debias,
	AUTHOR = {Bellec, Pierre C. and Zhang, Cun-Hui},
	TITLE = {Debiasing convex regularized estimators and interval
	estimation in linear models},
	JOURNAL = {Ann. Statist.},
	FJOURNAL = {The Annals of Statistics},
	VOLUME = {51},
	YEAR = {2023},
	NUMBER = {2},
	PAGES = {391--436},
	ISSN = {0090-5364},
	MRCLASS = {Prelim},
	MRNUMBER = {4600987},
	DOI = {10.1214/22-aos2243},
	URL = {https://doi-org.proxy.libraries.rutgers.edu/10.1214/22-aos2243},
}

@article {elkaroui2018impact,
	AUTHOR = {El Karoui, Noureddine},
	TITLE = {On the impact of predictor geometry on the performance on
	high-dimensional ridge-regularized generalized robust
	regression estimators},
	JOURNAL = {Probab. Theory Related Fields},
	FJOURNAL = {Probability Theory and Related Fields},
	VOLUME = {170},
	YEAR = {2018},
	NUMBER = {1-2},
	PAGES = {95--175},
	ISSN = {0178-8051},
	MRCLASS = {60F99 (60F05 60F25 62E20 62J07)},
	MRNUMBER = {3748322},
	MRREVIEWER = {Jo\~{a}o Lita da Silva},
	DOI = {10.1007/s00440-016-0754-9},
	URL = {https://doi.org/10.1007/s00440-016-0754-9},
}

@article {sur2019modern,
	AUTHOR = {Sur, Pragya and Cand\`es, Emmanuel J.},
	TITLE = {A modern maximum-likelihood theory for high-dimensional
	logistic regression},
	JOURNAL = {Proc. Natl. Acad. Sci.},
	FJOURNAL = {Proceedings of the National Academy of Sciences of the United
	States of America},
	VOLUME = {116},
	YEAR = {2019},
	NUMBER = {29},
	PAGES = {14516--14525},
	ISSN = {0027-8424},
	MRCLASS = {62F10 (62F12 62J12)},
	MRNUMBER = {3984492},
	MRREVIEWER = {Gabriela Ciuperca},
	DOI = {10.1073/pnas.1810420116},
	URL = {https://doi.org/10.1073/pnas.1810420116},
}

@article {bayati2011dynamics,
	AUTHOR = {Bayati, Mohsen and Montanari, Andrea},
	TITLE = {The dynamics of message passing on dense graphs, with
	applications to compressed sensing},
	JOURNAL = {IEEE Trans. Inform. Theory},
	FJOURNAL = {Institute of Electrical and Electronics Engineers.
	Transactions on Information Theory},
	VOLUME = {57},
	YEAR = {2011},
	NUMBER = {2},
	PAGES = {764--785},
	ISSN = {0018-9448},
	MRCLASS = {94A12 (05C90 60B20 94A08)},
	MRNUMBER = {2810285},
	MRREVIEWER = {Robby G. McKilliam},
	DOI = {10.1109/TIT.2010.2094817},
	URL = {https://doi.org/10.1109/TIT.2010.2094817},
}

@article {bayati2015universality,
	AUTHOR = {Bayati, Mohsen and Lelarge, Marc and Montanari, Andrea},
	TITLE = {Universality in polytope phase transitions and message passing
	algorithms},
	JOURNAL = {Ann. Appl. Probab.},
	FJOURNAL = {The Annals of Applied Probability},
	VOLUME = {25},
	YEAR = {2015},
	NUMBER = {2},
	PAGES = {753--822},
	ISSN = {1050-5164},
	MRCLASS = {60B20 (68W40 94A12)},
	MRNUMBER = {3313755},
	DOI = {10.1214/14-AAP1010},
	URL = {https://doi.org/10.1214/14-AAP1010},
}

@article {berthier2020state,
	AUTHOR = {Berthier, Rapha\"{e}l and Montanari, Andrea and Nguyen, Phan-Minh},
	TITLE = {State evolution for approximate message passing with
	non-separable functions},
	JOURNAL = {Inf. Inference},
	FJOURNAL = {Information and Inference. A Journal of the IMA},
	VOLUME = {9},
	YEAR = {2020},
	NUMBER = {1},
	PAGES = {33--79},
	ISSN = {2049-8764},
	MRCLASS = {94A05 (62B10 62M40)},
	MRNUMBER = {4079177},
	DOI = {10.1093/imaiai/iay021},
	URL = {https://doi.org/10.1093/imaiai/iay021},
}

@article {donoho2016high,
	AUTHOR = {Donoho, David and Montanari, Andrea},
	TITLE = {High dimensional robust {M}-estimation: asymptotic variance
	via approximate message passing},
	JOURNAL = {Probab. Theory Related Fields},
	FJOURNAL = {Probability Theory and Related Fields},
	VOLUME = {166},
	YEAR = {2016},
	NUMBER = {3-4},
	PAGES = {935--969},
	ISSN = {0178-8051},
	MRCLASS = {62F10 (62F12 62F35 62F99)},
	MRNUMBER = {3568043},
	DOI = {10.1007/s00440-015-0675-z},
	URL = {https://doi.org/10.1007/s00440-015-0675-z},
}

@article {thrampoulidis2018precise,
	AUTHOR = {Thrampoulidis, Christos and Abbasi, Ehsan and Hassibi, Babak},
	TITLE = {Precise error analysis of regularized {$M$}-estimators in high
	dimensions},
	JOURNAL = {IEEE Trans. Inform. Theory},
	FJOURNAL = {Institute of Electrical and Electronics Engineers.
	Transactions on Information Theory},
	VOLUME = {64},
	YEAR = {2018},
	NUMBER = {8},
	PAGES = {5592--5628},
	ISSN = {0018-9448},
	MRCLASS = {94A12 (62H12 90C25)},
	MRNUMBER = {3832326},
	DOI = {10.1109/TIT.2018.2840720},
	URL = {https://doi.org/10.1109/TIT.2018.2840720},
}

@article {chatterjee2006generalization,
	AUTHOR = {Chatterjee, Sourav},
	TITLE = {A generalization of the {L}indeberg principle},
	JOURNAL = {Ann. Probab.},
	FJOURNAL = {The Annals of Probability},
	VOLUME = {34},
	YEAR = {2006},
	NUMBER = {6},
	PAGES = {2061--2076},
	ISSN = {0091-1798},
	CODEN = {APBYAE},
	MRCLASS = {60F17 (15A52 60G09)},
	MRNUMBER = {2294976},
	MRREVIEWER = {Rongfeng Sun},
	DOI = {10.1214/009117906000000575},
	URL = {http://dx.doi.org.offcampus.lib.washington.edu/10.1214/009117906000000575},
}

@article {javanmard2014confidence,
	AUTHOR = {Javanmard, Adel and Montanari, Andrea},
	TITLE = {Confidence intervals and hypothesis testing for
	high-dimensional regression},
	JOURNAL = {J. Mach. Learn. Res.},
	FJOURNAL = {Journal of Machine Learning Research (JMLR)},
	VOLUME = {15},
	YEAR = {2014},
	PAGES = {2869--2909},
	ISSN = {1532-4435},
	MRCLASS = {62J07 (62F12 62F25)},
	MRNUMBER = {3277152},
	MRREVIEWER = {Zaixing Li},
}

@book {van1996weak,
	AUTHOR = {van der Vaart, Aad and Wellner, Jon A.},
	TITLE = {Weak {C}onvergence and {E}mpirical {P}rocesses},
	SERIES = {Springer Series in Statistics},
	PUBLISHER = {Springer-Verlag, New York},
	YEAR = {1996},
	PAGES = {xvi+508},
	ISBN = {0-387-94640-3},
	MRCLASS = {60F05 (60B12 62G30)},
	MRNUMBER = {1385671 (97g:60035)},
	MRREVIEWER = {Miguel A. Arcones},
	DOI = {10.1007/978-1-4757-2545-2},
	URL = {http://dx.doi.org/10.1007/978-1-4757-2545-2},
}

\end{document}